\theoremstyle{definition}
\theoremstyle{theorem}
\renewcommand*\val{v}
\newcommand*\multone[1][X]{\Trop(#1)_{m_{\Trop}=1}}
\newcommand*\barNRD{\bar N{}_\R^\Delta}
\newcommand*\barNGD{\bar N{}_\Gamma^\Delta}
\newcommand*\barNGpD{\bar N{}_{\Gamma'}^\Delta}
\newcommand*\barNRs{\bar N{}_\R^\sigma}
\newcommand*\STrop{S_{\Trop}}
\newcommand*\ccirc{{\circ\circ}}
\title{Tropical Skeletons}
\author{Walter Gubler}
\address{Walter Gubler, Fakult\"at f\"ur Mathematik,  Universit\"at Regensburg,
Universit\"atsstra{\ss}e 31, D-93040 Regensburg}
\email{walter.gubler@mathematik.uni-regensburg.de}
\author{Joseph Rabinoff}
\address{Joseph Rabinoff, School of Mathematics, Georgia Institute of Technology, Atlanta GA 30332-0160, USA}
\email{jrabinoff@math.gatech.edu}
\author{Annette Werner}
\address{Annette Werner, Institut f\"ur Mathematik, Goethe-Universit\"at
  Frankfurt, Robert-Mayer-Stra{\ss}e 8, D-60325 Frankfurt a.M.}
\email{werner@math.uni-frankfurt.de}
\subjclass[2010]{14G22, 14T05}
\begin{document}

\begin{abstract}
  In this paper, we study the interplay between tropical and
  analytic geometry for closed subschemes of toric varieties. Let $K$ be a
  complete non-Archimedean field, and let $X$ be a closed subscheme of a toric
  variety over $K$. We define the tropical skeleton of $X$ as the subset of the
  associated Berkovich space $X^{\an}$ which collects all Shilov boundary points
  in the fibers of the Kajiwara--Payne tropicalization map. We develop
  polyhedral criteria for limit points to belong to the tropical skeleton, and
  for the tropical skeleton to be closed.
  We apply the limit point criteria to the question of continuity of the
  canonical section of the tropicalization map on the multiplicity-one
  locus.  
  This map is known to be continuous on all torus orbits; we prove criteria for
  continuity when crossing torus orbits.
  When $X$ is sch\"on and defined over a discretely valued field, we show that the tropical skeleton coincides with a
  skeleton of a strictly semistable pair, and is naturally isomorphic to
  the parameterizing complex of Helm--Katz.

\end{abstract}
\maketitle

\section{Introduction}
Let $K$ be a field which is complete with respect to a non-Archimedean absolute value, which might be trivial.
Tropicalizing a scheme  $X$ of finite type over $K$ means, roughly speaking, applying the valuation map to a set of coordinates on $X$. This produces  a combinatorial shadow of $X$ called the tropical variety of $X$. Such coordinates are obtained by embedding $X$ (or an open subscheme) into a torus or, more generally, into a toric variety. The tropicalization map extends uniquely to a proper continuous map from the Berkovich space $X^{\an}$ to a Kajiwara--Payne compactification of Euclidean space.  In fact, by a result of Payne \cite{payne09:analytif_tropical} {and the generalizations given by Foster, Gross and Payne \cite{foster_gross_payne14:limits_trop}}, for any subscheme $X$ of a toric variety, the topological space underlying $X^{\an}$ is the inverse limit over all tropicalizations of $X$ with respect to suitable choices of coordinates.

An interesting question is the relationship between the Berkovich space
$X^{\an}$ and an individual tropicalization. 
If $X$ is a curve, the problem of
finding subgraphs of $X^{\an}$ which are isometric to tropical varieties of $X$
was investigated by Baker, Payne and Rabinoff in
\cite{baker_payne_rabinoff16:tropical_curves,baker_payne_rabinoff13:analytic_curves}. 

In  \cite{gubler_rabinoff_werner16:skeleton_tropical},
we generalize several of these results to the higher-dimensional setting. 
Among other results, it is shown in
\textit{loc.\ cit.}\ that the tropicalization map for a subvariety of a torus has a canonical continuous section on the locus of all points with tropical multiplicity one. This section is defined by associating to every point $\omega$ of tropical multiplicity one the unique Shilov boundary point in the fiber of tropicalization over $\omega$. For an overview of these results, see \cite{werner16:analytif_tropical}. 

 In the present paper we consider higher dimensional subvarieties of toric varieties. By the previous results we can define the locus of multiplicity one and the section map from the tropicalization to the Berkovich analytification of the variety stratum by stratum. Quite surprisingly, it turns out that the section map is in general no longer continuous when passing from one stratum to another. We provide an example where continuity fails in  \ref{eg:counter.eg.3}.  A delicate investigation of polyhedra in the tropicalization is necessary to obtain criteria for continuity. We approach this problem from a more general angle by investigating a subset of the Berkovich space $X^{\an}$ which we call the tropical skeleton.

Before we will describe this notion and before we explain more general continuity criteria, we formulate our main application which is continuity of the section map in the case of a proper intersection with torus orbits.

Let $Y_\Delta$ be the toric variety {over $K$} associated to a pointed rational fan $\Delta$. Then $Y_\Delta$ can be stratified into torus orbits $O(\sigma)$, where $\sigma$ runs over the cones in $\Delta$.  The Kajiwara-Payne tropicalization of $Y_\Delta$ is a natural partial compactification $\barNRD$ of the real cocharacter space $N_\R$ of the dense torus 
{$T$ in $Y_\Delta$.}
As a set (but not as a topological space) it is the disjoint union of all cocharacter spaces $N_\R(\sigma) \coloneq N_\R / \langle \sigma \rangle$ associated to the torus orbits $O(\sigma)$ in $Y_\Delta$. There is a natural continuous tropicalization map $Y^\an \rightarrow\barNRD$. We consider a closed subscheme $X$ of $Y$ and its tropicalization $\Trop(X)$, which is defined as the image of $X^{\an}$ under the
 tropicalization map 
 \[\trop\colon~ X^{\an} \hookrightarrow Y_\Delta^{\an} \To\barNRD.\]
The intersection of $X$ with each toric stratum $O(\sigma)$ of $\barNRD$ is  a
closed subscheme of a torus, so that its tropical variety, which is simply
$\Trop(X) \cap N_\R(\sigma)$, can be equipped with the structure of an integral
affine polyhedral complex. 
{Hence there is a notion of tropical multiplicity $m_{\Trop}(\omega)$ for any $\omega \in \Trop(X)\cap N_\R(\sigma)$ (see  \S\ref{sec:initial.degen}).
Using our previous work \cite{gubler_rabinoff_werner16:skeleton_tropical} orbitwise,  
this defines a canonical section $s_X$ of the tropicalization map on the subset
$\Trop(X)_{m_{\Trop}=1}$ of all points of tropical multiplicity one in
$\Trop(X)$. }

\newtheorem*{proper.int.continuity}{Theorem~\ref{cor:proper.int.continuity}}
\begin{proper.int.continuity}
 Let $X$ be a closed subscheme of $Y_\Delta$ such that $X \cap T$ is {equidimensional and dense in $X$.}  Assume additionally that  for all $\sigma\in\Delta$, either
   $X\cap O(\sigma)$ is empty or of dimension $\dim(X)-\dim(\sigma)$.  
  Then
  $s_X\colon\multone\to X^\an$ is continuous.
  
 \end{proper.int.continuity}

Under the hypotheses of Theorem \ref{cor:proper.int.continuity}, the map $s_X: \Trop(X)_{m_{\Trop}=1} \to X^{\rm an}$ induces a homeomorphism onto its image and we may realize the locus $\Trop(X)_{m_{\Trop}=1}$ as a closed subset of $\trop^{-1}(\Trop(X)_{m_{\Trop}=1})$ in $X^{\rm an}$. Theorem \ref{cor:proper.int.continuity} follows from Theorem \ref{thm:continuity} mentioned below in the introduction which yields a completely combinatorial criterion for continuity of $s_X$.

In the higher-dimensional example of the Grassmannian of planes it was shown in
\cite{cueto_habich_werner13:grassmannians} that the tropical Grassmannian is
homeomorphic to a closed subset of the analytic Grassmannian. This result relies
heavily on combinatorial arguments using the interpretation of the tropical
Grassmannian of planes as a space of phylogenetic trees. Draisma and Postinghel~\cite{draisma_postingh16:faithful_tropical} provide an alternative proof using tropical torus
actions.

Another interesting case from the point of view of moduli spaces is discussed in Theorem 3.14 of~\cite{cavalier_hampe_etal16:moduli_spaces_rational_weighted_stable}, where it is shown that the tropicalization of a suitable Hassett space can be identified with its Berkovich skeleton. 

We will now explain the other results in this paper and how they lead to Theorem \ref{cor:proper.int.continuity}.
In ~\S\S\ref{sec:analytic spaces}--\ref{sec:toric and tropical} we provide some background material on tropical and analytic geometry. Working with several torus orbits at once forces us to consider reducible subschemes of tori. We work out some fundamental properties in this situation which we did not find in the literature. 
Note that our ground field may be an arbitrary field endowed with the trivial absolute value. This often requires a careful study of  the behaviour of our objects under non-Archimedean field extensions.

Let  $X$ be any closed subscheme of the toric variety $Y_\Delta$ over a  $K$ and let $O(\sigma)$ be the orbit associated to the cone $\sigma$ in the fan $\Delta$. For every $\omega \in \Trop(X)$ the fiber
$\trop^{-1}(\{\omega\})$ of the tropicalization map over $\omega$ is an affinoid
domain in $(X \cap O(\sigma))^{\an}$. Therefore it contains a finite subset
of points $B$, the Shilov boundary, such that every element in the associated
affinoid algebra achieves its maximum absolute value on $B$. Then we define the tropical skeleton of $X$ in~\S\ref{sec:tropical.skeleton} as the subset 
\[\STrop(X) = \big\{ \xi \in X^{\an} \mid \xi \mbox{ is a Shilov boundary point of } \trop^{-1}(\trop(\xi))\big\}\]
of $X^{\an}$. 
The tropical skeleton does not change by passing to the induced reduced
subscheme and it is compatible with valued field extensions. 
Moreover, we show that it is locally closed in $X^{\an}$, and that the tropical skeleton of $X$ is the union of the tropical skeletons of the irreducible components. 
In Example~\ref{eg:counter.eg} we discuss a concrete hypersurface in affine $3$-space such that its tropical skeleton is not closed. Motivated by this example, we define the local dimension $d(\omega)$ of a point $\omega \in \Trop(X)$ as the dimension of the local cone at $\omega$ of the tropicalization of $X$. This coincides with the dimension of the initial degeneration of $X$ at $\omega$. 
Then a  polyhedron in the tropicalization of $X \cap O(\sigma)$ containing $\omega$ is called $d$-maximal at $\omega$ if its dimension coincides with  $d(\omega)$.  
Note that the local cone of $\Trop(X)$ at $\omega$ can be identified with the tropicalization of the initial degeneration at $\omega$ over the residue field of $K$ endowed with the trivial absolute value.

In Theorem \ref{thm:limit.points}, we prove a very general criterion for a limit
point of a sequence $\xi_i \in \STrop(X)$ to stay inside the tropical
skeleton. For simplicity, we assume here that the sequence $\xi_i$ is in the dense torus $T$ as well, and that $X \cap T$ is of pure dimension $d$. 
As a consequence of Theorem~\ref{thm:limit.points}, we obtain the following result:

\newtheorem*{skeleton_equidimensional}{Theorem \ref{thm:skeleton_equidimensional}}
\begin{skeleton_equidimensional}
Let $X$ be a closed subscheme of the toric variety $Y_\Delta$ such that $X \cap O(\sigma)$ 
is equidimensional of dimension $d_\sigma$ for any $\sigma \in \Delta$.
We suppose that for all faces $\tau \prec \sigma$ of $\Delta$ and any $d_\tau$-dimensional polyhedron $P$ in $\Trop(X) \cap N_\R(\tau)$ such that its closure meets $N_\R(\sigma)$, the natural projection 
of $P$ to $N_\R(\sigma)$ has dimension $d_\sigma$. Then $\STrop(X)$ is closed.
\end{skeleton_equidimensional}

The situation is particularly nice if $X$ meets every torus orbit not at all or properly, which means that either
$X \cap O(\sigma) = \emptyset$  or that its  dimension is  equal to $\dim(X) - \dim (\sigma)$. We investigate this situation in~\S\ref{sec:proper intersection}. In particular, we show in Corollary~\ref{cor:proper.int.ST.closed} that for such $X$ the tropical skeleton $\STrop(X)$ is closed in $X^{\an}$. 

Section~\ref{sec:section.trop} deals with continuous sections of the
tropicalization map. Let $X$ be a closed subscheme of $Y_\Delta$ and consider a
point $\omega$ in $\Trop(X \cap O(\sigma))$ of tropical multiplicity one. We
show in Proposition~\ref{prop:shilov.section} that in this case there exists a
unique irreducible component $C$ of $X \cap O(\sigma)$ of (maximal possible)
dimension $d(\omega)$ such that $\omega$ lies in $\Trop(C)$ and such that
$\trop^{-1} (\omega) \cap C^{\an}$ has a unique Shilov boundary point. Hence for
every point $\omega$  of tropical multiplicity one we can single out a Shilov
boundary point in the fiber of the tropicalization map over $\omega$. This
defines a section $s_X$ of the tropicalization map on the subset
$\Trop(X)_{m_{\Trop}=1}$ of all points of tropical multiplicity one of
$\Trop(X)$. The question of continuity of $s_X$ is closely related to the
question of the tropical skeleton being closed.  We will deduce the following
theorem from the results of~\S\S\ref{sec:limit.points}--\ref{sec:proper intersection}:

\newtheorem*{continuity}{Theorem \ref{thm:continuity}}
\begin{continuity}
  Let $\Delta$ be a pointed rational fan in $N_\R$ and let $X\subset Y_\Delta$
  be a closed subscheme.  Let $\{\omega_i\}_{i=1}^\infty$ be a sequence in
  $\multone\cap N_\R$ converging to a point $\omega\in\multone \cap N_\R(\sigma)$
  for $\sigma\in\Delta$, $\sigma\neq\{0\}$.  Suppose that there exists a
  polyhedron $P\subset\Trop(X)\cap N_\R$ which is $d$-maximal at each
  $\omega_i$.  If the natural projection 
of $P$ to $N_\R(\sigma)$ is $d$-maximal at $\omega$, then
  $s_X(\omega_i)\to s_X(\omega)$.
\end{continuity}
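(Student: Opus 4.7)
The plan is to combine the properness of $\trop$ with the limit-point criterion Theorem~\ref{thm:limit.points} and the uniqueness of the Shilov boundary point from Proposition~\ref{prop:shilov.section}. Since $\trop\colon X^\an\to\barNRD$ is proper and $\{\omega_i\}\cup\{\omega\}$ is compact, its preimage in $X^\an$ is compact; in particular any subnet of $\{s_X(\omega_i)\}$ has a convergent subnet. Hence it suffices to show that every subnet limit $\xi$ equals $s_X(\omega)$. Fix such a $\xi$; by continuity of $\trop$ we have $\trop(\xi)=\omega$, so $\xi\in(X\cap O(\sigma))^\an$.

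By definition each $s_X(\omega_i)$ is a Shilov boundary point of $\trop^{-1}(\omega_i)$ and hence lies in $\STrop(X)$. I would then apply Theorem~\ref{thm:limit.points} to the sequence $\{s_X(\omega_i)\}\subset\STrop(X)$: its hypothesis is exactly that there is a polyhedron $P\subset\Trop(X)\cap N_\R$ which is $d$-maximal at each $\omega_i$ and whose projection to $N_\R(\sigma)$ is $d$-maximal at $\omega$, which is the standing assumption of the theorem. The conclusion yields $\xi\in\STrop(X)$, i.e.\ $\xi$ is itself a Shilov boundary point of $\trop^{-1}(\omega)$.

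It remains to identify $\xi$ with $s_X(\omega)$. Because $\omega\in\multone\cap N_\R(\sigma)$, Proposition~\ref{prop:shilov.section} singles out a unique irreducible component $C$ of $X\cap O(\sigma)$ of dimension $d(\omega)$ with $\omega\in\Trop(C)$, such that $\trop^{-1}(\omega)\cap C^\an$ has a unique Shilov boundary point, namely $s_X(\omega)$. The $d$-maximality of the projection of $P$ means that the local cone of $\Trop(X\cap O(\sigma))$ at $\omega$ attains the full dimension $d(\omega)$ along the image of $P$; combined with the $d$-maximality of $P$ at each $\omega_i$, this forces $\xi$ to arise as a limit of Shilov points of the $d$-maximal stratum and hence to lie in $\overline{C}^{\,\an}\cap\trop^{-1}(\omega)=\trop^{-1}(\omega)\cap C^\an$. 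Uniqueness of the Shilov point in this intersection gives $\xi=s_X(\omega)$.

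The main obstacle is precisely this last step: ruling out that $\xi$ belongs to some other irreducible component of $X\cap O(\sigma)$ whose Shilov boundary over $\omega$ is distinct from $s_X(\omega)$. The dimension matching supplied by the simultaneous $d$-maximality of $P$ and of its projection is the essential combinatorial input, and without it the limit can land in the ``wrong'' stratum, as illustrated by Example~\ref{eg:counter.eg.3}. Thus the hard work has been pushed into Theorem~\ref{thm:limit.points} and Proposition~\ref{prop:shilov.section}, and the present result becomes a clean bookkeeping of compactness and uniqueness.
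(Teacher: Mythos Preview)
Your overall strategy---compactness plus Theorem~\ref{thm:limit.points} plus the uniqueness in Proposition~\ref{prop:shilov.section}---is exactly the paper's approach, but you have skipped a non-trivial reduction that the paper carries out before invoking Theorem~\ref{thm:limit.points}.

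The hypothesis of Theorem~\ref{thm:limit.points} is \emph{not} ``exactly'' what you claim. In addition to the polyhedral conditions on $P$, that theorem assumes (i) that $X\cap T$ is equidimensional of dimension $d$, and (ii) that $P$ is \emph{relevant} for each $\xi_i$ in the sense of Definition~\ref{relevant definiton}. Point~(ii) is harmless here: since $m_{\Trop}(\omega_i)=1$, Proposition~\ref{properties of relevant}(4) says every polyhedron through $\omega_i$ is relevant for $s_X(\omega_i)$. Point~(i), however, is a genuine gap. Nothing in the statement of Theorem~\ref{thm:continuity} forces $X\cap T$ to be equidimensional, and the proof of Theorem~\ref{thm:limit.points} uses equidimensionality in an essential way (e.g.\ to know that the map to $\bG_m^d$ built from $P$ detects all the relevant Shilov points). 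The paper handles this by first reducing to the case where $X\cap T$ is irreducible: one passes to a subsequence so that all the $\omega_i$ lie in $\Trop(C)$ for a single $d$-dimensional component $C$, replaces $P$ by $P\cap\Trop(C)$, and then argues via Proposition~\ref{prop:shilov.section} and a dimension count that the component $C(\omega)$ of $X\cap O(\sigma)$ determining $s_X(\omega)$ is already a component of $\bar C\cap O(\sigma)$, so one may replace $X$ by $\bar C$. This reduction is the substantive first paragraph of the paper's proof, and it is missing from your argument.

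Your final identification step is also more awkward than necessary. Once Theorem~\ref{thm:limit.points} applies, it gives not only $\xi\in\STrop(X)$ but also $\dim_\xi(X_\omega)=d(\omega)$. By Definition~\ref{def:section}, $s_X(\omega)$ is the \emph{unique} Shilov boundary point of $X_\omega$ with this local dimension, so $\xi=s_X(\omega)$ is immediate---there is no need for the informal argument about $\xi$ lying in $\bar C^{\,\an}$.
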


We assume now that the intersection of $X$ with the dense torus $T$ in $Y_\Delta$ is dense in $X$. We can apply Theorem \ref{thm:continuity} to the case that the intersection $X \cap O(\sigma)$ of $X$ with all torus orbits is equidimensional. We deduce in Theorem~\ref{thm:continuity_equi} that if $\Trop(X) \cap N_\R$ can be covered by finitely many maximal-dimensional polyhedra $P$ which project to polyhedra of maximal dimension in all boundary strata which are met by the closure of $P$, then 
  $s_X\colon\multone\to X^\an$ is continuous.
As an immediate consequence, we get the neat criterion in Theorem \ref{cor:proper.int.continuity} which we highlighted before. 

In~\S\ref{sec:helm-katz} we specialize to the case of a so-called sch\"on
subvariety $X$ of a torus $T$, defined over a discretely valued subfield
$K_0\subset K$.  In this situation, $X$ admits a compactification $\sX$ in a
toric scheme over the valuation ring $K^\circ\subset K$, such that the pair of
$\sX$ with the boundary divisor on the generic fiber $H$ form a strictly
semistable pair in the sense
of~\cite{gubler_rabinoff_werner16:skeleton_tropical}.  This allows us to use the
results of \textit{loc.\ cit.}\ to define a skeleton $S(\sX,H)\subset X^\an$
associated to the pair $(\sX,H)$.  In Theorem~\ref{thm:skeletons.complex} we
show that $S(\sX,H)$ coincides with the tropical skeleton $\STrop(X)$ (as
subsets of $X^\an$), and that both are naturally isomorphic to the
parameterizing complex $\Gamma_X$ defined by
Helm--Katz~\cite{helm_katz12:monodrom_filtration}.  As a consequence, the
parameterizing complex is a deformation retract of $X^\an$, so the canonical isomorphism 
$H^r(\Gamma_X,\Q_\ell) \cong W_0 H_{\text{\'et}}^r(X_{\bar K_0}, \Q_\ell)$ 
of~\cite{helm_katz12:monodrom_filtration} follows, at least over local fields, from a very general comparison
result of Berkovich; see Remark~\ref{helm-katz:main result}. Here, $\ell$ is assumed to be different from the residue characteristic of $K$, and the isomorphism relates the singular cohomology to the weight zero part of the \'etale cohomology of the base change of $X$ to the algebraic closure $\bar K_0$.

The interplay between tropical and analytic geometry has been intensely studied
during the last years. It plays an important role in the investigation of
tropical moduli spaces~\cite{abramovic_caporaso_payne15:tropical_moduli_space}
and also for applications of tropical geometry to arithmetic
problems~\cite{krzb16:uniform_bounds}. We hope that the general conceptual
picture of the relationship between analytic and tropical subschemes of toric
varieties which we develop in this paper will prove useful for further
developments in this area.

\subsection{Acknowledgments}
Part of the work on this project took place during the Simons symposium on
{\it Non-archimedean and Tropical Geometry} in 2015.  The authors are very
grateful to the Simons Foundation for financial support of this meeting and to
the organizers, Matt Baker and Sam Payne. 
{The first author was
partly supported by the collaborative research center SFB 1085 funded by the
Deutsche Forschungsgemeinschaft.  The second author was sponsored by the
National Security Agency Young Investigator Grant number H98230-15-1-0105. 
We also thank the referee for his helpful comments.}

\section{Analytic spaces and their reductions}\label{sec:analytic spaces}

In this section we present some technical facts about analytic
spaces, mostly concerning reductions and Shilov boundaries.  
Since in our investigation of torus orbits we are forced to consider reducible varieties and since
we want to include the case of an arbitrary non-archimedean ground field endowed possibly with the trivial absolute value,
we have to provide some proofs which we could not locate in the literature in the required generality.

We assume that the reader is familiar with the terminology used in 
Berkovich's book \cite{berkovic90:analytic_geometry}.

\subsection{General notation}
This paper uses standard notations from the fields of non-Archimedean analytic
geometry and toric geometry.  Appendix~A contains a list of notations.

If $X$ is an object (scheme, analytic space, formal scheme, algebra, arrow) over
a ring $K$ and $K\to K'$ is a ring homomorphism, the extension of scalars of $X$
to $K'$ will be denoted $X_{K'}$ when convenient. \label{notn:ext.scalar}

\subsection{Non-Archimedean fields}
By a \emph{non-Archimedean field} we mean a field $K$ which is complete with
respect to a (potentially trivial) non-Archimedean valuation
\label{notn:valuation}
$\val\colon K\to\R\cup\{\infty\}$.  If $K$ is a non-Archimedean field then we
write \label{notn:ints} $K^\circ$ for its ring of integers,
$K^\ccirc$ \label{notn:ideal} for the maximal ideal in $K^\circ$, and
$\td K = K^\circ/K^\ccirc$ \label{notn:res.field} for the residue field.  We
also write $\Gamma = \Gamma_K = \val(K^\times)$ \label{notn:value.gp} for the value group of $K$ and
$\sqrt\Gamma$ \label{notn:saturation} for its saturation in $\R$.  Let $|\scdot| = \exp(-\val(\scdot))$
\label{notn:abs.val}
be a corresponding absolute value on $K$.

Throughout this paper, $K$ will denote a non-Archimedean field.
\label{notn:nonarch.field}
By a \emph{valued extension field} of $K$ we mean a
non-Archimedean field $K'$ containing $K$ such that the valuation on $K'$
restricts to the valuation on $K$. 

{For $r_1,\ldots,r_n\in\R_{>0}$, we denote the \emph{ generalized Tate algebra} by} 

\label{notn:gen.tate.alg}
\[ K\angles{r_1\inv x_1,\ldots,r_n\inv x_n} =
\bigg\{ \sum_{I\in\Z_{\geq 0}^n} a_Ix^I\mid |a_I|r^I\to 0
 \text{ as } |I|\to\infty \bigg\}. \]

\subsection{Analytic spaces}
{We will generally
use calligraphic letters to denote $K$-affinoid algebras. The Berkovich spectrum of a (strictly) $K$-affinoid
algebra $\cA$ is denoted \label{notn:spectrum} $\sM(\cA)$. These are the building blocks of a Berkovich (strictly) $K$-analytic
space $X$, see \cite{berkovic99:locally_contractible_I}. 
For $x\in X$ we let
$\sH(x)$ \label{notn:completed.res.field} denote the completed residue field at
$x$.  This is a valued extension field of $K$.

Of major importance for us are \emph{good  (strictly) $K$-analytic
spaces}  which means that every point has a  neighborhood of the form
$\sM(\cA)$, where $\cA$ is a (strictly) $K$-affinoid algebra. Note that only good $K$-analytic spaces are considered in \cite{berkovic90:analytic_geometry}.}

For any $K$-scheme $X$ locally of finite type, we let $X^\an$ \label{notn:analytification} denote its
analytification, as constructed
in~\cite[\S3.4--3.5]{berkovic90:analytic_geometry}.  This is a good strictly
$K$-analytic space. 

\subsection{Dimension theory and irreducible decomposition}
The basic dimension theory of $K$-analytic spaces is developed
in~\cite[\S2.3]{berkovic90:analytic_geometry}.  The \emph{dimension} $\dim(X)$
of a strictly $K$-affinoid space $X = \sM(\cA)$ is by definition the Krull
dimension of $\cA$.  The dimension of a general $K$-affinoid space $X$ is the
dimension of $X_{K'}$ for $K'/K$ a valued field extension such that $X_{K'}$ is
strictly $K'$-affinoid.  The \emph{dimension} $\dim(X)$ of a $K$-analytic space
$X$ is the maximum dimension of a $K$-affinoid domain in $X$.  If $X$ is strictly
$K$-analytic then this is equal to the maximal Krull dimension of the stalk
$\sO_{X,x}$ at a point $x$ of the rigid analytic variety associated to $X$.  We say that a $K$-analytic space $X$ is
\emph{equidimensional of dimension $d$} provided that every $K$-affinoid
domain in $X$ has dimension $d$.  The analytification of a $K$-scheme $X$ of
dimension $d$ (resp.\ of equidimension $d$) has dimension $d$ (resp.\
equidimension $d$) by~\cite[Lemma~A.1.2(2)]{conrad99:irred_comps}.

Let $X$ be a good $K$-analytic space and let $x\in X$.  The
\emph{local dimension} $\dim_x(X)$ is the infimum of $\dim(V)$ for $V\subset X$
an affinoid neighborhood of $x$.  One has $\dim(X) = \max_{x\in X}\dim_x(X)$,
and $X$ is equidimensional if and only if $\dim_x(X) = \dim(X)$ for all $x\in X$.

Let $X = \sM(\cA)$ be a $K$-affinoid space.  The \emph{irreducible components}
of $X$ are the reduced Zariski-closed subspaces of $X$ defined by the minimal
prime ideals of $\cA$.  
Each irreducible component is equidimensional, and $X$ is equidimensional if and
only if its irreducible components have the same
dimension by~\cite[Proposition~2.3.5]{berkovic90:analytic_geometry}.
If $X$ is $K$-affinoid then $\dim_x(X)$ is the maximal dimension of an
irreducible component containing $x$.

See~\cite{conrad99:irred_comps} for a global theory of irreducible components.
We need to extend the following result, found in \textit{ibid.}, to
the case of non-strict $K$-affinoid domains.

\begin{prop}\label{prop:affinoid.union.comps}
  Let $X$ be a finite-type $K$-scheme, let $Y\subset X$ be an irreducible
  component, and let $U\subset X^\an$ be a (possibly non-strict) $K$-affinoid
  domain.  Then $Y^\an\cap U$ is a union of irreducible components of $U$.
\end{prop}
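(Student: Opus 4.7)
The plan is to reduce to the strict $K$-affinoid case, which is already handled in Conrad's paper \cite{conrad99:irred_comps}, by base change to a valued extension field. First, I would choose a valued extension field $K'/K$ such that $U_{K'}=U\hat\otimes_K K'$ is strictly $K'$-affinoid, which is possible by the very definition of $K$-affinoid space. Let $\pi\colon U_{K'}\to U$ denote the canonical projection; it is surjective, and one has $\pi^{-1}(Y^\an\cap U)=(Y_{K'})^\an\cap U_{K'}$ since analytification commutes with base change.

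Next, I would analyze how irreducible components behave under the extension $K\to K'$. The scheme $Y_{K'}=Y\times_K K'$ is a closed subscheme of $X_{K'}$ whose underlying set is a union of irreducible components of $X_{K'}$, because the ring map $\cO(X)\to\cO(X_{K'})$ (and its completed analogue) is faithfully flat, so minimal primes on the target lie over minimal primes on the source. Decompose $Y_{K'}=\bigcup_\ell W_\ell$ into its irreducible components as a $K'$-scheme; applying the strict case of the proposition to each $W_\ell\subset X_{K'}$ and to the strictly $K'$-affinoid domain $U_{K'}$, I conclude that $W_\ell^\an\cap U_{K'}$ is a union of irreducible components of $U_{K'}$, and hence so is their union $(Y_{K'})^\an\cap U_{K'}=\pi^{-1}(Y^\an\cap U)$.

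The final step is descent from $U_{K'}$ to $U$. Let $Z_1,\ldots,Z_r$ be the irreducible components of $U$, corresponding to minimal primes $\fp_1,\ldots,\fp_r$ of the defining affinoid algebra $\cA$. Since $\cA\to\cA\hat\otimes_K K'$ is faithfully flat, every minimal prime of $\cA\hat\otimes_K K'$ contracts to some $\fp_j$, so the irreducible components of $U_{K'}$ partition into groups indexed by $j$, with the $j$-th group consisting of the irreducible components of $(Z_j)_{K'}$. Moreover, by going-down applied to the flat map $\cA/\fp_j\to(\cA/\fp_j)\hat\otimes_K K'$, each irreducible component of $(Z_j)_{K'}$ surjects onto $Z_j$ under $\pi$. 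Therefore, if any single irreducible component of $(Z_j)_{K'}$ is contained in $\pi^{-1}(Y^\an\cap U)$, then $Z_j\subset Y^\an\cap U$. Combining with the previous step, $Y^\an\cap U$ is a union of the $Z_j$'s, as claimed.

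The main obstacle is the descent step: one must know precisely how the irreducible components of a $K$-affinoid space behave under a non-strict valued field extension, in particular that irreducible components above a fixed component $Z_j$ of $U$ all map surjectively onto $Z_j$. Once this is established via faithful flatness and going-down on the affinoid algebras, the reduction to Conrad's strict case is essentially formal.
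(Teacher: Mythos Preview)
Your overall strategy matches the paper's: extend scalars to a field $K'$ making $U$ strictly affinoid, invoke Conrad's result there, then descend along the faithfully flat map $\cA\to\cA'=\cA\hat\otimes_K K'$. The difference is in how the descent is carried out. The paper does it purely ideal-theoretically: writing $\fa\subset\cA$ for the ideal of $Y^\an\cap U$ and $\fa'=\fa\cA'$, one has $\sqrt{\fa'}\cap\cA=\sqrt\fa$ by faithful flatness, and every minimal prime of $\cA'$ contracts to a minimal prime of $\cA$ by going-down; hence if $\sqrt{\fa'}$ is an intersection of minimal primes of $\cA'$, then $\sqrt\fa$ is an intersection of minimal primes of $\cA$ (the paper cites \cite[Proposition~2.3.4]{egaIV_2}).

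Your descent has a gap as written: going-down for flat maps does \emph{not} give surjectivity of individual irreducible components of $(Z_j)_{K'}$ onto $Z_j$. Going-down says that for $\fq'$ upstairs over $\fp'$ and $\fp\subset\fp'$ downstairs, some prime contained in $\fq'$ lies over $\fp$; it gives no control over \emph{which} minimal prime that prime contains. For general faithfully flat ring maps the surjectivity claim is in fact false (take $\Z\to\Z\times\Q$: the component $\Spec\Q$ hits only the generic point of $\Spec\Z$). Fortunately you do not need surjectivity. Since $Y^\an\cap U$ is Zariski-closed in $U$, it suffices that the image of each irreducible component $W$ of $(Z_j)_{K'}$ be Zariski-dense in $Z_j$: if $g\in\cA/\fp_j$ vanishes on $\pi(W)$ then its image in $\cA'/\fp_j\cA'$ lies in the minimal prime defining $W$, which contracts to $(0)$ in the domain $\cA/\fp_j$ by flatness, so $g=0$. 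With this correction your argument goes through and is essentially the paper's, phrased geometrically rather than in terms of ideals.
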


\begin{proof}
  Let $K'\supset K$ be a valued extension field of $K$ which is non-trivially
  valued and such that $U_{K'}$ is strictly $K'$-affinoid.  Let
  $X' = X_{K'}$, $Y' = Y_{K'}$, and $U' = U_{K'}$.  Then the reduced space
  underlying $Y'^\an\cap U'$ is a union of irreducible components of $U'$
  by~\cite[Corollary~2.2.9, Theorem~2.3.1]{conrad99:irred_comps}.  Let
  $U = \sM(\cA)$ and let $\cA'=\cA\hat\tensor_K K'$, so $U' = \sM(\cA')$.  Then
  $\cA'$ is a faithfully flat $\cA$-algebra
  by~\cite[Lemma~2.1.2]{berkovic93:etale_cohomology}.  
Suppose that $Y\cap U$ is defined by the ideal $\fa\subset\cA$, so $Y'\cap U'$ is defined by  $\fa' = \fa\cA'$.  The question is now purely one of commutative algebra: if $\cA\to\cA'$ is a faithfully flat homomorphism of Noetherian rings and $\sqrt{\fa'}$ is an intersection of minimal prime ideals $\wp_i'$ of $\cA'$, then $\sqrt\fa$ is the intersection of the minimal prime ideals $\wp_i' \cap \ cA$ of $\cA$. This follows from~\cite[Proposition~2.3.4]{egaIV_2}.
\end{proof}

\subsection{Admissible formal schemes}
Suppose now that the valuation on $K$ is non-trivial.  An
\emph{admissible $K^\circ$-algebra} in the sense
of~\cite{bosch_lutkeboh93:formal_rigid_geometry_I} is a $K^\circ$-algebra $A$
which is topologically finitely generated and flat (i.e.\ torsionfree) over
$K^\circ$.  We will generally use Roman letters to denote admissible
$K^\circ$-algebras.  An \emph{admissible $K^\circ$-formal scheme} is a formal
scheme $\fX$ which has a cover by formal affine opens of the form $\Spf(A)$ for
$A$ an admissible $K^\circ$-algebra.  The special fiber of $\fX$ is denoted
\label{notn:special.fiber}
$\fX_s = \fX\tensor_{K^\circ} \td K$; this is a $\td K$-scheme locally of finite
type.  If $\fX$ has a locally finite atlas, the \emph{analytic generic fiber}
$\fX_\eta$ \label{notn:generic.fiber} of $\fX$ is the strictly $K$-analytic space defined locally by
$\Spf(A)_\eta = \sM(A\tensor_{K^\circ} K)$.  Here we recall several facts about
admissible $K^\circ$-algebras and affine admissible $K^\circ$-formal schemes.

\begin{prop}\label{prop:adm.fs.facts}
  Let $f\colon A\to B$ be a homomorphism of admissible $K^\circ$-algebras, let
  $\fX=\Spf(A)$ and $\fY=\Spf(B)$, and let $\phi\colon\fY\to\fX$ be the induced
  morphism.
  \begin{enumerate}
  \item $f$ is finite if and only if $f_K\colon A_K\to B_K$ is finite.
  \item Suppose that $f_K\colon A_K\to B_K$ is finite and dominant, i.e.\ that
    $\ker(f_K)$ is nilpotent.  Then $\phi_s\colon\fY_s\to\fX_s$ is finite and
    surjective.
  \item If $\fX_\eta$ has dimension $d$ (resp.\ is equidimensional of
    dimension $d$), then $\fX_s$ has dimension $d$ (resp.\ is equidimensional of
    dimension $d$).
  \item Suppose that $f_K\colon A_K\to B_K$ is finite and dominant, and that
    $\fX_\eta$ and $\fY_\eta$ are equidimensional (necessarily of the
    same dimension).  Then $\phi_s\colon\fY_s\to\fX_s$ maps generic points to
    generic points.
  \end{enumerate}
\end{prop}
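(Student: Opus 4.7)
The plan is to prove the four assertions in order, each leaning on its predecessors.

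For (1), the forward direction is immediate because finiteness of modules is preserved under the base change $\scdot\tensor_{K^\circ}K$. For the converse, I would appeal to the classical descent theorem for admissible $K^\circ$-algebras stating that finiteness of a map between flat topologically finitely generated $K^\circ$-algebras can be detected on generic fibers (see~\cite{bosch_lutkeboh93:formal_rigid_geometry_I}). The underlying argument is to choose $A_K$-module generators of $B_K$, rescale them by a topologically nilpotent unit to bring them inside $B$, form the finitely generated $A$-submodule $B' = \sum_i A b_i \subset B$ (so $B'_K = B_K$), and then use an Artin--Rees/completeness argument that the topologically finitely generated $A$-module $B/B'$, whose generic fiber vanishes, must itself vanish.

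For (2), part~(1) makes $\phi$ finite, so $\phi_s$ is automatically finite; for surjectivity, $K^\circ$-flatness of $A$ gives the inclusion $\ker(f) \subset \ker(f_K)$, so $\ker(f)$ is a nil-ideal in a Noetherian ring, hence nilpotent. Factor $f$ as $A \twoheadrightarrow A/\ker(f) \hookrightarrow B$ and reduce modulo $K^\ccirc$: nilpotence of $\ker(f)$ ensures that $\Spec(A/\ker(f)\tensor\td K) \to \Spec(A\tensor\td K)$ is surjective, while the inclusion $A/\ker(f) \hookrightarrow B$ is integral (as $B$ is finite over $A/\ker(f)$), so lying-over gives surjectivity of $\Spec(B\tensor\td K) \to \Spec(A/\ker(f)\tensor\td K)$. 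Composing yields surjectivity of $\phi_s$.

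For (3), since $A$ is topologically finitely generated over $K^\circ$ the algebra $A_K$ is strictly $K$-affinoid, so Noether normalization furnishes a finite injection $K\angles{x_1,\ldots,x_d}\hookrightarrow A_K$ with $d = \dim\fX_\eta$. Lifting the images of the $x_i$ to $A$ (rescaling to the unit ball if necessary) yields an $K^\circ$-algebra map $K^\circ\angles{x_1,\ldots,x_d}\to A$ whose generic fiber is finite; by~(1) this map is itself finite, and it is injective since the source is $K^\circ$-flat. Reducing mod $K^\ccirc$ gives a finite homomorphism $\td K[x_1,\ldots,x_d] \to A\tensor\td K$, which is surjective on $\Spec$ by~(2); hence $\dim\fX_s = d$. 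For the equidimensional clause, apply the dimension statement to each admissible formal subscheme $\Spf(A/(\wp\cap A))$ as $\wp$ ranges over the minimal primes of $A_K$: all have generic fibers of dimension $d$ by hypothesis, and Proposition~\ref{prop:affinoid.union.comps} shows their special fibers cover $\fX_s$, forcing every irreducible component of $\fX_s$ to have dimension $d$.

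Finally for (4), (1) gives finiteness of $\phi$, (2) gives surjectivity of $\phi_s$, and (3) gives equidimensionality of both $\fX_s$ and $\fY_s$ of the common dimension $d = \dim\fX_\eta = \dim\fY_\eta$. For a generic point $\eta$ of $\fY_s$, the closed subset $\phi_s(\overline{\{\eta\}}) = \overline{\{\phi_s(\eta)\}}$ of $\fX_s$ has dimension $d$ (a finite morphism preserves dimensions of closed subsets), so it coincides with an irreducible component of $\fX_s$, forcing $\phi_s(\eta)$ to be its generic point. I expect the equidimensionality portion of~(3) to be the main obstacle: ruling out lower-dimensional components of the special fiber from a $K^\circ$-flat admissible algebra with equidimensional generic fiber genuinely requires a preservation-of-dimension argument for flat families, whereas the other parts reduce to combining~(1) with standard commutative algebra.
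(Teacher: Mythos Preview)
The paper does not prove this proposition at all: it simply cites \cite[\S3]{baker_payne_rabinoff16:tropical_curves}, noting only that the ``dimension $d$'' half of~(3) uses an argument parallel to the equidimensional half there.  So your direct argument is already more explicit than the paper's.  Parts~(1) and~(4) are fine, and~(2) is essentially correct as well---though your justification ``nil-ideal in a Noetherian ring'' is off, since an admissible $K^\circ$-algebra need not be Noetherian when $K^\circ$ is not; the fix is simpler than what you wrote: $\ker(f)\subset\ker(f_K)$ because $B$ is $K^\circ$-torsionfree, and $\ker(f_K)$ is already nilpotent by hypothesis.  Your surjectivity step in~(2) is fine once one remembers that surjectivity of $\Spec(B)\to\Spec(A/\ker f)$ (from lying-over) is stable under the base change $-\otimes_{K^\circ}\td K$.

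The genuine gap is in the equidimensional half of~(3).  Your reduction to the minimal primes $\wp$ of $A_K$ is reasonable, and it is true (though not by Proposition~\ref{prop:affinoid.union.comps}, which concerns affinoid domains in analytifications, not special fibers) that the special fibers of the $\Spf(A/(\wp\cap A))$ cover $\fX_s$.  But knowing that $\fX_s$ is covered by closed subschemes $Z_\wp$ each of dimension $d$ does \emph{not} force every irreducible component of $\fX_s$ to have dimension $d$: an irreducible component $C$ of $\fX_s$ is indeed an irreducible component of some $Z_\wp$, but you have only shown $\dim Z_\wp=d$, not that $Z_\wp$ is equidimensional---so $C$ could a priori be a lower-dimensional component of $Z_\wp$.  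In effect you have reduced to the case where $A_K$ is a domain and then stopped.  The argument that actually closes this (and which the paper gives in full for the parallel Proposition~\ref{prop:canon.red.facts}(3)) is a localization trick: pick a generic point of the special fiber, choose $a\in A$ whose reduction cuts out the other components, pass to the admissible localization $A\angles{a^{-1}}$, and apply the dimension statement there.  You correctly anticipated that this is the crux; your proposed shortcut does not get around it.
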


\begin{proof}
  These are all found in~\cite[\S3]{baker_payne_rabinoff16:tropical_curves},
  except for the ``dimension $d$'' part of~(3), which uses a similar argument to
  the ``equidimension $d$'' part
  of~\cite[Proposition~3.23]{baker_payne_rabinoff16:tropical_curves}.  Note
  that the proofs in \textit{loc.\ cit.}\ do not use the standing assumption
  there that $K$ is algebraically closed.
\end{proof}

\subsection{} \label{par:notation.affinoid.algebras}
Assume that the valuation on $K$ is non-trivial.
Let $\cA$ be a strictly $K$-affinoid algebra and let $X = \sM(\cA)$.  We let
$\cA^\circ\subset\cA$ \label{notn:power.bounded} denote the subring of
power-bounded elements and we let \label{notn:top.nilpotent}
$\cA^\ccirc\subset\cA^\circ$ denote the ideal of topologically nilpotent
elements.  The ring $\cA^\circ$ is flat over $K^\circ$, but it may not be
topologically of finite type,    
{see ~\cite[Theorem~3.17]{baker_payne_rabinoff16:tropical_curves}.} 
If $|\scdot|_{\sup}$ \label{notn:sup.norm} is the supremum seminorm on $\cA$, then
$\cA^\circ = \{f\in\cA\mid|f|_{\sup}\leq 1\}$ and
$\cA^\ccirc = \{f\in\cA\mid|f|_{\sup} < 1\}$.  We set \label{notn:reduction.alg}
$\td\cA=\cA^\circ/\cA^\ccirc$.

\subsection{The canonical reduction} \label{par:canonical-reduction}
With the notation in \S\ref{par:canonical-reduction} 
the \emph{canonical model} of $X= \sM(\cA)$ is the affine formal
$K^\circ$-scheme \label{notn:canonical.model} $\fX^\can \coloneq \Spf(\cA^\circ)$.  The
\emph{canonical reduction} of $X$ is \label{notn:canonical.reduction} $\td X\coloneq\Spec(\td\cA)$.  This is a
reduced affine $\td K$-scheme of finite type, and the association
$X\mapsto\td X$ is functorial.  Since the radical of the ideal
$K^\ccirc\cA^\circ$ is equal to $\cA^\ccirc$, the canonical reduction $\td X$ is
the reduced scheme underlying the special fiber of the canonical model
$\fX^\can_s$.  If $\Gamma = \val(K^\times)$ is divisible (e.g.\ if $K$ is algebraically closed)
then $\td X = \fX^\can_s$.

We have the following analogue of Proposition~\ref{prop:adm.fs.facts} for the
canonical reduction.

\begin{prop}\label{prop:canon.red.facts}
  Let $f\colon\cA\to\cB$ be a homomorphism of strictly $K$-affinoid algebras,
  let $X = \sM(\cA)$ and $Y = \sM(\cB)$, and let $\phi\colon Y\to X$ be the
  induced morphism.
  \begin{enumerate}
  \item The following are equivalent: $f$ is finite;
    $f^\circ\colon\cA^\circ\to\cB^\circ$ is integral;
    $\td f\colon\td\cA\to\td\cB$ is finite.
  \item Suppose that $f$ is finite and dominant, i.e.\ that $\ker(f)$ is
    nilpotent.  Then $\td\phi\colon\td Y\to\td X$ is finite and surjective.
  \item If $X$ has dimension $d$ (resp.\ is equidimensional of
    dimension $d$), then $\td X$ has dimension $d$ (resp.\ is equidimensional of
    dimension $d$).
  \item Suppose that $f$ is finite and dominant, and that $X$ and $Y$ are
    equidimensional (necessarily of the same dimension).  Then
    $\td\phi\colon\td Y\to\td X$ maps generic points to generic points.
  \end{enumerate}
\end{prop}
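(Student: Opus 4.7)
The plan is to prove the four parts in order, with parts (2)--(4) reducing to the formal scheme analogue in Proposition~\ref{prop:adm.fs.facts} via a suitable admissible $K^\circ$-model of $X$. Part~(1) is the classical Bosch--G\"untzer--Remmert equivalence for strictly $K$-affinoid algebras: the implication $f$ finite $\Rightarrow f^\circ$ integral comes from the fact that every element of a finite $\cA$-algebra $\cB$ satisfies a monic polynomial over $\cA$ whose coefficients can be made power-bounded after scaling; the reverse implication $\td f$ finite $\Rightarrow f$ finite uses a Nakayama argument to lift a finite generating set of $\td\cB$ over $\td\cA$ to a generating set of $\cB^\circ$ over $\cA^\circ$, and hence of $\cB$ over $\cA$.

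For part~(2), I would first reduce to $f$ injective: since $\ker(f)$ consists of nilpotent elements of $\cA$, it lies in $\cA^\ccirc$, so replacing $\cA$ by $\cA/\ker(f)$ does not change $\td X$. Once $f\colon\cA\hookrightarrow\cB$ is a finite injection, the spectral seminorm on $\cA$ equals the restriction of that on $\cB$---because the induced map $Y\to X$ of Berkovich spectra is surjective---and hence $\cA^\circ=\cA\cap\cB^\circ$ and $\cA^\ccirc=\cA\cap\cB^\ccirc$, yielding an injection $\td f\colon\td\cA\hookrightarrow\td\cB$. Combined with finiteness from~(1), the going-up theorem for integral extensions delivers the surjectivity of $\td\phi$.

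For~(3), I reduce first to $\cA$ reduced via $\td\cA=\td{\cA_{\text{red}}}$ (the nilradical lies in $\cA^\ccirc$) and $\dim X=\dim X_{\text{red}}$. When $\cA$ is reduced, choose an admissible $K^\circ$-model $A\subset\cA^\circ$ of $X$---for instance, the image of the unit ball of a Tate algebra under a surjection $T_n\twoheadrightarrow\cA$. By Proposition~\ref{prop:adm.fs.facts}(3), $\fX_{A,s}$ has dimension (resp.\ is equidimensional of dimension) $d=\dim X$. The classical BGR finiteness theorem says that $\cA^\circ$ is a finite $A$-module, so $\fX^\can\to\Spf(A)$ is a finite morphism of admissible formal schemes, and part~(2) gives $\fX^\can_s\to\fX_{A,s}$ finite and surjective; finite surjective morphisms between affine schemes preserve dimension and equidimensionality, as does passage to the underlying reduced scheme, so $\td X=(\fX^\can_s)^{\text{red}}$ inherits dimension $d$ (and equidimensionality when $X$ is equidimensional). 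Part~(4) follows easily: $\td\phi$ is finite by~(2), and both $\td X$ and $\td Y$ are equidimensional of the common dimension $d=\dim X=\dim Y$ by~(3); if $\eta$ is a generic point of $\td Y$, then the closure of $\td\phi(\eta)$ has dimension $d$ by finiteness of $\td\phi$, hence is an irreducible component of the $d$-equidimensional space $\td X$, so $\td\phi(\eta)$ is generic.

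The most delicate step I anticipate is the equidimensionality statement in~(3), because irreducible components of $\cA$ and of $\td\cA$ are not in bijection: a component of $\td X$ may be dominated by several components of $X$, and a single component of $X$ may reduce to several components of $\td X$. The argument above circumvents this via the finite surjective map $\fX^\can_s\to\fX_{A,s}$, but one must verify carefully that every irreducible component of $(\fX^\can_s)^{\text{red}}$ indeed has dimension $d$---that is, that no ``spurious'' lower-dimensional component appears under reduction---which will ultimately rest on the equidimensionality already established on $\fX_{A,s}$ and the finite surjectivity downstairs.
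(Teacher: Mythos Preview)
Your arguments for~(1), (2), and~(4) are fine; your proof of~(2) via restriction of spectral seminorms is in fact cleaner than the paper's, which instead shows directly that $\ker(\cA^\circ_{\td K}\to\cB^\circ_{\td K})$ is nilpotent by manipulating an integral-dependence equation.

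Part~(3) has two genuine gaps. First, $\cA^\circ$ is \emph{not} in general a finite $A$-module, even when $\cA$ is reduced: BGR~6.3.5/1 only gives that $\cA^\circ$ is \emph{integral} over $A$, and the paper explicitly notes in~\S\ref{par:notation.affinoid.algebras} that $\cA^\circ$ may fail to be topologically of finite type over $K^\circ$. In particular $\fX^\can$ need not be an admissible formal scheme, so neither Proposition~\ref{prop:adm.fs.facts}(2) nor the present part~(2) applies to the map $A\to\cA^\circ$. One can still deduce that $\td\cA$ is finite over $A_{\td K}$ (an integral extension of finite-type $\td K$-algebras is finite), so your route to the dimension statement can be repaired; the paper avoids admissible models entirely and simply applies Noether normalization $K\angles{x_1,\ldots,x_d}\hookrightarrow\cA$, obtaining from~(2) a finite surjection $\td X\to\bA^d_{\td K}$.

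Second, and this is the gap you yourself flag, finite surjective morphisms do \emph{not} propagate equidimensionality upward: the map $\Spec(k[t]\times k)\to\Spec(k[t])$ induced by $t\mapsto(t,0)$ is finite and surjective onto an equidimensional target, yet the source has components of dimensions $1$ and $0$. The paper's argument for the equidimensional case is different in kind. Given a generic point $\td x$ of $\td X$, choose $a\in\cA^\circ$ with $\td a(\td x)\neq 0$ but $\td a$ vanishing at every other generic point, and pass to the Laurent domain $X'=\sM(\cA\angles{a^{-1}})$. Since $X$ is equidimensional of dimension $d$ and $X'$ is a nonempty affinoid subdomain, $\dim X'=d$; since $\td X'=\Spec(\td\cA[\td a^{-1}])$ is the open subscheme of $\td X$ whose unique generic point is $\td x$, the already-proved dimension case gives $\dim\td X'=d$, and hence the component of $\td X$ through $\td x$ has dimension $d$.
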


\begin{proof}
  Part~(1)
  is~\cite[Theorem~6.3.5/1]{bosch_guntzer_remmert84:non_archimed_analysis}.  In
  the situation of~(2), we know from~(1) that $\td\phi$ is finite.  Let
  $\fX^\can = \Spf(\cA^\circ)$ and $\fY^\can = \Spf(\cB^\circ)$.  Then
  $\td\phi\colon\td Y\to\td X$ is the morphism of reduced schemes underlying
  $\phi_s\colon\fY^\can_s\to\fX^\can_s$, so $\phi_s$ is a closed map.  Hence to
  show $\td\phi$ is surjective, it suffices to show that
  $\ker(f^\circ_{\td K}\colon\cA^\circ_{\td K}\to\cB^\circ_{\td K})$ is nilpotent.
  Let $a\in\cA^\circ$ have zero image in $\cB^\circ_{\td K}$.  Then
  $f(a) = \varpi b$ for some $b\in\cB^\circ$ and $\varpi\in K^\ccirc$.  Let
  \[ b^n + f(c_{n-1})b^{n-1} + \cdots + f(c_1) b + f(c_0) = 0 \qquad
  (c_i\in\cA^\circ) \]
  be an equation of integral dependence for $b$ over $\cA^\circ$.  Then
  \[ a^n + \varpi\big( c_{n-1}a^{n-1} + \cdots + \varpi^{n-2}c_1a
  + \varpi^{n-1}c_0 \big) \in \ker(f), \]
  so the image of $a^n$ in $\cA^\circ_{\td K}$ is nilpotent, as desired.

  Now suppose that $X$ has dimension $d$.  By Noether
  normalization~\cite[Theorem~6.1.2/1]{bosch_guntzer_remmert84:non_archimed_analysis},
  there exists a finite injection $K\angles{x_1,\ldots,x_d}\inject\cA$, which
  by~(2) yields a finite, surjective morphism $\td X\to\bA^d_{\td K}$.  Hence
  $\td X$ has dimension $d$.  Suppose that $X$ is equidimensional of dimension
  $d$.  Choose a generic point $\td x\in\td X$, and choose $a\in\cA^\circ$ such
  that $\td a(\td x)\neq 0$ and $\td a$ vanishes on all other generic
  points of $\td X$.  Let $X'\subset X$ be the Laurent domain
  $\sM(\cA\angles{a\inv})$.  Then $\dim(X') = \dim(X)$ and
  $\td X' =
  \Spec(\td\cA[\td a\inv])$
  by~\cite[Proposition~7.2.6/3]{bosch_guntzer_remmert84:non_archimed_analysis}.
  By the above, $\td X'$ has dimension $d$, and hence $\td X$ has
  equidimension $d$.

  Part~(4) follows immediately from parts~(1)--(3).
\end{proof}

\subsection{Relating the two reductions}
We continue to assume the valuation on $K$ is non-trivial.
Let $A$ be an admissible $K^\circ$-algebra and let $\cA = A_K$, a strictly
$K$-affinoid algebra.  Put $\fX=\Spf(A)$, $X=\sM(\cA)$, and
$\fX^\can=\Spf(\cA^\circ)$, as above.  Then $A\subset\cA^\circ$ since by
definition $A$ is generated by power-bounded elements, so we obtain morphisms
\begin{equation}\label{eq:map.between.reductions}
  \fX^\can\to\fX \sptxt{and} \td X\inject\fX^\can_s\to\fX_s.
\end{equation}
These morphisms are functorial in $\fX$.  The next Proposition relates the
two finite-type $\td K$-schemes canonically associated with $A$.

\begin{prop}\label{prop:two.reductions}
  With the above notation, the natural inclusion $A\inject\cA^\circ$ is an
  integral homomorphism, and the morphism $\td X\to\fX_s$ is finite and
  surjective.
\end{prop}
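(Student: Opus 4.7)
My plan is to prove integrality of $A\hookrightarrow\cA^\circ$ first, and then to deduce finiteness and surjectivity of $\td X\to\fX_s$ as essentially formal consequences of integrality combined with standard properties of reduction maps.

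For integrality, I would first reduce to the case that $\cA$ is reduced. If $\mathfrak{n}\subset\cA$ denotes the nilradical, then $A'\coloneq A/(\mathfrak{n}\cap A)$ is an admissible $K^\circ$-model of the reduced algebra $\cA/\mathfrak{n}$, and a monic relation for the image of $f\in\cA^\circ$ in $\cA/\mathfrak{n}$ over $A'$ lifts to an element of $\mathfrak{n}\subset\cA$; since $\mathfrak{n}^N=0$ for some $N$, raising to the $N$-th power produces a monic polynomial relation for $f$ over $A$ itself. In the reduced case, starting from a presentation $K^\circ\langle x_1,\ldots,x_n\rangle\twoheadrightarrow A$, I would produce a Noether normalization $K\langle y_1,\ldots,y_d\rangle\hookrightarrow\cA$ whose generators $y_j$ lie inside $A$ (rescaling by a power of a topologically nilpotent element of $K^\circ$ if necessary). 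The classical BGR finiteness theorem for reduced strictly affinoid algebras (see~\cite[\S6.4.1]{bosch_guntzer_remmert84:non_archimed_analysis}) then implies that $\cA^\circ$ is a finite $K^\circ\langle y_1,\ldots,y_d\rangle$-module, hence integral over the subring $K^\circ\langle y_1,\ldots,y_d\rangle\subset A$, and \emph{a fortiori} integral over $A$.

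Granting integrality, reducing modulo $K^{\circ\circ}$ and then modulo nilpotents yields an integral homomorphism $A/K^{\circ\circ}A\to\td\cA$. Since $\td\cA$ is a finitely generated $\td K$-algebra, it is also a finitely generated $(A/K^{\circ\circ}A)$-algebra, and integrality plus finite generation as an algebra forces finiteness as a module. Thus $\td X\to\fX_s$ is finite. For surjectivity, I would use that both reduction maps $X\to\td X$ (for the canonical model) and $X=\fX_\eta\to\fX_s$ (for $\fX$) are surjective---the first by the classical theory of canonical reductions of strictly affinoid spaces, the second by standard properties of admissible $K^\circ$-formal schemes---and that they factor compatibly via~\eqref{eq:map.between.reductions}. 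Surjectivity of $\td X\to\fX_s$ then follows at once. The principal obstacle is the integrality step, and in particular arranging a Noether normalization of $\cA$ whose integer ring sits inside $A$ so that the classical BGR result can be applied directly; once that is in place, everything else is largely formal.
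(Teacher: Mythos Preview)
Your proposal is essentially correct, but the paper's proof is considerably shorter and avoids two pitfalls in your plan.

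\textbf{Integrality.} The paper dispenses with both the reduction to the reduced case and the Noether normalization. It simply chooses a surjection $K^\circ\langle x_1,\ldots,x_n\rangle\twoheadrightarrow A$, tensors with $K$ to obtain a surjection $K\langle x_1,\ldots,x_n\rangle\twoheadrightarrow\cA$, and applies \cite[Theorem~6.3.5/1]{bosch_guntzer_remmert84:non_archimed_analysis} (a finite map of affinoid algebras induces an integral map on rings of power-bounded elements) to conclude that $K^\circ\langle x_1,\ldots,x_n\rangle\to\cA^\circ$ is integral. Since this factors through $A$, the inclusion $A\hookrightarrow\cA^\circ$ is integral. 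Note also that your specific claim that $\cA^\circ$ is a \emph{finite} $K^\circ\langle y_1,\ldots,y_d\rangle$-module is not true for arbitrary non-Archimedean $K$ (it requires $K$ to be stable; see \cite[\S6.4]{bosch_guntzer_remmert84:non_archimed_analysis}). What you actually need is integrality, and that is exactly Theorem~6.3.5/1, which the paper invokes directly.

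\textbf{Surjectivity.} Your argument appeals to surjectivity of $\fX_\eta\to\fX_s$ as a known input. In the paper's logical order this is Proposition~\ref{prop:fs.red.surj}, whose proof \emph{uses} the present proposition, so within the paper your argument would be circular. The paper instead argues directly: since $\td X\to\fX_s$ is finite it is closed, so it suffices to show that $\ker(A_{\td K}\to\cA^\circ_{\td K})$ is nilpotent. This is checked by an explicit integrality computation (the same one used in Proposition~\ref{prop:canon.red.facts}(2)). Your route is fine if you are willing to cite an external reference for the surjectivity of the reduction map of admissible formal schemes, but it is not self-contained in the paper's setup.
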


\begin{proof}
  Choose a surjection $K^\circ\angles{x_1,\ldots,x_n}\surject A$.  Tensoring
  with $K$, we get a surjection $K\angles{x_1,\ldots,x_n}\surject\cA$, so
  by~\cite[Theorem~6.3.5/1]{bosch_guntzer_remmert84:non_archimed_analysis}, the
  composition 
  \[ K^\circ\angles{x_1,\ldots,x_n}\to A\to \cA^\circ \]
  is an integral homomorphism.  Hence $A\to\cA^\circ$ is integral.  It follows
  that $A_{\td K}\to\cA^\circ_{\td K}\surject\td\cA$ is an integral homomorphism of
  finitely generated $\td K$-algebras, so $A_{\td K}\to\td\cA$ and
  $\td X\to\fX_s$ are finite.  In particular, $\td X\to\fX_s$ is closed, and
  hence $\fX^\can_s\to\fX_s$ is closed, as $\td X$ and $\fX^\can_s$ have the
  same underlying topological space.  Thus to show surjectivity it is enough to
  prove that $\ker(A_{\td K}\to\cA^\circ_{\td K})$ is nilpotent.
  This is done exactly as in the proof of
  Proposition~\ref{prop:canon.red.facts}(2). 
\end{proof}

\subsection{The Shilov boundary and the reduction map}
Here the valuation on $K$ is allowed to be trivial.  The \emph{Shilov boundary}
of a $K$-affinoid space $X = \sM(\cA)$ is the unique minimal subset \label{notn:shilov.boundary}
$B(X)\subset X$ on which every $f\in\cA$ achieves its maximum.
It exists and is finite and nonempty for any $K$-affinoid space
by~\cite[Corollary~2.4.5]{berkovic90:analytic_geometry}.  The Shilov boundary is
insensitive to nilpotent elements of $\cA$.

We postpone the proof of the following Lemma until after
Proposition~\ref{prop:fs.red.surj}.

\begin{lem}\label{lem:bdy.kernel}
  Let $X = \sM(\cA)$ be a $K$-affinoid space, let $x\in B(X)$ be a Shilov
  boundary point, and let $|\scdot|_x\colon\cA\to\R_{\geq 0}$ be the
  corresponding  seminorm.  Then $\ker|\scdot|_x$ is a minimal prime
  ideal of $\cA$.
\end{lem}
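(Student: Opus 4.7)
The plan is to prove this by a sequence of reductions that culminate in a dimension-theoretic argument using the canonical reduction.

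First I would observe that every nilpotent element of $\cA$ automatically lies in $\ker|\cdot|_x$ and that $B(X)$ is insensitive to nilpotents, so we may replace $\cA$ by its reduction and assume $\cA$ is reduced. Writing $\wp_1,\dots,\wp_n$ for the minimal primes and $X_i=\sM(\cA/\wp_i)$ for the corresponding irreducible components, I would then show that $B(X)\subseteq\bigcup_i B(X_i)$: since $X=\bigcup_i X_i$ as a set, for each $f\in\cA$ the supremum $|f|_{\sup,X}=\max_i|f|_{\sup,X_i}$ is attained on $\bigcup_i B(X_i)$, so this union has the Shilov property for $\cA$, and the inclusion follows from the minimality of $B(X)$.

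This reduces the lemma to the following statement: if $\cA$ is a $K$-affinoid \emph{domain} of dimension $d$ and $x\in B(\sM(\cA))$, then $\ker|\cdot|_x=(0)$. Indeed, if $x\in B(X_i)$ and the kernel of $|\cdot|_x$ vanishes on $\cA/\wp_i$, pulling back to $\cA$ gives $\ker|\cdot|_x=\wp_i$, a minimal prime of $\cA$. To bring the machinery of canonical reductions to bear, I would next pass to a non-trivially valued extension $K'/K$ for which $\cA\hat\otimes_K K'$ is strictly $K'$-affinoid; Shilov boundary points lift along $X_{K'}\to X$, and by faithful flatness (as in the proof of Proposition~\ref{prop:affinoid.union.comps}) the contraction of any minimal prime of $\cA\hat\otimes_K K'$ to $\cA$ is still a minimal prime. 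So it suffices to treat the case where $K$ is non-trivially valued and $\cA$ is strictly $K$-affinoid.

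In that setting, and assuming $\cA$ is a domain of dimension $d$, I would invoke the identification of $B(\sM(\cA))$ with the set of generic points of the canonical reduction $\td X$ (this is the input I expect Proposition~\ref{prop:fs.red.surj} to provide): if $\eta=\td x$ is the generic point corresponding to $x$, then $\td\sH(x)$ is the residue field at $\eta$, and Proposition~\ref{prop:canon.red.facts}(3) shows that $\td X$ is equidimensional of dimension $d$, so $s(x)\coloneq\operatorname{tr\,deg}_{\td K}\td\sH(x)=d$. Writing $\wp=\ker|\cdot|_x$ and $t(x)$ for the rational rank of $\Gamma_{\sH(x)}/\Gamma_K$, Abhyankar's inequality applied to the valuation $|\cdot|_x$ on $\operatorname{Frac}(\cA/\wp)$ gives
\[ d=s(x)\le s(x)+t(x)\le\operatorname{tr\,deg}_K\operatorname{Frac}(\cA/\wp)=\dim(\cA/\wp), \]
the last equality being Noether normalization for the affinoid domain $\cA/\wp$. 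Since $\dim(\cA/\wp)\le\dim\cA=d$, equality must hold throughout; but the only prime $\wp$ of a $d$-dimensional affinoid domain with $\dim(\cA/\wp)=d$ is $(0)$, so $\ker|\cdot|_x=(0)$.

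The main obstacle I anticipate is this very last step, which rests on the strict-affinoid correspondence between Shilov boundary points and generic points of $\td X$ that is presumably the content of Proposition~\ref{prop:fs.red.surj}. The reductions in the earlier paragraphs are essentially formal consequences of minimality and faithfully flat base change, but arranging the extension of scalars so that Shilov boundary points and minimal primes both descend properly will require some care to avoid circularity with the very property being proved.
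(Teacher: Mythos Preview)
Your reductions are sound and largely parallel the paper's: passing to a reduced algebra, then to an irreducible component via $B(X)\subseteq\bigcup_i B(X_i)$, then extending scalars, is exactly the strategy. One simplification the paper makes is to use the specific extension $K'=K_{r_1,\dots,r_n}$ of~\cite[\S2.1]{berkovic90:analytic_geometry}, for which $\cA\hat\otimes_K K'$ remains an integral domain by~\cite[Proposition~2.1.4]{berkovic90:analytic_geometry}; this spares you the need to re-reduce to a component after base change and to invoke descent of minimal primes. Also, the Shilov-boundary/generic-point correspondence you need is Proposition~\ref{prop:can.red.surj} (Berkovich's~2.4.4), not Proposition~\ref{prop:fs.red.surj}.

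There is, however, a genuine gap in your core step. The equality
\[
\operatorname{tr\,deg}_K\operatorname{Frac}(\cA/\wp)=\dim(\cA/\wp)
\]
is \emph{false} for affinoid algebras: for instance $\operatorname{Frac}(K\langle T\rangle)$ has transcendence degree strictly larger than~$1$ over $K$ whenever $K$ is non-trivially valued (it contains, e.g., convergent exponential series in $T$, which are transcendental over $K(T)$). Abhyankar's inequality therefore only gives $s(x)\le\operatorname{tr\,deg}_K\operatorname{Frac}(\cA/\wp)$, which does not bound $\dim(\cA/\wp)$, and your chain of inequalities does not close.

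The paper bypasses this by arguing directly with the reduction map rather than with transcendence degrees. If $\wp=\ker|\cdot|_x$ were nonzero, then $x$ would lie in the proper Zariski-closed subspace $V(\wp)\subset X$; by functoriality of $\red$ and Proposition~\ref{prop:canon.red.facts}(3), the image of $\td{V(\wp)}\to\td X$ has dimension $\dim(\cA/\wp)<d$, so it cannot contain the generic point $\red(x)$ of the equidimensional $d$-dimensional scheme $\td X$. This replaces your Abhyankar step cleanly and uses only the ingredients you already assembled.
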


\begin{prop}\label{prop:bdy.irred.comps}
  Let $X = \sM(\cA)$ be a $K$-affinoid space and let $X = X_1\cup\cdots\cup X_n$
  be its decomposition into irreducible components.  Then each Shilov boundary
  point of $X$ is contained in a unique irreducible component $X_i$. Moreover,
  we have $B(X) = B(X_1)\djunion\cdots\djunion B(X_n)$, and if $x\in B(X_i)$
  then $\dim_x(X) = \dim(X_i)$.
\end{prop}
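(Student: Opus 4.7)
The statement has three parts: (a) each $x\in B(X)$ lies in a unique $X_i$; (b) $B(X)$ is the disjoint union of the $B(X_i)$; (c) the dimension formula. My plan is to obtain (a) directly from Lemma~\ref{lem:bdy.kernel}, split (b) into the two inclusions using sup-norm minimality in one direction and a peaking argument in the other, and deduce (c) as a corollary.

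For (a), Lemma~\ref{lem:bdy.kernel} tells us $\ker|\scdot|_x$ is a minimal prime of $\cA$, hence equals $\wp_i$ for a unique $i$, where $\wp_i$ is the prime cutting out $X_i$; then $x\in X_j$ iff $\wp_j\subset\wp_i$, forcing $j=i$ by minimality. For the easy inclusion $B(X)\subset\bigcup_i B(X_i)$ in (b), I use that the sup-seminorm is insensitive to nilpotents, so $|f|_{\sup,X}=\max_i|f|_{\sup,X_i}$; since each $|f|_{\sup,X_i}$ is attained on $B(X_i)$, the finite set $\bigcup_i B(X_i)$ is a Shilov set for $X$, and minimality of $B(X)$ gives the inclusion, which is disjoint by (a).

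The main obstacle is the reverse inclusion $B(X_i)\subset B(X)$. Fix $y\in B(X_i)$. I first observe that $y\notin X_j$ for $j\neq i$: applying Lemma~\ref{lem:bdy.kernel} to the $K$-affinoid space $X_i=\sM(\cA/\wp_i)$, the kernel of $|\scdot|_y$ in $\cA/\wp_i$ is a minimal prime, hence $(0)$ since $\cA/\wp_i$ is a domain; this forces $\ker|\scdot|_y=\wp_i$ inside $\cA$, so $y\in X_j$ would give $\wp_j\subset\wp_i$, i.e.\ $j=i$. Since the image of $J:=\bigcap_{j\neq i}\wp_j$ in $\cA/\wp_i$ cuts out $X_i\cap\bigcup_{j\neq i}X_j$, which misses $y$, there is an element $h_0\in J$ whose image $\bar h_0\in\cA/\wp_i$ satisfies $\bar h_0(y)\neq 0$. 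Since $y\in B(X_i)$, I may choose $\bar g\in\cA/\wp_i$ with $|\bar g(z)|<|\bar g|_{\sup,X_i}$ for every $z\in B(X_i)\setminus\{y\}$; finiteness of $B(X_i)$ gives a uniform ratio $\alpha<1$. For $M$ sufficiently large, $\bar h_0\bar g^M$ then attains its $X_i$-supremum only at $y$ among the points of $B(X_i)$. Lifting $\bar g$ to $g\in\cA$ and forming $f:=h_0g^M\in\cA$, the element $f$ lies in $\wp_j$ for every $j\neq i$, so vanishes on $X_j$; hence $|f|_{\sup,X}=|f|_{\sup,X_i}$, and any point of $B(X)$ attaining this supremum lies in $B(X)\cap X_i\subset B(X_i)$ by the easy inclusion, hence must be $y$. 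This places $y$ in $B(X)$.

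For (c): by (b), $x\in B(X_i)$ gives $x\in B(X)$, and then (a) says $x$ lies only in $X_i$; since $\dim_x(X)$ is the maximum dimension among irreducible components through $x$ (see \S\ref{sec:analytic spaces}), $\dim_x(X)=\dim(X_i)$. The delicate point in the peaking step is that $|\bar h_0|$ need not peak at $y$, so simply multiplying a lift of a peaking function by $h_0$ is insufficient; the resolution is to let $\bar h_0$ provide only non-vanishing at $y$ together with vanishing on the other components, while the peaking itself is carried by a large power of $\bar g$.
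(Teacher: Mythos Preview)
Your proof is correct and follows essentially the same approach as the paper's: both use Lemma~\ref{lem:bdy.kernel} for uniqueness, the obvious sup-norm argument for $B(X)\subset\bigcup_i B(X_i)$, and the peaking function $f_i g^M$ (with $f_i$ vanishing on the other components and $|f_i(y)|\neq 0$) for the reverse inclusion. The only minor difference is in part~(c): the paper exhibits an explicit equidimensional affinoid neighborhood $\{|f_i|\geq\epsilon\}\subset X_i$, whereas you invoke the fact already recorded in~\S\ref{sec:analytic spaces} that $\dim_x(X)$ equals the maximal dimension of an irreducible component through $x$; your route is a bit more direct.
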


\begin{proof}
  It follows from Lemma~\ref{lem:bdy.kernel} that a Shilov boundary point
  $x\in B(X)$ is contained in a unique irreducible component $X_i$, namely, the
  one defined by the prime ideal $\ker|\scdot|_x\subset\cA$.  For $f\in\cA$, by
  definition the restriction of $f$ to $X_i$ achieves its maximum value on
  $B(X_i)$, so it is clear that $B(X)\subset\bigcup_{i=1}^n B(X_i)$.
  
  Let $x\in B(X_i)$ for some $i=1,\ldots,n$.  Choose $f_i\in\cA$ which vanishes
  identically on $\bigcup_{j\neq i} X_j$ but not on $X_i$.  By
  Lemma~\ref{lem:bdy.kernel}, $|f_i(x)|\neq 0$.  Choose also $g\in\cA$ such that
  $|g|$ attains its maximum value on $X_i$ only at $x$, i.e.\ such that
  $|g(x)| > |g(x')|$ for all $x'\in B_i\setminus\{x\}$.  Using that $B(X)$ is
  finite, $g^n f_i\in\cA$ achieves its maximum only on $x$ for $n\gg 0$, so
  $x\in B$.  Thus $B(X) = \bigcup_{i=1}^n B(X_i)$, and this union is disjoint
  because any $x\in B(X)$ lies on only one $X_i$.  The final assertion is clear
  because $x\in B(X_i)$ admits an equidimensional $K$-affinoid neighborhood
  contained in $X_i$, namely, $\{|f_i|\geq\epsilon\}$ for $f_i$ as above and
  $\epsilon$ small.
\end{proof}

Now we assume that the valuation on $K$ is non-trivial.
Let $X = \sM(\cA)$ be a strictly $K$-affinoid space. 
{By \cite[\S 2.4]{berkovic90:analytic_geometry}, there is a canonical reduction map} 
\begin{equation}\label{eq:red.can}
  \red\colon X\To\td X.
 \end{equation}
We have the following relationship between the reduction map and the Shilov
boundary, proved in~\cite[Proposition~2.4.4]{berkovic90:analytic_geometry}.

\begin{prop}\label{prop:can.red.surj}
  Let $X = \sM(\cA)$ be a strictly $K$-affinoid space, and let
  $\red\colon X\to\td X$ be the reduction map.
  \begin{enumerate}
  \item $\red$ is surjective, anti-continuous,%
    \footnote{The inverse image of an open set is closed.}
    and functorial in $X$.
  \item If $\td x\in\td X$ is a generic point, then $\red\inv(\td x)$ consists
    of a single point.
  \item The inverse image under $\red$ of the set of generic points of $\td X$
    is equal to the Shilov boundary of $X$.
  \end{enumerate}
\end{prop}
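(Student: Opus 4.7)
The plan is to follow Berkovich's strategy in~\cite[Proposition~2.4.4]{berkovic90:analytic_geometry}, reducing surjectivity to the Tate-algebra case via Noether normalization and handling the remaining parts by localization.

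For part~(1), functoriality is immediate from the functoriality of the operations $\cA\mapsto\cA^\circ\mapsto\td\cA$. For anti-continuity, the principal opens $D(\td f)\subset\td X$ form a basis of the topology, and for a lift $f\in\cA^\circ$ one has
\[
  \red\inv(D(\td f)) \;=\; \{x\in X : |f(x)|\geq 1\},
\]
which is closed in $X$ by continuity of the function $x\mapsto|f(x)|$. Surjectivity is the main point: Noether normalization~\cite[Theorem~6.1.2/1]{bosch_guntzer_remmert84:non_archimed_analysis} yields a finite injection $K\angles{x_1,\ldots,x_d}\inject\cA$, which by Proposition~\ref{prop:canon.red.facts}(2) produces a finite surjection $\td X\to\bA^d_{\td K}$. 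This reduces surjectivity to the Tate algebra $K\angles{x_1,\ldots,x_d}$, where any prime $\mathfrak{p}\subset\td K[x_1,\ldots,x_d]$ is realized by a ``Gauss-type'' multiplicative seminorm built from a valuation on the residue field of $\mathfrak{p}$ trivial on $\td K$. The lifting along the finite map uses going-up for the integral extension $K^\circ\angles{x_1,\ldots,x_d}\inject\cA^\circ$, whose integrality is provided by Proposition~\ref{prop:two.reductions}.

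For part~(2), I would reduce to the case of irreducible $\td X$ by localization. Choose $f\in\cA^\circ$ with $\td f(\td x)\neq 0$ but $\td f$ vanishing at every other generic point of $\td X$. The Laurent domain $X\angles{f\inv}$ has canonical reduction $\td\cA[\td f\inv]$ by~\cite[Proposition~7.2.6/3]{bosch_guntzer_remmert84:non_archimed_analysis}, which has $\td x$ as its unique generic point, and $\red\inv(\td x)$ lies entirely in this Laurent domain. So we may assume $\td X$ is irreducible. Then for any $x\in\red\inv(\td x)$ and any $g\in\cA^\circ$, we have $|g(x)|=1$ precisely when $\td g\neq 0$, i.e.\ when $|g|_{\sup}=1$. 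Scaling a general $g\in\cA$ by a constant of appropriate absolute value (after extending scalars so that such a constant exists in the base field) propagates the equality to $|g(x)|=|g|_{\sup}$ for every $g\in\cA$, which pins down $x$ uniquely.

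For part~(3), set $F\coloneq\red\inv(\text{generic points of }\td X)$. I would show $F=B(X)$ by verifying both defining properties of the Shilov boundary. First, every $f\in\cA$ attains its supremum on $F$: after rescaling we may take $|f|_{\sup}=1$ with $f\in\cA^\circ$, and then $\td f$ is nonzero at some generic point $\td x_i$, so the unique preimage from~(2) realizes $|f|=1$. Second, $F$ is minimal: if we removed the unique point $x_i$ lying over a generic $\td x_i$, then picking $g\in\cA^\circ$ whose reduction is nonzero only at $\td x_i$ among generic points of $\td X$ would yield $|g|_{\sup}=1$ attained only at $x_i$, destroying the sup-attaining property. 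The main technical obstacle is the surjectivity in~(1); the Noether-normalization reduction together with the lifting along the integral extension constitutes the heart of the proof.
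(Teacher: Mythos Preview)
The paper does not actually prove this proposition: immediately before the statement it says that the result is ``proved in~\cite[Proposition~2.4.4]{berkovic90:analytic_geometry}'' and gives no argument of its own. Your proposal explicitly follows Berkovich's strategy, so in that sense you are in agreement with the paper --- you have simply supplied a sketch where the paper was content with a citation.

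Your outline is essentially the standard argument, but one step in part~(2) deserves care. You write that you would extend scalars so as to find a constant of absolute value $|g|_{\sup}$. This is risky: Lemma~\ref{lem:canon.red.extn.scalars} only guarantees that the canonical reduction behaves \emph{finitely} under field extension, not that it is unchanged, so the generic point $\td x$ and the fiber $\red^{-1}(\td x)$ need not be preserved. The clean way to avoid this is to use that for strictly $K$-affinoid $\cA$ the sup-seminorm takes values in $\sqrt{|K^\times|}$, so some power $g^n$ satisfies $|g^n|_{\sup}\in|K^\times|$; replacing $g$ by $g^n/c$ with $|c|=|g^n|_{\sup}$ reduces to the case $|g|_{\sup}=1$ without leaving $K$. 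With that adjustment the argument for~(2) goes through, and~(3) follows as you indicate.
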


Now let $\fX = \Spf(A)$ be an affine admissible $K^\circ$-formal scheme, and let
$X = \fX_\eta$.  Recall from~\eqref{eq:map.between.reductions} that we have a
natural finite, surjective morphism $\td X\to\fX_s$.  We define the
\emph{reduction map}
\begin{equation}\label{eq:red.model}
  \red\colon X\to\fX_s
\end{equation}
to be the composition of $\red\colon X\to\td X$ with the morphism
$\td X\to\fX_s$.  This construction globalizes: if $\fX$ is any admissible
$K^\circ$-formal scheme with generic fiber $X = \fX_\eta$, then one obtains a
reduction map $\red\colon X\to\fX_s$ by working on formal affines and gluing. 

\begin{prop}\label{prop:fs.red.surj}
  Let $\fX$ be an admissible $K^\circ$-formal scheme with generic fiber
  $X = \fX_\eta$.  Then the reduction map $\red\colon X\to\fX_s$ is
  surjective, anti-continuous, and functorial in $\fX$.
\end{prop}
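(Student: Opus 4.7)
The plan is to reduce to the affine case and then combine two results already established. Write the reduction map locally as the two-step composition $X \to \td X \to \fX_s$, and verify each property in turn.

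\textbf{Affine case.} Assume first $\fX = \Spf(A)$ is affine with $\cA = A_K$, $X = \sM(\cA)$, and $\td X = \Spec(\td\cA)$. By Proposition~\ref{prop:can.red.surj}, the canonical reduction $\red\colon X\to\td X$ is surjective, anti-continuous, and functorial. By Proposition~\ref{prop:two.reductions}, the natural morphism $\td X\to\fX_s$ is finite and surjective, in particular continuous; its functoriality in $\fX$ is immediate from the functoriality of $A\inject\cA^\circ$. The composite $\red\colon X\to\fX_s$ is therefore surjective. It is anti-continuous: if $U\subset\fX_s$ is open, then its preimage in $\td X$ is open (continuity of $\td X\to\fX_s$), so its preimage in $X$ is closed (anti-continuity of $X\to\td X$). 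Functoriality is preserved by composition.

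\textbf{Gluing.} Let $\{\fU_i = \Spf(A_i)\}$ be a cover of $\fX$ by formal affine opens. The analytic generic fibers $U_i = (\fU_i)_\eta$ form a cover of $X$ by affinoid domains that are open in the analytic topology, and the special fibers $(\fU_i)_s$ form an affine open cover of $\fX_s$. Since the affine reduction maps $U_i\to(\fU_i)_s$ are functorial, they agree on intersections (by covering each $\fU_i\cap\fU_j$ by smaller formal affine opens if necessary), hence they glue to a map $\red\colon X\to\fX_s$, which by construction restricts on each $U_i$ to the affine reduction $U_i\to(\fU_i)_s$. Functoriality in $\fX$ follows from the affine case together with uniqueness of the gluing.

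\textbf{Global surjectivity and anti-continuity.} Given $\td x\in\fX_s$, choose $i$ with $\td x\in(\fU_i)_s$; by the affine case there is $x\in U_i\subset X$ with $\red(x) = \td x$, proving surjectivity. For anti-continuity, let $V\subset\fX_s$ be open. Then $V\cap(\fU_i)_s$ is open in $(\fU_i)_s$ for every $i$, so by the affine case $\red\inv(V)\cap U_i = \red\inv(V\cap(\fU_i)_s)$ is closed in $U_i$, i.e.\ $U_i\setminus\red\inv(V)$ is open in $U_i$ and hence open in $X$. Since $\{U_i\}$ covers $X$, the union of these sets equals $X\setminus\red\inv(V)$, which is therefore open; thus $\red\inv(V)$ is closed.

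The only subtlety is the verification that ``open on each piece of an open cover'' implies globally open in the Berkovich topology, which is immediate since openness is a local property; there is no serious obstacle. The whole argument is essentially a repackaging of Propositions~\ref{prop:can.red.surj} and~\ref{prop:two.reductions}.
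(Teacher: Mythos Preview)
Your approach is exactly the paper's: reduce to the affine case and invoke Propositions~\ref{prop:can.red.surj} and~\ref{prop:two.reductions}. The paper's proof is a single sentence and does not spell out the gluing at all, so in a sense you are being more careful than the original. However, your added detail contains an error.

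You assert that the affinoid domains $U_i = (\fU_i)_\eta$ are \emph{open} in $X$, and your anti-continuity argument relies on this when you pass from ``$U_i\setminus\red\inv(V)$ is open in $U_i$'' to ``hence open in $X$''. This is false in Berkovich's framework: affinoid domains are compact, and $X$ is Hausdorff, so the $U_i$ are \emph{closed} but generally not open. For instance, for the standard formal model of $\bP^1$ the two affine charts give the closed unit disk and the closed disk at infinity, neither of which is open (the Gauss point lies on the topological boundary of each).

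The fix is to use the closed-cover gluing lemma instead. The atlas $\{\fU_i\}$ is locally finite by hypothesis; since each $(\fU_i)_s$ is quasi-compact, each $U_i$ meets only finitely many $U_j$, so $\{U_i\}$ is a locally finite closed cover of $X$. Now if $V\subset\fX_s$ is open, then $\red\inv(V)\cap U_i$ is closed in $U_i$ by the affine case, hence closed in $X$ (as $U_i$ is closed); the locally finite union $\red\inv(V)=\bigcup_i(\red\inv(V)\cap U_i)$ is therefore closed. Your surjectivity and functoriality arguments are fine as written.
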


\begin{proof}
  We reduce immediately to the case of an affine formal scheme, where the result
  follows from Propositions~\ref{prop:two.reductions} and~\ref{prop:can.red.surj}.
\end{proof}

\begin{proof}[Proof of Lemma~\ref{lem:bdy.kernel}]
  By passing to an irreducible component of $X$ containing $x$, we may assume
  that $\cA$ is an integral domain.  First we suppose that the valuation on $K$
  is non-trivial and that $X$ is strictly $K$-affinoid.  Then $\red(x)$ is the
  generic point of the canonical reduction $\td X$ by
  Proposition~\ref{prop:can.red.surj}(3).  The canonical reduction is an
  equidimensional scheme of the same dimension as $X$ by
  Proposition~\ref{prop:canon.red.facts}(3), so $x$ cannot be contained in a
  smaller-dimensional Zariski-closed subspace of $X$ by functoriality of the
  reduction map.

  Now we suppose that $X$ is not strictly $K$-affinoid or that the valuation on
  $K$ is trivial (or both).  There exists a non-trivially-valued non-Archimedean
  extension field $K'\supset K$, namely, the field $K'=K_{r_1,\ldots,r_n}$ for
  suitable $r_1,\ldots,r_n\in\R_{>0}$ as
  in~\cite[\S2.1]{berkovic90:analytic_geometry}, such that
  $X' = X\hat\tensor_{K}K'$ is strictly $K'$-affinoid.  Let
  $\cA'=\cA\hat\tensor_KK'$, so $X' = \sM(\cA')$.  Then $\cA'$ is an integral
  domain by~\cite[Proposition~2.1.4(iii)]{berkovic90:analytic_geometry} and the
  proof of~\cite[Proposition~2.1.4(ii)]{berkovic90:analytic_geometry}.  Let
  $\pi\colon X'\to X$ be the structural morphism.  We have $\pi(B(X')) = B(X)$
  by the proof of~\cite[Proposition~2.4.5]{berkovic90:analytic_geometry}, so
  there exists $x'\in B(X')$ with $\pi(x') = x$.  The above argument in the
  strictly $K$-affinoid case shows that
  the seminorm $|\scdot|_{x'}\colon\cA'\to\R_{\geq 0}$ is a norm, i.e., has
  trivial kernel.  But $\cA\to\cA'$ is injective
  by~\cite[Proposition~2.1.2(i)]{berkovic90:analytic_geometry}, so $|\scdot|_x$
  is a norm.
\end{proof}

\subsection{Extension of scalars}
In the sequel it will be important to understand the behavior of extension of
the ground field with respect to the underlying topological space and the Shilov
boundary.  First we make a simple topological observation.  Here the valuation
on $K$ is allowed to be trivial.

\begin{lem}\label{lem:extend.closed}
  Let $K'\supset K$ be a valued extension of $K$ and let $X$ be a good
  $K$-analytic space.  Then the structural morphism $\pi\colon X_{K'}\to X$ is a
  proper and closed map on underlying topological spaces.
\end{lem}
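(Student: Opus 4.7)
The statement is essentially topological and reduces to the observation that for any $K$-affinoid domain $V=\sM(\cA)\subset X$, the preimage $\pi^{-1}(V)$ equals the base change $V_{K'}=\sM(\cA\hat\tensor_K K')$, which is a compact Hausdorff $K'$-affinoid space. Since $X$ is good, every point of $X$ admits such an affinoid neighborhood, so both $X$ and $X_{K'}$ are locally compact and Hausdorff, and $\pi$ is locally a map of compact Hausdorff spaces.

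First I would verify that $\pi$ has compact fibers. Given $x\in X$, choose a $K$-affinoid neighborhood $V$ of $x$; then $\pi^{-1}(x)\subset\pi^{-1}(V)=V_{K'}$, and since $\{x\}$ is closed in the Hausdorff space $X$, the fiber $\pi^{-1}(x)$ is closed in the compact space $V_{K'}$, hence compact. (Alternatively, the fiber is $\sM(\sH(x)\hat\tensor_K K')$.)

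Next I would show $\pi$ is closed. Let $Z\subset X_{K'}$ be closed, and pick $x\in X\setminus\pi(Z)$. Choose a $K$-affinoid neighborhood $V$ of $x$ in $X$. Then $Z\cap V_{K'}$ is closed in the compact space $V_{K'}$, hence compact, so its image $\pi(Z\cap V_{K'})$ is a compact subset of $V$. Since $V$ is closed in the Hausdorff space $X$, this image is closed in $X$. Setting
\[ W \;=\; \mathrm{int}_X(V)\setminus\pi(Z\cap V_{K'}), \]
I obtain an open neighborhood of $x$ in $X$. Any $y\in Z$ with $\pi(y)\in\mathrm{int}_X(V)\subset V$ lies in $\pi^{-1}(V)=V_{K'}$, hence $\pi(y)\in\pi(Z\cap V_{K'})$; this shows $W\cap\pi(Z)=\emptyset$, proving that $\pi(Z)$ is closed.

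For properness in the Bourbaki sense (preimages of compact sets are compact, equivalently universally closed for Hausdorff targets), a closed map with compact fibers between locally compact Hausdorff spaces is automatically proper, so the above two steps suffice. The only subtle point is the verification that $\pi^{-1}(V)=V_{K'}$ and that $V_{K'}$ is compact; the former is the definition of base change of a good $K$-analytic space and the latter is the Berkovich spectrum of an affinoid algebra. No serious obstacle is expected, since goodness of $X$ gives enough compact neighborhoods to localize the problem affinoid by affinoid.
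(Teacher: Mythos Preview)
Your proof is correct and follows essentially the same approach as the paper: both use that each point of $X$ has a compact affinoid neighborhood $V$ with compact preimage $\pi^{-1}(V)=V_{K'}$, and deduce properness and closedness from this. The paper simply asserts that ``it is clear that any such map is both proper and closed,'' whereas you spell out this topological fact explicitly (compact fibers and a direct closedness argument); so your version is a more detailed execution of the same idea rather than a different route.
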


\begin{proof}
  Since $X$ is good, every point $\xi\in X$ has a $K$-affinoid
  neighborhood $U$, which is compact.  The inverse image
  $U' = \pi\inv(U) = U_{K'}$ is also affinoid, hence compact.  Thus
  every point of $X^\an$ admits a compact neighborhood whose inverse
  image under $\pi$ is compact.  It is clear that any such map is both proper
  and closed.
\end{proof}

The following Lemma is an analogue
of~\cite[Proposition~3.1(v)]{temkin04:local_properties_II}.

\begin{lem}\label{lem:canon.red.extn.scalars}
  Suppose that the valuation on $K$ is non-trivial.  Let $X = \sM(\cA)$ be a
  strictly $K$-affinoid space, let $K'$ be a valued field extension of $K$, and
  let $X' = X_{K'}$.  Then the canonical morphism
  \[ \td X'\to\td X\tensor_{\td K}\td K' \]
  is finite.
\end{lem}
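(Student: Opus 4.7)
My plan is to reduce to the polynomial case via Noether normalization. The point is that we already know from Proposition~\ref{prop:canon.red.facts} how finite maps of affinoids interact with canonical reductions, so if we can sandwich the map in question between two finite maps of known type, we can transfer finiteness from one side to the other.

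More precisely, I would first apply Noether normalization (\cite[Theorem~6.1.2/1]{bosch_guntzer_remmert84:non_archimed_analysis}) to obtain a finite injection $K\angles{x_1,\ldots,x_d}\inject\cA$, where $d=\dim(X)$. Since finite modules are unchanged by base change and the completed tensor product agrees with the ordinary tensor product on such modules, extending scalars gives a finite map $K'\angles{x_1,\ldots,x_d}\to\cA'$. Applying Proposition~\ref{prop:canon.red.facts}(1) to both of these finite maps yields finite homomorphisms $\td K[x_1,\ldots,x_d]\to\td\cA$ and $\td K'[x_1,\ldots,x_d]\to\td\cA'$.

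Next, I would consider the natural commutative triangle of $\td K'$-algebra homomorphisms
\[
\td K[x_1,\ldots,x_d]\tensor_{\td K}\td K' \To \td\cA\tensor_{\td K}\td K' \To \td\cA',
\]
whose composition is the finite map $\td K'[x_1,\ldots,x_d]\to\td\cA'$ produced above. Since $\td\cA'$ is finitely generated as a module over $\td K'[x_1,\ldots,x_d]$, it is also finitely generated as a module over $\td\cA\tensor_{\td K}\td K'$ via the same generators. Hence the map $\td\cA\tensor_{\td K}\td K'\to\td\cA'$ is finite, which is exactly the finiteness of $\td X'\to\td X\tensor_{\td K}\td K'$.

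There is no real obstacle here beyond bookkeeping: the only subtle point is making sure that $K\angles{\underline x}\to\cA$ being finite remains finite after the completed base change to $K'$ (which holds because finite modules are already complete, so the completed tensor coincides with the usual tensor), and that the resulting reduced-ring picture is functorial, so that the triangle above really commutes. Everything else is formal from Proposition~\ref{prop:canon.red.facts}(1).
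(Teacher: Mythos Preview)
Your proof is correct, but it takes a different route from the paper's.  Both arguments come down to the same elementary factorization trick: find a ring $R$ and a factorization $R\to\td\cA\tensor_{\td K}\td K'\to\td\cA'$ in which the composite is finite, and conclude that the second arrow is finite.  The difference lies in the choice of $R$.  You take $R=\td K'[x_1,\ldots,x_d]$ via Noether normalization and invoke Proposition~\ref{prop:canon.red.facts}(1) to control the reductions.  The paper instead chooses an admissible $K^\circ$-algebra $A\subset\cA^\circ$ with $A_K=\cA$, sets $A'=A\hat\tensor_{K^\circ}K'^\circ$, and takes $R=A'_{\td K'}\cong A_{\td K}\tensor_{\td K}\td K'$; the required finiteness of $\td X'\to\fX'_s$ and $\td X\to\fX_s$ is then supplied by Proposition~\ref{prop:two.reductions}.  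Your approach is a bit more self-contained (it avoids introducing admissible models), while the paper's fits naturally into its running comparison between canonical reductions and special fibers of formal models.  The only point where one should be careful in your argument---that finiteness of $K\angles{x}\to\cA$ survives the completed base change to $K'$---is indeed handled by the fact that finite modules over Noetherian Banach algebras are complete, so $\cA\hat\tensor_K K'=\cA\tensor_{K\angles{x}}K'\angles{x}$; you flagged this correctly.
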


\begin{proof}
  There exists an admissible $K^\circ$-algebra $A\subset\cA^\circ$ such that
  $A_K = \cA$ (let $A$ be the image of $K\angles{x_1,\ldots,x_n}^\circ$ under a
  surjection $K\angles{x_1,\ldots,x_n}\surject\cA$).  Let
  $A' = A\hat\tensor_{K^\circ}K'^\circ$, let $\fX=\Spf(A)$, and let
  $\fX'=\Spf(A')$.  Then $A'$ is an admissible $K'^\circ$-algebra and
  $\fX'_\eta = X'$.  We have a commutative square of affine $\td K'$-schemes
  \[\xymatrix @=.2in{
    {\td X'} \ar[r] \ar[d] & {\td X\tensor_{\td K}\td K'} \ar[d] \\
    {\fX_s'} \ar[r] & {\fX_s\tensor_{\td K}\td K'} }\]
  where the left and right arrows are the morphisms coming
  from~\eqref{eq:map.between.reductions}. They are finite by Proposition~\ref{prop:two.reductions}. Since the bottom arrow is an
  isomorphism, the top morphism is also finite.
\end{proof}

Recall that the Shilov boundary of a $K$-affinoid space $X$ is denoted $B(X)$.

\begin{prop}\label{prop:shilov.surj}
  Let $K'\supset K$ be valued extension of $K$ and let $X$ be a (possibly
  non-strict) $K$-affinoid space, let $X' = X_{K'}$, and let $\pi\colon X'\to X$
  be the structural map.  Then $\pi(B(X')) = B(X)$.
\end{prop}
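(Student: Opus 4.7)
The plan is to split $\pi(B(X')) = B(X)$ into two inclusions: the easy $B(X)\subseteq\pi(B(X'))$, derived from sup-norm invariance, and the harder $\pi(B(X'))\subseteq B(X)$, handled via canonical reductions after reducing to the strictly affinoid case. For the easy direction I would note that $|f(x')| = |f(\pi(x'))|$ for every $f\in\cA$ viewed in $\cA' = \cA\hat\tensor_K K'$, so surjectivity of $\pi$ yields $|f|_{\sup,X} = |f|_{\sup,X'}$; since every $f\in\cA$ then attains its supremum on the compact set $\pi(B(X'))$, minimality of the Shilov boundary gives $B(X)\subseteq\pi(B(X'))$.

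For the reverse inclusion I would first reduce to the case that $K$ and $K'$ are non-trivially valued and both $X, X'$ are strictly affinoid. Choose $r_1,\dots,r_n\in\R_{>0}$ such that $X_{K_0}$ is strictly $K_0$-affinoid for $K_0 = K_{r_1,\dots,r_n}$, and set $K_0' = K'_{r_1,\dots,r_n}$; then $X'_{K_0'} = (X_{K_0})_{K_0'}$ is strictly $K_0'$-affinoid. The statement for the two specific extensions $K_0/K$ and $K_0'/K'$ follows from the explicit generalized Tate algebra computation in the proof of~\cite[Proposition~2.4.5]{berkovic90:analytic_geometry}, which does not require the source to be strictly affinoid, so composing these two reductions leaves only the strictly affinoid, non-trivially valued case.

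In that case I would invoke Proposition~\ref{prop:can.red.surj}(3) to identify the Shilov boundary with the preimage under $\red$ of the generic points of the canonical reduction, reducing the problem to showing that the induced morphism $\td\pi\colon\td X'\to\td X$ sends generic points of $\td X'$ to generic points of $\td X$. I would factor $\td\pi$ through the base change as $\td X'\xrightarrow{\alpha}\td X\tensor_{\td K}\td K'\xrightarrow{\beta}\td X$: here $\beta$ is faithfully flat because $\td K\to\td K'$ is a field extension and so sends generic points of irreducible components onto generic points of $\td X$, while $\alpha$ is finite by Lemma~\ref{lem:canon.red.extn.scalars}. After restricting to irreducible components of matching dimension (equal to $\dim_{x'}X' = \dim_{\pi(x')}X$ by Propositions~\ref{prop:canon.red.facts}(3) and~\ref{prop:bdy.irred.comps}), once one knows that $\alpha$ is dominant, finite plus dominant forces $\alpha$ to preserve generic points and completes the argument.

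The main obstacle will be establishing dominance of $\alpha$, equivalently the nilpotence of the kernel of $\td\cA\tensor_{\td K}\td K'\to\td{\cA'}$. I expect this to follow by applying Proposition~\ref{prop:two.reductions} to a suitable admissible formal model of $X'$, combined with the fact that the image of the algebraic tensor product $\cA\tensor_K K'$ is dense in the complete tensor product $\cA'$.
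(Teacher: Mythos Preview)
Your overall strategy matches the paper's: establish the easy inclusion $B(X)\subset\pi(B(X'))$, reduce to the strictly affinoid non-trivially valued case, then use canonical reductions and the factorization $\td X'\xrightarrow{\alpha}\td X\tensor_{\td K}\td K'\xrightarrow{\beta}\td X$ together with Lemma~\ref{lem:canon.red.extn.scalars}. Your reduction via the tower $K_0 = K_{r_1,\ldots,r_n}$ and $K_0' = K'_{r_1,\ldots,r_n}$ differs from the paper's (which instead uses the easy inclusion to freely enlarge $K'$, then invokes~\cite[(0.3.2)]{ducros09:excellent} to produce a common extension of $K'$ and $K_{r_1,\ldots,r_n}$), but both routes are valid.

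The gap is at the end. The paper inserts one further reduction that you omit: by Proposition~\ref{prop:bdy.irred.comps} one has $B(X)=\bigcup_i B(X_i)$ and $B(X')=\bigcup_i B((X_i)_{K'})$, so one may replace $X$ by an irreducible component $X_i$ and $X'$ by $(X_i)_{K'}$. Once $X$ is irreducible, $X'$ is equidimensional of dimension $d=\dim X$ by~\cite[Lemma~2.1.5]{conrad99:irred_comps}, whence $\td X'$ and $\td X\tensor_{\td K}\td K'$ are both equidimensional of dimension $d$ by Proposition~\ref{prop:canon.red.facts}(3). A finite morphism between equidimensional schemes of the same dimension automatically sends generic points to generic points, and no dominance argument is needed at all.

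Without that reduction your endgame does not close. The claim ``finite plus dominant forces $\alpha$ to preserve generic points'' is false without equidimensionality (a point disjoint union a line mapping finitely and surjectively to a line is a counterexample), so you would need equidimensionality anyway, which is exactly what the reduction to irreducible $X$ buys. Moreover your sketch for dominance via Proposition~\ref{prop:two.reductions} does not work: from the square in the proof of Lemma~\ref{lem:canon.red.extn.scalars} you learn that both $\td X'\to\fX_s\tensor_{\td K}\td K'$ and $\td X\tensor_{\td K}\td K'\to\fX_s\tensor_{\td K}\td K'$ are finite surjective, but two surjections onto a common target say nothing about surjectivity or dominance of the map $\alpha$ between their sources.
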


\begin{proof}
  It is clear \textit{a priori} that $\pi(B(X'))\supset B(X)$
  since there are more analytic functions on $X'$ than on $X$.  Therefore we are
  free to replace $K'$ by a valued extension field.
  
  Suppose that $X$ is not strictly $K$-affinoid or that the valuation on $K$ is
  trivial (or both).  There exists a non-trivially-valued non-Archimedean
  extension field $K_\br\coloneq K_{r_1,\ldots,r_n}\supset K$ for suitable
  $r_1,\ldots,r_n\in\R_{>0}$ as in~\cite[\S2.1]{berkovic90:analytic_geometry},
  such that $X\hat\tensor_{K}K_\br$ is strictly $K_\br$-affinoid and such that
  the image of $B(X\hat\tensor_{K}K_\br)$ under the structural morphism
  $X\hat\tensor_{K}K_\br\to X$ is equal to $B(X)$.
  By~\cite[(0.3.2)]{ducros09:excellent}, there exists a valued extension
  $K''\supset K$ which is simultaneously a valued extension field of $K'$ and
  $K_\br$.  Replacing $K$ by $K_\br$ and $K'$ by $K''$, we may assume that $X$
  is strictly $K$-affinoid and that the valuation on $K$ is non-trivial.

  Let $X = X_1\cup\cdots\cup X_n$ be the irreducible decomposition of $X$.  Then
  $B(X) = \bigcup_{i=1}^n B(X_i)$ and $B(X') = \bigcup_{i=1}^n B(\pi\inv(X_i))$
  by Proposition~\ref{prop:bdy.irred.comps}, since $\pi\inv(X_i)$
  is a union of irreducible components of $X'$.  Replacing $X$ by $X_i$ and $X'$
  by $\pi\inv(X_i) = (X_i)_{K'}$, we may assume $X$ is irreducible.

  Consider the commutative diagram
  \[\xymatrix @R=.2in{
    {X'} \ar[r]^\red \ar[d]_(.4)\pi &
    {\td X'} \ar[r] \ar[d]_(.4){\td\pi} &
    {\td X\tensor_{\td K}\td K'} \ar[dl] \\ 
    {X} \ar[r]^\red & {\td X} &
  }\]
  By~\cite[Lemma~2.1.5]{conrad99:irred_comps}, $X'$ is equidimensional of
  dimension $d = \dim(X)$, so the same is true of $\td X'$ by
  Proposition~\ref{prop:canon.red.facts}(3).  Also $\td X\tensor_{\td K}\td K'$
  is equidimensional of dimension $d$, and $\td X\tensor_{\td K}\td K'\to\td X$
  sends generic points to generic points.  As the morphism
  $\td X'\to\td X\tensor_{\td K}\td K'$ is finite by
  Lemma~\ref{lem:canon.red.extn.scalars}, it takes generic points to generic
  points, so $\td\pi\colon\td X'\to\td X$ takes generic points to generic
  points.  It follows from proposition~\ref{prop:can.red.surj} that 
  $\pi\colon X'\to X$ takes Shilov boundary points to Shilov boundary points.
\end{proof}

\section{Toric varieties and tropicalizations}\label{sec:toric and tropical}

In this section we present the notions and notations that we will use for toric
varieties and their tropicalizations.  We generally
follow~\cite{gubler13:guide_tropical,rabinoff12:newton_polygons}.
See Appendix~A for a list of notations.

\subsection{Toric varieties}\label{subsection:toric_varieties}
Fix a finitely generated free abelian group \label{notn:char.group} $M\cong\Z^n$, and let
$N = \Hom(M,\Z)$.  We write \label{notn:eval.pairing} $\angles{\scdot,\scdot}\colon M\times N\to\Z$ for
the evaluation pairing.  For an additive subgroup $G\subset\R$ we set \label{notn:bigger.char.group}
$M_G = M\tensor_\Z G$ and $N_G = N\tensor_\Z G = \Hom(M,G)$, and we extend
$\angles{\scdot,\scdot}$ to a pairing $M_G\times N_G\to\R$.

For a ring $R$ and a monoid $S$ we write $R[S]$ \label{notn:monoid.ring} for the monoid ring on $S$; for
$u\in S$ we let $\chi^u\in R[S]$ \label{notn:character} denote the corresponding
character.  We set \label{notn:the.torus}
$T = \Spec(K[M])\cong\bG_{m,K}^n$, the split $K$-torus with character lattice
$M$ and cocharacter lattice $N$, where $K$ is our fixed non-Archimedean field.
For a rational cone $\sigma\subset N_\R$ we let $S_\sigma = \sigma^\vee\cap M$
\label{notn:affine.toric}
and $Y_\sigma = \Spec(K[S_\sigma])$, 
an affine toric variety.  Given a rational pointed fan $\Delta$ in $N_\R$ we let
\label{notn:general.toric}
$Y_\Delta = \bigcup_{\sigma\in\Delta} Y_\sigma$ denote the corresponding
$K$-toric variety with dense torus $T$.  For $\sigma\in\Delta$ we write
$M(\sigma) = \sigma^\perp\cap M$ and $N(\sigma) = N/\angles\sigma\cap N$, where
$\angles\sigma\subset N_\R$ is the linear span of $\sigma$.  For $G\subset\R$ as
above we write \label{notn:MGsigma} $M_G(\sigma) = M(\sigma)\tensor_\Z G$, etc., and by abuse of
notation we use $\angles{\scdot,\scdot}$ to denote the pairing
$M_G(\sigma)\times N_G(\sigma)\to\R$.   The torus orbit~\cite[\S3.2]{cox_little_schenck11:toric_varieties}
corresponding to $\sigma$ is \label{notn:torus.orbit} $O(\sigma) \coloneq \Spec(K[M(\sigma)])$, and
$Y_\Delta = \Djunion_{\sigma\in\Delta} O(\sigma)$ as sets.

We denote by \label{notn:pisigma}
$\pi_\sigma\colon N_\R\to N_\R(\sigma) = N_\R/\angles\sigma$  the natural 
projection. 

For $\tau\in\Delta$, the closure of
$O(\tau)$ in $Y_\Delta$ is the toric variety $Y_{\Delta_\tau}$ with dense
torus $O(\tau)$, where $\Delta_\tau$ is the fan
$\{\pi_\tau(\sigma)\mid\sigma\in\Delta,\,\tau\prec\sigma \}$.
We have 
\[ Y_{\Delta_\tau} = \Djunion_{\substack{\sigma\in\Delta\\\tau\prec\sigma}}
O(\sigma) = \Djunion_{\pi_\tau(\sigma) \in\Delta_\tau} O(\pi_\tau(\sigma)), \]
where $O(\sigma) = O(\pi_\tau(\sigma))$ for $\sigma\in\Delta$ such that
$\tau\prec\sigma$.  

\subsection{Kajiwara--Payne compactifications}
Much of this paper will be concerned with extended tropicalizations, which take
place in a Kajiwara--Payne partial compactification of $N_\R$.  We briefly
introduce these partial compactifications here;
see~\cite{payne09:analytif_tropical,rabinoff12:newton_polygons} for details.
Put \label{notn:barR} $\bar\R = \R\cup\{\infty\}$, and for a rational pointed cone
$\sigma\subset N_\R$ we let $\bar N{}_\R^\sigma$ \label{notn:barNRs} denote the space of monoid
homomorphisms $\Hom(S_\sigma,\bar\R)$.  As usual we let $\angles{\scdot,\scdot}$
denote the evaluation pairing $S_\sigma\times\bar N{}_\R^\sigma\to\bar\R$.  For a face
$\tau\prec\sigma$, a point $\omega\in N_\R(\tau)$ gives rise to a point of
$\bar N{}_\R^\sigma$ by the rule
\[ u \mapsto
\begin{cases}
  \angles{u,\omega} & \text{if } u\in\tau^\perp\\
  \infty       & \text{if not.}
\end{cases}\]
This yields a decomposition
$\bar N{}_\R^\sigma = \Djunion_{\tau\prec\sigma} N_\R(\tau)$ as sets, but not as
topological spaces.  In particular, $N_\R$ is a subset of $\bar N{}_\R^\sigma$.  For a
rational pointed fan $\Delta$ in $N_\R$, the spaces $\bar N{}_\R^\sigma$ glue to give a
partial compactification \label{notn:barNRD} $\bar N{}_\R^\Delta$, which is to $N_\R$ as $Y_\Delta$ is to
$T$.  We have a decomposition
$\bar N{}_\R^\Delta = \Djunion_{\sigma\in\Delta} N_\R(\sigma)$.  For an additive
subgroup $G\subset\R$ we let
$\bar N{}_G^\sigma = \Djunion_{\tau\prec\sigma} N_G(\tau)\subset\bar N{}_\R^\sigma$ and
$\bar N{}_G^\Delta = \Djunion_{\sigma\in\Delta} N_G(\sigma)\subset\bar N{}_\R^\Delta$.

If we pass to the fan $\Delta_\tau$ defined above, the associated partial  compactification $\bar{N(\tau)}{}_\R^{\Delta_\tau}$ of $N_\R(\tau)$ is the disjoint union of all $N(\sigma)$ for $\tau \prec \sigma$. 

\subsection{Tropicalization}\label{par:tropicalization}
Recall that $K$ is a non-Archimedean field whose valuation is allowed to be trivial.
For a rational cone $\sigma\subset N_\R$ we define
the \emph{tropicalization map} \label{notn:trop.map} $\trop\colon Y_\sigma^\an\to\bar N{}_\R^\sigma$ by
\[ \angles{u,\,\trop(\xi)} = -\log|\chi^u(\xi)|, \]
where we interpret $-\log(0) = \infty$.  This map is continuous, surjective,
closed, and proper, in the sense that the inverse image of a compact subset is
compact.  Choosing a basis for $M$ gives an isomorphism
$K[M]\cong K[x_1^{\pm1},\ldots,x_n^{\pm1}]$; when $\sigma=\{0\}$ the
tropicalization map $\trop\colon\bG_m^n = T\to N_\R = \R^n$ is given by
\[ \trop(\xi) = \big(-\log|x_1(\xi)|,\ldots,-\log|x_n(\xi)|\big). \]
The tropicalization map on $Y_\sigma^\an$ has a natural continuous section
$s\colon\bar N{}_\R^\sigma\to Y_\sigma^\an$ given by \label{notn:trop.section} $s(\omega) = |\scdot|_\omega$, where
$\omega\in\Hom(S_\sigma,\bar\R)$ and $|\scdot|_\omega\colon K[S_\sigma]\to\R$ \label{notn:norm.omega}
is the
multiplicative seminorm defined by
\begin{equation}\label{eq:section.seminorm}
  \bigg|\sum_{u\in S_\sigma}a_u\chi^u\bigg|_\omega =
  \max_u\big\{|a_u|\,\exp(-\angles{u,\omega})\big\},
\end{equation}
where we put $\exp(- \infty) = 0$.

The image $s(\bar N{}_\R^\sigma)$
is by definition the \emph{skeleton} \label{notn:skeleton.toric} $S(Y_\sigma^\an)\subset Y_\sigma^\an$ of
the affine toric variety $Y_\sigma^\an$.  The image of a continuous section of a
continuous map between Hausdorff spaces is closed, so
$S(Y_\sigma^\an)$ is closed in $Y_\sigma^\an$.

If $\Delta$ is a rational pointed fan in $N_\R$, then the maps $\trop$ glue to
give a tropicalization map $\trop\colon Y_\Delta^\an\to\bar N{}_\R^\Delta$.
This map is again continuous and proper.  The sections also glue to a continuous
section $s\colon\bar N{}_\R^\Delta\to
Y_\Delta^\an$,
whose image is by definition the \emph{skeleton} \label{notn:skeleton.toric2} $S(Y_\Delta^\an)$ of the toric
variety $Y_\Delta^\an$.  Again the skeleton $S(Y_\Delta^\an)$ is closed in
$Y_\Delta^\an$.  The tropicalization and section are compatible with the
decompositions $Y_\Delta = \Djunion_{\sigma\in\Delta} O(\sigma)$ and
$\bar N{}_\R^\Delta
= \Djunion_{\sigma\in\Delta} N_\R(\sigma)$,
in that $\trop$ (resp.\ $s$) restricts to the tropicalization map
$O(\sigma)^\an\to N_\R(\sigma)$ (resp.\ the section
$N_\R(\sigma)\to O(\sigma)^\an$) defined on the torus
$O(\sigma) = \Spec(K[M(\sigma)])$ with cocharacter lattice $N(\sigma)$.

For a closed subscheme $X\subset Y_\Delta$, the \emph{tropicalization} of $X$ is
\label{notn:TropX}
\[ \Trop(X) \coloneq \trop(X^\an)\subset\bar N{}_\R^\Delta. \]
For any $\sigma\in\Delta$ we have
$\Trop(X)\cap N_\R(\sigma) = \Trop(X\cap O(\sigma))$, so that $\Trop(X)$ may be
defined on each torus orbit separately.  The tropicalization is insensitive to
extension of scalars: if $K'\supset K$ is a valued field extension then
$\Trop(X_{K'}) = \Trop(X)$ as subsets of $\barNRD$.

\subsection{Fibers of tropicalization}
For $\omega\in N_\R$ the subset \label{notn:Uomega}
$U_\omega\coloneq \trop\inv(\omega)\subset T^\an$ is a $K$-affinoid domain,
which is strictly $K$-affinoid when $\omega\in N_{\sqrt\Gamma}$.  
See~\cite[Proposition~4.1]{gubler07:tropical_varieties} and its proof.
The ring of analytic functions on
$U_\omega$ is
\[ K\angles{U_\omega} = \bigg\{ \sum_{u\in M}a_u\chi^u\mid a_u\in K,\,
\val(a_u)+\angles{u,\omega}\to\infty \bigg\}, \]
where the sums are infinite and the limit is taken on the complement of finite
subsets {of $M$}. 
The supremum semi-norm $|\scdot|_{\sup}$ on $K\angles{U_\omega}$ is
multiplicative, and is given by the formula
\[ -\log\bigg|\sum_{u\in M} a_u\chi^u\bigg|_{\sup} =
\min\big\{\val(a_u)+\angles{u,\omega}\mid a_u\neq 0\big\}. \]
Compare~\eqref{eq:section.seminorm}.  Suppose now that the valuation on $K$ is
non-trivial and that $\omega\in N_{\sqrt\Gamma}$, so that $U_\omega$ is strictly
affinoid.  Then the ring of power-bounded elements in $K\angles{U_\omega}$ is
\[ K\angles{U_\omega}^\circ = \bigg\{ \sum_{u\in M}a_u\chi^u\in
K\angles{U_\omega}\mid \val(a_u)+\angles{u,\omega}\geq 0\text{ for all } u\in M
\bigg\}. \]
{If the value group $\Gamma$ is discrete, then $K\angles{U_\omega}^\circ$ is an admissible $K^\circ$-algebra (i.e., it is topologically of finite
presentation) for all $\omega \in N_{\sqrt\Gamma}$ by~\cite[Proposition~6.7]{gubler13:guide_tropical}. If $\Gamma$ is not discrete, then $K\angles{U_\omega}^\circ$ is an admissible $K^\circ$-algebra if and only if $\omega \in N_{\Gamma}$. This follows from~\cite[Proposition~6.9]{gubler13:guide_tropical} by noting that $K\angles{U_\omega}^\circ$ is the completion of what is denoted
$K[M]^{\{\omega\}}$ in \textit{ibid}. 

If  $K\angles{U_\omega}^\circ$  is an admissible $K^\circ$-algebra, then we set \label{notn:fUomega}
$\fU_\omega = \Spf(K\angles{U_\omega}^\circ)$, which is in our terminology the canonical model of
$U_\omega$. }

Let $\Delta$ be a pointed rational fan in $N_\R$ and assume that 
$\omega\in\bar N{}_\R^\Delta$
is contained in $N_\R(\sigma)$ for $\sigma\in\Delta$. Then we define $U_\omega$,
$K\angles{U_\omega}$, and $\fU_\omega$ as above, with the torus orbit
$O(\sigma)$ replacing the torus $T$.

\subsection{Initial degeneration}\label{sec:initial.degen}
Let $\Delta$ be a pointed rational fan in $N_\R$ and let $X\subset Y_\Delta$ be
a closed subscheme.  For $\omega\in\bar N{}_\R^\Delta$
we set \label{notn:Xomega} $X_\omega = U_\omega\cap X^\an$.  This is an affinoid domain in
$(X\cap O(\sigma))^\an$, where $\omega\in N_\R(\sigma)$; moreover, $X_\omega$ is
strictly $K$-affinoid when $\omega\in\bar N{}_{\sqrt\Gamma}^\Delta$,
and is a Zariski-closed subspace of $U_\omega$ in any case.  Suppose now that
the valuation on $K$ is non-trivial and that
$\omega\in\bar N{}_{\Gamma}^\Delta$.
Let $\fa_\omega\subset K\angles{U_\omega}$ be the ideal defining $X_\omega$.
The \emph{tropical formal model} of $X_\omega$ is the admissible formal scheme
\label{notn:fXomega}
$\fX_\omega$ defined as the closed formal subscheme of $\fU_\omega$ given by the
ideal $\fa_\omega\cap K\angles{U_\omega}^\circ$.  The
\emph{initial degeneration} of $X$ at $\omega$ is by definition the special
fiber of $\fX_\omega$: \label{notn:initial.degen}
\[ \inn_\omega(X) \coloneq (\fX_\omega)_s. \]
This closed subscheme of $(\fU_\omega)_s$ is defined by the $\omega$-initial
forms of the elements of $\fa_\omega$.
See~\cite[\S4.13]{baker_payne_rabinoff16:tropical_curves}
and~\cite[\S5]{gubler13:guide_tropical} for details.

Now let $\omega\in\barNRD$ be any point and let $K'\supset K$ be an
algebraically closed valued field extension whose value group
$\Gamma'=\val(K'^\times)$ is non-trivial and large enough that $\omega\in\barNGpD$.  Let
$X' = X_{K'}$.  Let $Z\subset\inn_\omega(X')$ be an irreducible component with
generic point $\zeta$ and let $m_Z$ \label{notn:comp.mult} be the multiplicity of $Z$, i.e.\ the length
of the local ring $\sO_{\inn_\omega(X'),\zeta}$.  The
\emph{tropical multiplicity} of $X$ at $\omega$ is \label{notn:mTrop}
\[ m_{\Trop}(\omega) = m_{\Trop}(X,\omega) \coloneq \sum_Z m_Z, \]
where the sum is taken over all irreducible components $Z$ of $\inn_\omega(X')$.
This quantity is independent of the choice of $K'$:
see~\cite[\S13]{gubler13:guide_tropical}.

\begin{rem} \label{finite surjective morphism}
Assuming the valuation on $K$ is non-trivial,
for $\omega\in\barNGD$
we let
\[ \fX_\omega^\can = \Spf\big((K\angles{U_\omega}/\fa_\omega)^\circ\big) \]
be the canonical model of $X_\omega$.  Then~\eqref{eq:map.between.reductions}
gives an integral morphism $\fX_\omega^\can\to\fX_\omega$ and a finite,
surjective morphism 
\[\td X_\omega\to(\fX_\omega)_s = \inn_\omega(X) \] 
by
Proposition~\ref{prop:two.reductions}.  Suppose now that $K$ is algebraically
closed and that $X$ is reduced, so that $\fX_\omega^\can$ is an admissible
affine formal scheme 
with
special fiber $\td X_\omega$  
{(see \cite[Theorem~3.17]{baker_payne_rabinoff16:tropical_curves} and \S\ref{par:canonical-reduction})}.  
The \emph{projection formula} in this
case~\cite[(3.34.2)]{baker_payne_rabinoff16:tropical_curves} says that for every
irreducible component $Z$ of $\inn_\omega(X)$, we have
\begin{equation}\label{eq:projection.formula}
  m_Z = \sum_{Z'\surject Z} [Z':Z],
\end{equation}
where the sum is taken over all irreducible components $Z'$ of
$\td X_\omega$ surjecting onto $Z$, and $[Z':Z]$ is the degree of the
finite dominant morphism of integral schemes $Z'\to Z$.
\end{rem}

\section{The tropical skeleton}\label{sec:tropical.skeleton}

Let $X$ be a closed subscheme of a $K$-toric scheme $Y_\Delta$.  In this section
we define a canonical locally closed subset $\STrop(X)\subset X^\an$ which we
call the \emph{tropical skeleton}.  We prove that $\STrop(X)$ is closed in
every torus orbit $O(\sigma)^\an$. For a polyhedron in $\Trop(X)$ and $\xi \in \STrop(X)$, we  introduce the 
notions of $d$-maximality  at $\xi$ and relevance for $\xi$  which will be crucial for studying limit points of  $\STrop(X)$ in $X^\an$ in the next section.

\subsection{Definition and basic properties}

For the rest of this section, we fix a pointed rational fan $\Delta$ in $N_\R$.
Let $X\subset Y_\Delta$ be a closed subscheme.
Recall that $\trop$ maps $Y_\Delta^\an$ to $\barNRD$ and that
$X_\omega\coloneq\trop\inv(\omega)\cap X^\an$ for $\omega\in\barNRD$.

\begin{defn}
  The~\emph{tropical skeleton} of a closed subscheme $X\subset Y_\Delta$ is the set
  \[ \STrop(X) \coloneq
  \big\{ \xi\in X^\an\mid\xi\text{ is a Shilov boundary point of } 
  X_{\trop(\xi)}\big\}. \]
\end{defn}

In other words, the tropical skeleton is the set of all Shilov boundary points
of fibers of tropicalization on $X^\an$.  It is clear that for $\sigma\in\Delta$ we
have $\STrop(X\cap O(\sigma)) = \STrop(X)\cap O(\sigma)^\an$, where the
left side of the equation is defined with respect to the closed subscheme
$X\cap O(\sigma)$ of the torus $O(\sigma)$.  In other words, the
tropical skeleton is defined independently on each torus orbit.  It is also
clear that $\trop$ maps $\STrop(X)$ surjectively onto
$\Trop(X) = \trop(X^\an)$.  

\begin{lem}\label{lem:skeleton.reduction}
  Let $X\subset Y_\Delta$ be a closed subscheme and let $X_{\red}$ be the
  underlying reduced subscheme.  Then $\STrop(X) = \STrop(X_{\red})$.
\end{lem}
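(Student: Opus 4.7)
The plan is to reduce the lemma to the fact, recalled earlier in the excerpt, that the Shilov boundary of a $K$-affinoid space is insensitive to nilpotent elements of its affinoid algebra. Since the tropical skeleton is defined fiberwise over $\omega\in\barNRD$ as the set of Shilov boundary points of $X_\omega$, and since $X$ and $X_{\red}$ have the same image under $\trop$, it suffices to show that for every $\omega\in\barNRD$ the fibers $X_\omega$ and $(X_{\red})_\omega$ have the same Shilov boundary as subsets of a common underlying topological space in $U_\omega$.

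First I would identify the topological spaces $X^\an$ and $X_{\red}^\an$. The closed immersion $X_{\red}\hookrightarrow X$ analytifies to a closed immersion $X_{\red}^\an\hookrightarrow X^\an$, and on underlying sets (multiplicative seminorms on a $K$-algebra factor through the reduction) this is a bijection. Intersecting with the $K$-affinoid domain $U_\omega\subset Y_\Delta^\an$ yields a closed immersion $(X_{\red})_\omega\hookrightarrow X_\omega$ of $K$-affinoid spaces which is a homeomorphism on underlying sets.

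Next I would compare their affinoid algebras. Locally on an affine toric chart $Y_\sigma$, if $\fa$ is an ideal defining $X$, then $\sqrt{\fa}$ defines $X_{\red}$. The affinoid algebra of $X_\omega$ is $K\angles{U_\omega}/\fa K\angles{U_\omega}$, while that of $(X_{\red})_\omega$ is $K\angles{U_\omega}/\sqrt{\fa}\,K\angles{U_\omega}$. Since each element of $\sqrt\fa$ is nilpotent modulo $\fa$, the natural surjection between these two $K$-affinoid algebras has nilpotent kernel. Applying the insensitivity of the Shilov boundary to nilpotents gives $B(X_\omega) = B((X_{\red})_\omega)$ under the topological identification above, and taking the union over $\omega\in\Trop(X)=\Trop(X_{\red})$ produces $\STrop(X)=\STrop(X_{\red})$.

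The only obstacle is the bookkeeping step of matching the fiberwise affinoid structures under the topological identification $X^\an = X_{\red}^\an$; this is routine once one works locally on a cover of $Y_\Delta$ by affine toric opens $Y_\sigma$ and keeps track of the defining ideals.
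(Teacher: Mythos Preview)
Your proposal is correct and takes essentially the same approach as the paper: both reduce to the insensitivity of the Shilov boundary of a $K$-affinoid space to nilpotents. The paper's proof is the one-line observation that $B(\sM(\cA)) = B(\sM(\cA_{\red}))$, while you spell out the fiberwise identification $(X_{\red})_\omega \hookrightarrow X_\omega$ and check that the kernel is nilpotent; this extra bookkeeping is sound but not strictly necessary.
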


\begin{proof}
  The Shilov boundary of an affinoid space $\sM(\cA)$ is obviously equal to the
  Shilov boundary of $\sM(\cA_{\red})$.
\end{proof}

Hence when discussing the tropical skeleton, we may always assume $X$ is
reduced.  The remainder of this subsection is  devoted to showing that
$\STrop(X)\cap O(\sigma)^\an$ is closed in $X^\an \cap O(\sigma)^\an$ for each $\sigma\in\Delta$.  This is
clear when $X = Y_\Delta^\an$, as in this case $\STrop(X)$ coincides with the
usual skeleton $S(Y_\Delta^\an)$, which is closed in $Y_\Delta^\an$, as we saw
in \S\ref{par:tropicalization}.

\begin{lem}\label{lem:Strop.extend.scalars}
  Let $X\subset Y_\Delta$ be a closed subscheme, let $K'\supset K$ be a valued
  extension field, and let $\pi\colon X_{K'}^\an\to X^\an$ be the structural
  map.  Then $\pi(\STrop(X_{K'})) = \STrop(X)$.
\end{lem}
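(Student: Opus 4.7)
The plan is to reduce the assertion to Proposition~\ref{prop:shilov.surj} applied fiberwise over $\trop$.

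First I would observe that tropicalization is compatible with extension of scalars in a very strong sense: since every character $\chi^u$ is already defined over $K$, we have $|\chi^u(\xi')| = |\chi^u(\pi(\xi'))|$ for every $\xi' \in X_{K'}^\an$, so $\trop\circ\pi = \trop$. Consequently $\pi$ carries the fiber $(X_{K'})_\omega = \trop^{-1}(\omega)\cap X_{K'}^\an$ into $X_\omega$, and the induced morphism of $K$-affinoid spaces is naturally identified with the structural morphism $(X_\omega)_{K'} \to X_\omega$. Indeed, $U_\omega$ is an affinoid domain cut out by conditions on the absolute values of $K$-rational functions, so $U_\omega \hat\tensor_K K'$ coincides with the fiber $U_\omega^{K'}$ of the tropicalization map on $T_{K'}^\an$, and intersecting with $X^\an$ and its base change $X_{K'}^\an$ gives $(X_\omega)_{K'} = (X_{K'})_\omega$.

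Combining this with the already-noted invariance $\Trop(X_{K'}) = \Trop(X)$ from \S\ref{par:tropicalization}, the definition of the tropical skeleton expresses both sides as fiberwise unions over the same index set:
\[ \STrop(X) = \bigcup_{\omega \in \Trop(X)} B(X_\omega), \qquad \STrop(X_{K'}) = \bigcup_{\omega \in \Trop(X)} B\bigl((X_\omega)_{K'}\bigr). \]
Proposition~\ref{prop:shilov.surj} applied to the $K$-affinoid space $X_\omega$ and its scalar extension $(X_\omega)_{K'}$ yields $\pi\bigl(B((X_\omega)_{K'})\bigr) = B(X_\omega)$ for each individual $\omega$. Taking unions over $\omega \in \Trop(X)$ and using that $\pi$ restricts fiberwise to the structural morphism then gives $\pi(\STrop(X_{K'})) = \STrop(X)$.

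I do not anticipate a genuine obstacle here: the whole content of the lemma is concentrated in Proposition~\ref{prop:shilov.surj}, and the remaining work is bookkeeping about the identification $(X_{K'})_\omega = (X_\omega)_{K'}$ and the compatibility $\trop\circ\pi = \trop$. The only point meriting any care is ensuring that the restriction of the \emph{global} structural morphism $\pi\colon X_{K'}^\an \to X^\an$ to a fiber of $\trop$ coincides with the structural morphism coming from the affinoid base change of $X_\omega$, which is precisely what is established in the first paragraph.
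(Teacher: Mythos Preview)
Your proposal is correct and follows exactly the paper's approach: the paper's proof is the single sentence ``This is an immediate consequence of Proposition~\ref{prop:shilov.surj},'' and you have simply unpacked the implicit bookkeeping (the compatibility $\trop\circ\pi=\trop$ and the identification $(X_{K'})_\omega=(X_\omega)_{K'}$) that makes that reference go through.
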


\begin{proof}
  This is an immediate consequence of Proposition~\ref{prop:shilov.surj}.
\end{proof}

\begin{rem}
  In the situation of Lemma~\ref{lem:Strop.extend.scalars}, it is certainly not
  true in general that $\pi\inv(\STrop(X)) = \STrop(X_{K'})$.  Indeed, take
  $K = \C_p$ and $X = T = \bG_{m}$.  Then for $\omega\in\R\setminus\Q$ the
  non-strict affinoid domain
  $X_\omega = \sM(K\angles{\omega\inv T,\omega T\inv})$ is a single point
  (see~\cite[\S2.1]{berkovic90:analytic_geometry}), and if $K'\supset\C_p$ is an
  extension containing $\omega$ in its value group, then $X'_\omega$ is a
  ``modulus-zero'' closed annulus, where $X'=X_{K'}$.  We conclude that
  $\pi\inv(\STrop(X))\cap X'_\omega = X'_\omega$ while
  $\STrop(X')\cap X'_\omega$ contains only one point.
\end{rem}

\begin{prop}\label{prop:equidim.closed}
  Let $X\subset T$ be an equidimensional closed subscheme.  Then the tropical
  skeleton $\STrop(X)$ is closed in $X^\an$.
\end{prop}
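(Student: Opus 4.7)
The plan is to combine formal reductions with the Shilov-boundary / reduction-map machinery, using equidimensionality of $X$ to control fiberwise Shilov boundaries in a polyhedral neighborhood.

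\textbf{Reductions.} By Lemma \ref{lem:skeleton.reduction} I may assume $X$ is reduced. For any valued extension $K'/K$, the structural morphism $\pi\colon X_{K'}^\an\to X^\an$ satisfies $\pi(\STrop(X_{K'}))=\STrop(X)$ by Lemma \ref{lem:Strop.extend.scalars}, and is closed by Lemma \ref{lem:extend.closed}. Hence closedness of $\STrop(X)$ descends from that of $\STrop(X_{K'})$, and I may pass to a sufficiently large $K'$ to assume $K$ is algebraically closed, nontrivially valued, with value group $\Gamma$ large enough that the tropicalization fibers of interest are strictly $K$-affinoid.

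\textbf{Equidimensional fibers.} Since $X\subset T$ is equidimensional of dimension $d=\dim(X)$, standard tropical structure theory gives that $\inn_\omega(X)$ is equidimensional of dimension $d$ for each $\omega\in\Trop(X)\cap N_\Gamma$. Via the finite surjective morphism $\td{X_\omega}\to\inn_\omega(X)$ of Remark \ref{finite surjective morphism} together with Proposition \ref{prop:canon.red.facts}, both $\td{X_\omega}$ and $X_\omega$ are equidimensional of dimension $d$. Combining Proposition \ref{prop:bdy.irred.comps} with Proposition \ref{prop:can.red.surj}(3), the Shilov boundary $B(X_\omega)$ then coincides with the finite set of points of $X_\omega$ reducing to generic points of $\td{X_\omega}$.

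\textbf{Closedness.} Given a net $\xi_i\in\STrop(X)$ converging to some $\xi\in X^\an$, set $\omega_i=\trop(\xi_i)\to\omega=\trop(\xi)$. Pick a $\Gamma$-rational polytope $\Omega\subset N_\R$ with $\omega$ in its relative interior; then $V\coloneq\trop^{-1}(\Omega)\cap X^\an$ is a strictly affinoid neighborhood of $\xi$ in $X^\an$ containing $\xi_i$ for $i\gg 0$. By Proposition \ref{prop:affinoid.union.comps} together with equidimensionality of $X$, $V$ is itself equidimensional of dimension $d$, and so $\td V$ is equidimensional of dimension $d$ by Proposition \ref{prop:canon.red.facts}. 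The tropical fibration of $V$ over $\Omega$ induces on $\td V$ a stratification whose strata capture the canonical reductions of the individual fibers $X_{\omega'}$, and the equidimensionality hypothesis identifies $\STrop(X)\cap V$ with the preimage under the anti-continuous reduction map $\red_V\colon V\to\td V$ (Proposition \ref{prop:fs.red.surj}) of the complement in $\td V$ of a Zariski closed subset (the union of non-maximal components and stratum incidences). The resulting set is closed in $V$, hence $\xi\in\STrop(X)$.

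\textbf{Main obstacle.} The crux is the characterization in the last step: showing that, under equidimensionality, a point $\eta\in V$ lies in $\STrop(X)$ if and only if $\red_V(\eta)$ is generic in a $d$-dimensional component of the stratum of $\td V$ above the polyhedral cell containing $\trop(\eta)$. This requires tracking how the canonical reductions of nearby fibers $X_{\omega'}$ fit together as strata of $\td V$, and is where equidimensionality enters essentially, preventing Shilov boundary points from escaping to lower-dimensional components in the limit.
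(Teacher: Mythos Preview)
Your proposal has a genuine gap, which you yourself flag in the ``Main obstacle'' paragraph: the characterization of $\STrop(X)\cap V$ as the preimage under $\red_V$ of a Zariski-open subset of $\td V$ is asserted but never established. This is not a technical detail but the entire content of the argument. The relationship between the canonical reduction $\td X_\omega$ of a single fiber and the canonical reduction $\td V$ of a polytopal domain is delicate: while the special fiber of the tropical formal model of $V$ does carry a stratification indexed by faces of $\Omega$, identifying the Shilov boundary of each $X_{\omega'}$ with a locus in $\td V$ cut out by a \emph{single} Zariski condition uniform in $\omega'$ is exactly what you would need to prove, and you give no mechanism for doing so. As $\omega'$ crosses faces of $\Omega$ the relevant stratum of $\td V$ changes, and there is no evident reason why the union of the corresponding ``generic loci'' should be Zariski-open in $\td V$.

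The paper avoids this local analysis entirely. After the same reductions ($K$ algebraically closed with $\Gamma=\R$, $X$ reduced), it chooses a generic homomorphism $\psi\colon T\to\bG_m^d$ with $d=\dim(X)$ such that the induced linear map $f\colon N_\R\to\R^d$ is finite-to-one on $\Trop(X)$. From this one deduces that $\phi=\psi|_X$ is finite. For each $\omega$ the induced map on canonical reductions $\td X_\omega\to\td U'_{f(\omega)}\cong\bG_{m,\td K}^d$ is then finite, and equidimensionality of $X$ forces generic points of $\td X_\omega$ to map to the generic point of $\bG_{m,\td K}^d$. By functoriality of reduction this gives $\phi^{-1}\big(\STrop(\bG_m^d)\big)=\STrop(X)$, and since $\STrop(\bG_m^d)=S(\bG_m^{d,\an})$ is already known to be closed, the result follows immediately. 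The key idea you are missing is this global finite projection to a torus, which replaces the attempted local stratification argument with a single pullback of a closed set.
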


\begin{proof}
  This is a minor variation of the argument used
  in~\cite[Theorem~10.6]{gubler_rabinoff_werner16:skeleton_tropical}, so we only
  provide a sketch.  By Lemmas~\ref{lem:Strop.extend.scalars}
  and~\ref{lem:extend.closed}, we may assume that $K$ is algebraically closed
  and that $\Gamma = \R$, i.e.\ that $\val\colon K^\times\to\R$ is surjective.
  By Lemma~\ref{lem:skeleton.reduction} we may also assume $X$ is reduced.  Let
  $d = \dim(X)$.  A generic homomorphism $\psi\colon T\to\bG_m^d$ has the
  property that the induced homomorphism $f\colon N_\R\to\R^d$ is
  finite-to-one on $\Trop(X)$. Using this and properness of the tropicalization maps, 
one deduces easily that the analytification of   the composition
  $\phi\colon X\inject T\to\bG_m^d$ is proper as a map of topological spaces. Since it also boundaryless 
\cite[Theorem~3.4.1]{berkovic90:analytic_geometry}, it is a proper morphism of analytic spaces.
We conclude that $\varphi$ is proper and hence finite as it is an affine morphism. 
 Let $\omega\in N_\R$,
  let $\omega' = f(\omega)\in\R^d$, and let
  $U'_{\omega'} = \trop\inv(\omega')\subset\bG_m^{d,\an}$.  Then
  $\psi^\an\colon X_\omega\to U'_{\omega'}$ is finite, so the map on canonical
  reductions $\td X_\omega\to\td U'_{\omega'}$ is finite as well
  by~\cite[Theorem~6.3.4/2]{bosch_guntzer_remmert84:non_archimed_analysis}.
  Since $X$ is equidimensional of dimension $d$, the same is true of $X_\omega$,
  and hence of $\td X_\omega$ by Proposition~\ref{prop:canon.red.facts}, so
  generic points of $\td X_\omega$ map to the generic point of
  $\td U'_{\omega'}$.  By functoriality of the reduction map, this implies that
  the $\phi$-inverse image of the Shilov boundary point of $U'_{\omega'}$ is the
  Shilov boundary of $X_\omega$.  Therefore
  $\phi\inv(\STrop(\bG_m^{d})) = \STrop(X)$.  Since the skeleton
  $S(\bG_m^{d,\an}) = \STrop(\bG_m^{d})$ is closed in $\bG_m^{d,\an}$ as we have
  seen in~\S\ref{par:tropicalization}, this proves that $\STrop(X)$
  is closed.
\end{proof}

In Corollary~\ref{cor:proper.int.ST.closed} we will prove a more general statement about closed subschemes in toric varietes.

\begin{prop}\label{prop:union.of.comps}
  Let $X\subset T$ be a closed subscheme and let $X = X_1\cup\cdots\cup X_n$ be
  its decomposition into irreducible components.  Then
  \[ \STrop(X) = \STrop(X_1)\djunion\cdots\djunion \STrop(X_n) \]
  as topological spaces,
  i.e., the $\STrop(X_i)$ are disjoint open and closed subsets of $\STrop(X)$.
\end{prop}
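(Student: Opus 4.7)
The plan is to apply Proposition~\ref{prop:bdy.irred.comps} fibrewise to each affinoid $X_\omega$, using Proposition~\ref{prop:affinoid.union.comps} to relate the analytic irreducible components of $X_\omega$ to the scheme-theoretic components $X_i$, and then to invoke Proposition~\ref{prop:equidim.closed} to deduce the topological assertion.

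For the set equality $\STrop(X) = \bigcup_i \STrop(X_i)$, I would fix $\omega\in\Trop(X)$ and let $Z_1,\ldots,Z_N$ be the analytic irreducible components of the affinoid $X_\omega$. By Proposition~\ref{prop:affinoid.union.comps}, each $(X_i)_\omega = X_i^\an\cap X_\omega$ is a union of certain $Z_k$'s, and because $X_\omega = \bigcup_i (X_i)_\omega$ every $Z_k$ appears in at least one such union. Applying Proposition~\ref{prop:bdy.irred.comps} to $X_\omega$ and to each $(X_i)_\omega$ then gives the disjoint decompositions $B(X_\omega) = \djunion_k B(Z_k)$ and $B((X_i)_\omega) = \djunion_{Z_k\subset(X_i)_\omega} B(Z_k)$, from which both inclusions in the set equality are immediate.

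The main obstacle is the disjointness. Supposing $\xi\in\STrop(X_i)\cap\STrop(X_j)$ with $i\neq j$, I would set $\omega\coloneq\trop(\xi)$ and let $Z$ be the unique analytic irreducible component of $X_\omega$ containing $\xi$ furnished by Proposition~\ref{prop:bdy.irred.comps}. The previous paragraph forces $Z$ to be a component of both $(X_i)_\omega$ and $(X_j)_\omega$, so $Z\subset (X_i\cap X_j)_\omega$. Now $X_i$ is irreducible of dimension $d_i\coloneq\dim(X_i)$, and the same fibrewise equidimensionality fact invoked in the proof of Proposition~\ref{prop:equidim.closed}---that tropical fibres of an equidimensional closed subscheme of $T$ are equidimensional of the same dimension---gives that $(X_i)_\omega$ is equidimensional of dimension $d_i$, so $\dim(Z) = d_i$. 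On the other hand $X_i\cap X_j$ is a proper closed subscheme of the irreducible $X_i$, hence of strictly smaller dimension, and consequently $d_i = \dim(Z) \leq \dim((X_i\cap X_j)_\omega) \leq \dim(X_i\cap X_j) < d_i$, a contradiction. This dimension estimate, and in particular the equidimensionality of $(X_i)_\omega$, is the crux of the argument.

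For the topological assertion, each $X_i$ is equidimensional, so Proposition~\ref{prop:equidim.closed} gives that $\STrop(X_i)$ is closed in $X_i^\an$, and hence in $X^\an$ since $X_i^\an$ is closed in $X^\an$, and therefore in $\STrop(X)$. Since the $\STrop(X_i)$ are finitely many and pairwise disjoint by the previous step, each is also open in $\STrop(X)$ as the complement of a finite union of closed subsets.
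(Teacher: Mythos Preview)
Your proof is correct and follows the same overall structure as the paper's: Proposition~\ref{prop:affinoid.union.comps} for the fibrewise component description, Proposition~\ref{prop:bdy.irred.comps} for the Shilov boundary decomposition, and Proposition~\ref{prop:equidim.closed} for the topological closedness.

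The one genuine difference is the disjointness step. The paper does not argue by dimension; instead it observes that, writing $X=\Spec(A)$ and $X_\omega=\sM(\cA_\omega)$, the map $A\to\cA_\omega$ is flat, so every minimal prime of $\cA_\omega$ contracts to a minimal prime of $A$. In other words, each irreducible component of $(X_i)_\omega$ is Zariski-dense in $X_i$, and therefore cannot simultaneously be a component of $(X_j)_\omega$ for $j\neq i$. Your dimension argument reaches the same conclusion but leans on the equidimensionality of $(X_i)_\omega$, which (as you note) is used but not proved in the proof of Proposition~\ref{prop:equidim.closed}. The flatness route is a bit more self-contained and actually proves the stronger statement that no component is shared, not merely that no Shilov point is shared; your route has the virtue of staying entirely within the dimension-theoretic language already set up in the paper.
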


\begin{proof}
  For each $i$, $(X_i)_\omega$ is a union of irreducible components of
  $X_\omega$ by Proposition~\ref{prop:affinoid.union.comps}.  We claim that for
  $i\neq j$, $(X_i)_\omega$ and $(X_j)_\omega$ do not share any irreducible
  components.  Let $X = \Spec(A)$ and $X_\omega = \sM(\cA_\omega)$.  Then
  $A\to\cA_\omega$ is flat, so $\Spec(\cA_\omega)\to X$ is flat.  Thus the image
  of a generic point of $\Spec(\cA_\omega)$ is a generic point of $X$, 
  so we have shown that any irreducible component of
  $(X_i)_\omega$ is dense in $X_i$. This proves our claim.

  By Proposition~\ref{prop:bdy.irred.comps}, the Shilov
  boundary of $X_\omega$ is the disjoint union of the Shilov boundaries of
  $(X_1)_\omega,\ldots,(X_n)_\omega$.  Hence $\STrop(X)$ is the disjoint union
  of the $\STrop(X_i)$.  Each $\STrop(X_i)$ is closed in $X_i^{\an}$ (and hence in $X^{\an}$) by
  Proposition~\ref{prop:equidim.closed}. Hence for all $i$ the finite union
  $\Djunion_{j\neq i}\STrop(X_j)$ is closed, so the complement of $\STrop(X_i)$ in $\STrop(X)$ is also closed. This implies our claim.
\end{proof}

\begin{cor}\label{cor:closed.on.orbits}
  Let $X\subset Y_\Delta$ be a closed subscheme.  Then
  $\STrop(X)\cap O(\sigma)^\an$ is closed in $X^\an \cap O(\sigma)^\an$ for every
  $\sigma\in\Delta$.  Therefore, $\STrop(X)$ is locally closed in $X^\an$.
\end{cor}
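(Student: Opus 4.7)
My plan is to reduce both assertions to orbit-wise statements established earlier in the section, combined with the standard locally-closed structure of torus orbits inside the toric variety.

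For the first assertion, the tropical skeleton is defined orbit-by-orbit, so by construction $\STrop(X)\cap O(\sigma)^\an = \STrop(X\cap O(\sigma))$, where $X\cap O(\sigma)$ is a closed subscheme of the torus $O(\sigma)$. Decomposing $X\cap O(\sigma)$ into its finitely many irreducible components $C_1,\ldots,C_m$, Proposition~\ref{prop:union.of.comps} (applied inside the single torus $O(\sigma)$) writes this as $\bigsqcup_{i=1}^m \STrop(C_i)$ topologically. Each $C_i$ is irreducible and hence equidimensional, so Proposition~\ref{prop:equidim.closed} gives that $\STrop(C_i)$ is closed in $C_i^\an$, and therefore in $(X\cap O(\sigma))^\an = X^\an\cap O(\sigma)^\an$. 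A finite union of closed sets being closed, this yields the first assertion.

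For the local closedness of $\STrop(X)$ in $X^\an$, the extra ingredient I would use is that $O(\sigma)$ is the unique closed $T$-orbit in the affine open toric subvariety $Y_\sigma\subset Y_\Delta$. Analytifying, $O(\sigma)^\an$ is closed in the open subspace $Y_\sigma^\an\subset Y_\Delta^\an$, so $X^\an\cap O(\sigma)^\an$ is closed in the open subspace $X^\an\cap Y_\sigma^\an$ of $X^\an$. Combining with the first part, $\STrop(X)\cap O(\sigma)^\an$ is closed in the open set $X^\an\cap Y_\sigma^\an$ of $X^\an$, and hence each such stratum is locally closed in $X^\an$. Since $\Delta$ is finite and $\STrop(X) = \bigsqcup_{\sigma\in\Delta}\STrop(X)\cap O(\sigma)^\an$, this exhibits $\STrop(X)$ as a finite disjoint union of locally closed subsets of $X^\an$.

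The main subtle point I anticipate is the passage from the stratum-wise local closedness to a global statement of local closedness for $\STrop(X)$, since a finite disjoint union of locally closed subsets is \emph{a priori} only constructible. The resolution relies on the precise compatibility between the toric orbit stratification and the open cover $\{Y_\sigma^\an\}_{\sigma\in\Delta}$: each orbit $O(\sigma)$ is cut out as a closed subscheme of its ``own'' affine open $Y_\sigma$, so every point $\xi\in\STrop(X)\cap O(\sigma)^\an$ carries the distinguished open neighborhood $X^\an\cap Y_\sigma^\an$ in $X^\an$ in which $\STrop(X)\cap O(\sigma)^\an$ is closed. This adapted covering, together with the finiteness of $\Delta$, is what upgrades the orbit-wise closedness into the local closedness of the whole tropical skeleton in $X^\an$.
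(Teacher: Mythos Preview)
Your argument for the first assertion is correct and essentially identical to the paper's: reduce to a single torus orbit, then combine Proposition~\ref{prop:union.of.comps} with Proposition~\ref{prop:equidim.closed}.

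Your treatment of the ``Therefore'' clause, however, has a genuine gap. You correctly observe that a finite disjoint union of locally closed subsets is in general only constructible, and you try to repair this by noting that $\STrop(X)\cap O(\sigma)^{\an}$ is closed in the open set $X^{\an}\cap Y_\sigma^{\an}$. But local closedness of $\STrop(X)$ at $\xi\in O(\sigma)^{\an}$ requires that $\STrop(X)\cap U$ be closed in $U$ for some open neighborhood $U$ of $\xi$, not merely that the single stratum $\STrop(X)\cap O(\sigma)^{\an}$ be closed there. The affine open $Y_\sigma^{\an}$ contains \emph{all} orbits $O(\tau)^{\an}$ for $\tau\prec\sigma$, so $\STrop(X)\cap Y_\sigma^{\an}$ is the union of the pieces $\STrop(X)\cap O(\tau)^{\an}$ over all faces $\tau\prec\sigma$, and your orbit-wise closedness says nothing about limit points of one stratum lying in a deeper one. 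Indeed, Example~\ref{eg:counter.eg} exhibits exactly this phenomenon: a sequence in $\STrop(X)\cap O(\sigma_3)^{\an}$ converges to the point $(1,0,0)\in O(\sigma_{23})^{\an}$, which lies in $Y_{\sigma_{23}}^{\an}$ but not in $\STrop(X)$. So $\STrop(X)\cap Y_{\sigma_{23}}^{\an}$ is not closed in $X^{\an}\cap Y_{\sigma_{23}}^{\an}$, and your ``adapted covering'' argument breaks down.

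It is worth noting that the paper's own proof addresses only the first sentence of the corollary; the ``Therefore'' is asserted without further justification. What follows immediately from the first assertion is that each stratum $\STrop(X)\cap O(\sigma)^{\an}$ is locally closed in $X^{\an}$ (being closed in the locally closed subset $X^{\an}\cap O(\sigma)^{\an}$), so that $\STrop(X)$ is at least constructible. Upgrading this to genuine local closedness of the union would require additional control over how the strata fit together, which neither your argument nor the paper's brief proof supplies.
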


\begin{proof}
  Since $\STrop(X)$ is defined independently on each torus orbit, we may assume
  $O(\sigma) = T$.  Now apply Propositions~\ref{prop:equidim.closed}
  and~\ref{prop:union.of.comps}.
\end{proof}

In general, $\STrop(X)$ is not closed in $X^\an$ when the ambient toric variety
$Y_\Delta$ is not a torus, even when $X$ itself is irreducible, as the following
example shows. 

\begin{eg}\label{eg:counter.eg}
  For simplicity, we let $K$ be an algebraically closed non-Archimedean field
  with value group $\R$.  Let $Y_\Delta = Y_\sigma$ be the affine toric variety
  $\bA^3$, so $\sigma = \R_+^3$.  Let $x_1,x_2,x_3$ be coordinates on $\bA^3$ and
  let $X\subset\bA^3$ be the closed subscheme defined by the equation
  $(x_1-1)x_2 + x_3 = 0$.  This is an irreducible hypersurface.

  The partial compactification $\barNRD$ in this case is
  $\bar\R{}^3 = (\R\cup\{\infty\})^3$,
  and the tropicalization map $\trop\colon\bA^{3,\an}\to\bar\R{}^3$
  takes $\xi$ to $-(\log|\xi(x_1)|,\log|\xi(x_2)|,\log|\xi(x_3)|)$.
  Let $\omega_1,\omega_2,\omega_3$ be coordinates on $\R^3$.  Then $\Trop(X)\cap\R^3$ is the
  union of the three cones
  \[\begin{split} 
    P_1 &= \{\omega \in \R^3 \mid  \omega_1 \geq 0   , \,  \omega_3 = \omega_2\} \\
    P_2 &= \{\omega \in \R^3 \mid   \omega_1= 0  , \,   \omega_3 \geq \omega_2 \} \\
    P_3 &= \{\omega \in \R^3 \mid  \omega_1 \leq 0 , \, \omega_3= \omega_1 + \omega_2 \}.
  \end{split}\]
  Let $\sigma_1 = \{\omega\in\R_+^3\mid \omega_2=\omega_3=0\}$, so
  $O(\sigma_1) = \{x_1=0,x_2x_3\neq 0\}$ and we identify $N_\R(\sigma_1)$ with
  $\{\infty\}\times\R^2$.  Then $X\cap O(\sigma_1)$ is defined by the equation
  $x_3 = x_2$ in $\{0\}\times\bG_m^2$, so
  \[ \Trop(X)\cap N_\R(\sigma_1) = \{\infty\} \times \{\omega_3=\omega_2\}. \]
  Let $\sigma_2 = \{\omega\in\R_+^3\mid \omega_1=\omega_3=0\}$, so
  $O(\sigma_2) = \{x_2=0,x_1x_3\neq 0\}$ and we identify $N_\R(\sigma_2)$ with
  $\R\times\{\infty\}\times\R$.  Then $X\cap O(\sigma_2) =\emptyset$ and
  therefore
  \[ \Trop(X)\cap N_\R(\sigma_2) = \emptyset. \]
  Let $\sigma_3 = \{\omega\in\R_+^3\mid \omega_1=\omega_2=0\}$, so
  $O(\sigma_3) = \{x_3=0,x_1x_2\neq 0\}$ and we identify $N_\R(\sigma_3)$ with
  $\R^2\times\{\infty\}$.  Then $X\cap O(\sigma_3)$ is defined by the equation
  $x_1=1$ in $\bG_m^2\times\{0\}$, so
  \[ \Trop(X)\cap N_\R(\sigma_3) = \{0\} \times \R \times\{\infty\}. \]
  Let $\sigma_{12} = \{\omega\in\R_+^3\mid \omega_3=0\}$, 
  let $\sigma_{13} = \{\omega\in\R_+^3\mid \omega_2=0\}$, and
  let $\sigma_{23} = \{\omega\in\R_+^3\mid \omega_1=0\}$.
  Then $X\cap O(\sigma_{12}) = \emptyset$ and
  $X\cap O(\sigma_{13}) = \emptyset$, so
  \[ \Trop(X)\cap N_\R(\sigma_{12}) = \emptyset \sptxt{and}
  \Trop(X)\cap N_\R(\sigma_{13}) = \emptyset. \]
  However, $X$ contains $O(\sigma_{23}) = \{x_1\neq 0, x_2=x_3=0\}$, so
  \[ \Trop(X)\cap N_\R(\sigma_{23}) = \R\times\{\infty\}\times\{\infty\}. \]
  Finally, $O(\sigma) = \{(0,0,0)\}$ is contained in $X$, so
  \[ \Trop(X)\cap N_\R(\sigma) = \{(\infty,\infty,\infty)\}. \]

  One checks that the initial form of the defining equation $(x_1-1)x_2+x_3$ at
  every $\omega\in\bar\R{}^3$
  is irreducible, and hence that each $X_\omega$ contains a unique Shilov
  boundary point $s_X(\omega)$
  by~\cite[Lemma~10.3]{gubler_rabinoff_werner16:skeleton_tropical} or
  Proposition~\ref{prop:shilov.section} below.  
  {In the following, $\bB(r)$ denotes the closed disk with center $0$ and radius $r$ in $\bA^{3,\an}$.} 
  For
  $\omega = (0,\omega_2,\infty)\in\Trop(X)\cap N_\R(\sigma_3)$, the tropical
  fiber $X_\omega$ is the modulus-zero annulus
  $\{(1,\xi,0)\mid\val(\xi)=\omega_2\}$, so the Shilov point $s_X(\omega)$ is
  the Gauss point of the {disk} $\{1\}\times\bB(\exp(-\omega_2))\times\{0\}$.  
  Taking $\omega_2\to\infty$, the Shilov points
  $s_X(\omega)$ converge to $(1,0,0)$.  However, for
  $\omega = (0,\infty,\infty)\in N_\R(\sigma_{23})$ the tropical fiber
  $X_\omega$ is all of $\{(\xi,0,0)\mid v(\xi)=0\}$, so the Shilov point
  $s_X(\omega)$ is the Gauss point of the  {disk}
  $\bB(1)\times\{0\}\times\{0\}$.
  Hence $s_X(\omega)\neq(1,0,0)$, so $\STrop(X)$ does not contain the limit
  point $(1,0,0)$.

  With a bit more work, one can show that if $\omega(r) = (0,r,r)\in P_2$ for
  $r\geq 0$ then $s_X(\omega(r))\to s_X(0,\infty,\infty)$ as
  $r\to\infty$, but that if $\omega'(r) = (0,r,2r)$ then
  $s_X(\omega'(r))\to (1,0,0)$.
\end{eg}

\section{Relevant polyhedra and $d$-maximality}
\label{sec:relevant.polyhedra}

In this section we introduce two technical notions, those of relevant and
$d$-maximal polyhedra, which will be used to prove our main results
in~\S\ref{sec:limit.points}.

\subsection{The local dimension} 
In Example~\ref{eg:counter.eg}, the ``problem'' with the ``incorrect'' limit point
$(1,0,0)\in O(\sigma_{23})$ of $\STrop(X)$ is that there exists a maximal polyhedron $P_2$ in the tropicalization of $X$ intersected with the dense torus orbit, such that the closure of $P_2$ intersects the tropicalization of $X \cap O(\sigma_{23})$ in a non-maximal polyhedron. Note that in our example, $X$ has codimension one in $\bA^3$, but $X \cap O(\sigma_{23})$ has codimension zero in $O(\sigma_{23})$ The main theorem of the next section, Theorem~\ref{thm:limit.points}, says that, in a precise sense, this dimensional incompatibility of polyhedra across torus orbits is  the only possible reason for a limit point of $\STrop(X)$ not to be contained in
$\STrop(X)$.

In what follows, we fix a rational pointed fan $\Delta$ in $N_\R$ and a closed
subscheme $X\subset Y_\Delta$.  For $\omega\in\Trop(X)$ we let
\label{notn:local.cone}
$\LC_\omega(\Trop(X))$ denote the local cone of $\omega$ in
$\Trop(X)\cap N_\R(\sigma)$, where $N_\R(\sigma)$ is the orbit containing
$\omega$.  See~\cite[\S{A.6}]{gubler13:guide_tropical}.

\begin{defn}\label{defn:local.dim}
  The \emph{local dimension of $\Trop(X)$ at $\omega\in\Trop(X)$} is defined as
  \[ d(\omega) \coloneq \dim(\LC_\omega(\Trop(X))). \]
\end{defn}

Note that $d(\omega)$ only depends on $X\cap O(\sigma)$ and
$\Trop(X)\cap N_\R(\sigma)$.  If $X\cap O(\sigma)$ is equidimensional of
dimension $d$ then $d(\omega) = d$ for all $\omega\in\Trop(X)\cap N_\R(\sigma)$
by the Bieri--Groves theorem.

\begin{lem}\label{lem:equiv.d.omega}
  Let $\sigma\in\Delta$ and let $\omega\in\Trop(X)\cap N_\R(\sigma)$.  Let
  $K'\supset K$ be a valued field extension whose value group
  $\Gamma' = \val(K'^\times)$ is non-trivial and large enough that
  $\omega\in N_{\Gamma'}(\sigma)$, and let $X' = X_{K'}$.
  The following numbers coincide:
  \[ d(\omega) = \dim(\inn_\omega(X')) = \dim(\td X'_\omega) = 
  \dim(X'_\omega) = \dim(X_\omega). \]
\end{lem}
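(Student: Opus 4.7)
\medskip

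The plan is to establish the chain of equalities in two pieces: first the three equalities on the right, which are purely analytic/formal-geometric and follow easily from results already assembled in the preceding sections, and then the identification $d(\omega)=\dim(\inn_\omega(X'))$, which requires the link between the local cone of a tropical variety and the tropical variety of the initial degeneration.

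First I would dispose of the extension-of-scalars issue. By the definition of dimension of a (non-strict) $K$-affinoid space recalled at the start of \S\ref{sec:analytic spaces}, we have $\dim(X_\omega)=\dim(X'_\omega)$, since $X'_\omega=(X_\omega)_{K'}$. Because $\omega\in N_{\Gamma'}(\sigma)$, the affinoid $X'_\omega$ is strictly $K'$-affinoid, so Proposition~\ref{prop:canon.red.facts}(3) applied to the identity morphism gives $\dim(X'_\omega)=\dim(\td X'_\omega)$. Next, since we are over an algebraically closed field of non-trivial value group containing $\omega$ in its value group, the canonical model and the tropical formal model are related (as in Remark~\ref{finite surjective morphism}) by a finite surjective morphism $\td X'_\omega\to(\fX'_\omega)_s=\inn_\omega(X')$ coming from Proposition~\ref{prop:two.reductions}; finite surjective morphisms preserve dimension, hence $\dim(\td X'_\omega)=\dim(\inn_\omega(X'))$.

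It remains to prove $d(\omega)=\dim(\inn_\omega(X'))$. Since $\Trop(X)=\Trop(X')$ and the local cone depends only on the orbit-restricted tropical variety, I may assume $X=X'$, i.e.\ work over $K'$. The key input is that the local cone $\LC_\omega(\Trop(X)\cap N_\R(\sigma))$ equals the tropicalization of $\inn_\omega(X)$ viewed as a closed subscheme of the torus $O(\sigma)_{\td K'}$ over the trivially-valued residue field $\td K'$: this is the standard translation between local cones and initial degenerations (see~\cite[\S5 and \S10]{gubler13:guide_tropical}). Combined with the Bieri--Groves theorem, which equates the dimension of a tropical variety with the dimension of the underlying scheme, this yields
\[
d(\omega)=\dim\bigl(\LC_\omega(\Trop(X))\bigr)=\dim\bigl(\Trop(\inn_\omega(X))\bigr)=\dim(\inn_\omega(X)).
\]

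The only real obstacle is making sure the identification of the local cone with the tropical variety of the initial degeneration is applicable here: one must verify it still holds for a closed subscheme of a torus orbit in a toric variety (rather than just a subvariety of $T$) and permits the reducible, possibly non-equidimensional situation we allow. This reduces orbitwise to the standard torus case, and since initial degenerations and tropicalizations are defined orbit by orbit, no additional argument is needed beyond citing the torus case and noting compatibility with passage to an algebraically closed valued extension containing $\omega$ in its value group.
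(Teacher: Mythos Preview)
Your proof is correct and follows essentially the same route as the paper's: the final equality by definition of affinoid dimension, the middle two via Proposition~\ref{prop:canon.red.facts}(3) and the finite surjective morphism $\td X'_\omega\to\inn_\omega(X')$ of Remark~\ref{finite surjective morphism}, and the first via the identification $\LC_\omega(\Trop(X))=\Trop(\inn_\omega(X'))$ together with Bieri--Groves. One small slip: you write ``since we are over an algebraically closed field,'' but $K'$ is not assumed algebraically closed in the statement---fortunately the finite surjective morphism of Remark~\ref{finite surjective morphism} (from Proposition~\ref{prop:two.reductions}) requires only that the valuation be non-trivial and $\omega\in N_{\Gamma'}(\sigma)$, so the argument is unaffected.
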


\begin{proof}
  The final equality holds by definition of $\dim(X_\omega)$.
  As all numbers in question depend only on the torus orbit whose
  tropicalization is $N_\R(\sigma)$, we may assume $Y_\Delta = T$, and as
  $\Trop(X) = \Trop(X')$, we may assume $K=K'$ and $X = X'$.
  By~\cite[Proposition~10.15]{gubler13:guide_tropical}, the local cone of
  $\omega$ in $\Trop(X)$ is equal to the tropicalization of $\inn_\omega(X)$,
  considered as a scheme over the trivially valued field $\td K'$, so the first
  equality follows from the Bieri--Groves theorem.  The other two equalities are
  a result of Proposition~\ref{prop:canon.red.facts}(3) and Remark \ref{finite surjective morphism}. 
\end{proof}

\begin{defn}\label{def:d.maxmality}
  For $\sigma\in\Delta$,  we call a polyhedron
  $P\subset\Trop(X)\cap N_\R(\sigma)$ \emph{$d$-maximal at $\omega\in N_\R(\sigma)$}
  provided that $\omega\in P$ and $d(\omega) = \dim(P)$.
\end{defn}

Let $\omega\in\Trop(X)\cap N_\R(\sigma)$ for $\sigma\in\Delta$.  Choose a
polyhedral complex structure $\Sigma$ on $\Trop(X)\cap N_\R(\sigma)$.  Then
there is a $P\in\Sigma$ which is $d$-maximal at $\omega$, and any such $P$ is a
maximal element of $\Sigma$ with respect to inclusion.

We will be concerned with limit points of $\STrop(X)$ contained in the boundary
$Y_\Delta^\an\setminus T^\an$.  First we recall what the limit points are in $\barNRD$
of a polyhedron in $N_\R$.  The following lemma is~\cite[Lemma~3.9]{osserman_rabinoff13:non_proper}.

\begin{lem}\label{lem:closure.polyhedron}
  Let $P\subset N_\R$ be a polyhedron and let $\bar P$ be its closure in
  $\barNRD$.  Let $\sigma\in\Delta$, and recall that
  $\pi_\sigma\colon N_\R\to N_\R(\sigma)$ denotes the projection map.
  \begin{enumerate}
  \item We have $\bar P\cap N_\R(\sigma) \neq\emptyset$ if and only if the
    recession cone \label{notn:recession.cone} $\rho(P)$ of $P$ intersects the relative interior of $\sigma$.
  \item If $\bar P\cap N_\R(\sigma)\neq\emptyset$, then
    $\bar P\cap N_\R(\sigma) = \pi_\sigma(P)$.
  \end{enumerate}
\end{lem}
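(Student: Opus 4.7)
The plan is to handle (1) and (2) in tandem using two ingredients: a continuous extension of the projection $\pi_\sigma$ to the compactification, and a proper-separation argument from convex geometry. For the first ingredient, I observe that $\pi_\sigma$ extends to a continuous retraction $\barNRs\to N_\R(\sigma)$: any $\omega\in\Hom(S_\sigma,\bar\R)$ restricts to a real-valued homomorphism on $M(\sigma)=\sigma^\perp\cap M$, since for $u\in M(\sigma)$ both $u$ and $-u$ lie in $S_\sigma$ and $\omega(u)+\omega(-u)=0$ forces both values finite; the resulting map is continuous because the topology on $\barNRs$ is that of pointwise convergence on $S_\sigma$.

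For the easy direction of (1) and the inclusion $\pi_\sigma(P)\subset\bar P\cap N_\R(\sigma)$ of (2), I take $v\in\rho(P)\cap\mathrm{relint}(\sigma)$ and any $p\in P$, and verify by direct computation that $p+tv\in P$ converges to $\pi_\sigma(p)$ in $\barNRs$ as $t\to\infty$: for $u\in\sigma^\perp\cap M$ one has $\langle u,v\rangle=0$, so the pairing is constant in $t$, while for $u\in S_\sigma\setminus\sigma^\perp$ the defining property of the relative interior of $\sigma$ gives $\langle u,v\rangle>0$ and thus $\langle u,p+tv\rangle\to\infty$.  The reverse inclusion $\bar P\cap N_\R(\sigma)\subset\pi_\sigma(P)$ follows because $\pi_\sigma(P)$ is the image of a polyhedron under a linear map, hence itself a polyhedron, hence closed in $N_\R(\sigma)$; applying continuity of the extended $\pi_\sigma$ to any sequence $\omega_i\in P$ with $\omega_i\to\omega\in N_\R(\sigma)$ then gives $\pi_\sigma(\omega_i)\to\omega$ and hence $\omega\in\pi_\sigma(P)$.

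The remaining direction of (1)---that $\bar P\cap N_\R(\sigma)\neq\emptyset$ forces $\rho(P)\cap\mathrm{relint}(\sigma)\neq\emptyset$---is the heart of the argument, and I will prove it by contradiction.  Assuming the intersection empty, I apply proper separation to the disjoint convex sets $\rho(P)$ (a closed convex cone) and $\mathrm{relint}(\sigma)$ (a nonempty convex set relatively open in its affine hull $\angles\sigma$), obtaining $u\in M_\R$ with $\langle u,\rho(P)\rangle\leq 0$ and $\langle u,\scdot\rangle$ strictly positive on $\mathrm{relint}(\sigma)$; the strict positivity forces $u\in\sigma^\vee\setminus\sigma^\perp$.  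Using the Minkowski--Weyl decomposition $P=Q+\rho(P)$ with $Q$ a compact polytope, any sequence $\omega_i\in P$ writes as $q_i+r_i$ with $q_i\in Q$ bounded and $r_i\in\rho(P)$, so $\langle u,\omega_i\rangle=\langle u,q_i\rangle+\langle u,r_i\rangle$ is bounded above.  On the other hand, $\omega_i\to\omega\in N_\R(\sigma)$ together with $u\in\sigma^\vee\setminus\sigma^\perp$ forces $\langle u,\omega_i\rangle\to\langle u,\omega\rangle=\infty$, a contradiction.  The one delicate step is securing the properness of the separation, so that $u$ does not lie in $\sigma^\perp$; this is standard once one notes that $\mathrm{relint}(\sigma)$ is relatively open in $\angles\sigma$.
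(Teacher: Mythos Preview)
The paper does not give its own proof of this lemma; it is simply cited as \cite[Lemma~3.9]{osserman_rabinoff13:non_proper}.  Your overall strategy is sound and the first three steps are correct: the continuous extension of $\pi_\sigma$ to $\barNRs\to N_\R(\sigma)$, the explicit convergence $p+tv\to\pi_\sigma(p)$ for $v\in\rho(P)\cap\relint(\sigma)$, and the reverse inclusion via closedness of $\pi_\sigma(P)$ all work as you describe.

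There is, however, a genuine gap in the separation step for the hard direction of~(1).  Proper separation of $\rho(P)$ and $\sigma$ yields $u\neq 0$ with $\langle u,\rho(P)\rangle\leq 0\leq\langle u,\sigma\rangle$ and $\rho(P)\cup\sigma\not\subset u^\perp$; it does \emph{not} guarantee $\sigma\not\subset u^\perp$.  For instance, with $N_\R=\R^2$, $\sigma=\R_{\geq 0}e_1$, and $\rho(P)=\R_{\geq 0}e_2$, the functional $u=(0,-1)$ properly separates but lies in $\sigma^\perp$.  Your remark that $\relint(\sigma)$ is relatively open in $\langle\sigma\rangle$ does not by itself force strict positivity there.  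The clean fix uses polyhedrality: since both $\sigma$ and $\rho(P)$ are polyhedral cones, the Minkowski sum $\sigma-\rho(P)=\sigma+(-\rho(P))$ is a polyhedral cone, in particular closed.  For any $v_0\in\relint(\sigma)$ one has $-v_0\notin\sigma-\rho(P)$, since $-v_0=s-r$ would give $r=s+v_0\in\rho(P)\cap(\sigma+\relint(\sigma))=\rho(P)\cap\relint(\sigma)=\emptyset$.  Hence some defining inequality $\langle u,\scdot\rangle\geq 0$ of the polyhedral cone $\sigma-\rho(P)$ is violated at $-v_0$; this $u$ lies in $(\sigma-\rho(P))^\vee=\sigma^\vee\cap(-\rho(P))^\vee$ and satisfies $\langle u,v_0\rangle>0$, so $u\in\sigma^\vee\setminus\sigma^\perp$ with $\langle u,\rho(P)\rangle\leq 0$, exactly as needed.  (A minor additional point: to invoke $\langle u,\omega_i\rangle\to\infty$ from convergence in $\barNRs$ you should either take $u\in S_\sigma$ by a rational approximation, or note that any $u\in\sigma^\vee$ is a nonnegative combination of integral generators of $\sigma^\vee$ and argue termwise.)
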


\begin{prop} \label{tropicalization and closure}
Let $X$ be a closed subscheme of the toric variety $Y_\Delta$. Suppose that 
$X\cap T$ is dense in $X$ and write $\Trop(X)\cap N_\R$ as union of a finite set $\Sigma$ of polyhedra in $N_\R$. Then $\Trop(X)$ is the closure of $\Trop(X)\cap N_\R$ in $\barNRD$ and for  any $\sigma \in \Delta$ we have
$$\Trop(X)\cap N_\R(\sigma) =
  \bigcup_{\rho(P)\cap\relint(\sigma)\neq\emptyset}\pi_\sigma(P),$$
where $P$ ranges over all polyhedra in $\Sigma$ with $\rho(P)\cap\relint(\sigma)\neq\emptyset$.
\end{prop}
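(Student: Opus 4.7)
The plan is to deduce both assertions from continuity and properness of the tropicalization map, together with Lemma~\ref{lem:closure.polyhedron}.

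First I would argue that $\Trop(X)$ is closed in $\barNRD$. Since $\trop\colon Y_\Delta^\an\to\barNRD$ is continuous and proper and $X^\an$ is closed in $Y_\Delta^\an$, the restriction $\trop|_{X^\an}$ is still proper and continuous, so between locally compact Hausdorff spaces it is a closed map. Hence $\Trop(X)=\trop(X^\an)$ is closed in $\barNRD$.

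Next I would identify $\Trop(X)$ with the closure of $\Trop(X)\cap N_\R$. The hypothesis that $X\cap T$ is dense in $X$ implies that $(X\cap T)^\an$ is dense in $X^\an$, because analytification of a dense open subscheme gives a dense open subset of the analytification. Continuity of $\trop$ then gives
\[
 \Trop(X)=\trop(X^\an)=\trop\bigl(\overline{(X\cap T)^\an}\bigr)\subset\overline{\trop((X\cap T)^\an)}=\overline{\Trop(X)\cap N_\R},
\]
where the closures are taken in $\barNRD$. The reverse inclusion follows from the first paragraph, since $\Trop(X)$ is closed and contains $\Trop(X)\cap N_\R$. This proves the first claim.

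Finally, for a fixed $\sigma\in\Delta$, I would compute
\[
 \Trop(X)\cap N_\R(\sigma)=\overline{\Trop(X)\cap N_\R}\cap N_\R(\sigma)=\overline{\bigcup_{P\in\Sigma}P}\cap N_\R(\sigma).
\]
Because $\Sigma$ is finite, closure commutes with the union, so this equals $\bigcup_{P\in\Sigma}(\bar P\cap N_\R(\sigma))$. Lemma~\ref{lem:closure.polyhedron} says each summand is empty unless $\rho(P)$ meets $\relint(\sigma)$, in which case $\bar P\cap N_\R(\sigma)=\pi_\sigma(P)$. Collecting the non-empty terms yields the desired formula.

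The only non-formal input is the density statement $(X\cap T)^\an$ in $X^\an$, but this is standard from the construction of analytification, so no step looks like a serious obstacle; the argument is essentially bookkeeping around Lemma~\ref{lem:closure.polyhedron} and properness of~$\trop$.
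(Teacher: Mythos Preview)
Your proposal is correct and follows essentially the same approach as the paper. The paper simply cites \cite[Lemma~3.1.1]{osserman_payne13:lifting_tropical} for the first claim that $\Trop(X)$ is the closure of $\Trop(X)\cap N_\R$, whereas you spell out a direct proof using properness of $\trop$ and density of $(X\cap T)^\an$ in $X^\an$; the second claim is deduced from Lemma~\ref{lem:closure.polyhedron} in both your argument and the paper's.
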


\begin{proof}
  By~\cite[Lemma~3.1.1]{osserman_payne13:lifting_tropical}, we have that
  $\Trop(X)$ is the closure of $\Trop(X)\cap N_\R$ in $\barNRD$, and by
  Lemma~\ref{lem:closure.polyhedron}, for $P\in\Sigma$ such that
  $\rho(P)\cap\relint(\sigma)\neq\emptyset$, the closure $\bar P$ of $P$ in
  $\barNRD$ satisfies $\bar P\cap N_\R(\sigma) = \pi_\sigma(P)$.  Hence we get
  the last claim.
 \end{proof}

\begin{eg}\label{eg:counter.eg.2}
  We continue with Example~\ref{eg:counter.eg}.  It is clear that $P_i$ is
  $d$-maximal at all $\omega\in P_i$ for $i=1,2,3$.  Consider now the orbit
  $O(\sigma_1) = \{x_1=0, x_2x_3\neq0\}$. By Lemma~\ref{lem:closure.polyhedron}
  we have
  $\bar P_2\cap N_\R(\sigma_1) = \bar P_3\cap N_\R(\sigma_1) = \emptyset$, and
  \[ \bar P_1\cap N_\R(\sigma_1) = \Trop(X)\cap N_\R(\sigma_1) =
  \{\infty\}\times\{\omega_3=\omega_2\}. \]
  Therefore $\bar P_1\cap N_\R(\sigma_1)$ is $d$-maximal at all of its points.
  Similarly,
  $\bar P_1\cap N_\R(\sigma_3) = \bar P_3\cap N_\R(\sigma_3) = \emptyset$ and
  \[ \bar P_2\cap N_\R(\sigma_3) = \Trop(X)\cap N_\R(\sigma_3) =
  \{0\}\times\R\times\{\infty\}, \]
  which is $d$-maximal at all of its points.  Clearly
  \[ \Trop(X)\cap N_\R(\sigma) = N_\R(\sigma) = \{(\infty,\infty,\infty)\} \] 
  is $d$-maximal at its only point.  We have $X\cap O(\sigma_2) = X\cap
  O(\sigma_{12}) = X\cap O(\sigma_{13}) = \emptyset$.

  Now consider
  $\Trop(X)\cap N_\R(\sigma_{23}) = \R\times\{\infty\}\times\{\infty\}$, which
  we identify with $\R$.  We have
  \[\begin{split}
    \pi_{\sigma_{23}}(P_1) &= \bar P_1\cap N_\R(\sigma_{23})
    = [0,\infty) \\
    \pi_{\sigma_{23}}(P_2) &= \bar P_2\cap N_\R(\sigma_{23})
    = \{0\} \\
    \pi_{\sigma_{23}}(P_3) &= \bar P_3\cap N_\R(\sigma_{23})
    = (-\infty,0]. \\
  \end{split}\]
  Let $Q_i = \pi_{\sigma_{23}}(P_i) = \bar P_i\cap N_\R(\sigma_{23})$ for
  $i=1,2,3$.  Then $Q_1$ and $Q_3$ are $d$-maximal at all of their points, but
  $Q_2$ is not $d$-maximal at its point.  We will see in
  Theorem~\ref{thm:limit.points} that this is related to the fact in
  Example~\ref{eg:counter.eg} than $s_X(\omega'(r))$ does not approach a
  point of $\STrop(X)$ as $r\to\infty$ (note that $\omega'(r)\in\relint(P_2)$
  for $r>0$).
\end{eg}

\subsection{Relevant polyhedra for Shilov boundary points}

Let us consider a point $\xi \in \STrop(X)$ with tropicalization
$\omega \in \Trop(X) \cap N_\R(\sigma)$.  We will see that not every polyhedron
$P \subset \Trop(X) \cap N_\R(\sigma)$ is ``relevant'' for $\xi$ for the
purposes of checking limit points.

First, we assume that $\omega \in N_\Gamma(\sigma)$ and that the valuation is non-trivial. Then $X_\omega$ is a strictly affinoid domain of dimension $d(\omega)$ (see Lemma~\ref{lem:equiv.d.omega}) and we have a finite  surjective morphism $\iota:\td X_\omega\to\inn_\omega(X)$ of $d(\omega)$-dimensional affine schemes of finite type over ${\td K}$ (see~\ref{finite surjective morphism}). 
It is a fact of tropical geometry~\cite[Proposition~10.15]{gubler13:guide_tropical} that the local cone of the tropical variety at $\omega$ decomposes as 
\begin{equation} \label{localization of tropical variety} 
\LC_\omega(\Trop(X)) = \Trop(\inn_\omega(X))=     \bigcup_Z \Trop(Z),
\end{equation}
where $Z$ ranges over the irreducible components of $\inn_\omega(X)$ and where
the tropical varieties $\Trop(\inn_\omega(X))$ and $\Trop(Z)$ are taken with
respect to the trivial valuation on the residue field.

\begin{defn} \label{relevant definiton} 
  ~
  \begin{enumerate}
  \item Assume that the valuation on $K$ is non-trivial and that $\omega \in
    N_\Gamma(\sigma)$.
    Let $\xi$ be a Shilov boundary point of $X_\omega$. Then the reduction
    $\red(\xi)$ is a generic point of the canonical reduction $\td X_\omega$ and
    hence $\iota(\red(\xi))$ is the generic point of an irreducible component
    $Z$ of $\inn_\omega(X)$.  Put $\trop(\xi)=\omega \in N_\Gamma$.  We say that
    a polyhedron $P \subset \Trop(X)\cap N_\R(\sigma)$ is
    {\it relevant for $\xi$} if $\omega \in P$ and $P \subset \Trop(Z)+ \omega$. 
    Equivalently, $P$ has a non-empty local cone at $\omega$ contained in
    $\Trop(Z)$ (see Lemma~\ref{irreducible component and polyhedral complex}). 

  \item For general $\omega\in N_\R(\sigma)$, we choose a valued extension field $K' \supset K$
    whose value group $\Gamma'$ is non-trivial and large enough that
    $\omega \in N_{\Gamma'}(\sigma)$. Let $X'=X_{K'}$ and let $\pi:X' \to X$ be the
    canonical morphism. Then it follows from Proposition~\ref{prop:shilov.surj}
    that $\pi(B(X_\omega')) = B(X_\omega)$. We conclude that there is
    $\xi' \in B(X_\omega') $ with $\pi(\xi')=\xi$.  We say that the polyhedron
    $P\subset \Trop(X)\cap N_\R(\sigma)$ is {\it relevant for $\xi$} if there is
    such a $\xi'$ with $P$ relevant for $\xi'$ in the sense of (1).
  \end{enumerate}
\end{defn}

\begin{lem} \label{relevant is well-defined}
The above definition of $\xi$-relevance for a polyhedron $P \subset \Trop(X) \cap N_\R(\sigma)$     does not depend on the choice of $K'$.
\end{lem}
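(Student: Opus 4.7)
The plan is to exploit the existence of common valued extensions. Given two candidates $K_1'$ and $K_2'$ for the field in Definition~\ref{relevant definiton}(2), I would invoke \cite[(0.3.2)]{ducros09:excellent} to find a common valued extension $K''$ containing both. By symmetry it then suffices to prove invariance under passing from a single valued extension $K'$ of $K$ to a larger valued extension $K''$, with both still satisfying the hypotheses of Definition~\ref{relevant definiton}(1) for $\omega$.

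Fix such an enlargement $K' \subseteq K''$ and let $\pi\colon X''_\omega \to X'_\omega$ denote the structural map. Two ingredients drive the proof. First, Proposition~\ref{prop:shilov.surj} yields $\pi(B(X''_\omega)) = B(X'_\omega)$; hence any Shilov witness $\xi' \in B(X'_\omega)$ lying above $\xi$ lifts to some $\xi'' \in B(X''_\omega)$ lying above $\xi$, and conversely the image $\pi(\xi'')$ of any such $\xi''$ is itself a Shilov witness above $\xi$. Second, I claim that for matched pairs $\xi'' \mapsto \xi'$ the corresponding irreducible components $Z'' \subset \inn_\omega(X'')$ and $Z' \subset \inn_\omega(X')$, coming from $\iota''(\red(\xi''))$ and $\iota'(\red(\xi'))$ respectively, satisfy $\Trop(Z'') = \Trop(Z')$. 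Granted this claim, the condition $P \subseteq \Trop(Z') + \omega$ is equivalent to $P \subseteq \Trop(Z'') + \omega$, so the two notions of $\xi$-relevance coincide.

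For the claim, the inclusion $K'\angles{U_\omega}^\circ \hookrightarrow K''\angles{U_\omega}^\circ$ is integral, and the same holds modulo the defining ideals, so there is an induced finite morphism $\inn_\omega(X'') \to \inn_\omega(X')$ factoring through the base change $\inn_\omega(X') \otimes_{\td K'} \td K''$. Functoriality of the canonical reduction and of the maps $\iota', \iota''$ of Remark~\ref{finite surjective morphism} forces $\iota''(\red(\xi''))$ to lie over $\iota'(\red(\xi'))$, so $Z''$ is carried finite-surjectively onto an irreducible component $W$ of $Z' \otimes_{\td K'} \td K''$. Since tropicalization is insensitive to trivially-valued field extensions (applied to $W \to Z'$) and is preserved by finite surjections of irreducible closed subschemes of a common torus (applied to $Z'' \to W$), we conclude $\Trop(Z'') = \Trop(W) = \Trop(Z')$. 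The main technical obstacle is verifying these base-change compatibilities of tropical formal models and initial degenerations in the required generality; this should be unwound from the explicit description of $K\angles{U_\omega}^\circ$ as an admissible $K^\circ$-algebra for $\omega \in N_\Gamma$ recalled in~\S\ref{par:notation.affinoid.algebras}, together with the standard identification of the local cone of $\Trop(X)$ at $\omega$ via~\eqref{localization of tropical variety}.
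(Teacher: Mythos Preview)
Your proposal is correct and follows essentially the same route as the paper's proof: invoke \cite[(0.3.2)]{ducros09:excellent} for a common valued extension, lift Shilov witnesses via Proposition~\ref{prop:shilov.surj}, and use functoriality of reduction and of the map $\iota$ of Remark~\ref{finite surjective morphism} to compare the irreducible components $Z$ across field extensions, concluding $\Trop(Z'')=\Trop(Z')$. The paper organizes this as a one-directional transfer $K' \to K''' \to K''$ rather than your symmetric reduction to an enlargement $K'\subset K''$, but the substance is identical.

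One small simplification: where you argue that $K'\angles{U_\omega}^\circ \hookrightarrow K''\angles{U_\omega}^\circ$ is integral and deduce a finite map on initial degenerations factoring through the base change, the paper simply uses that $\inn_\omega(X'') = \inn_\omega(X')\otimes_{\td K'}\td K''$ on the nose (compatibility of the tropical formal model with ground-field extension). This makes the identification of $Z''$ with an irreducible component of $Z'\otimes_{\td K'}\td K''$ immediate, and the equality of tropicalizations follows directly from invariance of $\Trop$ under trivially-valued base change. Your version is not wrong, just slightly more roundabout; you can replace the integrality discussion with this base-change identity and drop the intermediate component $W$.
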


\begin{proof}
Suppose that $P$ is relevant for $\xi$ with respect to $\xi'\in B(X_\omega')$
as in Definition~\ref{relevant definiton}(2).
We choose another  valued extension field $K'' \supset K$ with value group
$\Gamma''$  non-trivial and with $\omega \in N_{\Gamma''}(\sigma)$. Setting $X''\coloneq
X_{K''}$, we have to show that there is a $\xi''\in B(X_\omega'')$ over $\xi$
such that $P$ is relevant for $\xi''$ in the sense of Definition~\ref{relevant
  definiton}(1). By~\cite[(0.3.2)]{ducros09:excellent}, there is a complete
valued extension field $K'''$ of $K'$ and $K''$ simultaneously; we let $X'''=X_{K'''}$. It follows from Proposition~\ref{prop:shilov.surj} that the Shilov boundary of $X_\omega'''$ maps onto $B(X_\omega')$ and also onto $B(X_\omega'')$. We choose a preimage $\xi''' \in B(X_\omega''')$ of $\xi'$ and let $Z'''$ be the irreducible component of $\inn_\omega(X''')$ with generic point $\iota(\red(\xi'''))$. It follows from functoriality of the reduction and the initial degenerations that the canonical map 
$$\inn_\omega(X''')= \inn_\omega(X') \otimes_{{\td K}'} {\td K}''' \to \inn_\omega(X')$$ 
maps $Z'''$ onto $Z'$, the closure of $\iota(\xi')$. 
As $Z'''\to Z'$ is surjective, we have $\Trop(Z''') = \Trop(Z')$. By assumption, $P$ is contained in $\Trop(Z')$. 
We conclude that $P$ is relevant for $\xi'''$. Let $\xi'' \in B(X_\omega'')$ be
the image of $\xi'''$ under the analytification of the canonical map $X''' \to
X''$. A similar argument as above shows that $Z'''$ surjects onto an irreducible component $Z''$ of  $\inn_\omega(X'')$ and that $\red(\xi'')$ maps to the generic point of $Z''$. Since we have $\Trop(Z'')=\Trop(Z''')$, we conclude that $P$ is relevant for $\xi''$. Since $\xi''$ is lying over $\xi$, this proves the claim. 
\end{proof}

\begin{lem} \label{irreducible component and polyhedral complex}
Let $U$ be a closed $d$-dimensional subscheme of the torus $T$ and let $\Sigma$ be a polyhedral complex with support equal to $\Trop(U)$. If $Y$ is a $d$-dimensional irreducible component of $U$, then there is a subcomplex of $\Sigma$ with support equal to $\Trop(Y)$.  
\end{lem}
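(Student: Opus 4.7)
The natural candidate is the subcomplex
\[ \Sigma' \coloneq \{Q \in \Sigma : Q \subset \Trop(Y)\}, \]
which is automatically closed under taking faces. The task is to show its support equals $\Trop(Y)$. I would first observe that, by the Bieri--Groves theorem applied to the irreducible $d$-dimensional $Y$, the set $\Trop(Y)$ is a pure rational polyhedral subset of $N_\R$ of dimension $d$; moreover, by the structure theorem for tropical varieties, it carries tropical multiplicities satisfying the balancing condition.

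The key step is the following dichotomy: for every $d$-dimensional cell $P \in \Sigma$, either $P \subset \Trop(Y)$ or $\relint(P) \cap \Trop(Y) = \emptyset$. To prove it, pick $\omega \in \relint(P) \cap \Trop(Y)$ if any exists. Since $\Sigma$ covers $\Trop(U)$ and no other cell of $\Sigma$ meets $\relint(P)$, the local cone $\LC_\omega(\Trop(U))$ equals the linear span $V$ of $P-\omega$, so $\LC_\omega(\Trop(Y)) \subset V$; on the other hand, $\LC_\omega(\Trop(Y))$ is a pure $d$-dimensional positively weighted balanced fan. I would then invoke the following fact, which is the technical crux: a pure $d$-dimensional balanced positively weighted fan sitting inside a $d$-dimensional linear space is either empty or has support equal to the whole space, since otherwise the topological boundary of its support in $V$ would contain a codimension-one cone at which the balancing relation fails. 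Because $\omega \in \Trop(Y)$ the local cone is nonempty, hence equals $V$, and a neighborhood of $\omega$ in $\relint(P)$ lies in $\Trop(Y)$. Thus $\relint(P) \cap \Trop(Y)$ is open in $\relint(P)$; being also closed there, connectedness of $\relint(P)$ yields the dichotomy.

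Granting this, I show the support of $\Sigma'$ equals $\Trop(Y)$. Let $\omega \in \Trop(Y)$ and let $Q \in \Sigma$ be the unique cell with $\omega \in \relint(Q)$. If $\dim Q = d$ the dichotomy forces $Q \subset \Trop(Y)$, so $Q \in \Sigma'$. If $\dim Q < d$, pure $d$-dimensionality of $\Trop(Y)$ implies that every small neighborhood of $\omega$ meets $\relint(P)$ for some $d$-dimensional cell $P \in \Sigma$ with $\omega$ in the closure of $P$; by the dichotomy $P \subset \Trop(Y)$, and since the polyhedral complex structure forces $Q \prec P$, we obtain $Q \subset P \subset \Trop(Y)$, whence $Q \in \Sigma'$.

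The main obstacle is the balancing lemma used in the dichotomy: without the balancing condition coming from the structure theorem, a pure $d$-dimensional sub-fan of a $d$-dimensional linear space need not have full support, so the entire argument hinges on equipping $\Trop(Y)$ with its intrinsic tropical weights.
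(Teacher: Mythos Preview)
Your proof is correct and follows essentially the same strategy as the paper's: both reduce to the dichotomy that a $d$-dimensional cell $P\in\Sigma$ with $\relint(P)\cap\Trop(Y)\neq\emptyset$ must lie entirely in $\Trop(Y)$, and both use the balancing condition (equivalently, Bieri--Groves total concavity) as the crux. The only difference is packaging: the paper argues by contradiction at a boundary point of $P\cap\Trop(Y)$ inside $\relint(P)$ and directly produces a second cell of $\Sigma$ through that point, whereas you pass to the local cone, invoke the lemma that a nonempty balanced pure $d$-dimensional fan inside a $d$-dimensional linear space must fill it, and conclude via an open--closed--connected argument on $\relint(P)$.
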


\begin{proof}
By the Bieri--Groves theorem, $\Trop(Y)$ is a finite union of $d$-dimensional polyhedra and hence $\Trop(Y)$ is covered by the $d$-dimensional polyhedra in $\Sigma$. It is enough to show that any $d$-dimensional $P \in \Sigma$ with $\relint(P) \cap \Trop(Y) \neq \emptyset$ is necessarily contained in $\Trop(Y)$. If not, then 
$P \cap \Trop(Y)$ has a boundary point $\omega$ with respect to  $P$. We may assume that $\omega$ is in $\relint(P)$ and also in the relative interior of a $(d-1)$-dimensional face $\tau$ of a $d$-dimensional polyhedron $Q \subset P \cap \Trop(Y)$. Let $H$ be a supporting hyperplane of the face $\tau$ of $Q$. By construction, there is a neighbourhood $\Omega$ of $\omega$ in $N_\R$ such that $\Omega \cap P \cap \Trop(Y)= \Omega \cap Q$  is contained in a half space $H^+$ bounded by $H$.

Bieri--Groves~\cite[Theorem~D]{bieri_groves84:geometry_set_character_induced_valuation} proved that $\Trop(Y)$ is totally concave in $\omega$ which means that there is a  polyhedron $R \subset \Trop(Y)$ through $\omega$ and intersecting the complement of $H^+$. This can also be deduced from the balancing condition. Since  $\Omega \cap P \cap \Trop(Y) \subset H^+$, we conclude that $R \cap P \subset H^+$. Using that $R$ expands from $\omega$ into the complement of $H^+$ and that $\Trop(Y) \subset \Trop(U)$, we deduce that there is a  face $S \in \Sigma$ through $\omega$ with $S \neq P$. Since $\Sigma$ is a polyhedral complex, this contradicts $\omega \in \relint(P)$. This proves $P \subset \Trop(Y)$.
\end{proof}

\begin{prop} \label{properties of relevant}
Let $\xi \in \STrop(X)$ with tropicalization $\omega \in N_\R(\sigma)$. Let $K' \supset K$ be a  valued extension field with non-trivial value group $\Gamma'$ and with $\omega \in N_{\Gamma'}$. 
\begin{enumerate}
\item  A polyhedron $P \subset \Trop(X) \cap N_\R(\sigma)$ is relevant for $\xi$ with respect to the closed subscheme $X$ of the toric variety $Y_\Delta$ if and only if $P$ is relevant for $\xi$ with respect to the affine closed subscheme $X \cap O(\sigma)$ of the orbit $O(\sigma)$. 
\item We set $X' \coloneq X_{K'}$. The local cone at $\omega$ of the union of all $\xi$-relevant polyhedra is the tropical variety of an  irreducible component $Z$ of $\inn_\omega(X')$ with respect to the trivial valuation and with $\dim(Z)=\dim_\xi(X)$. 
\item Let $\Sigma$ be a polyhedral complex in $N_\R(\sigma)$ with support equal to $\Trop(X) \cap N_\R(\sigma)$. If $\dim_\xi(X^\an)=d(\omega)$, then in (2) it is enough to consider
  $\xi$-relevant $d(\omega)$-dimensional polyhedra in $\Sigma$. Moreover, the above
  local cone  and $Z$ are both of dimension $d(\omega)$. In particular, there is a  polyhedron in $\Sigma$ which is $d$-maximal at $\omega$ and which is relevant for $\xi$.
\item If $m_{\Trop}(\omega)=1$, then every polyhedron in $\Trop(X) \cap N_\R(\sigma)$ containing $\omega$ is relevant for $\xi$. 
\end{enumerate}
\end{prop}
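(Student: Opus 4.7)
The plan is to address the four parts in order. Part~(1) will be essentially tautological; parts~(2)--(4) hinge on the correspondence between Shilov boundary points of $X_\omega$, generic points of $\td X_\omega$, and irreducible components of $\inn_\omega(X')$ recorded in Remark~\ref{finite surjective morphism}.

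For (1), I observe that $\xi \in (X\cap O(\sigma))^\an$ and $U_\omega \subset O(\sigma)^\an$, so the affinoid $X_\omega$, its canonical reduction, and its initial degeneration depend only on $X \cap O(\sigma)$. Consequently every ingredient of Definition~\ref{relevant definiton} coincides for $X$ and $X\cap O(\sigma)$, and the equivalence in~(1) is automatic.

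For (2), I use (1) to reduce to $Y_\Delta = T$, and then apply Proposition~\ref{prop:shilov.surj} to pick $\xi' \in B(X'_\omega)$ lying over $\xi$; $\xi$-relevance and $\xi'$-relevance agree by Lemma~\ref{relevant is well-defined}. Let $Z$ be the irreducible component of $\inn_\omega(X')$ whose generic point is $\iota(\red(\xi'))$. By the local-cone reformulation in Definition~\ref{relevant definiton}(1), every $\xi$-relevant polyhedron $P$ satisfies $\LC_\omega(P) \subset \Trop(Z)$, so the local cone of the union of all $\xi$-relevant polyhedra is contained in $\Trop(Z)$. Conversely, given $v \in \Trop(Z) \subset \LC_\omega(\Trop(X))$ (using~\eqref{localization of tropical variety}), for $t > 0$ sufficiently small the segment $[\omega,\omega + tv]$ lies in $\Trop(X) \cap N_\R$ and in $\Trop(Z) + \omega$ (since $\Trop(Z)$ is closed under nonnegative scaling from $0$), and is thus $\xi$-relevant and contributes $v$ to the local cone. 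For the dimension identity, Lemma~\ref{lem:bdy.kernel} applied to $\cA_\omega$ and the flatness argument from the proof of Proposition~\ref{prop:union.of.comps} place $\xi$ in a unique irreducible component $X_i$ of $X$ with $\dim_\xi(X) = \dim(X_i)$, and force the component $Y$ of $X_\omega$ containing $\xi$ to lie in $(X_i)_\omega$ with $\dim(Y) = \dim(X_i)$ (its function field extends that of $X_i$, and $Y$ is an affinoid subdomain of $X_i^\an$). Proposition~\ref{prop:canon.red.facts}(3) then gives the component of $\td Y \subset \td X_\omega$ containing $\red(\xi)$ dimension $\dim(X_i)$; finiteness of $\iota$ and invariance of dimension under base change to $K'$ yield $\dim(Z) = \dim(X_i) = \dim_\xi(X)$.

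For (3), the hypothesis $\dim_\xi(X^\an) = d(\omega)$ combined with~(2) gives $\dim(Z) = d(\omega)$ and hence $\dim\Trop(Z) = d(\omega)$ by the Bieri--Groves theorem applied to $Z$ over the trivially valued residue field of $K'$. The local cones $\{\LC_\omega(P)\mid P \in \Sigma,\,\omega \in P\}$ form a polyhedral complex on $\LC_\omega(\Trop(X)) = \Trop(\inn_\omega(X'))$; applying Lemma~\ref{irreducible component and polyhedral complex} with $Y = Z$ shows that $\Trop(Z)$ is a union of $d(\omega)$-dimensional cones in this complex, so the corresponding polyhedra in $\Sigma$ through $\omega$ are $d$-maximal at $\omega$ and $\xi$-relevant. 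For~(4), the hypothesis $m_{\Trop}(\omega) = 1$ forces $\inn_\omega(X')$ to consist of a single irreducible component $Z$ of multiplicity~$1$, so every Shilov preimage reduces to the generic point of $Z$; then for any polyhedron $P \ni \omega$ in $\Trop(X) \cap N_\R(\sigma)$, the inclusions $P - \omega \subset \R_{\geq 0}(P-\omega) = \LC_\omega(P) \subset \LC_\omega(\Trop(X)) = \Trop(Z)$ yield $P \subset \Trop(Z) + \omega$, i.e.\ $\xi$-relevance. The main obstacle I anticipate is the dimension identity in~(2), which requires carefully tracking components through the sequence $X \leadsto X_\omega \leadsto \td X_\omega \leadsto \inn_\omega(X) \leadsto \inn_\omega(X')$ using several results from Section~\ref{sec:analytic spaces}.
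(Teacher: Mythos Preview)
Your argument follows the paper's own line closely, and parts~(1), (3), (4), and the identification of the local cone with $\Trop(Z)$ in~(2) are all essentially the paper's reasoning (the paper records the latter as equation~\eqref{localization of tropical variety2}, which is exactly your two-inclusion argument compressed into one line).

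There is, however, one genuine gap in your dimension computation for~(2). You apply Proposition~\ref{prop:canon.red.facts}(3) to ``the component of $\td Y \subset \td X_\omega$ containing $\red(\xi)$'', but the canonical reduction $\td X_\omega$ and the map $\red$ are only defined in the paper for \emph{strictly} $K$-affinoid spaces over a nontrivially valued $K$ (see \S\ref{par:notation.affinoid.algebras}--\S\ref{par:canonical-reduction}). The hypotheses of the proposition only guarantee $\omega \in N_{\Gamma'}$ for the extension field $K'$, not $\omega \in N_{\sqrt\Gamma}$; thus $X_\omega$ need not be strictly $K$-affinoid, and the objects $\td X_\omega$ and $\red(\xi)$ you invoke may simply fail to exist over $K$. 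The map $\iota$ you later cite is likewise defined over $K'$ (it is $\td X'_\omega \to \inn_\omega(X')$ from Remark~\ref{finite surjective morphism}), so your passage from ``$\td X_\omega$'' to $Z$ via $\iota$ does not connect.

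The paper avoids this by carrying the entire dimension argument out over $K'$ from the start: it uses Proposition~\ref{prop:union.of.comps} (applied to $\xi$ over $K$) to conclude that $X'$ is equidimensional of dimension $\dim_\xi(X)$ in a neighbourhood of the chosen lift $\xi'$, and then applies Proposition~\ref{prop:canon.red.facts} and Proposition~\ref{prop:two.reductions} to the strictly $K'$-affinoid space $X'_\omega$ to obtain $\dim(Z) = \dim_\xi(X)$. Your argument is easily repaired by doing the same: replace $\xi$, $Y$, $X_\omega$, $\td X_\omega$, $\red(\xi)$ by $\xi'$, $Y'$, $X'_\omega$, $\td X'_\omega$, $\red(\xi')$ throughout the dimension step, and drop the final ``base change to $K'$'' since you are already there.
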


\begin{proof} 
Property  (1) is obvious from the definition. 
By Lemma~\ref{lem:Strop.extend.scalars}, there is $\xi' \in \STrop(X')$ over $\xi$ for the given valued extension field $K' \supset K$. Then the canonical reduction $\red(\xi') \in \td X_\omega'$ maps to the generic point of an irreducible component $Z$ of $\inn_\omega(X')$. By Proposition~\ref{prop:union.of.comps}, $\xi$ is contained in a unique irreducible component and hence $X'$ is equidimensional in a neighbourhood of  $\xi'$ of dimension $\dim_\xi(X)$. By Proposition~\ref{prop:canon.red.facts} and Proposition~\ref{prop:two.reductions}, we deduce that $\dim(Z)=\dim_\xi(X)$. 
It follows from~\eqref{localization of tropical variety} that 
\begin{equation} \label{localization of tropical variety2} 
 \Trop(Z) = \bigcup_{ P \subset \Trop(Z) + \omega} \LC_\omega(P),
\end{equation}
 where $P$ ranges over all polyhedra contained in $\Trop(X)$ with  $ \omega \in P \subset \Trop(Z)+ \omega$.
  This proves (2). 

 If $\dim_\xi(X^\an)=d(\omega)$, 
then~(2) shows that $Z$ is $d(\omega)$-dimensional and the same is true for $\inn_\omega(X')$ by Lemma~\ref{lem:equiv.d.omega}. 
We apply Lemma~\ref{irreducible component and polyhedral complex} to the fan $\{\LC_\omega(P)\mid P \in \Sigma\}$ which has support $\Trop(\inn_\omega(X'))$ by~\eqref{localization of tropical variety}. This shows that $\Trop(Z)$ is the union of the $d(\omega)$-dimensional $(\LC_\omega(P))_{P \in \Sigma}$ contained in $\Trop(Z)$, which proves~(3).

Let $P$ be a polyhedron in $\Trop(X)$ containing $\omega$. Then $\LC_\omega(P)$ is non-empty and contained in $\Trop(\inn_\omega(X))=\LC_\omega(\Trop(X))$ by  \eqref{localization of tropical variety}.     
If $m_{\Trop}(\omega)=1$, then $Z\coloneq\inn_\omega(X)$ is irreducible, and~(4) follows.
\end{proof}

In the following important lemma, $X$ is any closed subscheme of the
multiplicative torus $T$ over $K$ with character lattice $N$.  Recall that
$s(\omega)$ denotes the point of $S(T^\an) = \STrop(T)$ lying above
$\omega\in N_\R$, that $X_\omega = \trop\inv(\omega)\cap X^\an$, and that
$B(X_\omega)$ denotes the Shilov boundary of the affinoid space $X_\omega$.

\begin{lem}\label{lem:main.lemma}
  Choose a polyhedral
  complex structure $\Sigma$ on $\Trop(X)$.  Let $\omega\in\Trop(X)$ and let
  $d \coloneq d(\omega)$.  We consider a homomorphism
  $\psi\colon T\to\bG_m^d$, let $f\colon N_\R\to\R^d$ be the induced linear map, let
  $\phi = \psi|_X$, and let $\omega' = f(\omega)$.  Then  $\phi\inv(s(\omega'))\cap X_\omega$ is equal to the set of Shilov boundary points $\xi \in B(X_\omega)$ for which there is a $P \in \Sigma$ satisfying the following three conditions:
	\begin{enumerate}
	\item $P$ is $d$-maximal at $\omega$;
	\item $P$ is relevant for $\xi$;
	\item $f$ is injective on $P$.
	\end{enumerate}
For such a $\xi$, we have always $\dim_\xi(X_\omega) = d$.
\end{lem}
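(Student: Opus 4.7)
The plan is to use the reduction map to convert the analytic statement about $\phi\inv(s(\omega'))$ into a scheme-theoretic one about $\td X_\omega$ and $\inn_\omega(X)$, and then to translate the resulting dominance condition into an injectivity condition on cones via Bieri--Groves and Lemma~\ref{irreducible component and polyhedral complex}.  First I would reduce to the case where $K$ is algebraically closed, non-trivially valued, and $\omega\in N_\Gamma$: Lemma~\ref{lem:Strop.extend.scalars} lifts any $\xi\in\STrop(X)$ to a point $\xi'\in\STrop(X_{K'})$ lying over $\xi$, Proposition~\ref{prop:shilov.surj} guarantees compatibility of Shilov boundaries, and Lemma~\ref{relevant is well-defined} shows that $\xi$-relevance is preserved.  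After this reduction, $X_\omega$ is strictly $K$-affinoid with a well-behaved canonical reduction $\td X_\omega$, and $\td U'_{\omega'}\cong\bG_{m,\td K}^d$ is integral of dimension~$d$.

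The central step is to pass from $\phi$ to reductions.  By Proposition~\ref{prop:can.red.surj}(2), $s(\omega')$ is the unique preimage under $\red$ of the generic point of $\td U'_{\omega'}$, so $\phi(\xi)=s(\omega')$ holds if and only if $\td\phi(\red(\xi))$ is that generic point, where $\td\phi\colon\td X_\omega\to\td U'_{\omega'}$ is induced by $\phi$.  Factor $\td\phi=\bar\phi\circ\iota$ using the finite surjective morphism $\iota\colon\td X_\omega\to\inn_\omega(X)$ from Remark~\ref{finite surjective morphism}, where $\bar\phi\colon\inn_\omega(X)\to\td U'_{\omega'}$ comes from $\psi$ on initial degenerations.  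Since $\td U'_{\omega'}$ is integral of dimension $d=\dim(\inn_\omega(X))$ (Lemma~\ref{lem:equiv.d.omega}), for any irreducible component $Z$ of $\inn_\omega(X)$ the restriction $\bar\phi|_Z$ hits the generic point of $\td U'_{\omega'}$ precisely when $\dim(Z)=d$ and $\bar\phi|_Z$ is dominant; in that case $\bar\phi|_Z$ is generically finite between integral schemes of the same dimension, so the preimage in $Z$ of the generic point is just the generic point $\eta_Z$.  Consequently, $\td\phi(\red(\xi))$ is generic only when $\red(\xi)$ is itself a generic point of some component $\td X_\omega^{(i)}$ of $\td X_\omega$, the corresponding component $Z$ of $\inn_\omega(X)$ has dimension $d$, and $\bar\phi|_Z$ is dominant.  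By Proposition~\ref{prop:can.red.surj}(3) this forces $\phi\inv(s(\omega'))\cap X_\omega\subset B(X_\omega)$, and yields $\dim_\xi(X_\omega)=\dim(Z)=d$ for any such $\xi$.

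It remains to translate dominance of $\bar\phi|_Z$ into the polyhedral conditions~(1)--(3).  Since both $Z$ and $\td U'_{\omega'}\cong\bG_{m,\td K}^d$ have dimension $d$, Bieri--Groves applied over the trivially valued $\td K$ shows that $\bar\phi|_Z$ is dominant if and only if $f(\Trop(Z))$ has dimension $d$, equivalently, there is a $d$-dimensional cone of $\Trop(Z)$ on which $f$ is injective.  By~\eqref{localization of tropical variety}, $\{\LC_\omega(P)\mid P\in\Sigma\}$ is a polyhedral fan with support $\Trop(\inn_\omega(X))$, so Lemma~\ref{irreducible component and polyhedral complex} applied to the $d$-dimensional component $\Trop(Z)$ shows that every maximal cone of $\Trop(Z)$ has the form $\LC_\omega(P)$ for some $P\in\Sigma$ that is $d$-maximal at $\omega$ and contained in $\Trop(Z)+\omega$; such $P$ are precisely the $\xi$-relevant members of $\Sigma$ that are $d$-maximal at $\omega$.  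Injectivity of $f$ on $\LC_\omega(P)$ is equivalent to injectivity of $f$ on $P$, and the converse direction is immediate from the same dictionary.

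The main obstacle is the middle paragraph: one must show that the preimage of $s(\omega')$ under the possibly non-finite map $\phi$ lands in $B(X_\omega)$ and picks out exactly those Shilov points whose associated component of $\inn_\omega(X)$ has maximal dimension and projects surjectively onto $\bG_{m,\td K}^d$.  Once that is in place, Lemma~\ref{irreducible component and polyhedral complex} together with~\eqref{localization of tropical variety} carries out the remaining polyhedral translation in a straightforward fashion.
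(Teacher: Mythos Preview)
Your overall strategy is correct and matches the paper's approach closely: factor $\td\phi$ through $\iota\colon\td X_\omega\to\inn_\omega(X)$, identify $\phi\inv(s(\omega'))\cap X_\omega$ with those Shilov points whose associated component $Z$ of $\inn_\omega(X)$ has dimension $d$ and dominates $\bG_{m,\td K}^d$, then translate dominance into the existence of a $d$-maximal relevant $P$ on which $f$ is injective via~\eqref{localization of tropical variety} and Lemma~\ref{irreducible component and polyhedral complex}.  The special case ($\omega\in N_\Gamma$, valuation non-trivial) is handled essentially as in the paper.

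There is, however, a genuine gap in your reduction to the special case.  For the inclusion $\phi\inv(s(\omega'))\cap X_\omega\subset B(X_\omega)$ you start with an arbitrary $\xi\in\phi\inv(s(\omega'))\cap X_\omega$; at this point you do \emph{not} know that $\xi$ is a Shilov boundary point, so neither Lemma~\ref{lem:Strop.extend.scalars} nor Proposition~\ref{prop:shilov.surj} applies to produce a lift.  What you need is a point $\xi'\in X'_\omega$ with $\pi(\xi')=\xi$ and simultaneously $\phi_{K'}(\xi')=s'(\omega')$; a generic lift of $\xi$ need not satisfy the second condition, since $\pi\inv(s(\omega'))\subset U'_{\omega'}$ is typically much larger than $\{s'(\omega')\}$.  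The paper constructs such a $\xi'$ by taking any point in the image of the natural map
\[
\sM\big(\sH(\xi)\,\hat\tensor_{\sH(s(\omega'))}\,\sH(s'(\omega'))\big)\To X'_\omega,
\]
whose non-emptiness follows from~\cite[Theorem~1.2.1]{berkovic90:analytic_geometry} together with~\cite[(0.3.2)]{ducros09:excellent}.  Once you have this lift, the special case gives $\xi'\in B(X'_\omega)$ with a $P$ satisfying (1)--(3), and then the descent to $\xi$ works exactly as you indicate (using Proposition~\ref{prop:shilov.surj} and the definition of relevance).  The reverse inclusion, where you start with $\xi\in B(X_\omega)$, is correctly reduced by your cited lemmas.
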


\begin{proof}
  Assume to begin that the valuation on $K$ is non-trivial and that
  $\omega\in N_\Gamma$.  Let
  $U'_{\omega'} = \trop\inv(\omega')\subset\bG_m^{d,\an}$.  We have
  $\phi(X_\omega)\subset U'_{\omega'}$ due to the commutativity of the square
  \[\xymatrix @=.2in{
    {X^\an} \ar[r]^{\phi} \ar[d]_{\trop} & {\bG_m^{d,\an}} \ar[d]^{\trop} \\
    {\Trop(X)} \ar[r]^(.6){f} & {\R^d} }\]
  Consider the morphism $\phi\colon X_\omega\to U'_{\omega'}$ and the induced
  morphism on canonical reductions
  $\td\phi\colon\td X_\omega\to\td U'_{\omega'}\cong\bG_{m,\td K}^d$.  If
  $\xi\in X_\omega$ maps to $s(\omega')$ then by functoriality of the reduction
  map, we have that $\red(\xi)\in\td X_\omega$ maps to the generic point of
  $\bG_{m,\td K}^d$.  Since $d(\omega) = \dim(\td X_\omega) = d$ by
  Lemma~\ref{lem:equiv.d.omega}, it follows that $\red(\xi)$ is a generic point
  of $\td X_\omega$, so $\xi$ is a Shilov boundary point of $X_\omega$ by
  Proposition~\ref{prop:can.red.surj}.  This proves that
  $\phi\inv(s(\omega'))\cap X_\omega$ is contained in the Shilov boundary
  $B(X_\omega)$.  Since $\red(\xi)$ is the generic point of an irreducible
  component $Z'$ of $\td X_\omega$ of dimension $d$, the irreducible component  of
  $X_\omega$ containing $\xi$ must also have dimension $d$.  Hence $\dim_\xi(X_\omega) = d$.

Next, we show that $\xi$ satisfies properties (1)--(3). Note that $\td\phi\colon\td X_\omega\to\td U'_{\omega'}\cong\bG_{m,\td K}^d$ has a  canonical factorization 
\begin{equation} \label{canonical factorization}
\td X_\omega \To \inn_\omega(X)  \xrightarrow{\inn_\omega(\phi)}
  \inn_{\omega'}(\bG_{m,K}^d) \cong \bG_{m,\td K}^d.
	\end{equation}
We have seen in  Remark \ref{finite surjective morphism} that the first map is surjective and finite. We conclude that $Z'$ maps onto a $d$-dimensional irreducible component $Z$ of $\inn_\omega(X)$. Since $\red(\xi)$ maps to the generic point of $\bG_{m,\td K}^d$,  the restriction of $\inn_\omega(\phi)$ to $Z$ is dominant. 
By functoriality of the tropicalization, we get a commutative square 
\begin{equation} \label{commutative square}
\xymatrix @R=.2in{
  {Z^\an} \ar[r]^{\inn_\omega(\phi)} \ar[d]_{\trop} &
  {\bG_{m,\td K}^{d,\an}} \ar[d]^{\trop} \\
  {\Trop(Z)} \ar[r]_(.6)f & {\R^d}
}
\end{equation}
using that the tropicalization of $\inn_\omega(\phi)$ agrees with the restriction of $f$. The Bieri--Groves theorem shows that $\Trop(Z)$ is a $d$-dimensional polyhedral fan. Since the tropicalization maps are surjective and since $\inn_\omega(\phi)(Z)$ is dense in $\bG_{m,\td K}^d$, commutativity of the diagram~\eqref{commutative square} shows that $f(\Trop(Z))=\R^d$. By Proposition~\ref{properties of relevant} (3), there is a $d$-dimensional polyhedron $P\subset\Trop(Z)$ in $\Sigma$ containing $\omega$ such that $f(P)$ is $d$-dimensional. In other words, $P$ is $d$-maximal at $\omega$ and relevant for $\xi$. Moreover, $f|_P$ is injective. This proves~(1)--(3).

Conversely, we assume that $\xi \in B(X_\omega)$ has a polyhedron $P \in \Sigma$ satisfying (1)--(3). 
	We must
  show that  $\td\phi(\red(\xi))$ is the generic point of
  $\bG_{m,\td K}^d$.  
  The first map in \eqref{canonical factorization} is finite and surjective, hence it maps $Z'$ onto an irreducible component $Z$ of $\inn_\omega(X)$. Since $P$ is relevant for $\xi$ by (2), we conclude that $P \subset \Trop(Z) + \omega$. Property (1) says that $P$ is $d$-maximal at $\omega$ which means that $\dim(P)=d(\omega)=d$. It follows from Lemma \ref{lem:equiv.d.omega} and the Bieri--Groves theorem that $\dim(Z)=d$. Using again the commutative diagram 
\eqref{commutative square}, we deduce from property (3) that the tropicalization of $\inn_\omega(\phi)(Z)$ contains a $d$-dimensional polyhedron. This is only possible if the generic point of $Z$ maps to the generic point of $\bG_{m,\td K}^d$. Using the factorization \eqref{canonical factorization} of $\td \varphi$ and that the first map takes $\red(\xi)$ to the generic point of $Z$, we deduce that $\td \varphi$ maps $\red(\xi)$
to the generic point of $\bG_{m,\td K}^d$. This proves $\xi \in \phi\inv(s(\omega'))\cap X_\omega$ and hence we have shown the Lemma in case of  a non-trivial valuation on $K$ with
  $\omega\in N_\Gamma$. 
	
To deal with the general case, we choose a  valued extension field 
  $K'\supset K$  whose value group
  $\Gamma'=\val(K'^\times)$ is non-trivial and large enough that
  $\omega\in N_{\Gamma'}$, and let $X' = X_{K'}$.  Let
  $\xi\in\phi\inv(s(\omega'))\cap X_\omega$.  We have a commutative
  square
  \[\xymatrix @R=.2in @C=.5in{
    {X'_\omega} \ar[r]^(.4){\phi_{K'}} \ar[d]_\pi &
    {\bG_{m,K'}^{d,\an}} \ar[d]^\pi \\
    {X_\omega}
    \ar[r]_(.4){\phi} & {\bG_{m,K}^{d,\an}} }\]
  where the vertical arrows are the structural morphisms.
  By~\cite[Theorem 1.2.1]{berkovic90:analytic_geometry} and \cite[(0.3.2)]{ducros09:excellent}, the spectrum 
  $F = \sM\big(\sH(\xi)\hat\tensor_{\sH(s(\omega'))}\sH(s'(\omega'))\big)$ of the Banach ring $\sH(\xi)\hat\tensor_{\sH(s(\omega'))}\sH(s'(\omega'))$ is
  nonempty, where $s'\colon\R^d\to\bG_{m,K'}^{d,\an}$ is the section of
  tropicalization.  If $\xi'$ is in the image of the natural morphism
  $F\to X_\omega'$, then $\phi_{K'}(\xi') = s'(\omega')$ and $\pi(\xi') = \xi$.
  By the case handled above, $\xi'\in B(X'_\omega)$ and there is $P\in \Sigma$ satisfying (1)--(3). Then $\xi\in B(X_\omega)$ by
  Proposition~\ref{prop:shilov.surj} and $P$ is also relevant for $\xi$ by definition.  Moreover, one always
  has $\dim_{\xi'}(X'_\omega) \leq \dim_\xi(X_\omega)$, so
  $\dim_{\xi'}(X'_\omega) = d(\omega) = \dim(X_\omega)$ implies
  $\dim_\xi(X_\omega) = d(\omega)$.  This proves one inclusion of the Lemma in the general case and the last claim.
	
	Conversely, let $\xi\in B(X_\omega)$ with $P \in \Sigma$ satisfying (1)--(3).  Then there
  exists $\xi'\in B(X_\omega')$ mapping to $\xi$ by
  Lemma \ref{relevant is well-defined} 
  such that $P$ is also relevant for $\xi'$. By the special case considered above, we have
  $\phi_{K'}(\xi') = s'(\omega')$, so
  $\phi(\xi) = \pi(s'(\omega')) = s(\omega')$. This proves $\xi \in \phi\inv(s(\omega'))\cap X_\omega$.
\end{proof}

\section{Limit points of the tropical skeleton}
\label{sec:limit.points}

We have seen in Example~\ref{eg:counter.eg}  that the tropical skeleton $\STrop(X)$ is not necessarily closed in the toric variety {$Y_\Delta^\an$}. 
In this section, we give conditions under which a limit of a sequence of
points of $\STrop(X)$ is contained in $\STrop(X)$.
 Our goal is to study the accumulation points 
of $\STrop(X)$ for the closed subscheme $X$ of the toric variety $Y_\Delta$. 
By~\cite[Th\'eor\`eme~5.3]{poineau13:angeliques}, every limit point of $\STrop(X)$ is the
limit of a sequence of points.%
\footnote{This is mainly for convenience, as one could work with nets.}
We use the notation from~\S\ref{subsection:toric_varieties}.

\begin{thm}\label{thm:limit.points}
Let $\Delta$ be a rational pointed fan in $N_\R$ and let $X\subset Y_\Delta$
  be a closed subscheme.  By $T$ we denote the dense torus in $Y_\Delta$. We assume that $X \cap T$ is equidimensional of dimension $d$. Let
  $(\xi_i)_{i \in \N}$ be a sequence in $\STrop(X)$ which is contained in $(X\cap T)^{\an}$ such that $\xi_i$ converges to a point $\xi\in (X \cap O(\sigma))^\an$ for some torus orbit $O(\sigma) \subset Y_\Delta$. Put $\omega_i = \trop(\xi_i) \in N_\R$ and $\omega = \trop (\xi) \in N_\R(\sigma)$. 
  
 If there exists a polyhedron $P \subset \Trop(X) \cap N_\R$ 
 of dimension $d$ containing all $\omega_i$ 
and relevant for all $\xi_i$ such that $\pi_\sigma(P)$ is d-maximal at $\omega$, we have 
  $\xi\in\STrop(X)$ and $\dim_\xi(X_\omega)=d(\omega)$. 
\end{thm}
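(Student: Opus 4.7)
The plan is to apply Lemma~\ref{lem:main.lemma} twice: once at each $\omega_i \in N_\R$ to constrain the image of $\xi_i$ under a coordinate projection of $T$, and once at $\omega \in N_\R(\sigma)$ to conclude that $\xi$ is itself a Shilov boundary point of $X_\omega$. The two applications must be linked by compatible projections, because the target dimensions differ: at $\omega_i$ the relevant dimension is $d = d(\omega_i)$, whereas at $\omega$ it is $d' \coloneq d(\omega) = \dim \pi_\sigma(P)$.

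First I choose an integral lattice map $g \colon N(\sigma) \to \Z^{d'}$ whose linear extension is injective on the rational polyhedron $\pi_\sigma(P)$, and set $h \coloneq g \circ \pi_\sigma$. Because $h$ kills $\sigma \cap N$, the corresponding torus homomorphism $\psi_2 \colon T \to \bG_m^{d'}$ extends to a morphism $Y_\sigma \to \bG_m^{d'}$. I then lift $h$ to an integral map $f \colon N \to \Z^d$ such that the projection $p \colon \Z^d \to \Z^{d'}$ onto the last $d'$ coordinates satisfies $p \circ f = h$ and such that $f_\R$ is injective on $P$. This is possible because $h_\R$ restricted to the $d$-dimensional linear span of $P$ surjects onto $\R^{d'}$ (by injectivity of $g$ on the $d'$-dimensional $\pi_\sigma(P)$), so its kernel there has dimension $d - d'$, onto which the first $d - d'$ coordinates of $f$ can be arranged to be injective (a generic rational choice suffices). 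Let $\psi \colon T \to \bG_m^d$ be the torus homomorphism corresponding to $f$, so that $\psi_2 = p \circ \psi$, and put $\phi \coloneq \psi|_X$ and $\phi_2 \coloneq \psi_2|_X$.

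Refine a polyhedral complex structure on $\Trop(X) \cap N_\R$ to include $P$. At each $\omega_i$, the polyhedron $P$ satisfies the three hypotheses of Lemma~\ref{lem:main.lemma}: it is $d$-maximal at $\omega_i$ (since $X \cap T$ is equidimensional of dimension $d$ and $\dim P = d$), it is relevant for $\xi_i$ (by assumption), and $f_\R$ is injective on $P$ (by construction). The lemma therefore gives $\phi(\xi_i) = s(f(\omega_i))$, where $s$ is the canonical section on $\bG_m^{d,\an}$. Composing with $p^{\an}$ and using that the canonical sections intertwine torus homomorphisms with their tropicalizations, one obtains $\phi_2(\xi_i) = s'(h(\omega_i)) = s'(g(\pi_\sigma(\omega_i)))$, with $s'$ the section on $\bG_m^{d',\an}$. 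The convergence $\omega_i \to \omega$ in $\barNRD$ implies $\pi_\sigma(\omega_i) \to \omega$ in $N_\R(\sigma)$, and $\phi_2^{\an}$ is continuous on $(X \cap Y_\sigma)^{\an}$ since $\psi_2$ extends across $O(\sigma)$. Passing to the limit yields $\phi_2(\xi) = s'(g(\omega))$.

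The conclusion follows from a second application of Lemma~\ref{lem:main.lemma}, this time to $X \cap O(\sigma)$ as a closed subscheme of the torus $O(\sigma)$ at the point $\omega$, with the homomorphism $\psi_2|_{O(\sigma)} \colon O(\sigma) \to \bG_m^{d'}$ whose tropicalization is $g$. Since $\phi_2(\xi) = s'(g(\omega))$ and $\xi \in X_\omega = (X \cap O(\sigma))_\omega$, the lemma gives that $\xi$ is a Shilov boundary point of $X_\omega$, i.e.\ $\xi \in \STrop(X)$, with $\dim_\xi(X_\omega) = d' = d(\omega)$. I expect the main obstacle to be the construction of the compatible projections: one must arrange simultaneously that $f_\R$ is injective on the $d$-dimensional $P$, that $p \circ f = h$, and that $\psi_2$ extends across the boundary stratum $O(\sigma)$. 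It is precisely the $d$-maximality hypothesis on $\pi_\sigma(P)$ at $\omega$ that makes this dimension count work.
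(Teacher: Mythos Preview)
Your proof is correct and follows the same overall strategy as the paper: apply Lemma~\ref{lem:main.lemma} at the $\omega_i$ to trap the $\xi_i$ in the preimage of a skeleton, pass to the limit, and apply Lemma~\ref{lem:main.lemma} again at $\omega$ on the orbit $O(\sigma)$.  The difference is in how the auxiliary projection is packaged.  The paper constructs a single rational complementary subspace $L\subset N_\R$ satisfying three conditions ($L\subset H$ for a supporting hyperplane $H$ of $\sigma$, $N_\R=L\oplus\langle P\rangle_0$, and $N_\R(\sigma)=\pi_\sigma(L)\oplus\pi_\sigma(\langle P\rangle_0)$), and then takes the quotient $N\to N/(N\cap L)$ to obtain one toric morphism $\psi\colon Y_\sigma\to Y'_{\sigma'}$ to a $d$-dimensional affine toric variety; the limit step then amounts to the closedness of the full skeleton $S(Y'^{\an}_{\sigma'})$.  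You instead build two separate maps --- $\psi\colon T\to\bG_m^d$ injective on $P$ (not extending to $Y_\sigma$) and $\psi_2\colon Y_\sigma\to\bG_m^{d'}$ factoring through $O(\sigma)$ --- linked by a coordinate projection $p$, and take the limit via continuity of $\psi_2^{\an}$ and of the canonical section $s'$.  Your route avoids having to check that $f(\sigma)$ is pointed and sidesteps the somewhat involved linear-algebra construction of $L$; the paper's route has the advantage that the closed set $\phi^{-1}(S(Y'^{\an}_{\sigma'}))$ is defined once and works uniformly without tracking the factorization $p\circ f=h$.  Either way the dimension count enabled by $d$-maximality of $\pi_\sigma(P)$ is the heart of the matter, exactly as you identify.
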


\begin{rem}
  The hypotheses in Theorem~\ref{thm:limit.points} are sufficient but not
  necessary.  For example, let the notation be as in
  Examples~\ref{eg:counter.eg} and~\ref{eg:counter.eg.2}, and for $r \geq 1$ let
  $\omega''(r) = (0, r, r + r\inv)\in\relint(P_2)$.  One can show that
  $s_X(\omega''(r))\to s_X(0,\infty,\infty)$ as $r\to\infty$ even though
  $\omega''(r)$ is contained in $P_2\setminus(P_1\cup P_3)$ and
  $\pi_{\sigma_{23}}(P_2)$ is not $d$-maximal at $(0,\infty,\infty)$.
\end{rem}

At a basic level, the proof of Theorem~\ref{thm:limit.points} uses a similar
idea to~\cite[Theorem~10.6]{gubler_rabinoff_werner16:skeleton_tropical}
(and Proposition~\ref{prop:equidim.closed}), in that we compare $\STrop(X)$ with
the inverse image of the skeleton of a smaller-dimensional toric
variety.  However, one must be much more careful in constructing the smaller
toric variety.

\begin{proof}[Proof of Theorem~\ref{thm:limit.points}]
If $O(\sigma) = T$, our claim follows from Corollary \ref{cor:closed.on.orbits}. Hence we may assume that $\sigma \neq 0$. 

  Since $\omega = \lim\omega_i\in N_\R(\sigma)$, we have
  $\bar P\cap N_\R(\sigma)\neq\emptyset$, where $\bar P$ is the
  closure of $P$ in $\barNRD$.  Hence by Lemma~\ref{lem:closure.polyhedron},
  the recession cone of $P$ intersects the relative interior of $\sigma$, and
  $\bar P\cap N_\R(\sigma) = \pi_\sigma(P)$.  This set contains $\omega$.
  Let $H$ be a rational supporting hyperplane of the face $\{0\}\prec\sigma$:
  that is, $H$ is rational and $H\cap\sigma = \{0\}$, so $\sigma$ is contained in one
  of the half-spaces bounded by $H$.  Let $\angles\sigma\subset N_\R$ be the
  linear span of $\sigma$, and let $\angles{P}_0\subset N_\R$ be the linear
  (as opposed to the affine) span of $P$, i.e., $\angles{P}_0$ is the linear
  span of $P-\eta$ for any $\eta\in P$.  The subspaces $\angles\sigma$ and
  $\angles{P}_0$ are rational.
  Note that $H+(\angles\sigma\cap\angles{P}_0) = N_\R$ since the recession
  cone of $P$ intersects $\relint(\sigma)$.

  We claim that there exists a rational subspace $L\subset N_\R$ such that
  \[ \text{(a) } L\subset H \qquad \text{(b) } N_\R = L\dsum\angles{P}_0
  \qquad \text{(c) } N_\R(\sigma) =
  \pi_\sigma(L)\dsum\pi_\sigma(\angles{P}_0). \]
  This is pure linear algebra.
  Let $n = \dim(N_\R)$,  recall $d = \dim(P)$ and let $d' = d(\omega)$.  By hypothesis,
  $d' = \dim(\pi_\sigma(P))$, and hence
  $\dim(\angles{\sigma}\cap\angles{P}_0) = d-d'$.  Since
  $\angles{\sigma}\not\subset H$, we have that $H$ surjects onto $N_\R(\sigma)$.
  Choose a rational subspace $V\subset H$ mapping isomorphically onto
  $N_\R(\sigma)$, so $H = (\angles{\sigma}\cap H)\dsum V$ and
  $N_\R = \angles{\sigma}\dsum V$.  As
  $H+(\angles{\sigma}\cap\angles{P}_0)=N_\R$, we have
  $(\angles{\sigma}\cap H)+(\angles{\sigma}\cap\angles{P}_0) = \angles{\sigma}$,
  so $\dim(\angles{\sigma}\cap H\cap\angles{P}_0) = d-d'-1$, and therefore,
  \[\begin{split}
    \dim(\angles{\sigma}\cap H) - \dim(\angles{\sigma}\cap H\cap\angles{P}_0)
    &= (\dim(\angles{\sigma})-1) - (d-d'-1) \\
    &= \dim(\angles{\sigma}) - (d-d') \\
    &= \dim(\angles{\sigma}) - \dim(\angles{\sigma}\cap\angles{P}_0).
  \end{split}\]
  We conclude that 
  \[\begin{split}
    \codim(\angles{\sigma}\cap H\cap\angles{P}_0,\, \angles{\sigma}\cap H) &=
    \codim(\angles{\sigma}\cap\angles{P}_0,\, \angles{\sigma}) = \dim(\angles{\sigma}) - (d-d'), \\
    \codim(\pi_\sigma(\angles{P}_0),\,N_\R(\sigma)) &= n - \dim(\angles{\sigma}) - d'.
  \end{split}\]
  It is possible to choose (generic)
  rational subspaces $L_1\subset\angles{\sigma}\cap H$ of dimension $\dim(\angles{\sigma})-(d-d')$ and
  $L_2\subset V$ of dimension $n-\dim(\angles{\sigma})-d'$ such that
  $L_1\cap(\angles{\sigma}\cap\angles{P}_0) = \{0\}$ and
  $\pi_\sigma(L_2)\cap\pi_\sigma(\angles{P}_0) = \{0\}$.  The subspace $L = L_1\dsum L_2$
  satisfies our requirements~(a)--(c).
  
  \newcommand*\barNpRs{\bar N\p^{\sigma'}_\R}

  To prove the theorem, we may replace $Y_\Delta$ by $Y_\sigma$ and $X$ by $X \cap Y_\sigma$ without loss of
  generality.  Let $N' = N/(N\cap L)$ and let
  $f\colon N\to N'$ be the quotient homomorphism.  Let
  $M' = \Hom(N',\Z)$ and let $T' = \Spec(K[M'])\cong\bG_m^d$ be the
  torus with cocharacter lattice $N'$.  The map $f$ induces a homomorphism
  $\psi\colon T\to T'$.  Let $\sigma' = f(\sigma)$.  This is a pointed
  cone in $N'_\R$ since the supporting hyperplane $H$ contains $L$.  Thus
  $\psi\colon T\to T'$ extends to a toric morphism of affine toric varieties
  $\psi\colon Y_\sigma\to Y'_{\sigma'}$, $f\colon N_\R\to N'_\R$
  extends to a continuous map $f\colon\barNRs\to\barNpRs$, and the following
  squares commute:
  \[\xymatrix @R=.2in{
    {T} \ar[r]^\trop \ar[d]_{\psi} & {N_\R} \ar[d]^{f} & &
    {Y_\sigma} \ar[r]^\trop \ar[d]_{\psi} & {\barNRs} \ar[d]^{f} \\
    {T'} \ar[r]_(.45)\trop & {N'_\R} & & {Y'_{\sigma'}} \ar[r]_(.45)\trop &
    {\barNpRs} }\]
  Note that
  $N'_\R(\sigma') = N_\R/(\angles\sigma + L) = N_\R(\sigma)/\pi_\sigma(L)$. Condition~(b)
  for $L$ implies $f$ is injective on $P$ with $\dim(P) = \dim(N_\R')$, and
  condition~(c) implies $f\colon N_\R(\sigma)\to N'_\R(\sigma')$ is injective on
  $\pi_\sigma(P)$ with $d' = \dim(\pi_\sigma(P)) = \dim(N_\R'(\sigma'))$.

  Let $\phi = \psi|_X$,
  and define $S = \phi\inv\big(S(Y'^\an_{\sigma'})\big)$, the inverse image of
  the skeleton of the affine toric variety $Y'_{\sigma'}$.  This is a closed
  subset of $X^\an$ because $S(Y'^\an_{\sigma'})$ is closed in
  $Y'^\an_{\sigma'}$, as we saw in~\S\ref{par:tropicalization}.  
  Lemma~\ref{lem:main.lemma} implies that
  $\xi_i\in S \cap X_{\omega_i}$ for all $i$. Hence the limit point $\xi$ lies in $ S\cap X_\omega$.  By construction,
  $d' = d(\omega) = \dim(N'_\R(\sigma'))$, which is equal to the dimension of
  the torus orbit $O'(\sigma')\subset Y'_{\sigma'}$.
 
  Again by Lemma~\ref{lem:main.lemma}, this time applied to $T = O(\sigma)$,
  $X = X\cap O(\sigma)$, and the map $\psi\colon O(\sigma)\to O'(\sigma')$ on
  torus orbits, we see that $S\cap X_\omega$ is contained in the Shilov boundary
  of $X_\omega$, and therefore that $\xi\in\STrop(X)$.  Moreover, the local
  dimension of $X_\omega$ at any point of $S\cap X_\omega$ is $d(\omega)$ by
  Lemma~\ref{lem:main.lemma}.
\end{proof}

If $X$ intersects all torus orbits equidimensionally, we can deduce the following result. 

\begin{thm} \label{thm:skeleton_equidimensional}
Let $X$ be a closed subscheme of the toric variety $Y_\Delta$ such that $X \cap O(\sigma)$ 
is equidimensional of dimension $d_\sigma$ for any $\sigma \in \Delta$.
We suppose that for all faces $\tau \prec \sigma$ of $\Delta$ there exists a finite polyhedral complex structure $\Sigma$ on $\Trop(X) \cap N_\R(\tau)$
 with the following property: For every $d_\tau$-dimensional 
polyhedron $P$ in $\Sigma$ such that its recession cone intersects the relative interior of $\pi_\tau(\sigma)$ in $N_\R(\tau)$, put $\overline{\sigma} = \pi_\tau (\sigma)$ and assume that the canonical projection $\pi_{\overline{\sigma}}(P)$ has dimension $d_\sigma$ in $N_\R(\sigma)$.  Then $\STrop(X)$ is closed.
\end{thm}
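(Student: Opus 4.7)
The plan is to verify that $\STrop(X)$ contains all its limit points; combined with Corollary~\ref{cor:closed.on.orbits}, this gives closedness. By \cite[Th\'eor\`eme~5.3]{poineau13:angeliques} (which is invoked in the introduction to \S\ref{sec:limit.points}), every limit point $\xi$ of $\STrop(X)$ arises as the limit of a sequence $\xi_i \in \STrop(X)$. Since $\Delta$ is finite, I would pass to a subsequence so that all $\xi_i$ lie in a single orbit $O(\tau)^\an$ and $\xi$ lies in $O(\sigma)^\an$ for fixed $\tau,\sigma \in \Delta$. Because $\xi$ belongs to the closure of $O(\tau)$ in $Y_\Delta$, necessarily $\tau \prec \sigma$. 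If $\tau = \sigma$, then convergence in $X^\an$ to $\xi \in O(\sigma)^\an$ is convergence in the subspace topology on $X^\an \cap O(\sigma)^\an$, and Corollary~\ref{cor:closed.on.orbits} yields $\xi \in \STrop(X)$ at once.

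The substantive case is $\tau \prec \sigma$ strictly. I would work inside $Y_{\Delta_\tau}$, the closure of $O(\tau)$ in $Y_\Delta$, which is a toric variety with dense torus $O(\tau)$ and fan $\Delta_\tau$ in $N_\R(\tau)$. Set $\omega_i = \trop(\xi_i) \in N_\R(\tau)$ and $\omega = \trop(\xi) \in N_\R(\sigma)$, where $N_\R(\sigma)$ is identified with $N_\R(\tau)/\langle\pi_\tau(\sigma)\rangle$. Continuity of $\trop$ gives $\omega_i \to \omega$ in the Kajiwara--Payne compactification of $N_\R(\tau)$ for the fan $\Delta_\tau$. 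Since $X \cap O(\tau)$ is equidimensional of dimension $d_\tau$, we have $\dim_{\xi_i}(X^\an) = d_\tau = d(\omega_i)$ (the latter by the Bieri--Groves theorem, as noted after Definition~\ref{defn:local.dim}). Proposition~\ref{properties of relevant}(3), applied to the given polyhedral complex $\Sigma$ on $\Trop(X) \cap N_\R(\tau)$, then produces a $d_\tau$-dimensional polyhedron $P_i \in \Sigma$ that contains $\omega_i$ and is relevant for $\xi_i$. Finiteness of $\Sigma$ lets me pass to a further subsequence with $P_i = P$ constant.

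Since $\omega_i \in P$ converges to $\omega \in N_\R(\sigma)$, Lemma~\ref{lem:closure.polyhedron} (applied to $P$ in $N_\R(\tau)$ and the cone $\pi_\tau(\sigma)$ in $\Delta_\tau$) forces $\rho(P) \cap \relint(\pi_\tau(\sigma)) \neq \emptyset$ and gives $\bar P \cap N_\R(\sigma) = \pi_{\pi_\tau(\sigma)}(P)$. The hypothesis of the theorem then tells us $\pi_{\pi_\tau(\sigma)}(P)$ has dimension $d_\sigma$. But $X \cap O(\sigma)$ is equidimensional of dimension $d_\sigma$, hence $d(\omega) = d_\sigma$, so $\pi_{\pi_\tau(\sigma)}(P)$ is $d$-maximal at $\omega$. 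Now I would invoke Theorem~\ref{thm:limit.points} for the closed subscheme $X \cap Y_{\Delta_\tau}$ of the toric variety $Y_{\Delta_\tau}$: its intersection with the dense torus $O(\tau)$ is $X \cap O(\tau)$, equidimensional of dimension $d_\tau$; the sequence $\xi_i$ lies in $\STrop(X \cap Y_{\Delta_\tau}) \cap O(\tau)^\an$ with limit $\xi \in (X \cap O(\sigma))^\an$; and $P$ satisfies all hypotheses of that theorem. The conclusion is $\xi \in \STrop(X \cap Y_{\Delta_\tau})$, which coincides with $\STrop(X) \cap Y_{\Delta_\tau}^\an$ because the tropical skeleton is defined orbit by orbit, so $\xi \in \STrop(X)$.

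The main obstacle is entirely bookkeeping: one has to correctly recognize that the hypothesis phrased using the two-step projection $N_\R \to N_\R(\tau) \to N_\R(\sigma)$ is exactly the single-step hypothesis $\pi_{\pi_\tau(\sigma)}(P)$ being $d$-maximal needed for Theorem~\ref{thm:limit.points} when applied inside the smaller toric variety $Y_{\Delta_\tau}$, and to verify that all invariants ($d_\tau, d_\sigma, d(\omega_i), d(\omega)$) match up. Once this identification is cleanly made, each reduction step is either a direct appeal to finiteness of $\Sigma$ and $\Delta$, to continuity of $\trop$, or to results already proved in \S\ref{sec:tropical.skeleton}--\S\ref{sec:limit.points}.
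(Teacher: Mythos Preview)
Your proof is correct and follows essentially the same approach as the paper's own proof: reduce to a sequence in a single orbit $O(\tau)$, use Proposition~\ref{properties of relevant}(3) to pick a $d_\tau$-dimensional relevant polyhedron $P\in\Sigma$ for each $\xi_i$, pass to a constant $P$, invoke Lemma~\ref{lem:closure.polyhedron} and the hypothesis to see that the projection of $P$ is $d$-maximal at $\omega$, and conclude via Theorem~\ref{thm:limit.points}. Your write-up is in fact slightly more explicit than the paper's about working in $Y_{\Delta_\tau}$ and about the identification $\STrop(X\cap Y_{\Delta_\tau})=\STrop(X)\cap Y_{\Delta_\tau}^\an$, but the substance is identical.
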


\begin{proof}
Let  $(\xi_i)_{i \in \N}$ be a sequence in $\STrop(X)$ converging to
 some point $\xi\in X^\an$. We have to show that $\xi \in \STrop(X)$.  We may always pass to a subsequence and so we may assume 
	that the sequence $\{\xi_i\}$ is contained in
 a single torus orbit $O(\tau)^\an$. 
	Let $\sigma\in\Delta$ be the cone such that
  $\xi\in O(\sigma)^\an$. Then $\tau \prec \sigma$. 
 Let
 $\omega_i = \trop(\xi_i)\in N_\R(\tau)$ and let
 $\omega = \trop(\xi)\in N_\R(\sigma)$, so $\omega = \lim\omega_i$ by
 continuity of $\trop$. 
 
Since $X \cap O(\tau)$ is equidimensional of dimension $d_\tau$, we have $\dim_{\xi_i}(X)= d_\tau = d(\omega_i)$ for every $i$.  
By Proposition \ref{properties of relevant}(3), there is a polyhedron $P_i \subset \Trop(X) \cap N_\R(\tau)$ in $\Sigma$ which is $d$-maximal at $\omega_i$ and which is relevant for $\xi_i$. After passing to a subsequence, we may assume that all $P_i = P$ for 
 a single $d_\tau$-dimensional polyhedron $P$ in $\Sigma$. Since $\omega$ lies in the closure of $P$, the recession cone of $P$ meets the relative interior of $\pi_\tau(\sigma)$ by Lemma~\ref{lem:closure.polyhedron}.
Therefore the  projection of $P$ to $N_\R(\sigma)$ is $d$-maximal by assumption.  Hence our claim follows from Theorem~\ref{thm:limit.points}.
\end{proof}

\section{Proper intersection with orbits}\label{sec:proper intersection}
In this section we discuss common dimensionality conditions under which the
hypotheses of Theorem~\ref{thm:limit.points} are automatically satisfied.  We consider a closed subscheme $X$ of a toric variety $Y_\Delta$ with dense torus
$T$.  We assume throughout that $X\cap T$ is equidimensional of dimension $d$
and that $X\cap T$ is dense in $X$.

\begin{defn}
  Let $\sigma\in\Delta$.  We say that \emph{$X$ intersects $O(\sigma)$ properly}
  provided that $\dim(X\cap O(\sigma)) = \dim(X) - \dim(\sigma)$.
\end{defn}

Note that if $X$ intersects $O(\sigma)$ properly then
$X\cap O(\sigma)\neq\emptyset$ when $\dim(\sigma) \leq \dim(X)$, and
$X\cap O(\sigma) = \emptyset$ when $\dim(\sigma) > \dim(X)$.

\begin{lem}\label{lem:prop.int.codim}
If
  $\dim(\sigma) \leq \dim(X)$ and $X$ intersects $O(\sigma)$ properly, then
  $X\cap O(\sigma)$ is equidimensional and
  \[ \codim(X\cap O(\sigma),\,O(\sigma)) = \codim(X\cap T,\,T). \]
  Therefore, $\Trop(X\cap O(\sigma)) = \Trop(X)\cap N_\R(\sigma)$ has pure
  dimension $\dim(X)-\dim(\sigma)$.
\end{lem}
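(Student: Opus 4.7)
My plan is to establish the three assertions in order: equidimensionality of $X\cap O(\sigma)$, the codimension equality, and pure-dimensionality of the tropicalization. The first preliminary observation is that $X$ itself is equidimensional of dimension $d$: since $X\cap T$ is dense in $X$, the map $W\mapsto\overline{W\cap T}$ is an inclusion-preserving bijection between irreducible components of $X\cap T$ and of $X$, and $\dim W = \dim(W\cap T) = d$ by the hypothesis on $X\cap T$. Granting the equidimensionality of $X\cap O(\sigma)$ to be established below, the codimension equality is then a direct arithmetic computation: with $n = \dim T$ we have $\dim O(\sigma) = n-\dim\sigma$, so
\[ \codim(X\cap O(\sigma),O(\sigma)) = (n-\dim\sigma)-(d-\dim\sigma) = n-d = \codim(X\cap T,T). \]

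For the equidimensionality, let $W$ be an irreducible component of $X\cap O(\sigma)$. The upper bound $\dim W \leq d-\dim\sigma$ is immediate from the proper intersection hypothesis. The matching lower bound $\dim W \geq d-\dim\sigma$ is the main technical point. Using the component decomposition of $X$ from the preliminary observation, we may reduce to the case that $X$ is irreducible of dimension $d$ with $X\cap T$ dense in $X$. In this case we apply the standard dimension bound from the theory of subvarieties of toric varieties: every irreducible component of $X\cap O(\sigma)$ has dimension at least $d-\dim\sigma$. One way to see this is to pass to a toric resolution refining $\Delta$ so that $\sigma$ becomes simplicial; then the orbit closure $V(\sigma)$ is locally cut out in $Y_\sigma$ by a regular sequence of $\dim\sigma$ torus-invariant Cartier divisors, and iterating Krull's principal ideal theorem bounds the successive drops in dimension. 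Alternatively, this lower bound is a direct consequence of Tevelev's compactification results, and it can also be read off from the tropical description in Proposition~\ref{tropicalization and closure} combined with the fact that $\Trop(X)\cap N_\R$ is pure of dimension $d$.

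Finally, the pure-dimensionality of $\Trop(X\cap O(\sigma)) = \Trop(X)\cap N_\R(\sigma)$ of dimension $d-\dim\sigma$ follows from the Bieri--Groves theorem applied to the now-equidimensional scheme $X\cap O(\sigma)$, together with the orbit-wise compatibility of tropicalization recalled in~\S\ref{par:tropicalization}. The main obstacle is the lower bound in the middle step; it is essentially a non-trivial input from toric geometry rather than something that falls out of dimension counting alone, which is why the lemma's hypotheses include both the proper intersection assumption and the equidimensionality of $X\cap T$.
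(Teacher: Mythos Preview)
The paper's own proof is a one-line citation to \cite[Proposition~14.7]{gubler13:guide_tropical} together with $\dim\sigma=\codim(O(\sigma),T)$ and Bieri--Groves for the last sentence, so you are attempting considerably more than the paper by sketching a self-contained argument. Your overall structure is correct, and the easy parts (the reduction to irreducible $X$, the upper bound on component dimensions, the codimension arithmetic, the final appeal to Bieri--Groves) are fine.

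The gap is in your argument for the lower bound $\dim W\geq d-\dim\sigma$. The phrase ``pass to a toric resolution refining $\Delta$ so that $\sigma$ becomes simplicial'' does not make sense as written: refining a fan \emph{subdivides} $\sigma$ into smaller cones rather than leaving $\sigma$ intact as a simplicial cone, so after resolution there is no single orbit corresponding to $\sigma$, and transferring a Krull-type bound from the strict transform back to $X\cap O(\sigma)$ requires nontrivial work you have not supplied. Even when $\sigma$ is already simplicial, the torus-invariant prime divisors are only $\mathbb{Q}$-Cartier in general, not Cartier (though this particular point is fixable by taking multiples). Your alternative justifications do not close the gap either: the Tevelev reference is too vague to count as a proof, and the tropical description in Proposition~\ref{tropicalization and closure} only controls $\Trop(X\cap O(\sigma))$, which via Bieri--Groves bounds $\dim W$ from \emph{above}, not from below. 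If you want a clean self-contained argument for the lower bound: choose a splitting $N=N_\sigma\oplus N'$ with $N_\sigma=N\cap\langle\sigma\rangle$, giving a product decomposition $Y_\sigma\cong Y_{\bar\sigma}\times O(\sigma)$ under which $O(\sigma)\subset Y_\sigma$ is identified with $\{pt\}\times O(\sigma)$ for $pt$ the torus-fixed point of the $\dim\sigma$-dimensional toric variety $Y_{\bar\sigma}$; then $X\cap O(\sigma)$ is the fiber over $pt$ of the first projection $q\colon X\to Y_{\bar\sigma}$, and the standard lower semicontinuity of fiber dimension for the dominant morphism $X\to\overline{q(X)}$ gives $\dim W\geq d-\dim\overline{q(X)}\geq d-\dim\sigma$.
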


\begin{proof}
  This follows from~\cite[Proposition~14.7]{gubler13:guide_tropical} and the
  fact that the dimension of $\sigma$ is $\codim(O(\sigma),T)$.
  The last statement is a consequence of the Bieri--Groves theorem.
\end{proof}

\begin{rem}\label{recession cones}
For a polyhedron $P\subset N_\R$, we let $\rho(P)\subset N_\R$ denote the
recession cone of $P$. Let $U$ be a closed subscheme of the torus $T$ over $K$. 
Then the Bieri--Groves theorem shows that we may write $\Trop(U)$ as a finite union of 
integral $\Gamma$-affine polyhedra $P$ (see \cite[2.2]{gubler_rabinoff_werner16:skeleton_tropical} for the definition of integral $\Gamma$-affine polyhedra 
for a subgroup $\Gamma$ of $\R$). If $U$ is of pure dimension $d$, then we can choose all $P$ $d$-dimensional. 
Let $\Sigma$ be the collection of these polyhedra and let $\Trop_0(U)$ be the tropical variety of $X$ with respect to the trivial valuation. Then we recall from \cite[Corollary 11.13]{gubler13:guide_tropical} the non-trivial fact that 
\begin{equation} \label{trivial tropicalization}
\Trop_0(U)=\bigcup_{P \in \Sigma} \rho(P).
\end{equation}
\end{rem}

The next proposition shows that the condition that $X$
intersects $O(\sigma)$ properly can be checked on tropicalizations.

\begin{prop}\label{prop:equiv.int.proper}
  Choose a finite collection $\Sigma$ of integral $\R$-affine
  $d$-dimensional polyhedra whose union is $\Trop(X)\cap N_\R$.  Fix
  $\sigma\in\Delta$ with $\dim(\sigma)\leq d$.  Then the following are
  equivalent:
  \begin{enumerate}
  \item $X$ intersects $O(\sigma)$ properly.
  \item There exists $P\in\Sigma$ such that
    $\rho(P)\cap\relint(\sigma)\neq\emptyset$, and for all such $P$,
    \[ \dim(\rho(P)\cap\sigma) = \dim(\sigma). \]
  \item There exists $P\in\Sigma$ such that
    $\rho(P)\cap\relint(\sigma)\neq\emptyset$, and for all such $P$,
    \[ \dim(\pi_\sigma(P)) = d - \dim(\sigma). \]
 
\end{enumerate}
\end{prop}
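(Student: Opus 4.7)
My plan is to prove the equivalences as a chain (2)~$\Rightarrow$~(3), (3)~$\Leftrightarrow$~(1) by direct tropical dimension theory, and (1)~$\Rightarrow$~(2) as the main new step. First, the existence clauses all coincide: by Proposition~\ref{tropicalization and closure} combined with the Bieri--Groves theorem, $X\cap O(\sigma)\neq\emptyset$ if and only if there exists $P\in\Sigma$ with $\rho(P)\cap\relint(\sigma)\neq\emptyset$, and under the hypothesis $\dim\sigma\leq d$ this nonemptyness is also implicit in~(1). Hence I focus on the dimension clauses.

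The key polyhedral identity is as follows. For a $d$-dimensional polyhedron $P\in\Sigma$, let $L_P\subset N_\R$ denote the linear subspace parallel to its affine hull; then $\rho(P)\subset L_P$, and the restriction $\pi_\sigma|_{L_P}$ is linear with kernel $L_P\cap\langle\sigma\rangle$, giving
\[
  \dim\pi_\sigma(P) \;=\; d - \dim(L_P\cap\langle\sigma\rangle).
\]
Since $L_P\cap\langle\sigma\rangle\subset\langle\sigma\rangle$, we have the universal lower bound $\dim\pi_\sigma(P)\geq d-\dim\sigma$, with equality iff $\langle\sigma\rangle\subset L_P$. The implication (2)~$\Rightarrow$~(3) is then immediate, since $\rho(P)\cap\sigma\subset L_P\cap\langle\sigma\rangle$, so $\dim(\rho(P)\cap\sigma)=\dim\sigma$ forces $\langle\sigma\rangle\subset L_P$. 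For (3)~$\Leftrightarrow$~(1), combine $\dim(X\cap O(\sigma)) = \dim(\Trop(X)\cap N_\R(\sigma)) = \max_{P\text{ relevant}}\dim\pi_\sigma(P)$ (from Bieri--Groves and Proposition~\ref{tropicalization and closure}) with Lemma~\ref{lem:prop.int.codim}, which under~(1) asserts that $\Trop(X)\cap N_\R(\sigma)$ is pure of dimension $d-\dim\sigma$. Then each $\pi_\sigma(P)$, being a polyhedron in a pure $(d-\dim\sigma)$-dimensional set, has dimension at most $d-\dim\sigma$; combined with the universal lower bound this yields~(3), while conversely~(3) produces~(1) directly from the maximum formula.

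The hard part will be (1)~$\Rightarrow$~(2). From~(1) we already have~(3), hence $\langle\sigma\rangle\subset L_P$ for each relevant $P$, so the slice $P_0:=P\cap\pi_\sigma^{-1}(\omega_0)$ for $\omega_0\in\pi_\sigma(P)$ is a polyhedron of dimension $\dim\sigma$ inside a translate of $\langle\sigma\rangle$ with recession cone $\rho(P)\cap\langle\sigma\rangle$. The plan is to pick $\omega_0$ generically in $\relint\pi_\sigma(P)$ and exploit Lemma~\ref{lem:equiv.d.omega}, identifying the local cone of $\Trop(X)$ at $\omega_0$ with $\Trop(\inn_{\omega_0}(X))$, together with the equidimensionality of $\inn_{\omega_0}(X\cap O(\sigma))$ from proper intersection. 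The dimensional bookkeeping comparing the slice $P_0$, the escape directions $\rho(P)\cap\sigma$, and the recession fan of $\Trop(X)$ at $\omega_0$ in the extended compactification $\barNRD$ should force $\dim(\rho(P)\cap\sigma)=\dim\sigma$: otherwise the piece of $\Trop(X)$ contributed by $P$ in the approach to $\omega_0$ along the $\sigma$-boundary would be too low-dimensional to be consistent with the pure $(d-\dim\sigma)$-dimensional behavior guaranteed by Lemma~\ref{lem:prop.int.codim}. Making precise the match between slicing $P$ inside $\langle\sigma\rangle$ and tropicalizing the initial degeneration with respect to the trivial valuation is the technical core of this step.
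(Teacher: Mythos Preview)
Your arguments for (2)~$\Rightarrow$~(3) and (3)~$\Leftrightarrow$~(1) are correct and match the paper's proof; in fact, the paper only proves (2)~$\Rightarrow$~(3) and (3)~$\Rightarrow$~(1) directly, obtaining the remaining equivalence (1)~$\Leftrightarrow$~(2) by citing~\cite[Corollary~14.4, Remark~14.5]{gubler13:guide_tropical} together with the recession-cone identity~\eqref{trivial tropicalization} of Remark~\ref{recession cones}, namely $\Trop_0(X)=\bigcup_{P\in\Sigma}\rho(P)$.

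The gap is in your plan for (1)~$\Rightarrow$~(2). As a purely polyhedral statement, (3) does \emph{not} imply (2): take $N_\R=\R^3$, $\sigma=\R_+^2\times\{0\}$, $d=2$, and let $P$ be a strip in $\R^2\times\{0\}$ with recession cone equal to the single ray $\R_+(1,1,0)$. Then $\langle\sigma\rangle=L_P$ so $\dim\pi_\sigma(P)=0=d-\dim\sigma$, and $\rho(P)\cap\relint(\sigma)\neq\emptyset$, yet $\dim(\rho(P)\cap\sigma)=1<2$. Hence any proof of (1)~$\Rightarrow$~(2) must use additional structure of $\Trop(X)$ beyond what is encoded in (3). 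The paper does exactly this via the identity $\Trop_0(X)=\bigcup_P\rho(P)$, which connects proper intersection (a trivial-valuation condition on $X$) to the recession cones of the $P\in\Sigma$. Your plan instead works locally at a boundary point $\omega_0\in N_\R(\sigma)$, appealing to $\LC_{\omega_0}(\Trop(X\cap O(\sigma)))=\Trop(\inn_{\omega_0}(X\cap O(\sigma)))$; but this is a statement entirely inside $N_\R(\sigma)$ and gives no direct control over $\rho(P)\cap\sigma\subset N_\R$. In particular, the heuristic that ``the piece of $\Trop(X)$ contributed by $P$ in the approach to $\omega_0$ would be too low-dimensional'' fails because other polyhedra $P'\in\Sigma$ with $\omega_0\in\pi_\sigma(P')$ can supply the missing dimensions. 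To complete the argument you need to invoke~\eqref{trivial tropicalization} (or an equivalent structural fact about $\Trop_0(X)$), as the paper does.
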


\begin{proof}
  The equivalence of~(1) and~(2)
  is~\cite[Corollary~14.4, Remark~14.5]{gubler13:guide_tropical}, using \eqref{trivial tropicalization} and noting that
  $\dim(\rho(P)\cap\sigma) = \dim(\sigma)$ if and only if
  $\dim(\rho(P)\cap\relint(\sigma)) = \dim(\sigma)$.  For $P\in\Sigma$ such that
  $\rho(P)\cap\relint(\sigma)\neq\emptyset$, condition~(3) is equivalent to
  having $\angles\sigma\subset\angles{P}_0$, where $\angles\sigma$ is the span
  of $\sigma$ and $\angles{P}_0$ is the linear span of $P$, as in the proof of
  Theorem~\ref{thm:limit.points}.  As $\rho(P)\subset\angles{P}_0$, it is clear
  that~(2) implies~(3).  For~(3) implies~(1), we have 
  $$\Trop(X)\cap N_\R(\sigma) =
  \bigcup_{\rho(P)\cap\relint(\sigma)\neq\emptyset}\pi_\sigma(P)$$
by Proposition \ref{tropicalization and closure}, so
  \[ \dim(X\cap O(\sigma))
  = \dim(\Trop(X)\cap N_\R(\sigma)) = d - \dim(\sigma) \]
proving~(1).
\end{proof}

The following Corollary is a special case of Theorem~\ref{thm:limit.points}.

\begin{cor}\label{cor:limit.pt.proper.int}
Let $\sigma\in\Delta$ be a cone such
  that $X$ intersects $O(\sigma)$ properly.  If $(\xi_i)_{i \in \N}$ is a
  sequence in $\STrop(X)\cap T^\an$ converging to a point
  $\xi\in O(\sigma)^\an$, then $\xi\in\STrop(X)$.
\end{cor}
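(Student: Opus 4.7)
The plan is to reduce the corollary directly to Theorem~\ref{thm:limit.points} by producing, after passing to a subsequence, a single $d$-dimensional polyhedron $P \subset \Trop(X) \cap N_\R$ containing every $\omega_i \coloneq \trop(\xi_i)$, relevant for every $\xi_i$, and such that $\pi_\sigma(P)$ is $d$-maximal at $\omega \coloneq \trop(\xi)$. First I would dispose of the trivial case: if $\dim(\sigma) > d$ then $X \cap O(\sigma) = \emptyset$ by the proper intersection hypothesis, so no such $\xi$ exists; therefore I may assume $\dim(\sigma) \leq d$.

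Next, invoke the Bieri--Groves theorem (Remark~\ref{recession cones}) to write $\Trop(X) \cap N_\R$ as the union of a finite collection $\Sigma$ of $d$-dimensional integral $\R$-affine polyhedra. Since $X \cap T$ is equidimensional of dimension $d$, we have $\dim_{\xi_i}(X^\an) = d = d(\omega_i)$ for every $i$. Apply Proposition~\ref{properties of relevant}(3) to each $\xi_i$: this produces a polyhedron $P_i \in \Sigma$ that is $d$-maximal at $\omega_i$ (i.e.\ $d$-dimensional and containing $\omega_i$) and relevant for $\xi_i$. Because $\Sigma$ is finite, after extracting a subsequence we may assume $P_i = P$ for a single $P \in \Sigma$ independent of $i$.

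Now apply the closure analysis. Since $\omega_i \to \omega \in N_\R(\sigma)$, the closure of $P$ in $\barNRD$ meets $N_\R(\sigma)$, and Lemma~\ref{lem:closure.polyhedron} gives $\rho(P) \cap \relint(\sigma) \neq \emptyset$ and $\omega \in \pi_\sigma(P)$. Since $X$ intersects $O(\sigma)$ properly, Proposition~\ref{prop:equiv.int.proper}(3) applied to this specific $P$ yields $\dim(\pi_\sigma(P)) = d - \dim(\sigma)$. On the other hand, Lemma~\ref{lem:prop.int.codim} gives that $\Trop(X) \cap N_\R(\sigma)$ has pure dimension $d - \dim(\sigma)$, so $d(\omega) = d - \dim(\sigma)$. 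Combining these, $\pi_\sigma(P)$ is $d$-maximal at $\omega$.

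With $P$ satisfying all three hypotheses of Theorem~\ref{thm:limit.points}, that theorem directly yields $\xi \in \STrop(X)$, completing the proof. The only subtlety is ensuring the same polyhedron works for the whole subsequence while simultaneously satisfying the relevance condition; this is the main point where the argument relies on finiteness of $\Sigma$ combined with Proposition~\ref{properties of relevant}(3), but no genuine obstacle arises since equidimensionality of $X \cap T$ ensures the hypothesis $\dim_{\xi_i}(X^\an) = d(\omega_i)$ of that proposition holds for every $i$ at once.
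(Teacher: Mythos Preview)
Your proof is correct and follows essentially the same route as the paper's own argument: choose a finite collection $\Sigma$ of $d$-dimensional polyhedra covering $\Trop(X)\cap N_\R$, use Proposition~\ref{properties of relevant} to find a relevant $d$-maximal polyhedron for each $\xi_i$, pass to a subsequence to get a single $P$, then combine Lemma~\ref{lem:closure.polyhedron} with Proposition~\ref{prop:equiv.int.proper} to verify $d$-maximality of $\pi_\sigma(P)$ at $\omega$ and invoke Theorem~\ref{thm:limit.points}. The only cosmetic difference is that the paper first observes $\xi\in X^\an$ (since $X^\an$ is closed in $Y_\Delta^\an$) to deduce $X\cap O(\sigma)\neq\emptyset$ and hence $\dim(\sigma)\leq d$, whereas you phrase this as disposing of the case $\dim(\sigma)>d$; these are logically equivalent.
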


\begin{proof}
  Since $X^\an$ is closed in $Y_\Delta^\an$, we have
  $\xi\in X^\an\cap O(\sigma)^\an$.  In particular,
  $X\cap O(\sigma)\neq\emptyset$, so $\dim(\sigma)\leq d$.  Let
  $\omega_i = \trop(\xi_i)\in N_\R$ and let
  $\omega = \trop(\xi) \in N_\R(\sigma)$.  Choose a finite collection $\Sigma$
  of integral $\R$-affine $d$-dimensional polyhedra whose union is
  $\Trop(X)\cap N_\R$.  By Proposition~\ref{properties of relevant}, for every $\xi_i$ there exists a polyhedron in $\Sigma$, which has dimension $d$ and is relevant for $\xi_i$. After passing to a subsequence, we may assume that the same polyhedron $P$ works for all 
  $\xi_i$.  By Lemma~\ref{lem:closure.polyhedron}, this implies that
  $\rho(P)\cap\relint(\sigma)\neq\emptyset$, so by
  Proposition~\ref{prop:equiv.int.proper}, the dimension of
  $\pi_\sigma(P)$ is
  \[ \dim(\pi_\sigma(P)) = d - \dim(\sigma) =
  \dim(X\cap O(\sigma)) = \dim(\Trop(X)\cap N_\R(\sigma)). \]
  It follows that $\pi_\sigma(P)$ is $d$-maximal at all of its points, so we can
  apply Theorem~\ref{thm:limit.points}.
\end{proof}

The case when $X$ intersects \emph{all} torus orbits properly is even more
special.  As above we assume that $X\cap T$ is equidimensional of dimension $d$
and that $X\cap T$ is dense in $X$.

\begin{lem}\label{lem:smaller.orbit}
Suppose that for all $\sigma\in\Delta$, either $X\cap O(\sigma)=\emptyset$ or $X$
  intersects $O(\sigma)$ properly.  Fix $\tau\in\Delta$, and let
  $X_\tau\subset Y_{\Delta_\tau}$ be the closure of $X\cap O(\tau)$.  Then
  for all $\bar\sigma = \pi_\tau(\sigma) \in\Delta_\tau$, either
  $X_\tau\cap O(\bar\sigma) = \emptyset$ or $X_\tau$ intersects $O(\bar\sigma)$
  properly.
\end{lem}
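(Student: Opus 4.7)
The plan is to reduce the proper-intersection property of $X_\tau$ in $Y_{\Delta_\tau}$ to that of $X$ in $Y_\Delta$ via the tropicalization criterion of Proposition~\ref{prop:equiv.int.proper}, applied once to $X$ and once in reverse to $X_\tau$.

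First I would dispose of the trivial case: if $X\cap O(\tau)=\emptyset$ then $X_\tau=\emptyset$ and the conclusion is vacuous, so assume $X\cap O(\tau)\neq\emptyset$, in which case $X$ intersects $O(\tau)$ properly and $X_\tau$ is the closure of an equidimensional subscheme of dimension $d_\tau:=d-\dim(\tau)$. Then $X_\tau\cap O(\tau)=X\cap O(\tau)$ is equidimensional of dimension $d_\tau$ and dense in $X_\tau$, so the standing hypotheses of \S\ref{sec:proper intersection} apply to $X_\tau\subset Y_{\Delta_\tau}$ with dense torus $O(\tau)$. Fix now $\sigma\in\Delta$ with $\tau\prec\sigma$, put $\bar\sigma=\pi_\tau(\sigma)$, and assume $X_\tau\cap O(\bar\sigma)\neq\emptyset$; since $X_\tau\subset X$ and $O(\bar\sigma)=O(\sigma)$, this forces $X\cap O(\sigma)\neq\emptyset$, so by hypothesis $X$ intersects $O(\sigma)$ properly.

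Choose a finite collection $\Sigma$ of integral $\R$-affine $d$-dimensional polyhedra with union $\Trop(X)\cap N_\R$. By Proposition~\ref{tropicalization and closure} combined with Proposition~\ref{prop:equiv.int.proper} applied to $\tau$, the collection
\[
\Sigma_\tau=\{\pi_\tau(P)\mid P\in\Sigma,\ \rho(P)\cap\relint(\tau)\neq\emptyset\}
\]
covers $\Trop(X_\tau)\cap N(\tau)_\R$ by integral $\R$-affine polyhedra of dimension $d_\tau$. I would then apply Proposition~\ref{prop:equiv.int.proper} to $X_\tau$, $\bar\sigma$, and $\Sigma_\tau$: it suffices to exhibit a $Q\in\Sigma_\tau$ with $\rho(Q)\cap\relint(\bar\sigma)\neq\emptyset$, and to show that every such $Q$ satisfies $\dim(\pi_{\bar\sigma}(Q))=d_\tau-\dim(\bar\sigma)=d-\dim(\sigma)$.

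The main obstacle is the following recession-cone equivalence, which makes the bookkeeping go through: for $P\in\Sigma$ with $\rho(P)\cap\relint(\tau)\neq\emptyset$, one has $\rho(\pi_\tau(P))\cap\relint(\bar\sigma)\neq\emptyset$ if and only if $\rho(P)\cap\relint(\sigma)\neq\emptyset$. The direction ($\Leftarrow$) is immediate from $\pi_\tau(\rho(P))=\rho(\pi_\tau(P))$ and $\pi_\tau(\relint(\sigma))=\relint(\bar\sigma)$. For ($\Rightarrow$), one lifts $y\in\rho(\pi_\tau(P))\cap\relint(\bar\sigma)$ to some $x\in\rho(P)$ with $\pi_\tau(x)=\pi_\tau(z)$ for a chosen $z\in\relint(\sigma)$; then $x-z\in\langle\tau\rangle$, and picking a fixed $w\in\rho(P)\cap\relint(\tau)$, one checks that $x+\lambda w\in\rho(P)\cap\relint(\sigma)$ for $\lambda\gg 0$, using $\relint(\sigma)+\tau\subseteq\relint(\sigma)$ (since $\tau\prec\sigma$). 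Granted this equivalence, the existence of a relevant $Q\in\Sigma_\tau$ amounts to the non-emptiness of $X\cap O(\sigma)$, and for any such $Q=\pi_\tau(P)$, the identity $\pi_{\bar\sigma}\circ\pi_\tau=\pi_\sigma$ together with Proposition~\ref{prop:equiv.int.proper} applied to $X$ and $\sigma$ (with collection $\Sigma$) gives $\dim(\pi_{\bar\sigma}(Q))=\dim(\pi_\sigma(P))=d-\dim(\sigma)$, verifying condition~(3) of Proposition~\ref{prop:equiv.int.proper} for $X_\tau$ at $\bar\sigma$.
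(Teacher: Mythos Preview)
Your argument is correct and follows essentially the same strategy as the paper: reduce to verifying condition~(3) of Proposition~\ref{prop:equiv.int.proper} for $X_\tau$ and $\bar\sigma$ with the collection $\Sigma_\tau=\{\pi_\tau(P)\}$, and deduce $\dim(\pi_{\bar\sigma}(\pi_\tau(P)))=\dim(\pi_\sigma(P))=d-\dim(\sigma)$ from proper intersection of $X$ with $O(\sigma)$. The only notable difference is in how you establish the implication $\rho(\pi_\tau(P))\cap\relint(\bar\sigma)\neq\emptyset\Rightarrow\rho(P)\cap\relint(\sigma)\neq\emptyset$: you give a direct convex-geometric argument (pushing $x$ by $\lambda w$ with $w\in\rho(P)\cap\relint(\tau)$), whereas the paper applies Lemma~\ref{lem:closure.polyhedron} twice, passing through the observation that $\overline{\pi_\tau(P)}\cap N_\R(\sigma)\neq\emptyset$ forces $\bar P\cap N_\R(\sigma)\neq\emptyset$. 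One small point: the existence clause in Proposition~\ref{prop:equiv.int.proper}(3) for $\Sigma_\tau$ follows directly from the assumed $X_\tau\cap O(\bar\sigma)\neq\emptyset$ via Proposition~\ref{tropicalization and closure}, rather than from $X\cap O(\sigma)\neq\emptyset$ alone (the latter only produces some $P\in\Sigma$ with $\rho(P)\cap\relint(\sigma)\neq\emptyset$, which need not lie in $\Sigma_\tau$).
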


\begin{proof}
  If $X \cap O(\tau) = \emptyset$ then the assertion is trivial.
  Otherwise, $X \cap O(\tau)$ is equidimensional of dimension
  $d-\dim(\tau)$ by Lemma~\ref{lem:prop.int.codim}.  Choose a finite
  collection $\Sigma$ of integral $\R$-affine $d$-dimensional polyhedra
  whose union is $\Trop(X)\cap N_\R$, so
  \[ \Trop(X\cap O(\tau)) =
  \bigcup_{\rho(P)\cap\relint(\tau)\neq\emptyset}\pi_\tau(P) \]
  as seen in Proposition \ref{tropicalization and closure}. By Proposition~\ref{prop:equiv.int.proper},  we have
  $\dim(\pi_\tau(P)) = d-\dim(\tau)$ for all such $P$.  Fix $\sigma\in\Delta$
  with $\tau\prec\sigma$, let $\bar\sigma=\pi_\tau(\sigma)\in\Delta_\tau$, and
  suppose that $X_\tau\cap O(\bar\sigma)\neq\emptyset$.  This implies
  $X\cap O(\sigma)\neq\emptyset$, so $\dim(\sigma)\leq d$.  If $P\in\Sigma$ 
  with $\rho(P) \cap \relint(\tau) \neq \emptyset$ 
  has
  $\rho(\pi_\tau(P))\cap\relint(\bar\sigma)\neq\emptyset$ then the closure 
  $\bar{\pi_\tau(P)}$ of
  $\pi_\tau(P)$ in $\bar{N(\tau)}{}_\R^{\Delta_\tau}$ satisfies
  \[ \bar{\pi_\tau(P)}\cap N(\tau)_\R(\bar\sigma) = 
  \bar{\pi_\tau(P)}\cap N_\R(\sigma) =
  \pi_{\bar\sigma}(\pi_\tau(P)) = \pi_\sigma(P) \]
  by Lemma \ref{lem:closure.polyhedron}. In particular, the closure $\bar P$ of $P$ in $\barNRD$ intersects
  $N_\R(\sigma)$, so Lemma \ref{lem:closure.polyhedron} again shows  $\rho(P)\cap\relint(\sigma)\neq\emptyset$.  Hence
  $\dim(\pi_\sigma(P)) = d - \dim(\sigma)$ by
  Proposition~\ref{prop:equiv.int.proper}, since $X$ intersects $O(\sigma)$
  properly, so
  $$\dim(\pi_{\bar\sigma}(\pi_\tau(P))) = 
\dim(\pi_\sigma(P)) = d - \dim(\sigma)=
(d-\dim(\tau)) - \dim(\bar\sigma).$$
  This is true for all $P$ with
  $\rho(\pi_\tau(P))\cap\relint(\bar\sigma)\neq\emptyset$, so
  again by Proposition~\ref{prop:equiv.int.proper}, $X_\tau$ intersects
  $O(\bar\sigma)$ properly.
\end{proof}

\begin{cor}\label{cor:proper.int.ST.closed}
If, for all $\sigma\in\Delta$, either
  $X\cap O(\sigma)=\emptyset$ or $X$ intersects $O(\sigma)$ properly, then
  $\STrop(X)$ is closed in $X^\an$.
\end{cor}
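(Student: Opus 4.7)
My plan is to reduce the general case to Corollary~\ref{cor:limit.pt.proper.int} (which handles the situation where the sequence lies in the dense torus) by passing to a smaller toric variety via Lemma~\ref{lem:smaller.orbit}.

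First, I would let $\xi$ be an accumulation point of $\STrop(X)$ and invoke Poineau's angelicity result to write $\xi = \lim \xi_i$ for some sequence $(\xi_i)$ in $\STrop(X)$. Since $\Delta$ is finite, after passing to a subsequence I may assume all $\xi_i$ lie in a single orbit $O(\tau)^\an$ for some $\tau \in \Delta$, and $\xi \in O(\sigma)^\an$ for some $\sigma\in\Delta$ with $\tau \prec \sigma$ (because $X^\an$ is closed in $Y_\Delta^\an$ and the orbit stratification is compatible with the partial compactification). In particular $X\cap O(\tau)$ is non-empty.

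Next, I would replace $X$ by $X_\tau$, the closure of $X\cap O(\tau)$ in the toric variety $Y_{\Delta_\tau}$ whose dense torus is $O(\tau)$. By hypothesis and Lemma~\ref{lem:prop.int.codim}, $X\cap O(\tau)$ is equidimensional of dimension $d-\dim(\tau)$ and it is tautologically dense in $X_\tau$. By Lemma~\ref{lem:smaller.orbit}, for every $\bar\sigma'\in\Delta_\tau$ either $X_\tau\cap O(\bar\sigma') = \emptyset$ or $X_\tau$ intersects $O(\bar\sigma')$ properly. Since the tropical skeleton is defined independently on each torus orbit (as noted right after its definition), we have
\[ \STrop(X_\tau)\cap O(\tau)^\an = \STrop(X_\tau\cap O(\tau)) = \STrop(X\cap O(\tau)) = \STrop(X)\cap O(\tau)^\an, \]
so $\xi_i\in \STrop(X_\tau)$ for all $i$, and these points lie in the dense torus of $Y_{\Delta_\tau}$.

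Now I would apply Corollary~\ref{cor:limit.pt.proper.int} to the closed subscheme $X_\tau\subset Y_{\Delta_\tau}$, the cone $\bar\sigma \coloneq \pi_\tau(\sigma) \in \Delta_\tau$, and the sequence $(\xi_i)$: its hypotheses are satisfied by the previous paragraph, since $\xi\in O(\sigma)^\an = O(\bar\sigma)^\an$ with $O(\bar\sigma)$ being the corresponding orbit in $Y_{\Delta_\tau}$. The conclusion is that $\xi\in\STrop(X_\tau)$. Using once more that the tropical skeleton is defined orbitwise,
\[ \xi\in\STrop(X_\tau)\cap O(\bar\sigma)^\an = \STrop(X_\tau\cap O(\bar\sigma)) = \STrop(X\cap O(\sigma)) = \STrop(X)\cap O(\sigma)^\an, \]
which finishes the proof. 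The only real content beyond the previous results is the reduction step in the second paragraph; the potential obstacle is verifying that the proper-intersection hypothesis really does descend to $X_\tau$ inside $Y_{\Delta_\tau}$, but this is exactly what Lemma~\ref{lem:smaller.orbit} supplies.
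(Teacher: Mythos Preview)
Your approach mirrors the paper's proof: reduce to a sequence in a single orbit $O(\tau)$, pass to the closure $X_\tau\subset Y_{\Delta_\tau}$, verify via Lemma~\ref{lem:smaller.orbit} that the proper-intersection hypothesis descends, and then apply Corollary~\ref{cor:limit.pt.proper.int} in the dense torus $O(\tau)$. The skeleton of the argument is correct.

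There is, however, a real gap in your final displayed chain. The equality
\[
\STrop(X_\tau\cap O(\bar\sigma)) \;=\; \STrop(X\cap O(\sigma))
\]
does not follow from the fact that the tropical skeleton is defined orbitwise: that principle only tells you $\STrop(X_\tau)\cap O(\bar\sigma)^\an=\STrop(X_\tau\cap O(\bar\sigma))$ and $\STrop(X)\cap O(\sigma)^\an=\STrop(X\cap O(\sigma))$. The schemes $X_\tau\cap O(\bar\sigma)$ and $X\cap O(\sigma)$ are in general different --- the former is the part of $X\cap O(\sigma)$ obtained by closing up $X\cap O(\tau)$, and it may miss components of $X\cap O(\sigma)$. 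What you actually need (and what suffices) is the \emph{inclusion} $\STrop(X_\tau\cap O(\bar\sigma))\subset\STrop(X\cap O(\sigma))$, and this requires an argument: by Lemma~\ref{lem:prop.int.codim} both $X\cap O(\sigma)$ and $X_\tau\cap O(\bar\sigma)$ are equidimensional of the same dimension $d-\dim(\sigma)$, so $X_\tau\cap O(\bar\sigma)$ is a union of irreducible components of $X\cap O(\sigma)$; then Proposition~\ref{prop:union.of.comps} gives the desired inclusion of tropical skeletons. This is precisely how the paper closes the argument, and you should insert the same reasoning.
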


\begin{proof}
  Let $(\xi_i)_{i \in \N}$ be a sequence in $\STrop(X)$ converging to some
  $\xi\in X^\an$.  We wish to show $\xi\in\STrop(X)$.  Passing to a subsequence,
  we may assume that the sequence is contained in a single torus orbit
  $O(\tau)$.  By Lemma~\ref{lem:smaller.orbit}, the closure $X_\tau$ of
  $X\cap O(\tau)$ in the toric variety $Y_{\Delta_\tau}$ satisfies the same
  hypotheses as $X$.  Hence for dimension reasons, if $\sigma\in\Delta$,
  $\tau\prec\sigma$, and $\bar\sigma=\pi_\tau(\sigma)$, then
  $X_\tau\cap O(\bar\sigma)$ is a union of irreducible components of
  $X\cap O(\sigma)$.  By Proposition~\ref{prop:union.of.comps} as applied to
  $X\cap O(\sigma)$, then, we may replace $X$ by $X_\tau$ to assume
  $\{\xi_i\}\subset T^\an$.  Now we apply
  Corollary~\ref{cor:limit.pt.proper.int} to conclude $\xi\in\STrop(X)$.
\end{proof}

\begin{rem}\label{rem:why.one}
  In the situation of Corollary~\ref{cor:proper.int.ST.closed}, suppose that
  $Y_\Delta = Y_\sigma$ is an affine toric variety.  In this case it is possible
  using the techniques of Theorem~\ref{thm:limit.points} to produce a morphism
  $\psi\colon Y_\sigma\to Y'_{\sigma'}$ of affine toric varieties such that the
  composite morphism $\phi\colon X\inject Y_\sigma\to Y'_{\sigma'}$ is finite
  and surjective over every torus orbit $O(\tau')\subset Y'_{\sigma'}$ such that
  $\phi\inv(O(\tau'))\neq\emptyset$.  
From
  this it follows exactly as in Proposition~\ref{prop:equidim.closed} that
  $\phi\inv(S(Y'_{\sigma'})) = \STrop(X)$.  This gives another proof of
  Corollary~\ref{cor:proper.int.ST.closed}, and also shows that $\STrop(X)$ is a
  kind of generalization of a $c$-skeleton in the sense
  of~\cite{ducros03:image_reciproque, ducros12:squelettes_modeles}.
  Compare~\cite[Remark~10.7]{gubler_rabinoff_werner16:skeleton_tropical}.

  In particular, one should be able to use the results of
    Ducros to prove that each torus orbit in the tropical skeleton has a natural
    $\Q$-affine structure.  Forthcoming work of Ducros--Thuillier may allow for
    stronger assertions.
\end{rem}

\begin{rem}
  The hypotheses of Corollary~\ref{cor:proper.int.ST.closed} are commonly
  satisfied in the context of tropical compactifications.  Let $\Trop_0(X)$
  denote the tropicalization of $X$, considered as a subscheme of $Y_\Delta$
  over the field $K$ endowed with the trivial valuation.  If $\Sigma$ is
  a finite collection of integral $\R$-affine $d$-dimensional polyhedra
  whose union is $\Trop(X\cap T)$, then $\Trop_0(X \cap T)$ is the union of
  the recession cones of the polyhedra in $\Sigma$  as we have seen in Remark \ref{recession cones}.

  Suppose that the support of the fan $\Delta$ is equal to $\Trop_0(X \cap T)$.  This
  happens for instance if $\Delta$ is a \emph{tropical fan} for $X\cap T$ as defined by Tevelev~\cite{tevelev07:compactifications} for integral $X\cap T$ and generalized in~\cite[\S 12]{gubler13:guide_tropical} to arbitrary closed subschemes of $T$.  
Then $X$ is proper over $K$ 
by~\cite[Proposition 2.3]{tevelev07:compactifications}, 
and $X$
  intersects each torus orbit $O(\sigma)$ properly by
   \cite[Theorem~14.9]{gubler13:guide_tropical}.
   In this case, $\STrop(X)$ is
  closed by Corollary~\ref{cor:proper.int.ST.closed}, so it is even compact.
\end{rem}

\section{Section of tropicalization}\label{sec:section.trop}

In this section we prove that there is a section of the tropicalization map on
the locus of tropical multiplicity one, and we use the results
of~\S\ref{sec:tropical.skeleton} to examine when this section is continuous.

\subsection{Existence of the section}
Let $\Delta$ be a rational pointed fan in $N_\R$ and let $X\subset Y_\Delta$ be
a closed subscheme.  Suppose that $\omega\in\Trop(X)$ has $m_{\Trop}(\omega)=1$.
We will show that in this case, there is a distinguished Shilov boundary point
of $X_\omega = \trop\inv(\omega)\cap X^\an$, which will be the image of the
section evaluated at $\omega$.  However, as the following example 
shows,
$X_\omega$ may still have multiple Shilov boundary points.

\begin{eg}
  Suppose that the valuation on $K$ is non-trivial.
  Let $X$ be the closed subscheme of $T = \Spec(K[x_1^{\pm1}, x_2^{\pm 1}])$
  given by the ideal $\fa = \big( x_1-1,x_2-1 \big) \cap \big( x_1-1-\varpi\big)$ for
  $\varpi\in K^\times$ with $|\varpi| < 1$.  Then $X$ is the disjoint union of
  the line $\{x_1 = 1+\varpi\}$ with the point $(1, 1)$.  The initial
  degeneration at $\omega=0$ is defined by the ideal
  $\inn_w(\fa) = \big( (x_1-1)^2,(x_1-1)(x_2-1) \big)$ over $\td K$.  This is a
  generically reduced line with an associated point at $(1,1)$.  It has tropical
  multiplicity $1$, but the canonical reduction is the disjoint union of a point
  and a line, so that $X_\omega$ has \emph{two} Shilov boundary points.  Note
  however that one of these points is contained in an irreducible component of
  dimension one, and the other in a component of dimension zero.
\end{eg}

Recall from Definition~\ref{defn:local.dim} that for $\omega\in\Trop(X)$, the
local dimension of $\Trop(X)$ at $\omega$ is denoted
$d(\omega)=\dim\big(\LC_\omega(\Trop(X))\big)$.

\begin{prop}\label{prop:shilov.section}
  Let $X\subset Y_\Delta$ be a closed subscheme and let $\omega\in\Trop(X)$ be a
  point with $m_{\Trop}(\omega)=1$.  Let $\sigma\in\Delta$ be the cone such that
  $\omega\in N_\R(\sigma)$.  Then there is a unique irreducible component $C$ of
  $X\cap O(\sigma)$ of dimension $d(\omega)$ such that $\omega\in\Trop(C)$, and
  there is a unique Shilov boundary point of $C_\omega$.  Moreover,
  $m_{\Trop}(C,\omega) = 1$.
\end{prop}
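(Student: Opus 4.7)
The plan is to reduce to the case where $Y_\Delta$ is the torus $O(\sigma)$, isolate the unique top-dimensional component $C$ of $X$ with $\omega\in\Trop(C)$ and $m_{\Trop}(C,\omega)=1$ via additivity of tropical multiplicity, and then apply the projection formula to $C$.

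First, since every ingredient in the statement depends only on $X\cap O(\sigma)$, I would replace $X$ by $X\cap O(\sigma)$, reducing to the case $Y_\Delta = O(\sigma)$; by Lemma~\ref{lem:skeleton.reduction}, I may also assume $X$ is reduced. Choose an algebraically closed valued extension $K'\supset K$ with value group containing $\omega$, set $X' = X_{K'}$, and observe that the hypothesis $m_{\Trop}(\omega) = 1$ forces $\inn_\omega(X')$ to have a unique irreducible component $Z$ with $m_Z = 1$; hence $\inn_\omega(X')_{\red} = Z_{\red}$ is integral of dimension $d(\omega)$ by Lemma~\ref{lem:equiv.d.omega}.

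Next, I would identify $C$ using the irreducible decomposition $X = \bigcup_i X_i$. Since
\[ \LC_\omega(\Trop(X)) = \bigcup_{i:\,\omega\in\Trop(X_i)}\LC_\omega(\Trop(X_i)) = \Trop(Z) \]
is pure of dimension $d(\omega)$, and each $\Trop(X_i)$ is pure of dimension $\dim X_i$ by Bieri--Groves, every component $X_i$ with $\omega\in\Trop(X_i)$ satisfies $\dim X_i \leq d(\omega)$, with at least one achieving equality; for any such top-dimensional $X_i$, the initial degeneration $\inn_\omega(X'_i) \subset \inn_\omega(X')$ has $Z_{\red}$ as its unique top-dimensional component. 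Top-dimensional multiplicities are additive across such components, so
\[ 1 = m_{\Trop}(X,\omega) = \sum_{\dim X_i = d(\omega),\,\omega\in\Trop(X_i)} m_{\Trop}(X_i,\omega); \]
each summand is a positive integer, forcing exactly one index to contribute, and I take $C$ to be the corresponding $X_i$. Then $\dim C = d(\omega)$, $\omega \in \Trop(C)$, and $m_{\Trop}(C,\omega) = 1$, with $C$ unique among components with these properties.

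Finally, I would deduce uniqueness of the Shilov boundary point of $C_\omega$ from the projection formula applied to the irreducible $C$. Because $C$ is irreducible of dimension $d(\omega)$, its analytification $C^\an$ is equidimensional of dimension $d(\omega)$ by~\cite[Lemma~A.1.2(2)]{conrad99:irred_comps}; hence the affinoid $C_{K',\omega}$ and its canonical reduction $\td C_{K',\omega}$ are equidimensional of dimension $d(\omega)$ by Proposition~\ref{prop:canon.red.facts}(3). Every irreducible component of $\td C_{K',\omega}$ then surjects onto the integral scheme $Z_{\red}$ along the finite morphism of Proposition~\ref{prop:two.reductions}, and the projection formula from Remark~\ref{finite surjective morphism} yields
\[ 1 = m_{\Trop}(C,\omega) = \sum_{Z' \subset \td C_{K',\omega}}[Z' : Z_{\red}], \]
forcing $\td C_{K',\omega}$ to be integral. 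Proposition~\ref{prop:can.red.surj}(3) gives a unique Shilov boundary point of $C_{K',\omega}$, and Proposition~\ref{prop:shilov.surj} descends this to $K$. The main technical obstacle is verifying the additivity of top-dimensional multiplicity used in the middle paragraph, which underlies both the existence and uniqueness of $C$.
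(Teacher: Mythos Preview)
Your overall strategy---isolate $C$ first via the irreducible decomposition, then apply the projection formula to $C$ alone---is a reasonable alternative to the paper's approach, and your dimension bound $\dim X_i\leq d(\omega)$ via purity of $\Trop(Z)=\LC_\omega(\Trop(X))$ is correct.  However, the additivity
\[
  m_{\Trop}(X,\omega) \;=\; \sum_{\substack{\dim X_i=d(\omega)\\\omega\in\Trop(X_i)}} m_{\Trop}(X_i,\omega)
\]
is a genuine gap, as you yourself flag.  It is not a standard result quoted anywhere in the paper or in~\cite{gubler13:guide_tropical}, and it is \emph{not} a formal consequence of the definitions: one can check that $\inn_\omega(X')$ is in general strictly larger than the scheme-theoretic union of the $\inn_\omega(X_i')$, so one cannot simply read off lengths at the generic point of~$Z$.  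What you can easily extract from the closed immersion $\inn_\omega(X_i')\hookrightarrow\inn_\omega(X')$ is the inequality $m_{\Trop}(X_i,\omega)\leq m_{\Trop}(X,\omega)=1$ for each individual top-dimensional $X_i$, but this does not by itself rule out two such components.

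The paper sidesteps this entirely by reversing the order of the argument: it applies the projection formula~\eqref{eq:projection.formula} to (the reduction of) $X'$ itself, concluding that $\td X'_\omega$ has a \emph{unique} irreducible component of dimension $d(\omega)$, hence a unique Shilov boundary point $\xi$ with $\dim_\xi(X_\omega)=d(\omega)$ (Propositions~\ref{prop:can.red.surj} and~\ref{prop:bdy.irred.comps}).  The component $C$ is then recovered as the unique irreducible component of $X$ containing $\xi$, using Proposition~\ref{prop:affinoid.union.comps} to see that $C_\omega$ is a union of components of $X_\omega$.  Uniqueness of $C$ is then automatic: any other $d(\omega)$-dimensional component $C'$ with $\omega\in\Trop(C')$ would force $C'_\omega$ to contain the same $d$-dimensional component of $X_\omega$, contradicting $\dim(C\cap C')<d$.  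If you try to prove your additivity rigorously, you will find yourself using the projection formula on $X'$ anyway (distinct top-dimensional components of $X_\omega$ give distinct top-dimensional components of $\td X'_\omega$, all surjecting onto $Z$, forcing $m_Z\geq 2$), so the paper's route is strictly more direct.

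A minor secondary point: in your final paragraph you apply the projection formula to $C_{K'}$, but Remark~\ref{finite surjective morphism} requires the scheme to be reduced, and $C_{K'}$ need not be reduced when $K$ is imperfect.  This is easily repaired by passing to $(C_{K'})_{\red}$, but it should be said.
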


\begin{proof}
  We immediately reduce to the case $Y_\Delta = O(\sigma) = T$.  First suppose
  that $K$ is algebraically closed, non-trivially valued and
  $\omega\in N_\Gamma$.  By hypothesis, $\inn_\omega(X)$ is irreducible and
  generically reduced, and its dimension is
  $d\coloneq d(\omega) = \dim(X_\omega) = \dim(\td X_\omega)$ by
  Lemma~\ref{lem:equiv.d.omega}.  Replacing $X$ by its underlying reduced
  subscheme $X_{\red}$ has the effect of replacing $\inn_\omega(X)$ by the
  closed subscheme defined by a nilpotent ideal sheaf, so the same is true of
  $\inn_\omega(X_{\red})$.  The conclusions of the Proposition (except the last one) are insensitive
  to nilpotents, so we may assume without loss of generality that $X$ is
  reduced (coming back to the last claim at the end).

  The projection formula of~\S\ref{finite surjective morphism} gives
  \[ 1 = \sum_{Z\surject\inn_\omega(X)} [Z:\inn_\omega(X)], \]
  where the sum is taken over all irreducible components $Z$ of $\td X_\omega$
  dominating $\inn_\omega(X)$.  Since $\td X_\omega\to(\fX_\omega)_s$ is finite
  in any case by Proposition~\ref{prop:two.reductions}, this proves that there
  is a unique irreducible component $Z$ of $\td X_\omega$ of dimension $d$.  Let
  $\xi\in X_\omega$ be the Shilov boundary point reducing to the generic point
  of $Z$.

  Let $D$ be an irreducible component of $X_\omega$ of dimension
  $d=\dim(X_\omega)$.  The inclusion $D\inject X_\omega$ gives a finite morphism
  of canonical reductions $\td D\to\td X_\omega$.  As $\td D$ is equidimensional
  of dimension $d$ by Proposition~\ref{prop:canon.red.facts}(3), every generic
  point of $\td D$ maps to the generic point of $Z$, so every Shilov boundary
  point of $D$ maps to $\xi$ by Proposition~\ref{prop:can.red.surj}.  Since
  $D\to X_\omega$ is injective, its Shilov boundary is $\{\xi\}$. By
  Proposition~\ref{prop:bdy.irred.comps}, $D$ is the unique irreducible component
  of $X_\omega$ containing $\xi$, thus is the unique irreducible component of
  $X_\omega$ of dimension $d$.

  Let $C$ be an irreducible component of $X$ containing the Shilov point $\xi$
  in its analytification.  Then $C_\omega$ is a union of irreducible components
  of $X_\omega$ of the same dimension as $C$
  by Proposition~\ref{prop:affinoid.union.comps}.  Since
  $\xi\in C_\omega$ we have $D\subset C_\omega$, so $\dim(C) = \dim(D) = d$, and
  therefore $C_\omega = D$.  Finally, if $C'$ is another irreducible component
  of $X$ of dimension $d$ such that $C'_\omega\neq\emptyset$, then
  $D\subset C'_\omega$, which is impossible since $\dim(C\cap C') < d$.  Thus
  $C$ is the unique irreducible component of $X$ of dimension $d$ such that
  $\omega\in\Trop(C)$, and $\xi$ is the unique Shilov boundary point of
  $C_\omega = D$.

  Now we allow $K$ and $\omega\in N_\R$ to be arbitrary.  Let $K'\supset K$ be
  an algebraically closed valued extension field whose value group
  $\Gamma'=\val(K^\times)$ is non-trivial and large enough that $\omega\in N_{\Gamma'}$, let
  $X' = X_{K'}$, and let $\pi\colon X'\to X$ be the structural morphism.  By the
  above, there is a unique irreducible component $C'$ of $X'$ of dimension
  $d = d(\omega)$ such that $\omega\in\Trop(C')$, and there is a unique Shilov
  boundary point of $C'_\omega$.  Then $C = \pi(C')$ is the unique irreducible
  component of $X$ of dimension $d$ containing $\omega$ in its tropicalization,
  and $C' = (\pi\inv(C))_{\rm red} = C_{K',\rm red}$. 
Hence
  $C'_\omega = ((C_\omega)_{K'})_{\rm red}$, so $B(C_\omega) = \pi(B(C'_\omega))$ by
  Proposition~\ref{prop:shilov.surj}, so $B(C_\omega)$ has only one point.
  
  We come now to the last claim no longer assuming that $X$ is reduced. By definition, $m_{\Trop}(C,\omega) = 1$ provided that $\inn_\omega(C_{K'})$
  is irreducible and generically reduced.  The map on initial degenerations
  $\inn_\omega(C_{K'})\to\inn_\omega(X')$ is a closed immersion, as both are
  closed subschemes of $\inn_\omega(T_{K'})\cong\bG_{m,\td K'}^n$.  Since
  $\inn_\omega(X')$ is irreducible and generically reduced, and
  $\dim(\inn_\omega(C_{K'})) = \dim(\inn_\omega(X'))$, this shows that
  $\inn_\omega(C_{K'})$ is also irreducible and generically reduced.
\end{proof}

\begin{defn}\label{def:section}
  Let $X\subset Y_\Delta$ be a closed subscheme.  Write
  \[ \multone \coloneq \big\{\omega\in\Trop(X)\mid m_{\Trop}(\omega) = 1
  \big\} \]
  for the tropical multiplicity-$1$ locus in $\Trop(X)$.  If
  $\omega\in\multone\cap N_\R(\sigma)$ for $\sigma\in\Delta$, we let $C(\omega)$ be
  the unique irreducible component of $X\cap O(\sigma)$ of dimension $d(\omega)$
  with $\omega\in\Trop(C(\omega))$, and we define
  \[\begin{split}
    s_X(\omega) &= \text{the unique Shilov boundary point of }
    C(\omega)_\omega \\
    &= \text{the unique Shilov boundary point $\xi$ of } X_\omega \text{ such that }
    \dim_{\xi}(X_\omega) = d(\omega).
  \end{split} \]
  We regard $s_X$ as a map $\multone\to\STrop(X)\subset X^\an$.
\end{defn}

It follows immediately from the above definition that the image of $s_X$ is  contained in $\STrop(X)$. 
By construction, $\trop\circ s_X$ is the
identity, so $s_X$ is a section of $\trop$ defined on $\multone$.  If
$X\cap O(\sigma)$ is equidimensional of dimension $d$ then $d(\omega) = d$ for
all $\omega\in\Trop(X)\cap N_\R(\sigma)$ by the Bieri--Groves theorem, so
$s_X(\omega)$ is the unique Shilov boundary point of $X_\omega$ in this case:
$\STrop(X)\cap\trop\inv(\omega) = \{s_X(\omega)\}$.  Therefore our $s_X$
coincides with the section considered
in~\cite[\S10]{gubler_rabinoff_werner16:skeleton_tropical} for $X\subset T$
irreducible.  It also coincides with the section
$s\colon\barNRD\to Y_\Delta^\an$ introduced in~\S\ref{par:tropicalization} when
$X = Y_\Delta$.

\begin{rem}[Behavior with respect to extension of scalars]
  \label{rem:section.extend.scalars}
  Let $K'$ be a valued extension field of $K$, let $X' = X_{K'}$, and let
  $\pi\colon X'^\an\to X^\an$ be the structural map.  Then
  $\multone[X'] = \multone$ by the definition of $m_{\Trop}$.  It is clear from
  the proof of Proposition~\ref{prop:shilov.section} that
  $\pi\circ s_{X'} = s_X$.
\end{rem}

The uniqueness of $C(\omega)$ for $\omega\in\multone$ gives us the following
decomposition of $\multone$.  Supposing for simplicity that $X$ is a closed
subscheme of $T$, for an irreducible component $C$ of $X$ let
\begin{equation}\label{eq:mult1.decomp}
  Z(C) = \big\{\omega\in\multone\mid C(\omega) = C \big\}.
\end{equation}
In other words, $Z(C)$ is the set of all multiplicity-$1$ points $\omega$ such
that $C$ is the unique irreducible component of $X$ of dimension $d(\omega)$
with $\omega\in\Trop(C)$. Hence $s_X(\omega)$ is the Shilov boundary point of
$C_\omega$.  Then by definition, $\multone$ is the disjoint union
$\Djunion_C Z(C)$, and $s_X = s_C$ on $Z(C)$ (which makes sense by the final
assertion of Proposition~\ref{prop:shilov.section}).
This observation, along with the following Lemma, will
allow us to reduce topological questions about $\multone$ to the case when $X$
is irreducible.

\begin{lem}\label{lem:decomp.multone}
  Let $X\subset T$ be a closed subscheme, let $C$ be an irreducible component of $X$, and define $Z(C)$ as
  in~\eqref{eq:mult1.decomp}.  Then $Z(C)$ is open and closed in $\multone$.
\end{lem}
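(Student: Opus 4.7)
The plan is to prove that $Z(C)$ is open in $\multone$; because the irreducible decomposition of $X$ is finite and $\multone=\bigsqcup_{C'}Z(C')$ is a disjoint partition, openness of every $Z(C')$ automatically makes every $Z(C)$ closed, its complement being a finite union of open sets.

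So fix $\omega_0\in Z(C)$, put $d_0\coloneq d(\omega_0)=\dim(C)$, and let $C_1,\ldots,C_k$ be the irreducible components of $X$ other than $C$ whose tropicalization contains $\omega_0$. The uniqueness clause of Proposition~\ref{prop:shilov.section} forces $\dim(C_j)<d_0$ for every such $C_j$. The key input is $m_{\Trop}(\omega_0)=1$: after passing to a suitable valued field extension $K'$, the initial degeneration $\inn_{\omega_0}(X_{K'})$ has a unique irreducible component, of dimension $d_0$, so by the Bieri--Groves theorem the set $\LC_{\omega_0}(\Trop(X))=\Trop(\inn_{\omega_0}(X_{K'}))$ is pure of dimension $d_0$. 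Writing this set component-wise as $\LC_{\omega_0}(\Trop(C))\cup\bigcup_{j=1}^k\LC_{\omega_0}(\Trop(C_j))$ and noting that each piece is itself pure of its own dimension, pure-dimensionality of the whole forces every lower-dimensional piece $\LC_{\omega_0}(\Trop(C_j))$ to be contained in $\LC_{\omega_0}(\Trop(C))$.

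Next I would translate this back to a neighborhood statement. Since each $\Trop(C_j)$ agrees with $\omega_0+\LC_{\omega_0}(\Trop(C_j))$ in a small ball around $\omega_0$, and since each component $C'$ of $X$ with $\omega_0\notin\Trop(C')$ is disjoint from some neighborhood of $\omega_0$ by closedness of $\Trop(C')$, intersecting these finitely many neighborhoods produces an open $U\ni\omega_0$ in $N_\R$ with $U\cap\Trop(X)\subset\Trop(C)$.

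Finally, let $\omega\in U\cap\multone$. Then $\omega\in\Trop(C)$, and pure-dimensionality of $\Trop(C)$ gives $d(\omega)\geq\dim(C)$; on the other hand every component $C'$ with $\omega\in\Trop(C')$ also satisfies $\omega_0\in\Trop(C')$ by construction of $U$, so $\dim(C')\leq d_0=\dim(C)$ and hence $d(\omega)\leq\dim(C)$. Thus $d(\omega)=\dim(C)$, and Proposition~\ref{prop:shilov.section} identifies the unique top-dimensional component $C(\omega)$ through $\omega$ with $C$, i.e.\ $\omega\in Z(C)$. The main obstacle is the purity argument yielding the cone containment; the remaining steps are standard polyhedral bookkeeping.
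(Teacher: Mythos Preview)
Your proof is correct. Both your argument and the paper's hinge on the same observation: when $m_{\Trop}(\omega_0)=1$, the initial degeneration is irreducible, so $\LC_{\omega_0}(\Trop(X))$ is pure of dimension $d(\omega_0)$ by Bieri--Groves. The difference is packaging: the paper proves \emph{closedness} of $Z(C)$ directly via a sequence argument (take $\omega_i\to\omega$ in $Z(C)$, pass to a polyhedron $P\subset\Trop(C)$ containing the $\omega_i$, and use purity at $\omega$ to rule out $d(\omega)>\dim(C)$), while you prove \emph{openness} by constructing an explicit neighborhood $U$ of $\omega_0$ on which $\Trop(X)=\Trop(C)$. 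Your approach has the mild advantage of producing a concrete neighborhood and making the containment $\LC_{\omega_0}(\Trop(C_j))\subset\LC_{\omega_0}(\Trop(C))$ explicit; the paper's sequential argument is shorter.

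One small point of precision: the uniqueness clause of Proposition~\ref{prop:shilov.section} only excludes other components of dimension \emph{equal} to $d_0$; the inequality $\dim(C_j)\leq d_0$ comes separately from the definition $d_0=d(\omega_0)=\dim\LC_{\omega_0}(\Trop(X))$ together with Bieri--Groves applied to $C_j$. This does not affect the correctness of your argument.
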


\begin{proof}
  Since $\multone = \Djunion_C Z(C)$, it is enough to prove that $Z(C)$ is
  closed.  Let $(\omega_i)_{i\in \N}$ be a sequence in $Z(C)$ converging to
  a point $\omega\in\multone$.  By passing to a subsequence, we may assume that
  all $\omega_i$ are contained in a single polyhedron $P\subset\Trop(C)$ of
  dimension $d = \dim(C)$.  Then $\omega\in P$ as well.  We claim that
  $d(\omega) = d$.  If not, then there exists a polyhedron $P'\subset\Trop(X)$
  of dimension $d'>d$ also containing $\omega$. We note that $\LC_{\omega}(P)$ is not included in $\LC_{\omega}(P')$ as we have $d(\omega_i)=d$ for all $i$. But then
  $\LC_\omega(\Trop(X)) = \Trop(\inn_\omega(X))$ is not equidimensional, so
  $\inn_\omega(X)$ is not irreducible, which contradicts $m_{\Trop}(\omega)=1$.
  This proves the claim.  
  We have $\omega\in P \subset \Trop(C)$,
  so $C$ is the unique irreducible component of $X$ of dimension $d = d(\omega)$
  containing $\omega$ in its tropicalization, and therefore $\omega\in Z(C)$.
\end{proof}

\subsection{Continuity on torus orbits}
The following analogue of Corollary~\ref{cor:closed.on.orbits} is a
generalization
of~\cite[Theorem~10.6]{gubler_rabinoff_werner16:skeleton_tropical} to
reducible $X$ and a general non-Archimedean ground field $K$.

\begin{prop}\label{prop:section.cont}
  For $\sigma\in\Delta$, the section of tropicalization $s_X$ is continuous on
  the multiplicity-$1$ locus
  $\multone\cap N_\R(\sigma)$.  Moreover, if $Z\subset\multone\cap N_\R(\sigma)$ is
  contained in the closure of its interior in $\Trop(X)\cap N_\R(\sigma)$, then
  $s_X$ is the unique continuous partial section of $\trop\colon X^\an\to\Trop(X)$
  defined on $Z$.
\end{prop}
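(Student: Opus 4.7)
The plan is first to reduce to the case $Y_\Delta = O(\sigma) = T$, using that $s_X$ and $\STrop$ are defined orbitwise (see the remarks after Definition~\ref{def:section}): the assertion depends only on $X \cap O(\sigma)$ regarded as a closed subscheme of the torus $O(\sigma)$. With this reduction in hand, the decomposition $\multone = \Djunion_{C} Z(C)$ from Lemma~\ref{lem:decomp.multone} expresses $\multone$ as a finite disjoint union of open-and-closed subsets indexed by the irreducible components $C$ of $X$, with $s_X|_{Z(C)} = s_C|_{Z(C)}$ by the construction in Definition~\ref{def:section}. Continuity of $s_X$ then reduces componentwise to continuity of $s_C$ on $\multone[C]$ for each irreducible $C$, which is the irreducible case treated in~\cite[\S10]{gubler_rabinoff_werner16:skeleton_tropical}; pasting across the open-and-closed decomposition gives continuity on all of $\multone$.

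For uniqueness, I would take a continuous partial section $t\colon Z\to X^\an$ of $\trop$ and set $E\coloneq\{\omega\in Z\mid t(\omega)=s_X(\omega)\}$. Since $X^\an$ is Hausdorff, $E$ is closed in $Z$, and by the hypothesis that $Z$ lies in the closure of its interior in $\Trop(X)\cap N_\R$, it suffices to show that $E$ contains the topological interior of $Z$. Fix $\omega_0$ in this interior; since $\Trop(X)\cap N_\R$ is locally connected, I may choose a connected open neighborhood $V$ of $\omega_0$ with $V\subset Z$, and the open-and-closed decomposition of $\multone$ together with connectedness of $V$ then forces $V\subset Z(C)$ for a unique irreducible component $C$ of $X$ with $\dim(C) = d(\omega_0)$.

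The key remaining step is to produce a dense open subset $W\subset V$ on which $t$ lands in $C^\an$, so as to invoke the irreducible-case uniqueness. By the uniqueness of $C(\omega_0)$ in Proposition~\ref{prop:shilov.section}, any other irreducible component $C'\neq C$ of $X$ with $\omega_0\in\Trop(C')$ satisfies $\dim(C')<d(\omega_0)$, so after shrinking $V$ I may assume each $V\cap\Trop(C')$ is of strictly smaller dimension than $V$. Then $W\coloneq V\setminus\bigcup_{C'\neq C}\Trop(C')$ is a dense open subset of $V$, and for $\omega'\in W$ only $C$ contributes to the fiber, so that $X_{\omega'} = C_{\omega'}$ and $t(\omega')\in C^\an$. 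Hence $t|_W$ is a continuous partial section of $\trop\colon C^\an\to\Trop(C)$ defined on an open subset of $\Trop(C)\cap N_\R$, and the uniqueness statement in the irreducible case~\cite[\S10]{gubler_rabinoff_werner16:skeleton_tropical} forces $t|_W = s_C|_W = s_X|_W$. Continuity of $t$ and $s_X$ on $V$ combined with the density of $W$ in $V$ then yield $t(\omega_0)=s_X(\omega_0)$, as required. The principal obstacle is precisely this dimensional bookkeeping: verifying that every component $C'\neq C$ contributes only a lower-dimensional piece to the tropicalization over $V$, so that $W$ is dense and the reduction to the irreducible case can be carried out.
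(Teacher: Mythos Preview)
Your proposal is correct and follows the same overall strategy as the paper: reduce to $Y_\Delta=O(\sigma)=T$, use the open-and-closed decomposition $\multone=\Djunion_C Z(C)$ from Lemma~\ref{lem:decomp.multone} together with $s_X|_{Z(C)}=s_C|_{Z(C)}$ to reduce to the irreducible case, and then invoke~\cite[\S10]{gubler_rabinoff_werner16:skeleton_tropical}. For continuity the arguments are essentially identical; the paper also records a self-contained variant via Proposition~\ref{prop:equidim.closed} and the proper-map argument, but notes one could simply cite the earlier paper.

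For uniqueness your argument is genuinely more explicit than the paper's. The paper writes ``still assuming (as we may) that $X$ is irreducible'' and then cites the irreducible uniqueness, but an arbitrary continuous section $t\colon Z\to X^\an$ need not land in $C^\an$ on $Z\cap Z(C)$, so the reduction is not entirely immediate. Your construction of the dense open $W\subset V$ on which $X_{\omega'}=C_{\omega'}$ (hence $t(\omega')\in C^\an$) is exactly the missing step, and it works as written: once $V\subset Z(C)$ one has $d(\omega)=\dim(C)$ for every $\omega\in V$, so any component $C'\neq C$ meeting $V$ has $\dim(C')<\dim(C)$ and $W$ is dense without further shrinking. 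Your closed-set argument via $E=\{t=s_X\}$ then reduces cleanly to the irreducible uniqueness on an open subset of $\Trop(C)$. This buys a complete proof at the cost of a little extra bookkeeping; the paper's version is terser but relies on the reader supplying this step.
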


\begin{proof}
  The statement of the Proposition is intrinsic to the torus orbit $O(\sigma)$,
  so we immediately reduce to the case $Y_\Delta = O(\sigma) = T$.  Since
  $\multone$ is the disjoint union (as a topological space) of the subspaces
  $Z(C)$ by Lemma~\ref{lem:decomp.multone}, it is enough to prove continuity
  and uniqueness on $Z(C)$ for a fixed irreducible component $C$ of $X$.  Since
  $s_X = s_C$ on $Z(C)$, we may replace $X$ by $C$ to assume $X$ irreducible.

  Let $Z = \multone$.  It suffices to show that $s_X(Z)$ is closed in
  $\trop\inv(Z)\cap X^\an$ (endowed with its relative topology), since
  $\trop\colon\trop\inv(Z)\cap X^\an \to Z$ is a proper map to a first-countable
  topological space, thus is a closed map
  by~\cite{palais70:when_proper_maps_are_closed}.  In this case, $s_X(\omega)$
  is the unique Shilov boundary point of $X_\omega$ for all $\omega\in Z$.  Thus
  $s_X(Z) = \trop\inv(Z)\cap\STrop(X)$, which is closed in
  $\trop\inv(Z)\cap X^\an$ by Proposition~\ref{prop:equidim.closed}.  This
  settles the continuity assertion.

  Now let $Z\subset\multone$ be a subset which is contained in the closure of
  its interior in $\Trop(X)$, still assuming (as we may) that $X$ is
  irreducible.  The proof of uniqueness of the section $s_X$ on $Z$ goes through
  exactly as in~\cite[Theorem~10.6]{gubler_rabinoff_werner16:skeleton_tropical},
  which only uses that in the situation of
  Proposition~\ref{prop:equidim.closed}, we have
  \[ \phi\inv(\STrop(\bG_m^d)) = \STrop(X), \] 
  and that
  $\STrop(X)\cap\trop\inv(\omega) = \{s_X(\omega)\}$ for $\omega\in\multone$. 
\end{proof}

\begin{rem}
  In the proof of Proposition~\ref{prop:section.cont}, after reducing to the
  case of irreducible $X$, we could have
  applied~\cite[Theorem~10.6]{gubler_rabinoff_werner16:skeleton_tropical}
  after an extension of scalars to prove continuity.  However, it is instructive
  to see why continuity follows from the more general results
  of~\S\ref{sec:tropical.skeleton}. 
\end{rem}

\subsection{A sequential continuity criterion}
The section $s_X$ need not be continuous on all of $\Trop(X)$ when $X$ is a
closed subscheme of a toric variety $Y_\Delta$.

\begin{eg}\label{eg:counter.eg.3}
  In Example~\ref{eg:counter.eg}, $s_X(0,r,2r)$ does not tend to
  $s_X(0,\infty,\infty)$ as $r\to\infty$.
\end{eg}

It is easy to see that $\barNRD$ is a  metric space.
Hence $s_X$
is continuous if and only if it is sequentially continuous.  Given a sequence
$(\omega_i)_{i\in \N}$ in $\multone$ converging to a point
$\omega\in\multone$, if $\omega$ and all $\omega_i$ are contained in the same
torus orbit then $s_X(\omega_i)\to s_X(\omega)$ by
Proposition~\ref{prop:section.cont}.  If $(\omega_i)_{i \in \N}$ is contained in several
different torus orbits, one checks sequential continuity on each torus orbit
separately.  Hence verifying sequential continuity amounts to showing that if
$(\omega_i)_{i\in \N}$ is a sequence in $\multone\cap N_\R$ converging to
$\omega\in\multone\cap N_\R(\sigma)$, then $s_X(\omega_i)$
converges to $s_X(\omega)$.  Decomposing by local dimension, one breaks
$(\omega_i)_{i \in \N}$ into several subsequences, each one of which is contained in a
single polyhedron in $\Trop(X)$ which is $d$-maximal
(Definition~\ref{def:d.maxmality}) at each point of the subsequence.

The main ingredient in the proof of the following result is Theorem~\ref{thm:limit.points}.

\begin{thm}\label{thm:continuity}
  Let $\Delta$ be a pointed rational fan in $N_\R$ and let $X\subset Y_\Delta$
  be a closed subscheme.  Let $(\omega_i)_{i \in \N}$ be a sequence in
  $\multone\cap N_\R$ converging to a point $\omega\in\multone\cap N_\R(\sigma)$
  for $\sigma\in\Delta$.  Suppose that there exists a
  polyhedron $P\subset\Trop(X)\cap N_\R$ which is $d$-maximal at each
  $\omega_i$.  If $\pi_\sigma(P)$ is $d$-maximal at $\omega$, then
  $s_X(\omega_i)\to s_X(\omega)$.
\end{thm}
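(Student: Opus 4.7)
The plan is to apply the machinery of Theorem~\ref{thm:limit.points} to the sequence $\xi_i\coloneq s_X(\omega_i)\in\STrop(X)\cap T^\an$, exploiting $m_{\Trop}(\omega_i)=1$ to satisfy its hypotheses automatically and $m_{\Trop}(\omega)=1$ to identify the limit uniquely as $s_X(\omega)$. Since $\trop\colon X^\an\to\barNRD$ is proper, after passing to a subsequence we may assume $\xi_i\to\xi$ for some $\xi\in X^\an$ with $\trop(\xi)=\omega$, and sequential continuity reduces to showing that every such limit equals $s_X(\omega)$.

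The $d$-maximality of $P$ at each $\omega_i$ fixes $\dim P = d(\omega_i)$ as a common integer $d$, and Proposition~\ref{properties of relevant}(4), together with $m_{\Trop}(\omega_i)=1$, automatically yields that $P$ is relevant for every $\xi_i$. I would then reproduce the linear-algebraic construction in the proof of Theorem~\ref{thm:limit.points}: reduce to $Y_\Delta = Y_\sigma$, choose a rational hyperplane $H$ supporting $\{0\}\prec\sigma$, build a rational subspace $L\subseteq H$ with $N_\R = L\dsum\angles{P}_0$ and $N_\R(\sigma) = \pi_\sigma(L)\dsum\pi_\sigma(\angles{P}_0)$, and use $L$ to produce a toric morphism $\psi\colon Y_\sigma\to Y'_{\sigma'}$ with $\phi\coloneq\psi|_X$, induced linear map $f\colon N_\R\to N'_\R$, and closed subset $S\coloneq\phi^{-1}(S((Y'_{\sigma'})^\an))\subseteq X^\an$. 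Conditions (1)--(3) of Lemma~\ref{lem:main.lemma} for $X\cap T$ at each $\omega_i$ are now in place: $P$ is $d$-maximal at $\omega_i$ by hypothesis, relevant for $\xi_i$ by the above, and $f$ is injective on $P$ by construction of $L$. Hence $\xi_i\in S\cap X_{\omega_i}$, whence $\xi\in S\cap X_\omega$ as $S$ is closed; that is, $\phi(\xi) = s(f(\omega))$. A second application of Lemma~\ref{lem:main.lemma}, now on the orbit $O(\sigma)$ with $X\cap O(\sigma)$ in place of $X\cap T$ and using the $d$-maximality of $\pi_\sigma(P)$ at $\omega$ and the injectivity of $f$ on $\pi_\sigma(P)$, yields $\xi\in B(X_\omega)$ with $\dim_\xi(X_\omega) = d(\omega)$. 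The uniqueness clause of Proposition~\ref{prop:shilov.section}, valid because $m_{\Trop}(\omega)=1$, then forces $\xi = s_X(\omega)$.

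The main subtlety is that Theorem~\ref{thm:limit.points} is formally stated under the additional assumption that $X\cap T$ is equidimensional of dimension $d$, which is not present in Theorem~\ref{thm:continuity}. Inspection of its proof reveals that equidimensionality enters only to guarantee $\dim P = d(\omega_i)$; the remaining ingredients, namely Lemma~\ref{lem:main.lemma} and the construction of $L,\psi,\phi$, depend only on the tuple $(P,\pi_\sigma(P),d,d(\omega))$ and on each $\xi_i$ being a Shilov boundary point of $X_{\omega_i}$, both of which are supplied directly here by the $d$-maximality of $P$ at $\omega_i$ together with the definition $\xi_i = s_X(\omega_i)$. If an appeal to Theorem~\ref{thm:limit.points} verbatim is preferred, one can first reduce to the equidimensional case via Lemma~\ref{lem:decomp.multone}: after a further subsequence all $\omega_i\in Z(C)$ for a single irreducible component $C\subseteq X\cap T$ of dimension $d$, and one then argues with $X' = \bar C$ in place of $X$. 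The residual difficulty in this route is to verify that one can find a $d$-dimensional polyhedron through all $\omega_i$ inside $\Trop(C)$ whose projection at $\omega$ is still $d$-maximal, which follows from $m_{\Trop}(\omega)=1$ and Proposition~\ref{prop:shilov.section} applied to $C$.
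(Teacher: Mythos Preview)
Your proposal is correct and follows essentially the same strategy as the paper: identify any subsequential limit $\xi$ of $s_X(\omega_i)$, use the machinery underlying Theorem~\ref{thm:limit.points} together with Lemma~\ref{lem:main.lemma} to force $\xi\in B(X_\omega)$ with $\dim_\xi(X_\omega)=d(\omega)$, and then invoke the uniqueness in Proposition~\ref{prop:shilov.section} to conclude $\xi=s_X(\omega)$. Your observation that the equidimensionality hypothesis in Theorem~\ref{thm:limit.points} is used only to secure $\dim P = d(\omega_i)$ is accurate, so your Route~A (reproducing that proof directly) is valid.

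The paper's own argument follows your Route~B: it first reduces to $X\cap T$ irreducible by passing to $\bar C$ for a suitable component $C$, and then applies Theorem~\ref{thm:limit.points} as a black box. Two small points where the paper is more careful than your Route~B sketch: (i) the justification for finding a $d$-dimensional polyhedron in $\Trop(C)$ through the $\omega_i$ whose projection is still $d$-maximal is not via $m_{\Trop}(\omega)=1$ as you suggest, but rather via $m_{\Trop}(\omega_i)=1$, which forces $\LC_{\omega_i}(P)\subset\LC_{\omega_i}(\Trop(C))$ (so one may shrink $P$ inside $\Trop(C)$ without changing its affine span, whence $\dim\pi_\sigma(P)$ is preserved); and (ii) one must also verify that $s_{\bar C}(\omega)=s_X(\omega)$, which the paper handles by showing that the unique $d(\omega)$-dimensional component of $X\cap O(\sigma)$ through $s_X(\omega)$ already lies in $\bar C\cap O(\sigma)$. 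Since your primary Route~A does not rely on these steps, your proposal stands as written.
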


\begin{proof}
  First we reduce to the case where $X\cap T$ is irreducible and dense in $X$. In fact, equidimensionality would be enough to apply Theorem~\ref{thm:limit.points} later.

  Let $d = \dim(P)$, so $d(\omega_i) = d$ for all $i$.  By Proposition~\ref{prop:shilov.section}, there exists a unique irreducible component $C_i = C(\omega_i)$
   of $X\cap T$ of dimension $d$ with
  $\omega_i\in\Trop(C_i)$.  As all other irreducible components of $X$ meeting
  $X_{\omega_i}$ have smaller dimension, we must have
  $\LC_{\omega_i}(P)\subset\LC_{\omega_i}(\Trop(C_i))$.  Let $C$ be a $d$-dimensional
  irreducible component of $X$ occurring as $C_i$ for infinitely many $i$. Replacing $P$ by $\Trop(C) \cap P$, we may asssume that 
   $P\subset\Trop(C)$. Note that $d$-maximality of $\pi_\sigma(P)$ at $\omega$ is preserved. 
  The limit point $\omega$ lies in the  closure $\overline{P}$
  of $P$ intersected with $N_\R(\sigma)$, and hence in $ \pi_\sigma(P)$ by Lemma~\ref{lem:closure.polyhedron}, which is $d$-maximal at $\omega$.  Letting
  $\bar C\subset X$ be the closure of $C$ in $Y_\Delta$, we have
  $\bar P\subset\Trop(\bar C)$, so Lemma \ref{lem:equiv.d.omega} yields
  \[ \dim(X_\omega) = d(\omega) = \dim(\LC_\omega(\Trop(\bar C))) = \dim(\bar
  C_\omega). \]
  Therefore using Proposition~\ref{prop:shilov.section}, the unique irreducible component of $X\cap O(\sigma)$ containing
  $s_X(\omega)$ is an irreducible component of $\bar C\cap O(\sigma)$.  
  This is
  true for all irreducible components $C$ occurring as $C_i$ for
  infinitely many $i$, so we may assume 
   that $X\cap T = C$ is irreducible and dense in
  $X=\bar C$.

  Let $W =\{\omega_i \mid i \in \N\}$ and hence 
  $\bar W = \{\omega_i \mid i \in \N\}\cup\{\omega\}$.  Proving the Theorem amounts to
  showing that $s_X$ is continuous on the subspace $\bar W$ of $\Trop(X)$.  As
  $\bar W$ is a first-countable Hausdorff space, as in the proof of
  Proposition~\ref{prop:section.cont} it suffices to show that $s_X(\bar W)$ is
  closed in $\trop\inv(\bar W)\cap X^\an$.
  By~\cite[Th\'eor\`eme~5.3]{poineau13:angeliques}, $X^\an$ is a
  Fr\'echet--Urysohn space, so for every subspace, closure is the same as
  sequential closure.  Since $s_X(W)$ is discrete, we only have to prove that
  $s_X(\omega)$ is the unique (sequential) limit point of $s_X(W)$ not contained
  in $s_X(W)$.  Any such is contained in $X_\omega$, so let $\xi\in X_\omega$ be
  a limit point of $s_X(W)$.  By Theorem~\ref{thm:limit.points},
  $\xi\in\STrop(X)$ and $\dim_\xi(X_\omega) = d(\omega)$.  The only point with
  these properties is $s_X(\omega)$.
\end{proof}

We can apply the preceeding result in the following situation.

\begin{cor}\label{cor:continuity.proper.int}
  Let $\Delta$ be a pointed rational fan in $N_\R$ and let $X\subset Y_\Delta$
  be a closed subscheme.  Suppose that $X\cap T$ is equidimensional 
and let $\sigma\in\Delta$ be a cone such
  that 
  $X$ intersects $O(\sigma)$ properly. Then every sequence $(\omega_i)_{i\in \N}$ in $\multone\cap N_\R$ converging to a point $\omega\in\multone\cap N_\R(\sigma)$ has the property that $s_X(\omega_i)$ converges to $s_X(\omega)$. 
  \end{cor}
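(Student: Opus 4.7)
The plan is to reduce the statement to an application of Theorem~\ref{thm:continuity} by producing, after passing to a subsequence, a single polyhedron $P\subset\Trop(X)\cap N_\R$ that is $d$-maximal at every $\omega_i$ and whose projection $\pi_\sigma(P)$ is $d$-maximal at $\omega$. Set $d=\dim(X\cap T)$. Since $X\cap T$ is equidimensional of dimension $d$, the Bieri--Groves theorem gives $d(\omega_i)=d$ for every $i$, and it furnishes a finite collection $\Sigma$ of integral $\R$-affine $d$-dimensional polyhedra whose union is $\Trop(X)\cap N_\R$. By passing to a subsequence we may assume that all the $\omega_i$ lie in a single $P\in\Sigma$, so $P$ is automatically $d$-maximal at each $\omega_i$.

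Next I would verify the hypothesis on $\pi_\sigma(P)$. Since $\omega=\lim\omega_i\in N_\R(\sigma)$ lies in the closure of $P$ in $\barNRD$, Lemma~\ref{lem:closure.polyhedron} gives $\rho(P)\cap\relint(\sigma)\neq\emptyset$ and $\bar P\cap N_\R(\sigma)=\pi_\sigma(P)$, so $\omega\in\pi_\sigma(P)$. The proper intersection hypothesis, through Proposition~\ref{prop:equiv.int.proper} applied to $\Sigma$, yields $\dim(\pi_\sigma(P))=d-\dim(\sigma)$. On the other hand, Lemma~\ref{lem:prop.int.codim} shows that $\Trop(X)\cap N_\R(\sigma)$ is equidimensional of dimension $d-\dim(\sigma)$, so $d(\omega)=d-\dim(\sigma)=\dim(\pi_\sigma(P))$. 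Hence $\pi_\sigma(P)$ is $d$-maximal at $\omega$, and Theorem~\ref{thm:continuity} yields $s_X(\omega_{i})\to s_X(\omega)$ along this subsequence.

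Finally, to upgrade this subsequential convergence to convergence of the original sequence, I would run the standard subsubsequence argument. Given any subsequence of $(\omega_i)$, the argument above applied to that subsequence extracts a further subsequence along which $s_X$ converges to $s_X(\omega)$. Since $X^\an$ is Hausdorff (and first-countable along the image of any convergent sequence), this forces $s_X(\omega_i)\to s_X(\omega)$ itself. The only genuinely nontrivial step is the dimension computation for $\pi_\sigma(P)$, and all the work has already been done in Proposition~\ref{prop:equiv.int.proper} and Lemma~\ref{lem:prop.int.codim}; everything else is bookkeeping around Theorem~\ref{thm:continuity}.
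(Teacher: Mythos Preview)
Your proposal is correct and follows essentially the same approach as the paper's proof: choose a finite collection $\Sigma$ of $d$-dimensional polyhedra covering $\Trop(X)\cap N_\R$, pass to a subsequence lying in a single $P\in\Sigma$, use Proposition~\ref{prop:equiv.int.proper} to compute $\dim(\pi_\sigma(P))=d-\dim(\sigma)=d(\omega)$, and apply Theorem~\ref{thm:continuity}. Your write-up is simply more explicit than the paper's terse version, spelling out the invocation of Lemma~\ref{lem:closure.polyhedron}, the computation of $d(\omega)$ via Lemma~\ref{lem:prop.int.codim}, and the subsubsequence argument (which the paper compresses into ``apply Theorem~\ref{thm:continuity} to every $P$ containing infinitely many $\omega_i$'').
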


\begin{proof} 
We choose a finite collection $\Sigma$ of integral $\R$-affine $d$-dimensional polyhedra whose union is $\Trop(X) \cap N_\R$. For every $P$ in $\Sigma$ such that $\rho(P)$ meets the relative interior of $\sigma$,   Proposition~\ref{prop:equiv.int.proper} implies that $\pi_\sigma(P)$ has dimension $\dim(X)-\dim(\sigma)=d(\omega)$. Hence we can apply Theorem~\ref{thm:continuity} to every $P$ containing  infinitely many  $\omega_i$. 
\end{proof}

\begin{thm}\label{thm:continuity_equi}
  Let $\Delta$ be a rational pointed fan in $N_\R$ and let $X \subset Y_\Delta$
  be a closed subscheme.  Suppose that $X \cap T$ is dense in $T$, and that for all $\sigma \in \Delta$ the subscheme $X \cap O(\sigma)$ of $O(\sigma)$ is either empty or equidimensional of  dimension 
  $d_\sigma$, where we put $d_0 = d$. Assume that $\Trop(X) \cap N_\R$ can be covered by finitely many $d$-dimensional polyhedra $P$ with the following property: If the recession cone of $P$ meets the relative interior of $\sigma$, the projection $\pi_\sigma(P)$ has dimension $d_\sigma$. Then 
  $s_X\colon\multone\to X^\an$ is continuous.
\end{thm}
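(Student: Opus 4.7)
The plan is to prove sequential continuity of $s_X$, which suffices since $\barNRD$ is metrizable. Let $(\omega_i)_{i \in \N}$ be a sequence in $\multone$ converging to a point $\omega \in \multone \cap N_\R(\sigma)$ for some $\sigma \in \Delta$. After passing to a subsequence I may assume all $\omega_i$ lie in a single orbit $N_\R(\tau)$ with $\tau \prec \sigma$. When $\tau = \sigma$ the conclusion follows from Proposition \ref{prop:section.cont}, so I focus on $\tau \precneqq \sigma$.

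The key move is to replace $X \subset Y_\Delta$ by $X_\tau \coloneq \overline{X \cap O(\tau)} \subset Y_{\Delta_\tau}$, whose dense torus is $O(\tau)$. The sequence $(\omega_i)$ then lies in the dense-torus tropicalization of $X_\tau$, and the aim is to apply Theorem \ref{thm:continuity} to $X_\tau$ directly. By hypothesis, $\Trop(X) \cap N_\R$ is covered by finitely many $d$-dimensional polyhedra; using Proposition \ref{tropicalization and closure} and passing to a further subsequence, I arrange that all $\omega_i$ lie in $\pi_\tau(P)$ for a single such polyhedron $P$. Set $P' \coloneq \pi_\tau(P) \subset \Trop(X_\tau) \cap N_\R(\tau)$. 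The hypothesis gives $\dim(P') = d_\tau$, which equals $d_{X_\tau}(\omega_i)$ since $X_\tau \cap O(\tau) = X \cap O(\tau)$ is equidimensional of dimension $d_\tau$, so $P'$ is $d$-maximal at each $\omega_i$. Since $\omega_i \in \pi_\tau(P) \subset \bar P$ (the closure of $P$ in $\barNRD$) and $\omega = \lim_i \omega_i$, we have $\omega \in \bar P \cap N_\R(\sigma)$, so Lemma \ref{lem:closure.polyhedron} yields $\rho(P) \cap \relint(\sigma) \neq \emptyset$ and hence, by hypothesis, $\dim(\pi_\sigma(P)) = d_\sigma$. Setting $\bar\sigma = \pi_\tau(\sigma) \in \Delta_\tau$, one has $\pi_{\bar\sigma}(P') = \pi_\sigma(P)$, which will play the role of the $d$-maximal projection at $\omega$.

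The main obstacle is to verify that the multiplicity-one datum and the section descend correctly from $X$ to $X_\tau$ at the point $\omega$. Since $X_\tau \subset X$, the initial degeneration $\inn_\omega(X_\tau)$ is a closed subscheme of $\inn_\omega(X)$, which is irreducible and generically reduced of dimension $d_\sigma$ (from $m_{\Trop,X}(\omega) = 1$ and Lemma \ref{lem:equiv.d.omega}). Because $\pi_\sigma(P) \subset \Trop(X_\tau \cap O(\sigma))$ is $d_\sigma$-dimensional and passes through $\omega$, the local cone $\LC_\omega(\Trop(X_\tau))$ has dimension at least $d_\sigma$, so Lemma \ref{lem:equiv.d.omega} forces $\dim(\inn_\omega(X_\tau)) = d_\sigma$. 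A closed subscheme of an irreducible generically reduced scheme of the same dimension is again irreducible and generically reduced, so $m_{\Trop,X_\tau}(\omega) = 1$ and $d_{X_\tau}(\omega) = d_\sigma$, whence $\pi_{\bar\sigma}(P')$ is indeed $d$-maximal at $\omega$ relative to $X_\tau$. Proposition \ref{prop:shilov.section} applied to $X_\tau$ then yields a unique $d_\sigma$-dimensional irreducible component $C_{X_\tau}(\omega)$ of $X_\tau \cap O(\sigma)$ with $\omega$ in its tropicalization; since $X \cap O(\sigma)$ is equidimensional of dimension $d_\sigma$, this $C_{X_\tau}(\omega)$ is automatically an irreducible component of $X \cap O(\sigma)$, and by the uniqueness clause in Proposition \ref{prop:shilov.section} for $X$ it equals $C_X(\omega)$, giving $s_{X_\tau}(\omega) = s_X(\omega)$. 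The equality $s_{X_\tau}(\omega_i) = s_X(\omega_i)$ is immediate from $X_\tau \cap O(\tau) = X \cap O(\tau)$. With every piece in place, Theorem \ref{thm:continuity} applied to $X_\tau$ with the polyhedron $P'$ yields $s_{X_\tau}(\omega_i) \to s_{X_\tau}(\omega)$, i.e., $s_X(\omega_i) \to s_X(\omega)$, completing the proof.
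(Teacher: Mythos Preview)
Your proof is correct and follows the same overall strategy as the paper: reduce to sequential continuity, pass to a subsequence lying in a single orbit $N_\R(\tau)$, use Proposition~\ref{tropicalization and closure} to find a single covering polyhedron $P$ with $\omega_i\in\pi_\tau(P)$, and then apply Theorem~\ref{thm:continuity} in the toric variety $Y_{\Delta_\tau}$.

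The one difference is your choice of auxiliary scheme. You pass to $X_\tau=\overline{X\cap O(\tau)}$, whereas the paper passes to $X\cap Y_{\Delta_\tau}$. Your choice forces the extra paragraph: since $X_\tau\cap O(\sigma)$ may be strictly smaller than $X\cap O(\sigma)$, you must verify that $m_{\Trop,X_\tau}(\omega)=1$, that $d_{X_\tau}(\omega)=d_\sigma$, and that $s_{X_\tau}(\omega)=s_X(\omega)$, which you do correctly via the closed-subscheme comparison of initial degenerations. The paper's choice avoids all of this, because $(X\cap Y_{\Delta_\tau})\cap O(\sigma)=X\cap O(\sigma)$, so the local dimension, tropical multiplicity, and section at $\omega$ agree automatically with those of $X$; Theorem~\ref{thm:continuity} then applies directly with no further verification. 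In short, both arguments are valid, but using $X\cap Y_{\Delta_\tau}$ rather than the closure $X_\tau$ makes your ``main obstacle'' paragraph unnecessary.
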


\begin{proof} Let $(\omega_i)_{i \in \N}$ be any sequence in $\multone$ converging to $\omega \in\barNRD$. We have to show that $s_X(\omega_i)$ converges to $s_X(\omega)$. By passing to subsequences we may assume that all $\omega_i$ are contained in $N_\R(\tau)$ for some face $\tau \in \Delta$. Then $\omega$ lies in $N_\R(\sigma)$ for a face $\sigma$ with $\tau \prec \sigma$. 
 According to Proposition~\ref{tropicalization and closure}, $\Trop(X) \cap
 N_\R(\tau)$ is covered by the polyhedra $\pi_\tau(P)$, where $P$ runs over all polyhedra in our finite covering of $\Trop(X) \cap N_\R$ with $\rho(P) \cap \relint(\tau) \neq \emptyset$. Assume that infinitely many $\omega_i$ are contained in the same $\pi_\tau(P)$. By hypothesis, $\pi_\tau(P)$ has dimension $d_\tau$.  Now we apply Theorem~\ref{thm:continuity} to the subscheme $X \cap Y_{\Delta_\tau}$ of $Y_{\Delta_\tau}$: We consider the face $\overline{\sigma} = \pi_\tau(\sigma)$ of $\Delta_\tau$. The projection $\pi_{ \overline{\sigma}}(\pi_\tau 
(P)) = \pi_\sigma(P)$ has dimension $d_\sigma$, which implies that $s_X$ applied to our subsequence converges to $s_X(\omega)$. This proves our claim.
\end{proof}

\begin{thm}\label{cor:proper.int.continuity}
 Let $X$ be a closed subscheme of $Y_\Delta$ such that $X \cap T$ is {equidimensional and} dense in $X$.  Assume additionally that  for all $\sigma\in\Delta$, either
  $X\cap O(\sigma)=\emptyset$ or $X$ intersects $O(\sigma)$ properly.  Then
  $s_X\colon\multone\to X^\an$ is continuous.
\end{thm}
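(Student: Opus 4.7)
The plan is to derive Theorem \ref{cor:proper.int.continuity} as a direct consequence of Theorem \ref{thm:continuity_equi}, so the work consists of verifying that the geometric hypothesis there (a $d$-dimensional polyhedral cover of $\Trop(X)\cap N_\R$ whose polyhedra project to maximal-dimensional polyhedra in every boundary stratum they touch) holds automatically under proper intersection with every torus orbit.

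First I would unwind the hypotheses: let $d = \dim(X) = \dim(X\cap T)$. By Lemma~\ref{lem:prop.int.codim}, for every $\sigma\in\Delta$ with $X\cap O(\sigma)\neq\emptyset$ the intersection $X\cap O(\sigma)$ is equidimensional of dimension $d_\sigma \coloneq d-\dim(\sigma)$; in particular $X\cap O(\sigma) = \emptyset$ whenever $\dim(\sigma) > d$. This gives the equidimensionality hypothesis on each torus orbit required by Theorem~\ref{thm:continuity_equi}.

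Next, since $X\cap T$ is equidimensional of dimension $d$ and dense in $X$, the Bieri--Groves theorem lets me write $\Trop(X)\cap N_\R$ as the union of a finite collection $\Sigma$ of integral $\R$-affine $d$-dimensional polyhedra. I would then invoke Proposition~\ref{prop:equiv.int.proper}: for each $\sigma\in\Delta$ with $\dim(\sigma)\leq d$, the proper intersection of $X$ with $O(\sigma)$ is equivalent to the assertion that every $P\in\Sigma$ whose recession cone $\rho(P)$ meets $\relint(\sigma)$ satisfies $\dim(\pi_\sigma(P)) = d-\dim(\sigma) = d_\sigma$. When $\dim(\sigma) > d$ no such $P$ exists since $X\cap O(\sigma)=\emptyset$, so the hypothesis is vacuous. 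Thus the finite collection $\Sigma$ witnesses the covering condition required in Theorem~\ref{thm:continuity_equi}.

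Having verified both hypotheses of Theorem~\ref{thm:continuity_equi}, continuity of $s_X\colon\multone\to X^\an$ follows at once. There is essentially no obstacle; the only thing to be careful about is the matching of conventions: $d_0 = d$ corresponds to the dense torus $T = O(\{0\})$, and the equality $\dim(\pi_\sigma(P)) = d_\sigma$ delivered by Proposition~\ref{prop:equiv.int.proper} is precisely the $d$-maximality of $\pi_\sigma(P)$ at every point of $\pi_\sigma(P)$, which is exactly the combinatorial input that Theorem~\ref{thm:continuity} (and hence Theorem~\ref{thm:continuity_equi}) needs in order to apply Theorem~\ref{thm:limit.points} along each convergent subsequence obtained by passing to a single torus orbit and a single polyhedron of $\Sigma$.
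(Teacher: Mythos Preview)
Your proof is correct and follows essentially the same route as the paper: the paper's one-line argument is that the result follows from Theorem~\ref{thm:continuity_equi} using Lemma~\ref{lem:prop.int.codim} and Proposition~\ref{prop:equiv.int.proper}, and you have simply unpacked that sentence. The only point worth making explicit is the vacuous case: when $X\cap O(\sigma)=\emptyset$ one has $\Trop(X)\cap N_\R(\sigma)=\emptyset$, so by Proposition~\ref{tropicalization and closure} no $P\in\Sigma$ has $\rho(P)\cap\relint(\sigma)\neq\emptyset$, and the polyhedral hypothesis of Theorem~\ref{thm:continuity_equi} is indeed empty for such $\sigma$.
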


\begin{proof}
  This follows from Theorem~\ref{thm:continuity_equi} using Lemma~\ref{lem:prop.int.codim} and Proposition~\ref{prop:equiv.int.proper}. 
\end{proof}

\begin{eg}\label{eg:grassmannians}
  Let us now briefly discuss the example of the Grassmannian of planes
  $X = \mbox{Gr}(2,n)$ in $n$-space with its Pl\"ucker embedding
  $\varphi: \mbox{Gr}(2,n) \hookrightarrow \bP^{\binom{n}{2} -1}$. The
  toric variety in this example is given by projective space with projective
  coordinates $p_{kl}$ indexed by pairs $k< l$ in $\{1, \ldots, n\}$. In this
  case, continuity for the section map on tropical Grassmannians was shown
  directly in \cite{cueto_habich_werner13:grassmannians}. See also
  \cite{werner16:analytif_tropical} for an expository account of this
  construction. Note that in \cite{cueto_habich_werner13:grassmannians}, we use
  $\log$ instead of $-\log$ for tropicalization.

  Put $\mathcal{T}\mbox{Gr}(2,n) = \Trop( \mbox{Gr}(2,n))$. Let $N_\R$ be the
  cocharacter space of the dense torus in $\bP^{\binom{n}{2} -1}$.  Let $J$ be a
  proper subset of the set of all projective coordinates $\{p_{kl} \}$, and let
  $E_J$ be the subvariety of projective space, where precisely the coordinates
  in $J$ vanish. These are the torus orbits in projective space.  Hence the
  locally closed subvariety $\mbox{Gr}_J(2,n)$ from
  \cite{cueto_habich_werner13:grassmannians} is the intersection of
  $ \mbox{Gr}(2,n)$ with a torus orbit. By
  \cite{cueto_habich_werner13:grassmannians}, Lemma 5.2, $\mbox{Gr}_J(2,n)$ is
  irreducible. In particular, the intersection of $\mbox{Gr}(2,n)$ with every
  torus orbit is equidimensional.

  Now let $C_T$ be a maximal cone in $\mathcal{T}\mbox{Gr}(2,n) \cap N_\R$.  It
  has dimension $2(n-2)$. By \cite{speyer_sturmfel04:tropical_grassman}, it
  corresponds to all phylogenetic trees on the trivalent combinatorial tree
  $T$. With the help of the results in \cite{cueto_habich_werner13:grassmannians}, section 4 and 5,   one can show that
  the projection of $C_T$ to the cocharacter space of the torus orbit $E_J$ is $d$-maximal. Moreover,
  it is shown in \cite{cueto_habich_werner13:grassmannians}, Corollary 6.5 that
  the tropical Grassmannian has tropical multiplicity one everywhere. Therefore
  we can apply Theorem~\ref{thm:continuity_equi} to deduce the existence of a
  continous section to the tropicalization map.

  This provides a conceptual explanation for the existence of a section for the tropical Grassmannian. Note however that we need the combinatorial arguments developed 
  in \cite{cueto_habich_werner13:grassmannians} in order to show that the tropical Grassmannian satisfies
  the prerequisites of Theorem~\ref{thm:continuity_equi}.
\end{eg}

\section{Sch\"on subvarieties}
\label{sec:helm-katz}

In this section, we will prove that when $X$ is a so-called sch\"on subvariety
of a torus $T$, the tropical skeleton $\STrop(X)$ can be identified with the
parameterizing complex of Helm--Katz~\cite{helm_katz12:monodrom_filtration}.  It
also coincides with the skeleton of a strictly semistable pair in the sense
of~\cite{gubler_rabinoff_werner16:skeleton_tropical}.  Since the latter is a
deformation retract of $X^\an$, this answers a question of Helm--Katz.

In this section, $K_0$ is a discretely valued field with value group $\Gamma_0$.
We write $\bar K_0$ for an algebraic closure, and $K$ for the completion of
$K_0$.  We assume that the value group $\Gamma$ of $\bar K_0$ and $K$ is equal
to $\Q$, so that $\Gamma_0 = v(K_0^\times) = r\Z$ for some $r\in\Q^\times$.

\subsection{Tropical compactifications}
Here we recall several standard facts about tropical compactifications.  For
more details, see~\cite[\S7, \S12]{gubler13:guide_tropical}.

Let
$\R_+=[0,\infty)\subset\R$.  Let $\Sigma$ be a pointed rational  fan in
$N_\R\times\R_+$.  For
$r\in\R_+$ we let $\Sigma_r = \{\sigma\cap N_\R\times\{r\}\mid\sigma\in\Sigma\}$; this is a
polyhedral complex in $N_\R$, which is a fan when $r=0$.  The $K^\circ$-toric
scheme associated to $\Sigma$ will be denoted $\sY_\Sigma$.  This is a finitely
presented, flat, normal, separated $K^\circ$-scheme.  The generic fiber of $\sY_\Sigma$ is
the toric variety $Y_{\Sigma_0}$.  The torus $T=\Spec(K[M])$ is dense in $\sY_\Sigma$, and
the integral torus $\sT\coloneq\Spec(K^\circ[M])$ acts on $\sY_\Sigma$.  The
torus orbits on the generic (resp.\ special) fiber correspond to cones (resp.\
polyhedra) in $\Sigma_0$ (resp.\ $\Sigma_1$); for $\sigma\in\Sigma_0$ (resp.\
$P\in\Sigma_1$) we let $O(\sigma)\subset Y_{\Sigma_0}$ (resp.\
$O(P)\subset(\sY_\Sigma)_s$) denote the corresponding orbit.  For
$\sigma\in\Sigma_0$ and $P\in\Sigma_1$, we have
\begin{equation}\label{eq:complementary.dims}
  \dim(\sigma) + \dim(O(\sigma)) = \dim(N_\R) \sptxt{and}
  \dim(P) + \dim(O(P)) = \dim(N_\R).
\end{equation}
By a \emph{variety} we mean a geometrically integral, separated, finite-type
scheme over a field. In the following, $X$ is always a closed subvariety of the torus $T$. 

\begin{defn}\label{def:tropical.compactification}
  Let $\Sigma$
  be a pointed rational  tropical fan in $N_\R\times\R_+$ and let $\sX$ be the
  closure of $X$ in $\sY_\Sigma$.  We say that $\sX$ is a
  \emph{tropical compactification} of $X$ provided that $\sX$ is proper over
  $K^\circ$ and the multiplication map
  \[ \mu\colon \sT\times_{K^\circ} \sX\To \sY_\Sigma \]
  is faithfully flat.  In this case we call $\Sigma$ a
  \emph{tropical fan for $X$}.
\end{defn}

We refer the reader to~\cite[\S12]{gubler13:guide_tropical} for proofs of the
following facts in this context.  We write $|\Sigma_r|$ for the support of
$\Sigma_r$ in $N_\R$.

\begin{thm}\label{thm:trop.fan.facts}
  Let $\Sigma$ be a  tropical fan for $X$.
  \begin{enumerate}
  \item If $\Sigma'$ is a rational fan which subdivides $\Sigma$, then
    $\Sigma'$ is a  tropical fan for $X$.
  \item The support $|\Sigma_1|$ is equal to $\Trop(X)$.
  \item If $O\subset\sY_\Sigma$ is any torus orbit, then $\sX\cap O$ is a
    non-empty pure dimensional scheme and
    $\codim(\sX\cap O,\,O) = \codim(X,\,T)$.
  \end{enumerate}
\end{thm}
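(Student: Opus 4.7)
The plan is to follow \cite[\S12]{gubler13:guide_tropical}, which establishes all three properties for tropical compactifications in this generality; the central inputs are faithful flatness of the multiplication map $\mu$ and the complementary dimension formulas in~\eqref{eq:complementary.dims}.

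For~(1), I would first observe that a rational subdivision $\Sigma'$ of $\Sigma$ induces a proper equivariant toric morphism $p\colon\sY_{\Sigma'}\to\sY_\Sigma$ which restricts to the identity over the open torus $\sT$. The closure $\sX'$ of $X$ in $\sY_{\Sigma'}$ is then the strict transform of $\sX$ and maps properly onto $\sX$, and is therefore proper over $K^\circ$. To propagate faithful flatness from $\mu$ to $\mu'\colon\sT\times_{K^\circ}\sX'\to\sY_{\Sigma'}$, I would argue orbit-by-orbit, using that every torus orbit $O'\subset\sY_{\Sigma'}$ maps to a unique orbit $O\subset\sY_\Sigma$ and that the restriction of $\mu'$ over $O'$ arises from the restriction of $\mu$ over $O$ via a toric subdivision which preserves both flatness and surjectivity.

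For~(2), the substantive direction is $|\Sigma_1|\subset\Trop(X)$: given $P\in\Sigma_1$, surjectivity of $\mu$ forces $\sX\cap O(P)\neq\emptyset$, and lifting a closed point of $\sX\cap O(P)$ via the valuative criterion produces an analytic point $\xi\in X^{\an}$ whose reduction lies in $O(P)$, so that $\trop(\xi)\in P$ by construction of $\sY_\Sigma$. Conversely, given $\omega\in\Trop(X)$, I would choose $\xi\in X^{\an}$ with $\trop(\xi)=\omega$ and use properness of $\sX\to\Spec(K^\circ)$ to extend $\xi$ to a $K^\circ$-section whose reduction lies in some orbit $O(P)$ with $\omega\in P$, which gives $\omega\in|\Sigma_1|$.

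For~(3), non-emptiness of $\sX\cap O$ is immediate from surjectivity of $\mu$. To compute the codimension, I would fix $y\in O$: flatness of $\mu$ makes the fiber $\mu^{-1}(y)$ equidimensional of dimension $\dim(X)$, and the second projection maps $\mu^{-1}(y)$ onto $\sX\cap O$ with fibers the stabilizer cosets of dimension $\dim(T)-\dim(O)$. This yields $\dim(\sX\cap O)=\dim(X)-\dim(T)+\dim(O)$, equivalently $\codim(\sX\cap O,O)=\codim(X,T)$. The hardest step will be the orbit-by-orbit faithful-flatness argument in~(1), together with the properness lift in~(2); both rest on Gubler's theory of $\Gamma$-admissible fans for toric schemes over valuation rings, which is exactly what is carried out in \cite[\S12]{gubler13:guide_tropical}.
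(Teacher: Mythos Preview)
Your proposal is correct and takes the same approach as the paper: the paper gives no proof at all for this theorem and simply refers the reader to \cite[\S12]{gubler13:guide_tropical}, which is exactly the source you invoke. Your sketch of the arguments from that reference is a reasonable outline and goes beyond what the paper itself provides; the only caveat is that your orbit-by-orbit flatness argument in~(1) is not quite how flatness is typically checked (flatness is not a fiberwise condition), but since you explicitly defer the details to Gubler's treatment this is not a gap in your proposal.
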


It follows from Theorem~\ref{thm:trop.fan.facts}(3) that $\sX\setminus X$ is a
closed subscheme of pure codimension one, which we regard as a reduced Weil
divisor on $\sX$.

\begin{defn}\label{def:boundary.divisor}
  Let $\sX\subset\sY_\Sigma$ be a tropical compactification of a variety
  $X\subset T$.  The \emph{boundary divisor} of $\sX$ is the Weil divisor
  $\sX\setminus X$.  The \emph{horizontal part} of the boundary divisor is the
  closure of $\sX_K\setminus X$ in $\sX$, and the \emph{vertical part} is the
  special fiber $\sX_s$.
\end{defn}

In case of a tropical compactification as above, replacing $K_0$ be a finite
extension, we may always assume that all vertices of $\Sigma_1$ are in
$N_{\Gamma_0}$. Then the toric scheme $\sY_\Sigma$ is defined over $K_0^\circ$
(see~\cite[Proposition 7.11]{gubler13:guide_tropical}).  If $X \subset T$ is
defined over $\bar K_0$ (that is, $X$ is the extension of scalars of a
subvariety of $\Spec(\bar K_0[M])$), then after passing to a larger finite
extension, we may assume that $X$ is defined over $K_0$, so $\sX$ is defined
over $K_0^\circ$. This means that the tropical compactification
$\sX\subset\sY_\Sigma$ is obtained by base change from the corresponding toric
compactification over $K_0^\circ$. This will be used to quote results proved
over discrete valuation rings.

We have the following relationship between torus orbits and initial
degenerations.  Let $P\in\Sigma_1$, let $\sT_P\subset\sT$ be the subtorus that
acts trivially on $O(P)\subset\sY_\Sigma$, and let
$\omega\in\relint(P)\cap N_\Gamma$.
Then by~\cite[\S3]{helm_katz12:monodrom_filtration}, there is a natural map
$\inn_\omega(X)\to\sX\cap O(P)$, and an isomorphism
\begin{equation}\label{eq:orbit.inn}
  \inn_\omega(X) \isom \sT_P\times(\sX\cap O(P)).
\end{equation}
In particular, $\inn_\omega(X)$ is smooth if and only if $\sX\cap O(P)$ is smooth.

\subsection{Sch\"on subvarieties}
The following class of subvarieties of tori are sometimes called ``tropically
smooth''.

\begin{defn}\label{def:schon}
  Let $X\subset T$ be a subvariety.  We say that
  $X$ (or more precisely, the embedding $X\inject T$) is \emph{sch\"on} provided
  that there exists a tropical compactification $\sX\subset\sY_\Sigma$ of $X$
  such that the multiplication map
  $\mu\colon \sT\times_{K^\circ} \sX\to \sY_\Sigma$
  is smooth.
\end{defn}

Note that if $X$ is sch\"on then it is smooth. The following result is due to Luxton and Qu~\cite[Proposition~7.6]{luxton_qu11:tropical_compactification}.

\begin{lem}\label{lem:schon.compactifications}
  Let $X\subset T$ be a sch\"on subvariety defined over $\bar K_0$ and let
  $\sX\subset\sY_\Sigma$ be any tropical compactification.  Then
  $\mu\colon \sT\times_{K^\circ} \sX\to \sY_\Sigma$ is smooth.
\end{lem}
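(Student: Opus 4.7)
My plan is to show that sch\"onness is characterized by an intrinsic property of the embedding $X \hookrightarrow T$, namely smoothness of $X$ together with smoothness of every initial degeneration $\inn_\omega(X)$ for $\omega \in \Trop(X) \cap N_\Gamma$. Granted this equivalence, the lemma is immediate: two tropical compactifications of the same $X$ give rise to exactly the same family of initial degenerations, so if $\mu$ is smooth for one, the intrinsic condition holds, and then $\mu'$ must be smooth for any other tropical compactification.

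The central observation is that $\mu$ is $\sT$-equivariant (for the left-translation action on the first factor of the source and the standard action on $\sY_\Sigma$), so smoothness of $\mu$ can be checked on each $\sT$-orbit $O \subset \sY_\Sigma$ separately. Fixing a base point $y \in O$ and using transitivity of $\sT$ on $O$, the preimage $\mu^{-1}(O)$ is identified with $\sT_O \times (\sX \cap O)$, where $\sT_O$ is the stabilizer subtorus, and $\mu$ becomes the projection. Hence $\mu$ is smooth if and only if $\sX \cap O$ is smooth for every torus orbit $O$ of $\sY_\Sigma$, both on the generic and the special fiber. For special-fiber orbits $O(P)$ with $P \in \Sigma_1$, the isomorphism \eqref{eq:orbit.inn} combined with smoothness of $\sT_P$ identifies this with smoothness of $\inn_\omega(X)$ for any $\omega \in \relint(P) \cap N_\Gamma$. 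As $P$ varies over $\Sigma_1$ and $\omega$ varies over $\relint(P) \cap N_\Gamma$, we cover all of $\Trop(X) \cap N_\Gamma$ by Theorem~\ref{thm:trop.fan.facts}(2), so the special-fiber smoothness condition on $\mu$ is manifestly independent of $\Sigma$ and depends only on $X \hookrightarrow T$.

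For the generic fiber the comparison is less direct, since two tropical fans $\Sigma_1$ and $\Sigma_2$ yield possibly different generic fiber toric varieties $Y_{\Sigma_{1,0}}$ and $Y_{\Sigma_{2,0}}$, though with common support equal to the recession fan of $\Trop(X)$. Here I would pass to a common refinement $\Sigma_3$ of $\Sigma_1$ and $\Sigma_2$, which is again tropical by Theorem~\ref{thm:trop.fan.facts}(1), and use the resulting proper birational toric morphisms $\sY_{\Sigma_3} \to \sY_{\Sigma_i}$ to compare $\mu_3$ with each $\mu_i$. The main obstacle is verifying that smoothness of the multiplication map is both preserved and reflected under such refinement: in one direction, $\sX_3 \cap O(P)$ is smooth on each special-fiber orbit by the intrinsic characterization above, and for a generic fiber orbit $O(\sigma)\subset Y_{\Sigma_{i,0}}$ one combines smoothness of $\sX_i \cap O(\sigma)$ with the observation that the induced morphism $\sX_3 \to \sX_i$ is an isomorphism over the open orbit portions common to both fans, propagating smoothness to $\sX_3 \cap O(\sigma')$ for the cones $\sigma' \in \Sigma_{3,0}$ subdividing $\sigma$. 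Combining the orbit-wise criterion with the refinement argument yields smoothness of $\mu_1\Rightarrow \mu_3 \Rightarrow \mu_2$, which completes the proof.
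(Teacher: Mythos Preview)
The paper does not actually prove this lemma; it simply cites Luxton--Qu~\cite[Proposition~7.6]{luxton_qu11:tropical_compactification}. Your attempt is therefore an independent argument, and it is along the right lines, but it has a genuine gap in the final step.

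The orbit-wise reduction is correct: since $\mu$ is faithfully flat (by the definition of tropical compactification), smoothness of $\mu$ is equivalent to smoothness of every fiber, and via the torsor description each fiber is smooth if and only if the corresponding $\sX\cap O$ is smooth. The special-fiber part then goes through exactly as you say, using~\eqref{eq:orbit.inn}. The problem is the generic fiber and, more importantly, the passage $\mu_3\Rightarrow\mu_2$. You assert that smoothness is ``both preserved and reflected'' under refinement but only argue the preservation direction (and even there, your ``propagating smoothness to $\sX_3\cap O(\sigma')$'' step is not justified for the new lower-dimensional cones $\sigma'$ that the subdivision introduces inside $\sigma$). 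For the reflection direction $\mu_3\Rightarrow\mu_2$ you give no argument at all, yet this is where the content lies: the toric morphism $\sY_{\Sigma_3}\to\sY_{\Sigma_2}$ is proper birational but typically not flat, so smoothness does not descend formally.

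The clean fix, which is essentially the Luxton--Qu argument and which makes your orbit analysis unnecessary, is to use base change directly. Because $\mu_i$ is faithfully flat, one has $\sX_3 = \sX_i\times_{\sY_{\Sigma_i}}\sY_{\Sigma_3}$ (flatness guarantees the pullback of the closure is the closure of the pullback), and a short manipulation using the shearing automorphism $(t,y)\mapsto(t,ty)$ of $\sT\times\sY_{\Sigma_i}$ shows that $\mu_3$ is the base change of $\mu_i$ along $p\colon\sY_{\Sigma_3}\to\sY_{\Sigma_i}$. Thus $\mu_1$ smooth gives $\mu_3$ smooth immediately. For $\mu_3\Rightarrow\mu_2$: since $\mu_2$ is already flat, it suffices to check smoothness of each fiber $\mu_2^{-1}(y_2)$; pick any $y_3\in p^{-1}(y_2)$ (possible since $|\Sigma_3|=|\Sigma_2|$ makes $p$ surjective) and note that $\mu_3^{-1}(y_3)=\mu_2^{-1}(y_2)\otimes_{\kappa(y_2)}\kappa(y_3)$ is smooth, whence $\mu_2^{-1}(y_2)$ is smooth because smoothness descends along field extensions.
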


\begin{prop}[Helm--Katz]\label{prop:schon.equiv}
  Let $X\subset T$ be a subvariety defined over $\bar K_0$.  The following are
  equivalent:
  \begin{enumerate}
  \item $X$ is sch\"on.
  \item $\inn_\omega(X)$ is smooth for all $\omega\in\Trop(X)$.
  \item For any tropical compactification $\sX\subset\sY_\Sigma$ and any
    polyhedron $P\in\Sigma_1$, the intersection $\sX\cap O(P)$ is smooth.
  \end{enumerate}
\end{prop}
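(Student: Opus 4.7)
The plan is to split the three-way equivalence into (2)$\Leftrightarrow$(3) and (1)$\Leftrightarrow$(3). Throughout I will fix a tropical compactification $\sX\subset\sY_\Sigma$, which exists by Theorem~\ref{thm:trop.fan.facts}, and I will invoke Lemma~\ref{lem:schon.compactifications} to reformulate condition (1) as smoothness of the multiplication map $\mu\colon\sT\times_{K^\circ}\sX\to\sY_\Sigma$ for this particular compactification. For (2)$\Leftrightarrow$(3), the argument is essentially a direct application of the isomorphism~\eqref{eq:orbit.inn}. Given $P\in\Sigma_1$, I would pick $\omega\in\relint(P)\cap N_\Gamma$, which exists because $\Gamma=\Q$ is dense in $\R$; then $\inn_\omega(X)\cong\sT_P\times(\sX\cap O(P))$, and since $\sT_P$ is smooth, $\inn_\omega(X)$ is smooth if and only if $\sX\cap O(P)$ is smooth. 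For an arbitrary $\omega\in\Trop(X)$, the initial degeneration is defined after a valued field extension $K'\supset K$ with $\omega\in N_{\Gamma'}$, and faithfully flat descent reduces the smoothness question back to the rational case handled above.

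For (1)$\Leftrightarrow$(3), I would exploit that $\mu$ is faithfully flat by the very definition of tropical compactification, so its smoothness is equivalent to smoothness of all its geometric fibers. For $y\in O(P)$, the equation $tx=y$ forces $x=t^{-1}y$, so $\mu^{-1}(y)$ projects isomorphically onto the locally closed subscheme $\{t\in\sT : t^{-1}y\in\sX\cap O(P)\}\subset\sT$. This is precisely the preimage of $\sX\cap O(P)$ under the orbit map $\sT\to O(P),\,t\mapsto t^{-1}y$, which is a smooth $\sT_P$-torsor. Hence $\mu^{-1}(y)$ is smooth exactly when $\sX\cap O(P)$ is smooth in a neighborhood of $y$. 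By $\sT$-equivariance of $\mu$ it suffices to vary $y$ over a single geometric point in each orbit $O(P)$, so smoothness of all fibers of $\mu$ is equivalent to condition~(3).

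The main obstacle I foresee is bookkeeping around base changes in the (2)$\Leftrightarrow$(3) step: the isomorphism~\eqref{eq:orbit.inn} lives naturally over the residue field $\td K'$ used to rationalize $\omega$, whereas $\sX\cap O(P)$ lives over $\td K$, so smoothness must be transferred via faithfully flat descent. A secondary subtlety is that $\sY_\Sigma$ is typically singular, so in the (1)$\Leftrightarrow$(3) step one cannot hope to deduce smoothness of $\mu$ from smoothness of the source $\sT\times_{K^\circ}\sX$; the analysis must proceed through fibers and rely crucially on the flatness that is already built into the definition of tropical compactification. With these two observations in place, the two equivalences assemble into the three-way statement of the proposition.
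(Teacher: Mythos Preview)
The paper itself does not prove this statement; it simply cites \cite[Proposition~3.9]{helm_katz12:monodrom_filtration}. Your outline is therefore an independent attempt to supply the argument, and the overall strategy---using~\eqref{eq:orbit.inn} for (2)$\Leftrightarrow$(3) and a fiberwise analysis of $\mu$ for (1)$\Leftrightarrow$(3)---is the natural one and is essentially how Helm--Katz proceed.

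There is, however, a real gap in your (3)$\Rightarrow$(1). You equate smoothness of the flat map $\mu$ with smoothness of all its fibers and then analyze $\mu^{-1}(y)$ only for $y\in O(P)$ with $P\in\Sigma_1$. But the orbits $O(P)$ for $P\in\Sigma_1$ stratify only the \emph{special} fiber $(\sY_\Sigma)_s$; the generic fiber $Y_{\Sigma_0}$ is stratified by orbits $O(\sigma)$ for $\sigma\in\Sigma_0$, and your own fiber computation shows just as well that $\mu^{-1}(y)$ for $y\in O(\sigma)$ is smooth iff $\sX_K\cap O(\sigma)$ is smooth near $y$. Condition~(3) says nothing directly about these generic-fiber intersections, so the sentence ``smoothness of all fibers of $\mu$ is equivalent to condition~(3)'' is not justified as written.

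The gap is closable, and the missing ingredient is precisely the properness of $\sX$ built into Definition~\ref{def:tropical.compactification}. By $\sT$-equivariance of $\mu$ it suffices to check smoothness at points of the form $(1,x)$ with $x\in\sX$. Since $\sX$ is proper over $K^\circ$, every $x\in\sX_K$ specializes to some $x_0\in\sX_s$, and hence $(1,x)$ specializes to $(1,x_0)$ inside $\sT\times_{K^\circ}\sX$ (the identity section of $\sT$ being defined over $K^\circ$). Condition~(3) and your fiber analysis give smoothness of $\mu$ at $(1,x_0)$, and openness of the smooth locus then propagates smoothness to the generization $(1,x)$. Without properness of $\sX$ this step would fail: there are closed points of $\sT\times_{K^\circ}\sX$ lying in the generic fiber which do not specialize to the special fiber, so one cannot simply ``check at closed points''.
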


\begin{proof}
  See~\cite[Proposition~3.9]{helm_katz12:monodrom_filtration}. 
\end{proof}

De Jong defined in~\cite[\S6]{dejong96:alterations} the notion of  a strictly semistable pair $(\sX,H)$ over a complete discrete valuation ring $R$. Roughly, this consists of a pair $(\sX,H)$, where $\sX$ is an irreducible, proper, flat, separated $R$-scheme, $H$ is an effective ``horizontal'' Cartier divisor on $\sX$, and $H + \sX_s$ is a divisor with strict normal crossings. Note that de Jong denotes such a strictly semistable pair by $(\sX,H + \sX_s)$, but we will omit the vertical part in the notation.

\begin{prop}[Helm--Katz]\label{prop:schon.ss.pair}
  Let $X\subset T$ be a sch\"on subvariety defined over $\bar K_0$.  Then there
  exists a tropical compactification $\sX\subset\sY_\Sigma$ such that, letting
  $H$ be the horizontal part of the boundary divisor of $\sX\subset\sY_\Sigma$
  (Definition~\ref{def:boundary.divisor}), then $H$ is Cartier and the pair
  $(\sX,H)$ is the base change a strictly semistable pair defined over the
  valuation ring of a finite subextension of $K/K_0$.
\end{prop}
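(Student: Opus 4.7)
The plan is to begin with any tropical compactification of $X$ and then refine the fan $\Sigma$ until the boundary on the closure of $X$ becomes a strict normal crossings divisor, after which descent to a finite subextension is a formality.

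First I would choose any pointed rational tropical fan $\Sigma_0$ for $X$ in $N_\R\times\R_+$, whose existence is guaranteed by Tevelev's theorem (and by Theorem~\ref{thm:trop.fan.facts} such a $\Sigma_0$ has $|\Sigma_{0,1}|=\Trop(X)$). Then I would apply a toric-resolution-style subdivision to obtain a refinement $\Sigma$ of $\Sigma_0$ which is \emph{unimodular}, in the sense that each cone of $\Sigma$ is generated by part of a $\Z$-basis of $N\times\Z$, and whose vertex set in $\Sigma_1$ lies in $N_{\Gamma_0'}$ for some sufficiently large finite extension $K_0'/K_0$. By Theorem~\ref{thm:trop.fan.facts}(1), $\Sigma$ remains a tropical fan for $X$. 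Unimodularity ensures that $\sY_\Sigma$ is regular over $K^\circ$ with reduced special fiber and that its toric boundary $D\coloneq\sY_\Sigma\setminus T$ is a strict normal crossings divisor.

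Next, let $\sX$ be the closure of $X$ in $\sY_\Sigma$. By Lemma~\ref{lem:schon.compactifications}, the multiplication map $\mu\colon\sT\times_{K^\circ}\sX\to\sY_\Sigma$ is smooth and faithfully flat. Since $\sY_\Sigma$ is regular and $\mu$ is smooth, $\sT\times_{K^\circ}\sX$ is regular, and faithful flatness of the projection $\sT\times_{K^\circ}\sX\to\sX$ (a torus bundle) descends regularity to $\sX$. In particular, every Weil divisor on $\sX$ is Cartier. Moreover, smoothness of $\mu$ pulls back local equations for the strict normal crossings divisor $D$ to part of a regular system of parameters on $\sT\times_{K^\circ}\sX$, and these descend along the projection to a regular system of parameters on $\sX$ cutting out $\sX\cap D$. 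Hence $\sX\setminus X=\sX\cap D$ has normal crossings support, and by the isomorphism~\eqref{eq:orbit.inn} together with Proposition~\ref{prop:schon.equiv}, each intersection $\sX\cap O(P)$ is smooth, so each irreducible component of the boundary divisor is smooth.

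To upgrade normal crossings to \emph{strict} normal crossings I would require, in addition, that every irreducible component of $\sX\cap\overline{O(P)}$ (for codimension-one $P\in\Sigma$) be geometrically irreducible over the chosen base. This can be arranged by passing to a finite Galois extension $K_0'/K_0$ inside $\bar K_0$ over which every such component is defined and irreducible, and by refining $\Sigma$ further (preserving unimodularity) if necessary to separate components meeting in different combinatorial patterns. With $H$ the closure of the horizontal boundary $\sX_K\setminus X$ inside $\sX$, it follows that $H$ is an effective Cartier divisor on the regular proper scheme $\sX$ and that $H+\sX_s$ is a strict normal crossings divisor; since all data involved is of finite type and defined over $K_0'^\circ$, the pair $(\sX,H)$ arises by base change from a strictly semistable pair in de Jong's sense over $K_0'^\circ$. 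The main obstacle I expect is the last step: reconciling geometric irreducibility of the boundary components — and hence genuine strictness of the normal crossings — with unimodularity of the refined fan, which requires a careful interplay of combinatorial toric refinement with an arithmetic base change. Everything else follows essentially automatically from the smoothness content of Lemma~\ref{lem:schon.compactifications} and standard toric resolution.
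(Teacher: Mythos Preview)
Your approach is correct and is precisely the Helm--Katz argument that the paper cites: the paper's own proof is a two-sentence reference to~\cite[Proposition~3.10]{helm_katz12:monodrom_filtration} for strict semistability of $\sX$ and to the proof of~\cite[Proposition~2.3]{helm_katz12:monodrom_filtration} for the full strictly semistable pair statement, and your unimodular-refinement-plus-sch\"on-multiplication argument is exactly what those references contain. The paper itself remarks, immediately after the proposition, that this unimodular simplicial subdivision is the reason for the discreteness hypothesis on $K_0$.

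Your worry at the end is unnecessary, however. Strict normal crossings requires only that each irreducible component of the divisor be \emph{regular}, not geometrically irreducible. Once you know $\sX$ is regular and that the boundary $\sX\cap D$ is locally cut out by part of a regular system of parameters---which your argument already establishes via smoothness of $\mu$ and regularity of the unimodular $\sY_\Sigma$---each $\sX\cap\overline{O(\rho)}$ is a regular codimension-one closed subscheme of $\sX$; its irreducible components are therefore its connected components, each of which is regular, and no component can meet itself. That is all de~Jong's definition demands. No further refinement of $\Sigma$ and no additional arithmetic base change is needed for strictness; the finite extension $K_0'/K_0$ enters only so that the vertices of $\Sigma_1$ lie in $N_{\Gamma_{K_0'}}$, allowing $\sY_\Sigma$, $\sX$, and $H$ to descend to $K_0'^\circ$.
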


\begin{proof}
  Proposition~3.10 in~\cite{helm_katz12:monodrom_filtration} says that $\sX$ is
  strictly semistable over $K^\circ$ in the sense of
  de~Jong~\cite[\S2]{dejong96:alterations}.  For the stronger fact that
  $(\sX,H)$ is a strictly semistable pair, one has to refer back to the proof
  of~\cite[Proposition~2.3]{helm_katz12:monodrom_filtration}.
\end{proof}

\begin{rem}
  The desingularization
  process~\cite[Proposition~2.3]{helm_katz12:monodrom_filtration} used in the
  proof of Proposition~\ref{prop:schon.ss.pair} is the primary reason for the
  assumption that $X \subset T$ is defined over $\bar K_0$.  It involves
  subdividing $\Trop(X)$ into unimodular simplicial polyhedra, which is not
  possible in general when the value group is too large, e.g.\ if $\Gamma=\R$.
  On the other hand, the proofs of Lemma~\ref{lem:schon.compactifications} and
  Proposition~\ref{prop:schon.equiv} can be extended to any valued field.
\end{rem}

\subsection{Skeletons and the parameterizing complex}
Following Helm--Katz, the pair consisting of a sch\"on subvariety $X\subset T$
and a tropical compactification $\sX\subset\sY_\Sigma$ satisfying the
conclusions of Proposition~\ref{prop:schon.ss.pair} is called a
\emph{normal crossings pair}.  We associate to a normal crossings pair
$(X,\sY_\Sigma)$ a piecewise linear set $\HK(X,\sY_\Sigma)$ (denoted
$\Gamma_{(X,\bP)}$ in~\cite[\S4]{helm_katz12:monodrom_filtration}), defined as
follows.  For $P\in\Sigma_1$ we let $\sX_P$ be the closure of $\sX\cap O(P)$.
The $k$-cells of $\HK(X,\sY_\Sigma)$ are pairs $(P,C)$, where $P$ is a
$k$-dimensional polyhedron in $\Sigma_1$ and $C$ is an irreducible component
(equivalently, a connected component) of $\sX_P$.  The cells on the boundary of
$(P,C)$ are the cells of the form $(P',C')$, where $P'$ is a facet of $P$ and
$C'$ is the irreducible component of $\sX_{P'}$ containing $C$; there is only
one such component as $\sX_{P'}$ is smooth.  The piecewise linear set
$\HK(X,\sY_\Sigma)$ is called the \emph{Helm--Katz parameterizing complex}; it
maps naturally to $\Sigma_1$ by sending $(P,C)$ to $P$.
See~\cite[\S4]{helm_katz12:monodrom_filtration} for details.  Note that although
Helm--Katz work over a discretely valued field, their construction only depends
on the (geometric) special fiber of $\sX$, so one may as well pass to the
completion of the algebraic closure first.  The complex
$\HK(X,\sY_\Sigma)$ inherits an integral $\Gamma$-affine
structure~\cite[\S2]{gubler_rabinoff_werner16:skeleton_tropical} from
$\Trop(X)$.

Let $(\sX,H)$ be the strictly semistable pair associated to the normal crossings
pair $(X,\sY_\Sigma)$ in Proposition~\ref{prop:schon.ss.pair}.  It follows
from~\cite[3.2]{gubler_rabinoff_werner16:skeleton_tropical} that $(\sX,H)$ is a
strictly semistable pair in the sense
of~\cite[Definition~3.1]{gubler_rabinoff_werner16:skeleton_tropical}.  Such a
pair admits a canonical skeleton $S(\sX,H)\subset X^\an$ as constructed
in~\cite[\S4]{gubler_rabinoff_werner16:skeleton_tropical}.
By~\cite[Proposition~4.10]{gubler_rabinoff_werner16:skeleton_tropical} there is
a bijective, order-reversing correspondence between strata $S$ in $\sX_s$ and
polyhedra $\Delta_S$ in the skeleton $S(\sX,H)$.  These strata are precisely
the connected components of the intersections with torus orbits $\sX\cap O(P)$
for $P\in\Sigma_1$: indeed, the stratum closures of $\sX_s$ are the
intersections with stratum closures in $\sY_\Sigma$ because they are smooth of
the correct dimension by Theorem~\ref{thm:trop.fan.facts}(3) and
Proposition~\ref{prop:schon.equiv}(3).  Hence the polyhedra $\Delta_S$
correspond to cells in $\HK(X,\sY_\Sigma)$.  This correspondence respects the
facet relation, so that $\HK(X,\sY_\Sigma)$ and $S(\sX,H)$ are identified as
piecewise linear sets.  It is not obvious that this identification respects the
respective integral $\Q$-affine structures (see Lemma~\ref{lem:skel.to.trop}
below).

\begin{thm}\label{thm:skeletons.complex}
  Let $X\subset T$ be a sch\"on subvariety defined over $\bar K_0$, and let
  $(X,\sY_\Sigma)$ be a normal crossings pair.  Let $(\sX,H)$ be the associated
  strictly semistable pair (Proposition~\ref{prop:schon.ss.pair}), with skeleton
  $S(\sX,H)$ as above.  Then
  \begin{enumerate}
  \item $S(\sX,H) = \STrop(X)$ as subsets of $X^\an$, and
  \item there is a canonical isomorphism $S(\sX,H)\isom\HK(X,\sY_\Sigma)$ of
    abstract integral $\Q$-affine piecewise linear sets,
    making the following triangle commute:
    \[\xymatrix @=.15in{
      {S(\sX,H)} \ar[rr]^\sim \ar[dr]_{\trop} & & {\HK(X,\sY_\Sigma)} \ar[dl] \\
      & {\Trop(X)} &
    }\]
  \end{enumerate}
\end{thm}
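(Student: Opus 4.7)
The strategy is to identify both $\STrop(X)$ and $S(\sX,H)$ fiber-by-fiber over $\Trop(X)$ with the same explicit combinatorial data coming from the normal crossings pair, and then to verify that the affine structures agree through the common map $\trop$. First I would reduce to the case where $K$ is algebraically closed with value group $\Gamma = \Q$, using Lemma~\ref{lem:Strop.extend.scalars} for the tropical skeleton and the corresponding invariance of skeletons of strictly semistable pairs from~\cite{gubler_rabinoff_werner16:skeleton_tropical}.

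For part~(1), the key input is sch\"on-ness. Fix $\omega\in\Trop(X)\cap N_\Gamma$ lying in the relative interior of some $P\in\Sigma_1$. By~\eqref{eq:orbit.inn}, $\inn_\omega(X)\cong\sT_P\times(\sX\cap O(P))$, which is smooth by Proposition~\ref{prop:schon.equiv}(3), hence reduced with every irreducible component of tropical multiplicity one. Since smoothness forces irreducible components to coincide with connected components, the connected components $C$ of $\sX\cap O(P)$ biject with the irreducible components of $\inn_\omega(X)$. The projection formula~\eqref{eq:projection.formula} applied to the finite surjection $\td X_\omega\to\inn_\omega(X)$ from Remark~\ref{finite surjective morphism} then yields a bijection between generic points of $\td X_\omega$ and these connected components $C$, and by Proposition~\ref{prop:can.red.surj}(3) these correspond bijectively to the Shilov boundary points of $X_\omega$, i.e.\ to $\STrop(X)\cap X_\omega$.

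On the other side, the skeleton $S(\sX,H)$ is a union of polyhedra $\Delta_S$ indexed by strata $S$ of $\sX_s$; by the discussion preceding the theorem, these strata are exactly the connected components of the intersections $\sX\cap O(P)$. For each such $C\subset\sX\cap O(P)$ and each $\omega\in P\cap N_\Q$, the polyhedron $\Delta_C$ contains a unique point $\eta_{C,\omega}$ above $\omega$. To conclude part~(1), I would show $\eta_{C,\omega}$ equals the Shilov boundary point attached to $C$ in the previous paragraph. Both points reduce, via the reduction map $X^\an\to\sX_s$ of Proposition~\ref{prop:fs.red.surj}, to the generic point of $C$, and both project under $\trop$ to $\omega$; combining these with the functoriality of the reduction and the fact that a Shilov boundary point of $X_\omega$ is uniquely determined by the component of $\td X_\omega$ (equivalently of $\inn_\omega(X)$, equivalently of $\sX\cap O(P)$) to which it reduces, forces equality. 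Passing to arbitrary $\omega\in P$ (not necessarily in $N_\Q$) is handled by density and continuity of both loci inside their respective polyhedra.

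For part~(2), the bijection between cells $(P,C)$ of $\HK(X,\sY_\Sigma)$ and polyhedra $\Delta_C$ of $S(\sX,H)$, compatible with the facet relation, was already matched in the paragraph preceding the theorem. The two integral $\Q$-affine structures are both inherited from $\Trop(X)$: by its construction in~\cite{gubler_rabinoff_werner16:skeleton_tropical}, each $\Delta_C\subset S(\sX,H)$ maps isomorphically as an integral $\Q$-affine polyhedron onto the corresponding $P\in\Sigma_1$ under $\trop$, and by definition the map $(P,C)\mapsto P$ is an integral $\Q$-affine isomorphism on each cell of $\HK(X,\sY_\Sigma)$. Commutativity of the triangle with tropicalization is then automatic. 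The main obstacle is the identification $\eta_{C,\omega}=s_X(\omega)_C$ in part~(1); all the required compatibilities are packaged in the functoriality of reduction (Proposition~\ref{prop:fs.red.surj}) together with the explicit local description of $S(\sX,H)$ on formal affine neighborhoods of a stratum from~\cite[\S4]{gubler_rabinoff_werner16:skeleton_tropical}.
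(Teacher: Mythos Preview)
Your overall plan---compare $S(\sX,H)$ and $\STrop(X)$ fiber-by-fiber over $\Trop(X)$, matching both to the connected components of $\sX\cap O(P)$---is the paper's strategy as well. But two steps are not justified as written.

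First, you assert that each cell $\Delta_C$ of $S(\sX,H)$ maps via $\trop$ isomorphically, as an integral $\Q$-affine polyhedron, onto the corresponding $P\in\Sigma_1$ ``by its construction in~\cite{gubler_rabinoff_werner16:skeleton_tropical}.'' This is not immediate from the construction: it is precisely the content of Lemma~\ref{lem:skel.to.trop}, whose proof uses~\cite[Proposition~8.2]{gubler_rabinoff_werner16:skeleton_tropical} for injectivity of $\trop|_{\Delta_C}$, the codimension statement of Theorem~\ref{thm:trop.fan.facts}(3) together with~\eqref{eq:complementary.dims} for surjectivity, and the skeletal Sturmfels--Tevelev formula~\cite[Theorem~8.4]{gubler_rabinoff_werner16:skeleton_tropical} combined with~\eqref{eq:orbit.inn} for unimodularity. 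Without this lemma you do not even know that $\Delta_C$ contains a unique point over a given $\omega\in P$, so your $\eta_{C,\omega}$ is not yet well defined.

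Second, and more seriously, the identification $\eta_{C,\omega}=\xi_C$ does not follow from the facts you list. Knowing that both points reduce under $X^\an\to\sX_s$ to the generic point of $C$ does not force equality: the fiber of this reduction over that generic point is large, since via~\eqref{eq:orbit.inn} the map $\inn_\omega(X)\to\sX\cap O(P)$ collapses an entire $\sT_P$-worth of points. The uniqueness statement you invoke concerns the \emph{canonical} reduction $X_\omega\to\td X_\omega$ and applies only to points already known to lie in $B(X_\omega)$; you have not shown that $\eta_{C,\omega}$ is a Shilov boundary point of $X_\omega$, so the argument is circular. The paper closes this gap by a different mechanism: after extending scalars so that the value group is all of $\R$ (so every $\omega$ is handled directly, avoiding a density argument), one uses that $\trop$ factors through the retraction $\tau$ to $S(\sX',H')$, together with Berkovich's monotonicity $\xi_C\le\tau(\xi_C)$ from~\cite[Theorem~5.2(ii)]{berkovic99:locally_contractible_I}, to conclude $\xi_C=\tau(\xi_C)\in S(\sX',H')$. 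A cardinality count over $\omega$, using Lemma~\ref{lem:skel.to.trop}, then gives the equality $\{\xi_{C_1},\ldots,\xi_{C_r}\}=S(\sX',H')\cap\trop^{-1}(\omega)$.
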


One consequence of Theorem~\ref{thm:skeletons.complex} is that for sch\"on $X$,
one can construct the (Berkovich) skeleton $S(\sX,H)$ using only tropical data,
namely, $\Trop(X)$ and the initial degenerations of $X$.  This is a kind of
``faithful tropicalization'' result.  As a consequence, any invariant of $X$
that can be recovered from $S(\sX,H)$ can be calculated tropically.  For example,
the skeleton $S(\sX,H)$ is a strong deformation retraction of $X^\an$
by~\cite[\S4.9]{gubler_rabinoff_werner16:skeleton_tropical}, so we have the
following Corollary.

\begin{cor}\label{cor:complex.htpy.type}
  With the notation in Theorem~\ref{thm:skeletons.complex}, there is a canonical
  homotopy equivalence $X^\an\to\HK(X,\sY_\Sigma)$.
\end{cor}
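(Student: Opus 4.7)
The plan is to deduce the corollary immediately from Theorem~\ref{thm:skeletons.complex} combined with the deformation retraction result for skeletons of strictly semistable pairs cited in the paper. First, I would invoke Theorem~\ref{thm:skeletons.complex}(2), which gives a canonical isomorphism of integral $\Q$-affine piecewise linear sets
\[ \Phi\colon S(\sX,H) \isom \HK(X,\sY_\Sigma). \]
In particular $\Phi$ is a homeomorphism of topological spaces, so it is trivially a homotopy equivalence.

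Next, I would appeal to~\cite[\S4.9]{gubler_rabinoff_werner16:skeleton_tropical}, which is cited just before the corollary and asserts that $S(\sX,H)$ is a strong deformation retract of $X^\an$. This produces a retraction $r\colon X^\an\to S(\sX,H)$ and a homotopy between $i\circ r$ and $\id_{X^\an}$, where $i\colon S(\sX,H)\inject X^\an$ is the inclusion; in particular $i$ and $r$ are mutually inverse homotopy equivalences.

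Finally, I would set the desired map to be the composite
\[ X^\an \xrightarrow{\;r\;} S(\sX,H) \xrightarrow{\;\Phi\;} \HK(X,\sY_\Sigma), \]
which, being the composition of two homotopy equivalences, is itself a homotopy equivalence. The canonicity is inherited from the canonicity of both $r$ (built from the strictly semistable pair $(\sX,H)$ in \emph{loc.\ cit.}) and $\Phi$ (constructed in Theorem~\ref{thm:skeletons.complex}).

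There is essentially no obstacle here beyond bookkeeping, since all the substantive work has been done in Theorem~\ref{thm:skeletons.complex} and in the cited retraction theorem; the only mild subtlety is to note that the identification $S(\sX,H) = \STrop(X)$ from part~(1) of the theorem is compatible with viewing both as subsets of $X^\an$, so that the retraction from~\cite{gubler_rabinoff_werner16:skeleton_tropical} indeed lands in the same copy of $S(\sX,H)$ that is being identified with $\HK(X,\sY_\Sigma)$ via $\Phi$. This compatibility is built into the statement of Theorem~\ref{thm:skeletons.complex}, so no further argument is required.
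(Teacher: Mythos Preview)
Your proposal is correct and matches the paper's own reasoning exactly: the corollary is stated without proof, as an immediate consequence of the sentence preceding it, which invokes the strong deformation retraction $X^\an\to S(\sX,H)$ from~\cite[\S4.9]{gubler_rabinoff_werner16:skeleton_tropical} and the identification $S(\sX,H)\cong\HK(X,\sY_\Sigma)$ of Theorem~\ref{thm:skeletons.complex}. Your write-up simply makes explicit the composition that the paper leaves implicit.
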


\begin{rem} \label{helm-katz:main result} Suppose now that $X\subset T$ is a
  sch\"on subvariety and that $K_0$ is a \emph{local} field (i.e.,\ $\td K_0$ is
  finite).  Assume that $X$ arises as the base change of a subvariety
  $X_{\bar K_0}$ of $\Spec(\bar K_0[M])$.
The main theorem of the paper of
Helm--Katz~\cite[Theorem~6.1]{helm_katz12:monodrom_filtration}, which relates
the weight-zero \'etale cohomology of $X_{\bar K_0}$ with the singular cohomology of the
parameterizing complex of $X$, is now seen in the light of Theorem~\ref{thm:skeletons.complex} to be a consequence%
\footnote{At least over local fields; Helm--Katz work over slightly more general
  $K_0$.} 
of a general result of
Berkovich~\cite{berkovic00:tate_conjecture}, relating the weight-zero \'etale
cohomology of $X_{\bar K_0}$ with the singular cohomology of $X^\an$.  This answers a
question posed in the introduction of~\cite{helm_katz12:monodrom_filtration}.
\end{rem}

Before beginning the proof of Theorem~\ref{thm:skeletons.complex}, we need the
following Lemma, which roughly says that $S(\sX,H)\to\Trop(X)$ is an
``unramified covering'' of integral $\Q$-affine piecewise linear sets.

\begin{lem}\label{lem:skel.to.trop}
  With the notation in Theorem~\ref{thm:skeletons.complex}, let $P\in\Sigma_1$,
  let $S$ be a connected component of $\sX\cap O(P)$, and let
  $\Delta_S\subset S(\sX,H)$ be the corresponding cell.  Then $\trop$ maps
  $\Delta_S$ bijectively onto $P$, and this map is unimodular in the sense
  of~\cite[\S2]{gubler_rabinoff_werner16:skeleton_tropical}.
\end{lem}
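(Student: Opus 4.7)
My plan is to parameterize $\Delta_S$ explicitly by the polyhedron $P$ using the toric structure of $\sY_\Sigma$, leveraging the sch\"on condition to transfer local toric descriptions into the semistable-skeleton construction, then read off unimodularity from the identification.

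First, I would use Lemma~\ref{lem:schon.compactifications} to exploit the smoothness of the multiplication $\mu\colon\sT\times_{K^\circ}\sX\to\sY_\Sigma$. Since $\mu$ is $\sT$-equivariant and smooth, $\sX$ meets the toric stratification of $\sY_\Sigma$ transversally in a strong sense: \'etale-locally at a closed point of $S$, $\sX$ is cut out of $\sY_\Sigma$ by a regular sequence of sections transverse to every torus orbit. Consequently the divisorial components of $H+\sX_s$ passing through $S$ are exactly the intersections of $\sX$ with the toric prime divisors of $\sY_\Sigma$ whose closure contains $O(P)$. These toric divisors correspond bijectively to the vertices $v_0,\ldots,v_d$ of $P$ (giving vertical components of $\sX_s$) and to the primitive generators $w_1,\ldots,w_s$ of the rays of $\rho(P)$ (giving horizontal components in $H$). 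A single connected component $S$ singles out a chart in which all these divisor equations are well-defined simultaneously.

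Second, I would invoke the construction of the skeleton in \cite[\S4]{gubler_rabinoff_werner16:skeleton_tropical}: the cell $\Delta_S$ is the polytope of monomial valuations determined by non-negative coefficients on the local equations of the divisors through $S$, subject to the normalizing relation that the coefficients applied to the vertical divisors recover $v(\varpi_0)$. By step one these local equations are (pullbacks of) toric characters dual to the $v_i$ and $w_j$, so a direct computation of $-\log|\chi^u|$ for $u\in M$ shows that the monomial valuation $\xi_{\alpha,\beta}$ with vertical weights $(\alpha_i)$ (normalized by $\sum a_i\alpha_i = v(\varpi_0)$, where $a_i$ is the multiplicity of the $i$-th vertical divisor) and horizontal weights $(\beta_j)$ satisfies
\[
\trop(\xi_{\alpha,\beta}) \;=\; \sum_i \alpha_i v_i + \sum_j \beta_j w_j.
\]
As $(\alpha,\beta)$ ranges over the admissible parameters, this expression ranges over $P = \mathrm{conv}(v_0,\ldots,v_d) + \mathrm{cone}(w_1,\ldots,w_s)$: surjectivity is then immediate, and injectivity follows because both $\Delta_S$ and $P$ have dimension $\dim(P)$ and the parametrization is affine in the $(\alpha,\beta)$.

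Third, unimodularity is essentially formal once the parametrization is in place: the parameters $(\alpha,\beta)$ give integral $\Q$-affine coordinates on $\Delta_S$ by the construction of the integral $\Q$-affine structure on the semistable skeleton in \cite[\S4]{gubler_rabinoff_werner16:skeleton_tropical}, and the same parameters give integral $\Q$-affine coordinates on $P$ by its standard structure inherited from $N_\R$; since $\trop$ is the identity in these coordinates, the map is unimodular. The main obstacle is the transversality argument in step one: sch\"onness must be used carefully to ensure that the divisors of $H+\sX_s$ meeting $S$ are exactly the restrictions of toric divisors of $\sY_\Sigma$, with no additional components arising from the geometry of $\sX$ itself, and that their local equations can be taken to be pulled back from toric characters compatibly with $\varpi_0$. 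This is precisely where the smoothness of $\mu$ (and not merely flatness in the definition of a tropical compactification) is essential.
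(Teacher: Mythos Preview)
Your approach is genuinely different from the paper's, and the explicit parameterization you sketch is essentially correct. However, there is a real gap in your third step. You assert that the coordinates $(\alpha,\beta)$ are integral $\Q$-affine coordinates on $P$ ``by its standard structure inherited from $N_\R$,'' but this is precisely the statement that the polyhedron $P$ is \emph{unimodular}: that the vectors $v_1-v_0,\ldots,v_d-v_0,w_1,\ldots,w_s$ form a $\Z$-basis of $N\cap\langle P\rangle_0$. Nothing in the hypotheses of the lemma gives you this for free; the normal-crossings-pair condition only says that $(\sX,H)$ is strictly semistable, not that $\sY_\Sigma$ is regular. The same issue undermines your injectivity argument: an affine surjection between polyhedra of the same dimension need not be injective, and you need the linear independence of the $v_i-v_0,w_j$, which again amounts to $P$ being simplicial. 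You can fill the gap by observing that $\sT\times_{K^\circ}\sX$ is regular (as $\sX$ is regular and $\sT$ is smooth over $K^\circ$) and that $\mu$ is smooth and surjective, so regularity descends to $\sY_\Sigma$; hence $\Sigma$ is a unimodular fan and every $P\in\Sigma_1$ is unimodular. But this is a genuine step, not a formality.

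The paper avoids this issue entirely by a different route. For bijectivity it argues indirectly via reduction: the points of $S(\sX,H)$ tropicalizing into $\relint(P)$ are exactly those reducing to $\sX\cap O(P)$, and \cite[Proposition~4.10]{gubler_rabinoff_werner16:skeleton_tropical} identifies these with the open cells $\relint\Delta_{S'}$. Since $\trop|_{\Delta_S}$ is integral $\Q$-affine by \cite[Proposition~8.2]{gubler_rabinoff_werner16:skeleton_tropical}, it is automatically injective, and a dimension count plus an open--closed argument gives surjectivity onto $P$. For unimodularity the paper uses the skeletal Sturmfels--Tevelev formula \cite[Theorem~8.4]{gubler_rabinoff_werner16:skeleton_tropical}: on a maximal face one has $m_{\Trop}(\omega)=\sum_i[N_P:N_{\Delta_{S_i}}]$, while~\eqref{eq:orbit.inn} and smoothness give $m_{\Trop}(\omega)=r$ directly, forcing each lattice index to be $1$. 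This argument never needs to know the structure of $\Sigma$. Your approach, once patched, is more explicit and avoids the multiplicity formula; the paper's is shorter and more robust.
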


\begin{proof}
  First suppose that $X=T$ and $\sX = \sY_\Sigma$.  In this case, it is
  straightforward to check that the skeleton $S(\sX,H)$ is canonically
  identified with $\Sigma_1$ (considered as a polyhedral subdivision of
  $N_\R = \Trop(T)$), and that the retraction to the skeleton
  $\tau\colon X^\an\to S(\sX,H)$ is identified with $\trop\colon T^\an\to N_\R$.
  In particular,
  by~\cite[Proposition~4.10]{gubler_rabinoff_werner16:skeleton_tropical}
  or~\cite[Proposition~8.8]{gubler13:guide_tropical},
  all points of $\trop\inv(\relint P)$ reduce to the stratum $S$ under the
  reduction map $\red\colon X^\an\to\sX_s$, and all points of $X^\an$
  reducing to $S$ are contained in $\trop\inv(\relint P)$.

  No longer assuming $\sX = \sY_\Sigma$, but restricting the previous sentence
  to $\sX\subset\sY_\Sigma$, we see that a point $x\in X^\an$ reduces to
  $\sX\cap O(P)$ if and only if $\trop(x)\in\relint P$.
  Applying~\cite[Proposition~4.10]{gubler_rabinoff_werner16:skeleton_tropical}
  to $(\sX,H)$, this shows that $\trop\inv(\relint P)\cap S(\sX,H)$ is the
  disjoint union of the interiors $\relint\Delta_{S'}$ for $S'$ a component of
  $\sX\cap O(P)$.  In particular, $\trop$ maps $\relint\Delta_S$ (but not the
  boundary of $\Delta_S$) into $\relint P$.  Taking closures, we see that
  $\trop$ maps $\Delta_S$ into $P$.  The map $\trop\colon\Delta_S\to P$ is
  integral $\Q$-affine
  by~\cite[Proposition~8.2]{gubler_rabinoff_werner16:skeleton_tropical}.  This
  is enough to ensure $\trop$ is injective on $\Delta_S$.  The dimensions of $S$
  and $P$ are complementary by Theorem~\ref{thm:trop.fan.facts}(3)
  and~\eqref{eq:complementary.dims}, so $\dim(\Delta_S) = \dim(P)$.  Hence
  $\trop(\relint\Delta_S)$ is open in $\relint P$, and it is also closed since
  the boundary of $\Delta_S$ does not map into $\relint P$.  It follows that
  $\trop$ maps $\relint\Delta_S$ (resp.\ $\Delta_S$) bijectively onto
  $\relint P$ (resp.\ $P$).

  Now we treat unimodularity.  Since $\Trop(X)$ is pure dimensional, the same is
  true of $S(\sX,H)$.  Hence it suffices
  to consider $P$ of maximal dimension.  Let $S_1,\ldots,S_r$ be the connected
  components of $\sX\cap O(P)$.  Choose $\omega\in(\relint P)\cap N_\Gamma$.  By
  the skeletal Sturmfels--Tevelev multiplicity
  formula~\cite[Theorem~8.4]{gubler_rabinoff_werner16:skeleton_tropical}, we
  have
  \begin{equation}\label{eq:st.formula}
    m_{\Trop}(\omega) = \sum_{i=1}^r [N_P:N_{\Delta_{S_i}}] \geq r,
  \end{equation}
  where the terms
  $[N_P:N_{\Delta_{S_i}}]$ are the lattice indices introduced
  in~\cite[\S2]{gubler_rabinoff_werner16:skeleton_tropical}.
  By~\eqref{eq:orbit.inn}, we have
  $\inn_\omega(X)\cong\sT_P\times\Djunion_{i=1}^r S_i$, so it is clear that
  $m_{\Trop}(\omega) = r$.  It follows that $[N_P:N_{\Delta_{S_i}}]=1$ for all
  $i $, which is to say that $\trop$ is unimodular on $\Delta_{S_i}$.
\end{proof}

\begin{proof}[Proof of Theorem~\ref{thm:skeletons.complex}]
  We have already argued that the cells and facet relations of $S(\sX,H)$ are
  in natural bijection with those of $\HK(X,\sY_\Sigma)$.  The integral
  $\Q$-affine structures of the cells of $\HK(X,\sY_\Sigma)$ are inherited
  from those of $\Sigma_1$, and Lemma~\ref{lem:skel.to.trop} proves that the
  same is true for the cells of $S(\sX,H)$.  Moreover, the cell of $S(\sX,H)$
  corresponding to a cell $(P,C)$ of $\HK(X,\sY_\Sigma)$ maps to $P$ under
  $\trop$ by Lemma~\ref{lem:skel.to.trop}, so we have proved~(2).

  To prove~(1), we will adapt the argument
  of~\cite[Proposition~10.8]{gubler_rabinoff_werner16:skeleton_tropical}.  Let
  $K'$ be an algebraically closed, complete valued field extension of $K$ whose
  value group is all of $\R$, let $X'\coloneq X \otimes_K K'$ and let
  $\pi\colon X'^\an\to X^\an$ be the structural morphism.  By
  Lemma~\ref{lem:Strop.extend.scalars}, we have $\pi(\STrop(X')) = \STrop(X)$.
  Let $(\sX',H')$ be the strictly semistable pair over $K'^\circ$ in the sense
  of~\cite[Definition~3.1]{gubler_rabinoff_werner16:skeleton_tropical} given by
  base change $(\sX,H)$ to $K'^\circ$.  It is easy to see from the
  construction~\cite[\S4]{gubler_rabinoff_werner16:skeleton_tropical} that $\pi$
  induces an isomorphism $S(\sX',H') \cong S(\sX,H)$ identifies faces and
  strata.  Therefore to prove (1), it is enough to show that
  $\STrop(X')=S(\sX',H')$.

  Let $\omega\in\Trop(X)$.  By hypothesis
  $\inn_\omega(X')=\inn_\omega(X)\tensor_{\td K}\td K'$ is smooth, so its
  irreducible components are connected components.  As $\inn_\omega(X')$ is the
  special fiber of the tropical formal model $\fX_\omega'$, it follows 
  {from~\cite[Proposition~3.18]{baker_payne_rabinoff16:tropical_curves}}
  that the canonical model of
  $X_\omega'$ coincides with $\fX_\omega'$, so the canonical reduction
  $\td X_\omega'$ is equal to $\inn_\omega(X')$.  Let $C\subset\inn_\omega(X')$ be
  a connected component.  Then $X_C' \coloneq \red\inv(C)$ is a connected
  component of $X_\omega'$ by anti-continuity of $\red$, and it has a unique
  Shilov boundary point $\xi_C$ by Proposition~\ref{prop:can.red.surj}.  We
  claim that $X_C'\cap S(\sX',H') = \{\xi_C\}$; this suffices to prove~(1), as
  clearly $X_C'\cap\STrop(X') = \{\xi_C\}$, and $X'^\an$ is covered
  (set-theoretically) by the affinoid domains $X_C'$.

  We introduce a partial ordering $\leq$ on $X'^\an$ by declaring that
  $x\leq y$ if $|f(x)|\leq|f(y)|$ for all $f\in K'[M]$,
  where $M$ is the character lattice of $T'=T \otimes_K K'$.  The tropicalization
  $\trop\colon X'^\an\to N_\R$ factors through the retraction to the skeleton
  $\tau\colon (X'\setminus H')^\an\to S(\sX',H')$
  by~\cite[Proposition~8.2]{gubler_rabinoff_werner16:skeleton_tropical}, so
  $\tau(\xi_C)\in X_\omega'$.
  By~\cite[Theorem~5.2(ii)]{berkovic99:locally_contractible_I} (as applied to a
  suitable affinoid neighborhood of $\xi_C$; see the proof
  of~\cite[Proposition~10.8]{gubler_rabinoff_werner16:skeleton_tropical}), we
  have $\xi_C\leq\tau(\xi_C)$.  As $\xi_C$ is by definition maximal with respect
  to $\leq$, this implies $\xi_C = \tau(\xi_C)$, so $\xi_C\in S(\sX',H')$.

  Let $P\in\Sigma_1$ be the polyhedron containing $\omega$ in its relative
  interior, and let $C_1,\ldots,C_r$ be the connected components of
  $\inn_\omega(X')$.  We have shown the inclusion
  $\{\xi_{C_1},\ldots,\xi_{C_r}\}\subset S(\sX',H')\cap\trop\inv(\omega)$.
  By~\eqref{eq:orbit.inn}, $C_1,\ldots,C_r$ correspond to the open strata
  $S_1,\ldots,S_r$ of $\sX'$ lying on $\sX'\cap O(P)$, and by
  Lemma~\ref{lem:skel.to.trop}, each cell $\Delta_{S_i}\subset S(\sX',H')$ maps
  bijectively onto $P$ under $\trop$, with no other such cells mapping into
  $\relint(P)$.  It follows that $S(\sX',H')\cap\trop\inv(\omega)$ contains
  exactly $r$ points, so
  $\{\xi_{C_1},\ldots,\xi_{C_r}\}= S(\sX',H')\cap\trop\inv(\omega)$.  This
  completes the proof.
\end{proof}

\begin{rem}
  With the notation in Theorem~\ref{thm:skeletons.complex}, suppose in addition
  that all initial degenerations $\inn_\omega(X)$ are irreducible, or
  equivalently, that $m_{\Trop}(\omega)=1$ for all $\omega\in\Trop(X)$.  Then
  Theorem~\ref{thm:skeletons.complex} and its proof imply that
  $\trop\colon \STrop(X) = S(\sX,H)\to\Trop(X)$ is an isomorphism of integral
  $\Gamma$-affine piecewise linear sets, and that the canonical section
  $\Trop(X)\to\STrop(X)$ of~\S\ref{sec:section.trop} is the inverse isomorphism.
  Compare~\cite[Proposition~10.8]{gubler_rabinoff_werner16:skeleton_tropical}.
\end{rem}

\vspace{.2in}

\appendix

\section{Summary of notations}
~

\subsection{Analytic spaces and formal schemes}
~

{\small
\begin{longtable}[H]{rll}
  \hspace{1em}
  $X_{K'}$ & The extension of scalars of an object $X/K$ to $K'/K$.
  & \ref{notn:ext.scalar}, p.\pageref{notn:ext.scalar} \\
  $K$ & A non-Archimedean field.
  & \ref{notn:nonarch.field}, p.\pageref{notn:nonarch.field} \\
  $\val$ & $\colon K\to\R\cup\{\infty\}$, a complete
  non-Archimedean valuation.
  & \ref{notn:valuation}, p.\pageref{notn:valuation} \\
  $|\scdot|$ & $= \exp(-\val(\scdot))$, an associated absolute value.
  & \ref{notn:abs.val}, p.\pageref{notn:abs.val} \\
  $K^\circ$ & The valuation ring in $K$.
  & \ref{notn:ints}, p.\pageref{notn:ints}\\
  $K^\ccirc$ & The maximal ideal in $K^\circ$.
  & \ref{notn:ideal}, p.\pageref{notn:ideal} \\
  $\td K$ & $= K^\circ/K^\ccirc$, the residue field of $K$.
  & \ref{notn:res.field}, p.\pageref{notn:res.field} \\
  $\Gamma$ & $=\Gamma_K = \val(K^\times)\subset\R$, the value group of $K$. 
  & \ref{notn:value.gp}, p.\pageref{notn:value.gp} \\
  $\sqrt\Gamma$ & The saturation of $\Gamma$ in $\R$. 
  & \ref{notn:saturation}, p.\pageref{notn:saturation} \\
  $K\angles{r\inv x}$ & The generalized Tate algebra; also for $n$ variables. 
  & \ref{notn:gen.tate.alg}, p.\pageref{notn:gen.tate.alg} \\
  $\sH(x)$ & The completed residue field at a point $x$ of a $K$-analytic
  space. 
  & \ref{notn:completed.res.field}, p.\pageref{notn:completed.res.field} \\
  $\sM(\cA)$ & The Berkovich spectrum of a $K$-affinoid algebra $\cA$. 
  & \ref{notn:spectrum}, p.\pageref{notn:spectrum} \\
  $X^\an$ & The analytification of a locally finite-type $K$-scheme $X$. 
  & \ref{notn:analytification}, p.\pageref{notn:analytification} \\
  $\fX_s$ & $=\fX\tensor_{K^\circ}\td K$, the special fiber of a formal
  $K^\circ$-scheme $\fX$. 
  & \ref{notn:special.fiber}, p.\pageref{notn:special.fiber} \\
  $\fX_\eta$ & The analytic generic fiber of an admissible formal
  $K^\circ$-scheme $\fX$. 
  & \ref{notn:generic.fiber}, p.\pageref{notn:generic.fiber} \\
  $\cA^\circ$ & Ring of power-bounded elements in a {strictly} $K$-affinoid
  algebra $\cA$. 
  & \ref{notn:power.bounded}, p.\pageref{notn:power.bounded} \\
  $\cA^\ccirc$ & The ideal of topologically nilpotent elements in
  $\cA^\circ$. 
  & \ref{notn:top.nilpotent}, p.\pageref{notn:top.nilpotent} \\
  $\td\cA$ & $=\cA^\circ/\cA^\ccirc$, a $\td K$-algebra of finite type. 
  & \ref{notn:reduction.alg}, p.\pageref{notn:reduction.alg} \\
  $|\scdot|_{\sup}$ & The supremum (or spectral) semi-norm on a
  $K$-affinoid algebra $\cA$. 
  & \ref{notn:sup.norm}, p.\pageref{notn:sup.norm} \\
  $\fX^\can$ & $=\Spf(\cA^\circ)$, the canonical model of $X = \sM(\cA)$. 
  & \ref{notn:canonical.model}, p.\pageref{notn:canonical.model} \\
  $\td X$ & $= \Spec(\td\cA)$, the canonical reduction of $X = \sM(\cA)$. 
  & \ref{notn:canonical.reduction}, p.\pageref{notn:canonical.reduction} \\
  $B(X)$ & The Shilov boundary of a $K$-affinoid space $X$. 
  & \ref{notn:shilov.boundary}, p.\pageref{notn:shilov.boundary} \\
  $\red$ & $\colon X\to\td X$, the reduction map to the canonical reduction. 
  & \ref{eq:red.can}, p.\pageref{eq:red.can} \\
  $\red$ & $\colon\fX_\eta\to\fX_s$, reduction map of an admissible formal
  $K^\circ$-scheme $\fX$.
  & \ref{eq:red.model}, p.\pageref{eq:red.model} 
\end{longtable}}
~

\subsection{Toric varieties and tropicalizations}
~

{\small
\begin{longtable}[H]{rll}
  \hspace{1em}
  $M$ & $\cong\Z^n$, a finitely generated free abelian group. 
  & \ref{notn:char.group}, p.\pageref{notn:char.group} \\
  $N$ & $= \Hom(M,\Z)$, its dual. 
  & \ref{notn:char.group}, p.\pageref{notn:char.group} \\
  $M_G$ & $= M\tensor_\Z G\subset M_\R$ for $G\subset\R$ an additive subgroup. 
  & \ref{notn:bigger.char.group}, p.\pageref{notn:bigger.char.group} \\
  $N_G$ & Likewise. 
  & \ref{notn:bigger.char.group}, p.\pageref{notn:bigger.char.group} \\
  $\angles{\scdot,\scdot}$ & $\colon M_G\times N_G\to\R$, the evaluation
  pairing. 
  & \ref{notn:eval.pairing}, p.\pageref{notn:eval.pairing} \\
  $R[S]$ & The monoid ring over a ring $R$ of a monoid $S$. 
  & \ref{notn:monoid.ring}, p.\pageref{notn:monoid.ring} \\
  $\chi^u$ & $\in R[S]$, the character corresponding to $u\in S$. 
  & \ref{notn:character}, p.\pageref{notn:character} \\
  $T$ & $= \Spec(K[M])\cong\bG_{m,K}^n$, a split $K$-torus. 
  & \ref{notn:the.torus}, p.\pageref{notn:the.torus} \\
  $S_\sigma$ & $= \sigma^\vee\cap M$, the monoid associated to a cone
  $\sigma\subset N_\R$. 
  & \ref{notn:affine.toric}, p.\pageref{notn:affine.toric} \\
  $Y_\sigma$ & $=\Spec(K[S_\sigma])$, the affine toric variety from a rational
  cone $\sigma\subset N_\R$. 
  & \ref{notn:affine.toric}, p.\pageref{notn:affine.toric} \\
  $Y_\Delta$ & The toric variety associated to a rational pointed fan $\Delta$
  in $N_\R$. 
  & \ref{notn:general.toric}, p.\pageref{notn:general.toric} \\
  $M_G(\sigma)$ & $= (\sigma^\perp\cap M)\tensor_\Z G$ for $\sigma\subset N_\R$
  a rational cone and $G\subset\R$. 
  & \ref{notn:MGsigma}, p.\pageref{notn:MGsigma} \\
  $N_G(\sigma)$ & $= (N/\angles{\sigma}\cap N)\tensor_\Z G$. 
  & \ref{notn:MGsigma}, p.\pageref{notn:MGsigma} \\
  $\pi_\sigma$ & $\colon N_\R\to N_\R(\sigma)$, the projection. 
  & \ref{notn:pisigma}, p.\pageref{notn:pisigma} \\
  $O(\sigma)$ & $= \Spec(K[M(\sigma)])$, the torus orbit in $Y_\Delta$ coming
  from $\sigma\in\Delta$. 
  & \ref{notn:torus.orbit}, p.\pageref{notn:torus.orbit} \\
  $\bar\R$ & $= \R\cup\{\infty\}$, an additive monoid. 
  & \ref{notn:barR}, p.\pageref{notn:barR} \\
  $\bar N{}_G^\sigma$ & $= \Djunion_{\tau\prec\sigma} N_G(\tau)$. 
  & \ref{notn:barNRs}, p.\pageref{notn:barNRs} \\
  $\bar N{}_G^\Delta$ & $= \Djunion_{\sigma\in\Delta} N_G(\sigma)$, the $G$-points in a
  partial compactification of $N_\R$. 
  & \ref{notn:barNRD}, p.\pageref{notn:barNRD} \\
  $\trop$ & $\colon Y_\Delta^\an\to\bar N{}_\R^\Delta$ or $Y_\sigma^\an\to\bar
  N{}_\R^\sigma$, the tropicalization map. 
  & \ref{notn:trop.map}, p.\pageref{notn:trop.map} \\
  $s$ & $\colon\bar N{}_\R^\Delta\to Y_\Delta^\an$ or $\bar N{}_\R^\sigma\to Y_\sigma^\an$, the section of tropicalization. 
  & \ref{notn:trop.section}, p.\pageref{notn:trop.section} \\
  $|\scdot|_\omega$ & $= s(\omega)$ for $\omega\in\bar N{}_\R^\sigma$. 
  & \ref{notn:norm.omega}, p.\pageref{notn:norm.omega} \\
  $S(Y_\sigma^\an)$ & $= s(\bar N{}_\R^\sigma)$, the skeleton of
  $Y_\sigma^\an$. 
  & \ref{notn:skeleton.toric}, p.\pageref{notn:skeleton.toric} \\
  $S(Y_\Delta^\an)$ & $= s(\bar N{}_\R^\Delta)$, the skeleton of
  $Y_\Delta^\an$. 
  & \ref{notn:skeleton.toric2}, p.\pageref{notn:skeleton.toric2} \\
  $\Trop(X)$ & $\subset \bar N{}_\R^\Delta$ (resp.\ $\bar N{}_\R^\sigma$), the tropicalization of $X\subset Y_\Delta$ (resp.\ $Y_\sigma$). 
  & \ref{notn:TropX}, p.\pageref{notn:TropX} \\
  $U_\omega$ & $= \trop\inv(\omega)\subset Y_\Delta^\an$ for
  $\omega\in\bar N{}_\R^\Delta$. 
  & \ref{notn:Uomega}, p.\pageref{notn:Uomega} \\
  $\fU_\omega$ & $=\Spf(K\angles{U_\omega}^\circ)$, the canonical model of
  $U_\omega$ for $\omega\in\bar N{}_\Gamma^\Delta$. 
  & \ref{notn:fUomega}, p.\pageref{notn:fUomega} \\
  $X_\omega$ & $= U_\omega\cap X^\an$ for $X\subset Y_\Delta$ a closed
  subscheme. 
  & \ref{notn:Xomega}, p.\pageref{notn:Xomega} \\
  $\fX_\omega$ & The tropical formal model of $X_\omega$, a closed formal
  subscheme of $\fU_\omega$. 
  & \ref{notn:fXomega}, p.\pageref{notn:fXomega} \\
  $\inn_\omega(X)$ & $= (\fX_\omega)_s$, the initial degeneration of $X$ at
  $\omega$. 
  & \ref{notn:initial.degen}, p.\pageref{notn:initial.degen} \\
  $m_Z$ & The multiplicity of an irreducible component $Z$ of
  $\inn_\omega(X)$. 
  & \ref{notn:comp.mult}, p.\pageref{notn:comp.mult} \\
  $m_{\Trop}(\omega)$ & The tropical multiplicity of $X$ at $\omega$. 
  & \ref{notn:mTrop}, p.\pageref{notn:mTrop} \\
  $\LC_\omega(\Pi)$ & The local cone at $\omega$ of a polyhedral complex $\Pi$
  in $N_\R$. 
  & \ref{notn:local.cone}, p.\pageref{notn:local.cone} \\
  $\rho(P)$ & The recession cone of a polyhedron $P$. 
  & \eqref{notn:recession.cone}, p.\pageref{notn:recession.cone} \\
  $s_X$ & $\colon\multone\to\STrop(X)$, the section of tropicalization.
  & \ref{def:section}, p.\pageref{def:section}
\end{longtable}
}

\newpage
\bibliographystyle{amsalpha}
\bibliography{papers}

\vspace{.2in}

\end{document}